\newtheorem{theorem}{Theorem}[section]
\newtheorem{prop}[theorem]{Proposition}
\newtheorem{hypothesis}[theorem]{Hypothesis}
\newtheorem{lemma}[theorem]{Lemma}
\newtheorem{definition}[theorem]{Definition}
\newtheorem{note}[theorem]{Note}
\newtheorem{corollary}[theorem]{Corollary}
\newtheorem{example}[theorem]{Example}
\newtheorem{excursus}[theorem]{Excursus}
\newcommand{\set}{\mathcal{D}}
\newcommand{\barz}{\bar{\zeta}}
\newcommand{\barZ}{\bar{Z}}
\newcommand{\barQ}{\bar{Q}}
\newcommand{\mub}{\bar{\mu}}
\newcommand{\bfzz}{{\bf{Z}}}
\newcommand{\A}{\mathcal{A}}
\newcommand{\N}{\mathbb{N}}
\newcommand{\E}{\mathbb{E}}
\newcommand{\cL}{\mathcal{L}}
\newcommand{\cP}{\mathcal{P}}
\newcommand{\cQ}{Q}
\newcommand{\cV}{\mathcal{V}}
\newcommand{\hf}{\hfill$\Box$} 
\newcommand{\pa}{\partial}
\newcommand{\lv}{\left\vert}
\newcommand{\rv}{\right\vert}
\newcommand{\be}{\begin{equation}}
\newcommand{\ee}{\end{equation}}
\newcommand{\dd}{,{\dots},}
\newcommand{\deln}{\hat{\Delta}}
\newcommand{\delnn}{\hat{\Delta}_{0}}
\newcommand{\curlys}{\mathscr{S}}
\newcommand{\capitals}{S}
\newcommand{\R}{\mathbb{R}}
\newcommand{\jacobian}[2]{\mathcal{J}_{#2} #1}  
\newcommand{\distlevel}[1]{\Delta_{#1}}   
\newcommand{\cZ}{\mathcal{Z}}
\newcommand{\Ad}{\mathrm{Ad}}
\newcommand{\ad}{\mathrm{ad}}
\newcommand{\zQ}{\mathcal{Q}}
\newcommand{\lz}{z}
\newcommand{\mulim}{\overline{\mu}^{\Sinfty}}
\newcommand{\Wlim}{W^\infty}
\newcommand{\wo}{\voperp}
\newcommand{\Lt}{\mathcal{L}}
\newcommand{\So}{\capitals_{x_0}} 
\newcommand{\Sbar}{\overline{\capitals}_{x_0}}
\newcommand{\St}{\capitals_{e^{t\voperp}(x_0)}}
\newcommand{\Stbar}{\overline{\capitals}_{e^{t\voperp}(x_0)}}
\newcommand{\Sinfty}{\capitals_{\overline{x}}}
\newcommand{\Sinftybar}{\overline{\capitals}_{\overline{x}}}
\newcommand{\DVinfty}{\mathcal{D}_V^{2,\infty}(\R^N)}
\newcommand{\pn}{z}
\newcommand{\pnn}{\zeta}
\newcommand{\fq}{\mathfrak{q}}
\newcommand{\rar}{\rightarrow}
\newcommand{\bfz}{{\bf{z}}}
\newcommand{\vodel}{V_0^{(\deln)}}
 \newcommand{\voperp}{V_0^{(\perp)}}
\newcommand{\fRm}{\mathcal{R}_{m}} 
\newcommand{\fRmo}{\mathcal{R}_{m,0}}
\begin{document}

\title[Long-time behaviour of degenerate diffusions]{Long-time behaviour of degenerate diffusions: 
UFG-type SDEs and  time-inhomogeneous hypoelliptic processes}
\author{ T. Cass, D. Crisan, P. Dobson and M. Ottobre}

\address{\noindent \textsc{Thomas Cass, Department of Mathematics, Imperial College London, Huxley Building, 180 Queen's Gate,  London SW7 2AZ, UK}} 
\email{thomas.cass@imperial.ac.uk}
\address{\noindent \textsc{Dan Crisan, Department of Mathematics, Imperial College London, Huxley Building, 180 Queen's Gate,  London SW7 2AZ, UK}} 
\email{d.crisan@imperial.ac.uk}
\address{\noindent \textsc{Paul Dobson, Maxwell Institute for Mathematical Sciences,  Department of Mathematics, Heriot-Watt University, Edinburgh EH14 4AS, UK}} 
\email{pd14@hw.ac.uk}
\address{\noindent \textsc{Michela Ottobre, Maxwell Institute for Mathematical Sciences, Department of Mathematics, Heriot-Watt University, Edinburgh EH14 4AS, UK}} 
\email{m.ottobre@hw.ac.uk}


\begin{abstract}
We study the long time behaviour of a large class of diffusion processes on  $\R^N$,   generated by second order differential operators of (possibly) degenerate type. The operators that we consider {\em need not}  satisfy the H\"ormander condition. Instead,  they satisfy the so-called  UFG condition, introduced by Herman,  Lobry and Sussman in the context of geometric control theory and later by Kusuoka and Stroock, this time with probabilistic motivations. In this paper  we study UFG diffusions and demonstrate the importance of such a class of processes in several respects: roughly speaking i) we show that UFG processes constitute a family of SDEs which exhibit multiple invariant measures and for which one is able to describe a systematic procedure to determine the basin of attraction of each invariant measure (equilibrium state).   ii) We use an explicit change of coordinates introduced in differential geometry by Frobenius to prove that every UFG diffusion can be, at least locally, represented as a system consisting of an SDE coupled with an  ODE, where the ODE evolves independently of the SDE part of the dynamics.  iii) As a result,  UFG diffusions are inherently ``less smooth" than hypoelliptic SDEs; more precisely, we prove that UFG processes do not admit a density with respect to Lebesgue measure on the entire space, but only on suitable time-evolving submanifolds, which we describe. iv) We show that our results and techniques, which we devised for UFG processes, can be applied to the study of the long-time behaviour of non-autonomous hypoelliptic SDEs and therefore produce several results on this latter class of processes as well.    v) Because processes that satisfy the (uniform) parabolic H\"ormander condition are UFG processes, our paper contains a wealth of results about the long time behaviour of (uniformly) hypoelliptic processes which are non-ergodic, in the sense that they exhibit multiple invariant measures.   
 \vspace{5pt}
\\
{\sc Keywords.} Diffusion Semigroups, Parabolic PDE; UFG Condition; H\"ormander condition;  Long time Asymptotics; Invariant Measures; Non-Ergodic SDEs ; Distributions with non-constant rank; Stochastic Control Theory.
\vspace{5pt}
\\
{\sc AMS Classification (MSC 2010).} 60H10, 35K10,  35B35, 35B65, 58J65, 49J55, 93E03, 37H10, 

\end{abstract}

\date{\today}

\maketitle

\section{Introduction}

\subsection{Context and scope of the paper. }\label{context}
 Consider  stochastic differential equations (SDEs) in $\R^N$ of the form 
\be\label{SDE}
X_t=X_0 + \int_0^t V_0(X_s) ds + \sqrt{2} \sum_{i=1}^d\int_0^t  V_i(X_s) \circ  dB^i (s),
\ee
where $V_0,\ldots,V_d$ are smooth  vector fields on $\R^N$, $\circ$ denotes Stratonovich integration and  $B^1(t) \dd B^d(t)$  are one dimensional independent standard Brownian motions.   
The Markov semigroup $\{\cP_t\}_{t\ge 0}$ associated with the SDE \eqref{SDE} is defined on the set $C_b(\R^N)$ of continuous and bounded functions as
\be\label{semigroup}
\cP_t: C_b(\R^N) \rightarrow C_b(\R^N), \quad 
(\cP_t f)(x):= \mathbb{E} \left[f(X_t \vert X_0=x) \right].
\ee

We recall that, given a vector field $V:\R^N \rar \R^N$, we can interpret $V$ both as a vector-valued function on $\R^N$ and as a first order differential operator on $\R^N$:
\be\label{notvectoper}
V=(V^1(x), V^2(x) \dd V^N(x)) \quad \mbox{ or }
\quad V= \sum_{j=1}^NV^j(x)\pa_j, 
 \quad x \in \R^N, \pa_j= \pa_{x^j} \,.
\ee
 With this notation, 
the Kolmogorov operator associated with the semigroup $\cP_t$ is the second order differential operator given  on smooth functions by 
\be\label{gen1}
\cL=V_0+\sum_{i=1}^{d}V_i^2. 
\ee
The Markov diffusion $X_t$ is called hypoelliptic (elliptic, respectively) when  the operator $\cL$ is hypoelliptic (elliptic, respectively) \cite{Bakry}. The study of diffusion processes of hypoelliptic type has by now produced a fully-fledged theory, involving several branches of mathematics: stochastic analysis, analysis of differential operators, (sub-)Riemannian geometry and control theory.  One of the key steps in the development of such a theory has been the seminal paper of H\"ormander \cite{H1} and a large body of work has been dedicated for over forty years to the study of diffusion processes under the H\"ormander Condition (HC) (in one if its many forms),  which is a sufficient condition for hypoellipticity. 
In particular, the ergodic theory for hypoelliptic SDEs is well developed, see \cite{ReyBellet, Hairerbath, OttobrePavliotis, EckmannHairer} and references therein -- throughout the paper we define a process to be  {\em ergodic} if it admits a unique invariant measure (stationary state).

To the best of our knowledge, this is the first paper that attempts to build a framework for the study of the long time asymptotics of solutions of SDEs which are non-necessarily hypoelliptic.  We will work in the setting in which the vector fields $V_0,\ldots,V_d$ satisfy a weaker condition, the so-called {\em UFG condition}. The acronym UFG stands for {\em Uniformly Finitely Generated}. We give a precise  statement of the UFG condition in  Definition \ref{defufg}. 
For the moment let us just point out that, while  the  Parabolic H\"ormander condition imposes the following
 \be
\bigcup_{j\geq 1} \mathrm{span} \{\mathfrak{L}_j(x)\} =\R^N \quad \mbox{for every } x \in \R^N,
\tag{{\bf PHC}}
\ee
where as customary the hierarchy of operators $\mathfrak{L}_j$ is defined as $\mathfrak{L}_1(x):= \{V_1(x) \dd V_d(x)\}$ and, for $j >1$,  $\mathfrak{L}_j(x)= \mathfrak{L}_{j-1}(x) \cup \{[V, V_k], V \in \mathfrak{L}_{j-1}, k \in\{0 \dd d\}\}$, 
under the UFG condition the  vector space appearing in  ({\bf PHC}) is not required to have constant rank; roughly speaking, it is only required to be {\em finitely generated}.  In particular, we emphasize that the UFG condition does not impose the vector space  in ({\bf{PHC}}) to be equal to ${\mathbb R}^N$  for any $x\in {\mathbb R}^N$.   Hence, in this sense, the UFG condition is   weaker than the parabolic H\"{o}rmander condition. 
The UFG condition has been long known by the (geometric) control theory community, although perhaps under other names (see Section \ref{sec:preliminaries} for a more detailed account on the matter), and it is indeed well-studied in the works of Hermann, Lobry and Sussman \cite{Hermann, Lobry, Sussman}.   It was then considered by Stroock and Kusuoka in the eighties \cite{{KusStr82},{KusStr85},{KusStr87}, Kus03}, though in a completely different context (which we briefly explain below). The purpose there was to study smoothing properties of the semigroup $\cP_t$ under the UFG condition. In this paper we combine the geometric viewpoint with the functional analytic and probabilistic one to introduce new results on the asymptotic behaviour of UFG diffusions. In broad terms, the two main achievements of this paper can be described as follows: 
 
\medskip
{\bf i)} We study the diffusion process \eqref{SDE} in absence of the H\"ormander condition. To this end, we establish explicit connections between the geometric theory of finitely generated Lie algebras and the related stochastic dynamics. Because every (uniformly) hypoelliptic process is a UFG process, our results cover a very large class of SDEs.   In particular we show that our approach can be fruitfully employed to study the asymptotic behaviour of non-autonomous hypoelliptic diffusions.  

\medskip
 {\bf ii)} We  argue that UFG processes constitute a class of SDEs which exhibit, in general,  multiple equilibria and for which one is able, given an initial datum, to determine the invariant measure to which the dynamics will converge. 
\medskip

 Let us further remark on the significance of the latter point: 
although a large body of work has been devoted to the study of ergodic processes, the development of a general framework to understand problems with multiple
equilibria is at a very early stage.  It is well known that ergodic processes will, under appropriate general conditions,
converge to their unique equilibrium irrespective of the initial configuration, i.e. they will tend to lose memory of
the initial datum. Clearly this cannot be the case, in general, for more complicated systems.  When the invariant measure is not unique it is typically extremely difficult to determine the basin of attraction of each equilibrium measure and we are indeed not aware of any criteria developed to this effect.   To be more precise, one can ask one of the two (complemetary) questions:  given an initial datum for the SDE,  which equilibrium measure will the process converge to?  Conversely, given an equilibrium measure $\mu$, one may wish to describe the basin of attraction of such a measure, i.e. the set of initial data $x \in \R^N$ such that the process $X_t^{(x)}$ \footnote{We use the notation $X_t^{(x)}$ to emphasize the fact that the initial datum of the process is $X_0=x$.} converges to $\mu$.  Beyond numerical simulations, no  theoretical framework currently exists to tackle this kind of problems.

  In  this paper we  introduce a systematic way to study long-time convergence for a large class of SDEs which will, in general, admit several stationary states. This methodology applies to UFG diffusions and hence, because processes that satisfy the (uniform) parabolic H\"ormander condition are UFG processes, our results produce further understanding on non-ergodic H\"ormander processes - we stress here, and we will emphasize it again in Section \ref{sec:geomofUFGprocesses}, that hypoelliptic processes need not be ergodic (see Section \ref{sec:geomofUFGprocesses} for examples of hypoelliptic processes which are not ergodic).   

The Markov diffusions studied here are {\em linear},  in the sense that their generators  \eqref{gen1} are  linear second order differential operators. As a point of comparison, another  class of systems exhibiting multiple equilibria is the class of so-called {\em collective dynamics}: in this case the system is constituted by a
large number of particles or agents  that interact with each other. The underlying kinetic-PDEs for this type of models are {\em non-linear  in the sense of McKean} and the existence of multiple stationary states here is due to such a nonlinearity. In our case, the nature of the phenomenon is completely different and in a way simpler,  as multiple invariant measures arise as a result of the non-trivial control-theory implied by the UFG condition.

In the remainder of this introduction we comment on the implications and significance of the UFG condition first from an analytic perspective and then from a geometric and probabilistic viewpoint. In Subsection \ref{intro:mainresults} we  explain the main results of the paper and the reasons for studying  UFG diffusions;  we then conclude the introduction with Subsection \ref{intro:organization}, where we illustrate the organization of the paper.

 As is well known, under the (parabolic) { H\"{o}rmander condition}, the transition probabilities of the semigroup $\cP_t$ have a smooth density; furthermore,  $\cP_tf$ is differentiable in every direction and $u(t,x):=(\cP_t f)(x)$ is a classical solution of the Cauchy problem
\begin{align*}
 \pa_t u(t,x) &=\cL u(t,x)\\
 u(0,x)& =f(x).
\end{align*}
In the present  paper we will  relax the hypoellipticity  assumption and study the long-time behaviour of the dynamics \eqref{SDE} in absence of the H\"ormander condition. 

In a series of papers \cite{{KusStr82},{KusStr85},{KusStr87}, Kus03, {CrisanLitterer}, {CrisanDelarue}, Crisan}, Kusuoka and Stroock first and Crisan and collaborators later,  have analyzed the smoothness properties of diffusion semigroups $\{\cP_t\}_{t\ge 0}$ associated with the stochastic dynamics \eqref{SDE} when the vector fields $\{V_i,i=0,1,...,d\}$ satisfy the UFG condition.  Such works  showed that, as opposed to what happens under the PHC, under the UFG condition the semigroup $\cP_t$ is no longer differentiable in every direction; in particular it is no longer differentiable in the direction $V_0$, but it is still differentiable in the direction $\mathcal{V}:=\pa_t-V_0$ when viewed as a function 
$(t,x)\mapsto u(t,x)$ over the  product space 
$(0,\infty)\times {\mathbb R}^N$.  This fact has been proved by means of Malliavin calculus and in this paper we give a geometric and analytic  explanation of such a  phenomenon.  Because of differentiability in the direction $\mathcal{V}$,  a rigourous PDE analysis  can still be built starting from the stochastic dynamics \eqref{SDE}. In this case one can indeed prove that for  every $f\in C_b$ (continuous and bounded), the function $u(t,x):=(\cP_t f)(x)$ is a classical solution of the Cauchy problem
\begin{equation}
\left\{ 
\begin{array}{rl}
\mathcal{V} u(t,x) &=\sum_{i=1}^{d}V_i^2 u(t,x)\\
 u(0,x)& =f(x). \footnote{The notion of classical solution for the PDE \eqref{linearPDE} and further background material can be found in \cite[Appendix A]{CrisanOttobre}.}
 \end{array}
\right.\label{linearPDE}
\end{equation}

 From a geometric and control-theoretical point of view, working with the UFG condition will  imply dealing with distributions of non-constant rank. \footnote{In this paper we use the word distribution only in geometric sense, see definition at the beginning of Section \ref{intro:mainresults}. } 
If the geometric understanding of the H\"ormander condition  is rooted in the classic Frobenius  Theorem, which deals with distributions of constant rank,  the geometry of the UFG condition is described in the works of Hermann,  Lobry and Sussman \cite{Hermann, Lobry, Sussman}.  In these works, the UFG condition was considered for geometric and control theoretical purposes, in particular for the study of reachability (i.e., roughly speaking,  to answer questions regarding the set of points that can be reached by the integral curves of given vector fields). In this respect we should stress that the UFG condition is not optimal from a control-theoretical point of view (an optimal condition for reachability has been described by Sussmann \cite{Sussman}). However, it is the closest to being optimal, while still being easy to check in practice.  

Finally, by a probabilistic standpoint, it is well known that the Parabolic H\"ormander condition (PHC) is a sufficient (and almost necessary)  condition for the law of the process \eqref{SDE} to have a density, see \cite{Hairer},   and this fact has motivated the large literature on hypoelliptic SDEs. Again, the  understanding of this matter relies on   Frobenius  Theorem, as H\"ormander himself noted \cite{H1}. In his seminal paper \cite{Bismut}, Bismut  proved that, when the H\"ormander condition (HC) is enforced in place of the PHC, \footnote{The difference between the PHC and the HC will be clarified in Section \ref{sec:preliminaries}. } the law of the process  no longer admits a density on $\R^N$; however, it  admits a density on  appropriate time-dependent submanifolds of $\R^N$. In this paper we prove that a similar statement holds,  in more generality, for UFG processes, and in Section \ref{sec:8} we explicitly describe the time-dependent manifolds on which the process admits a law.   Throughout the paper we will make several comparisons between the setting of \cite{Bismut} and the present setting.

\subsection{Main Results}\label{intro:mainresults} The main results of this paper are the following: Proposition \ref{correachstoc}, Proposition \ref{prop:dimcanonlydecrease} and  Proposition \ref{thm:meszerosetsunderinvmeas} give a  description of the global behaviour of the dynamics \eqref{SDE}, under the sole assumption that the vector fields $V_0 \dd V_d$ satisfy the UFG condition; Theorem \ref{thm:mainglobalthmforX}  and Theorem  \ref{thm:mainlocalthm} describe   the long time behaviour of non-autonomous hypoelliptic processes and of UFG processes, respectively, identifying invariant measures and  characterizing their basin of attraction; finally in  Theorem \ref{thmsec8} we describe appropriate manifolds where the process $X_t$ admits a density. 
Let us give a rather informal description of  such  results. Precise notation, assumptions and statements are deferred to the relevant sections.

A {\em distribution} $\Delta$ on $\R^N$ is a map that, to each point $x \in \R^N$, associates a linear subspace of the tangent space $T_x \R^N$.   Given a set $ \set $ of smooth vector fields on $\R^N$, the distribution generated by $\set$, denoted by $\Delta_{\set}$,  is the map $x \mapsto \mathrm{span}\{X(x): X \in \set\}$. Let us introduce two  distributions,  $\deln(x)$ and  $\delnn(x)$, that will play a fundamental role in this paper. To avoid having to set too much notation and nomenclature, we introduce them now informally but we will give  precise definitions at the beginning of Section \ref{sec:geomofUFGprocesses}. \footnote{In that section we define them differently, but we then prove that the definition we give there is equivalent to the one we state in \eqref{bowie1} - \eqref{bowie2}. } The distribution  $\deln$ is generated by the vector fields contained in the Lie algebra ({\bf PHC}), i.e. the distribution
\be\label{bowie1}
\deln(x) = \bigcup_{j\geq 1} \mathrm{span} \{\mathfrak{L}_j(x)\}
\ee
 while 
\begin{align}
\delnn(x)& = \mathrm{span}\{\mathrm{Lie}\{V_0(x), V_1(x), \dd V_d(x)\}\}\label{bowie2}\\
&=\mathrm{span} \{V_0(x) \cup \deln(x)\}\,.
\end{align}
Clearly, $\deln(x) \subseteq \delnn(x)$ for every $x \in \R^N$ and the two distributions coincide at $x$ if and only if $V_0(x)$ is a combination of the vectors contained in $\deln$. More precisely, we decompose the vector $V_0$ into a component which belongs to $\deln$, $\vodel$,  and a component which is orthogonal to $\deln$, $\voperp$:
\be\label{defvoperp}
V_0= \vodel + \voperp \, .
\ee
 In other words, $\voperp(x)$ is the projection of $V_0(x)$ on the orthogonal of the vector space $\deln(x)$, so the two distributions coincide if and only if $\voperp=0$. We will see that the vector $\voperp$ plays an important role for the dynamics and, ultimately, it is the component of $V_0$ responsible for the lack of smoothness in the direction $V_0$. \footnote{Note that even when $V_0$ is smooth, $\voperp$ need not be smooth, see Note \ref{technicalpoint} on this matter. } Therefore, in a way, the distribution $\deln$ is the one containing all the directions along which the problem \eqref{linearPDE} is smooth. We will come back to this later. 

Under the UFG condition the integral manifolds (see Section \ref{subsec:geometry} for definition) of $\delnn$ form a partition of the state space $\R^N$. Let $\curlys$ be one such manifold.  \footnote{By definition of integral manifold, on each one of these manifolds the rank of the distribution $\delnn$   is constant and it is equal to the dimension of the manifold itself.} If $X_0 = x \in \curlys$ then $X_t^{(x)} \in \overline{\curlys}$ for all $t \geq 0$. That is, if the process starts from one of the manifolds of the partition, then it remains in the closure of such a manifold;  but, crucially,  it may hit the (topological) boundary $\pa \curlys := \overline{\curlys}\setminus \curlys$ of the manifold $\curlys$. This is the content of Proposition \ref{correachstoc}. Such a statement is obtained by combining the known geometric theory of distributions with non-constant rank with the classical Stroock--Varadhan support theorem.  We further prove that if $X_t$ hits the boundary $\pa \curlys$ of the manifold $\curlys$, then it never leaves it, see Proposition \ref{prop:dimcanonlydecrease} and Note \ref{Note5.2}.   Therefore: i) because the dimension of the boundary $\pa \curlys$ is smaller than the dimension of $\curlys$, along the path of $X_t^{(x)}$ the rank of the distribution cannot increase; ii) if the solution of the SDE leaves the manifold $\curlys$ from where it started, then any invariant measure can only be supported on the boundary $\pa \curlys$ of such a manifold, see Proposition \ref{thm:meszerosetsunderinvmeas}.

Further understanding of the dynamics relies on  the results of  Section \ref{sec:Coordinatechange}: in this section  we show that, after an appropriate change of coordinates, any $N$-dimensional SDE of UFG-type can be written, at least locally, as a system of the form
\begin{align}
dZ_t & = U_0(Z_t, \hat{\zeta}_t) dt + \sum_{j=1}^d U_j(Z_t, \hat{\zeta}_t) \circ dB^j_t  \label{OS1}\\
d{\hat{\zeta}}_t &= \hat{W_0}(\hat{\zeta}_t) dt \, , \label{OS2}
\end{align}
where $\hat{\zeta}_t$ solves an ordinary differential equation (ODE), $\hat{\zeta}_t \in \R^{N-n}$,  \footnote{To make a link with the more precise notation that we will use in Section \ref{sec:Coordinatechange}, we are denoting  here by $\hat{\zeta}_t$ the components $(\zeta_t, a_t)$ in \eqref{ashtree2}-\eqref{ashtree3}, i.e. $\hat{\zeta}_t = (\zeta_t, a_t)$.  } $Z_t \in \R^n$,  $\hat{W_0}:\R^{N-n} \rar \R^{N-n}$ and $U_i:\R^N \rar \R^n$ for every $i \in \{0 \dd d\}$.  Beyond details about the dimensionality of the ODE component,  the important thing is that the solution of the ODE $\hat{\zeta}_t$ evolves independently of the SDE part, while the coefficients of the SDE depend on the evolution of the ODE. We will informally refer to such a representation as being of the form ``ODE+SDE". In general, this representation is only local. This change of coordinates has been known for a long time in differential geometry, at least since Frobenius, see \cite{Isidori};  here we are simply  expressing it in a way which is more congenial to our setting and purposes and we apply it to SDEs.  While the change of coordinates itself is not new, to the best of our knowledge it has  been  used in  SDE theory only by Bismut in \cite[Section 5]{Bismut}, to study the density of SDEs that satisfy the  H\"ormander Condition (HC), but it has never been used to study the long-time behaviour of SDEs. To clearly compare our work with \cite{Bismut}, let us emphasize that  in the notation introduced so far, the HC is satisfied when the distribution $\delnn$ has rank equal to $N$ at every point (see Section \ref{sec:preliminaries} for more details).  In this paper we primarily exploit the representation \eqref{OS1}-\eqref{OS2} to study the long time behaviour of SDEs that satisfy the UFG condition (so, as we said already, the rank of $\delnn$ is not constant) but  we also use it briefly in Section \ref{sec:8} to study the density of UFG processes.\footnote{Further comparisons between the setting of  \cite{Bismut} and the setting of this paper can be found latesr in this section,  in Note \ref{note:compareBismut2} and in  Note \ref{note:compareBismut1}.}   This local representation is both an important technical tool throughout the paper and a fundamental element in understanding the evolution of the dynamics. Referring to the PDE \eqref{linearPDE}, we also note here that the change of coordinates gives a geometric interpretation of the (potential) lack of  smoothness in the direction $V_0$ and of the reason why  smoothness is instead maintained in the direction $\mathcal{V}$, see  Note \ref{noteonsmoothness} on this point. 
 
In view of the discussed change of coordinates, it makes sense to start by studying UFG dynamics for which the representation \eqref{OS1}-\eqref{OS2} is global. For this reason in Section \ref{sec:non-autonomous} we consider systems which are (globally) of the form \eqref{OS1}-\eqref{OS2}, where the ODE is assumed to be one-dimensional and the SDE satisfies a form of H\"ormander condition. 
More precisely, the dynamics studied in Section \ref{sec:non-autonomous} are  non-autonomous hypoelliptic SDEs; because the topic is somewhat of independent interest, this section has been written in such a way that it can be read independently of the rest of the paper. Non-autonomous SDEs and their associated two-parameter semigroup have been studied in \cite{Cattiaux}, where a detailed analysis of the law of the process is carried out, in \cite{CrisanMcMurray} where the associated semigroup is examined, and in \cite{Kunze, daPratoRoeckner}, where the authors introduce  some interesting techniques to deal with the analysis of  invariant measures and  long-time behaviour of  time-inhomogeneous processes. The work \cite{Cattiaux} assumes that the non-autonomous SDE is hypoelliptic, while in \cite{Kunze} a uniform ellipticity assumption is enforced.  From a technical point of view, the results of Section \ref{sec:non-autonomous} extend the approach of  \cite[Section 6.1]{Kunze} to the hypoelliptic setting. However the main difference between our results and the results in \cite{Kunze} is that here we  highlight the fact  that the process may admit several invariant measures and we characterize the basin of attraction of each of them. In this setting convergence to equilibrium is driven by the ODE component. We will indeed show that the basin of attraction of each invariant measure can be completely described by just looking at the behaviour of the solution of the ODE. Because the ODE is assumed to be one-dimensional and autonomous, it can only behave monotonically, so the analysis of the ODE and of the full problem is relatively intuitive in this setting (see Section \ref{sec:non-autonomous} for details).   

In Section \ref{sec:longtimebehaviour} we consider the general case of UFG processes for which the representation  \eqref{OS1}-\eqref{OS2} is only local. While this case is substantially richer than the previous one, the fact that, locally, we can always represent the SDE \eqref{SDE} as a system of the form ODE+SDE, still means that there is some deterministic behaviour which is intrinsic to UFG dynamics. It turns out that one is still able to single out the deterministic behaviour. Recalling the definition of the vector   
 $\voperp$, formula \eqref{defvoperp}, we will show that the ($N$-dimensional) ODE 
$$
d\zeta_t = \voperp (\zeta_t) dt
$$
plays, in this more general context, the same driving role that the ODE \eqref{OS2} had in the context of Section \ref{sec:non-autonomous}.  Motivated by the above discussion,  we introduce the process
$$
\cZ_t:=e^{-t\voperp}(X_t).
$$
This process is non-autonomous and, as we will explain, it can be interpreted geometrically as being a projection of the process $X_t$ on an appropriate integral manifold of the distribution $\deln$.  We apply the techniques of Section \ref{sec:non-autonomous}  to the study of such a non-autonomous process, producing results on the long-time behaviour of $\cZ_t$. We then relate the asymptotic behaviour of $\cZ_t$  to the asymptotic behaviour of $X_t$. 
Notice that the procedure that we have just described  is somewhat the reverse of the one that is traditionally used (and it is, to the best of our knowledge, new): given a non-autonomous system, the established methodology consists of increasing the dimension of the state space by adding time as an auxiliary variable, thereby reducing the given non-autonomous system to a (larger) autonomous one. Here we do the converse: by projecting the process on an appropriate manifold, we reduce to a (lower-dimensional) non-autonomous one, $\mathcal{Z}_t$,  with the advantage that now the techniques of Section \ref{sec:non-autonomous} can be adapted to prove statements on  $\mathcal{Z}_t$. Once the latter process has been understood, we deduce results about  the autonomous process $X_t$ from those shown for $\mathcal{Z}_t$. 

From a probabilistic point of view it is clear that, in absence of the H\"ormander condition, we cannot expect the process $X_t$ to have a density with respect to the Lebesgue measure. This is made explicit by the local representation \eqref{OS1}-\eqref{OS2}, which also clarifies that it is the ODE component to be responsible for the lack of smoothness. Notice also that, in the coordinates \eqref{OS1}-\eqref{OS2}, the vector $\voperp$ is given by $\voperp=(0 \dd 0, \hat{W_0})$, i.e. it is precisely the vector driving the ODE behaviour (we have elaborated on this fact in Note \ref{noteonsmoothness}).  However in Section \ref{sec:8} we show that the law of the SDE \eqref{SDE} still has a density on an appropriate time-dependent submanifold, which can be explicitly described. In order to do so, we correct and then extend the results of \cite{Schiltz}.

One may also wish to point out that systems of the form ``ODE+SDE" appear as diffusion limits of some Metropolis-Hastings type of algorithms, see e.g. \cite{RWM}. It is noted in \cite{RWM} that one may use the ODE as a way to monitor convergence.  We believe that the lack of smoothness of UFG processes could be seen as a perk in the context of sampling. The authors intend to explore this fact in future work.  
Finally, we mention in passing that UFG processes play a fundamental role in the study of cubature methods, see \cite{Crisan} and references therein  for a complete account on the matter.

\subsection{Organisation of the paper}\label{intro:organization}
In Section \ref{sec:notation} we introduce the standing notation for the remainder of the manuscript.  To make the paper self-contained, in Section \ref{sec:preliminaries} we gather background definitions and notions. In particular  Subsection \ref{subsec:UFG} contains details of the UFG condition, while Subsection \ref{subsec:geometry} covers basic definitions and standard results in differential geometry and (stochastic) control theory. In Section \ref{sec:geomofUFGprocesses} we exploit the existing theory of distributions of non-constant rank to produce both global and local results about the SDE \eqref{SDE}, under the UFG condition. In Section \ref{Sec4:global} we cover the {\em global} behaviour of the SDE, in Section \ref{sec:Coordinatechange} we study {\em local} properties. In Section \ref{sec:5} we introduce several results for UFG-diffusions. These results are quite general, in the sense that most of them valid under just the UFG condition. The following Section \ref{sec:non-autonomous} can be read independently of the rest of the manuscript: in this section  we describe  the long-time behaviour of hypoelliptic SDEs of non-autonomous type. The class of SDEs considered in Section \ref{sec:non-autonomous}  is one for which the representation of the form ``ODE+SDE" is global. This is the first section where we address the problem of studying the basin of attraction of different invariant measures. In Section \ref{sec:longtimebehaviour} we instead study the long time behaviour of \eqref{SDE} in the general  UFG case (in which the change of coordinates is only local).  Section \ref{sec:8} is devoted to the study of the density of the process, via Malliavin calculus. Finally, for ease of reading, we chose to relegate almost all the proofs  to the appendix. In particular, Appendix \ref{AppendixA} contains some needed miscellaneous technical facts, Appendix \ref{sec:mainproofs}  contains the proofs of all the statements contained in Section \ref{sec:preliminaries} to Section \ref{sec:8}.

\section{Notation}\label{sec:notation}
We will be interested in $N$-dimensional SDEs, of the form \eqref{SDE}. The letter $N$ will only be used to refer to the dimension of the state space. While examples of UFG diffusions can be found in any dimension, it is fair to say that the theory we develop in  this paper is mostly interesting in dimension $N \geq 2$, so we will make this a standing assumption which will hold unless otherwise stated in specific examples. 

 If $x$ is a point in $\R^N$, we denote the $j$-th  coordinate of $x$ by $x^j$, i.e.  $x= (x^1 \dd x^N)$ (this is coherent with \eqref{notvectoper}). We will often use a local change of coordinates, presented in Section \ref{sec:Coordinatechange}. The change of coordinates will be given by a local diffeomorphism $\Phi :\R^N \rar \R^N$ and the new coordinates will typically be denoted by $\bfz$, i.e. $\bfz=\Phi(x)$. 
In the new coordinate system it will be of particular importance to distinguish the role of the first $n$ coordinates of $\bfz$ from the others ($n$ being an appropriate integer, $n<N$). In particular, if $N-n>1$, we will use the following notation
\begin{align}
\bfz & =(\underbrace{z^1\dd z^n},\underbrace{z^{n+1}}, \underbrace{z^{n+2} \dd z^N}) = (z, \zeta, a),
\label{coordinateblocks}\\
&\qquad  \quad \,\, z \,\,\, \,\, \, \qquad\zeta \,\,\, \qquad 
\quad \,  a \nonumber 
\end{align}
where $(z^1 \dd z^n)=z \in \R^n$, $z^{n+1}=\zeta \in \R$ and $(z^{n+2}\dd z^N)=a \in \R^{N-(n+1)}$. The last block of coordinates plays a role which is different from the role of the first two blocks, as it will be explained (the coordinates in the last block should more be intended as parameters).
If $N=n+1$ then simply $\bfz =(z, \zeta)$. 

A similar  reasoning holds for the vector fields appearing in \eqref{SDE}: for any $j \in \{0 \dd d\}$,  $V_j= (V_j^1 \dd V_j^N)$ and $\tilde{V}_j$ will denote the vector $V_j$,  expressed in the new coordinate system $\bfz=\Phi(x)$. We will show that  in the new coordinate system, one has
\begin{align}
& \tilde{V}_j(\bfz)= (U_j(\bfz), 0 \dd 0) \qquad j =1 \dd d  \label{newvectors1}\\
& \tilde{V}_0= (U_0(\bfz), W_0(\zeta,a), 0 \dd 0) \,, \label{newvectors0}
\end{align}
where $U_j: \R^N \rar \R^n$ while $W_0$ is a real-valued function which depends only on the last two blocks of coordinates of $z$, i.e. $W_0: \R^{N-n} \rar \R$. 

Accordingly, if $\R^N \ni X_t$ is the solution at time $t$ of the SDE \eqref{SDE}, then $X_t^j$ denotes the $j$-th component of $X_t$. We will sometimes want to stress the dependence of the solution $X_t$ on the initial datum;  when this is the case,  we will write $X_t^{(x)}$ if $X_0=x$.  Finally, given a probability measure $\mu$ and a function $f$ which is integrable with respect to $\mu$, we shall define $\mu(f)$ by
$$
\mu(f):=\int f d\mu.
$$

We shall use the following function spaces throughout the paper. For any $N\geq 1$ and closed set $E\subseteq \R^N$;
\begin{itemize}
\item We denote by $C_b(E)$ the space of all functions $f:E\to\R$ which are continuous and bounded;  this space will be endowed with the supremum norm. 
\item We say that a real-valued function $f$ is $C^\infty$ if it has continuous derivatives of all orders. 
\item We denote by $C_c^\infty(\R^N)$ the set of all  functions $f:\R^N\to\R$ which are $C^{\infty}$ and with compact support. 
\end{itemize}

Given a differentiable function $f:\R^N\to\R^N$ we denote by $\jacobian{f}{x}$ the Jacobian matrix of $f$, that is $(\jacobian{f}{x})^{ij}(x) = \partial_{x^j}f^i(x)$.

\section{Preliminaries and Assumptions}\label{sec:preliminaries}

\subsection{The UFG condition}\label{subsec:UFG}
Fix $d \in \N$ and let $\A$ be the set of all $k$-tuples, of any size $k \geq 1$,  of integers of the following form 
$$
\A:= \{\alpha=(\alpha^1 \dd \alpha^{k}), k \in \N : \alpha^j\in \{0, 1 \dd d\} {\mbox{ for all } j \geq  1} \}\setminus \{(0)\}\,.
$$
We emphasise that all $k$-tuples of any length $k\geq 1$ are allowed in $\A$,  except the trivial one, $\alpha=(0)$ (however singletons $\alpha=(j)$ belongs to $\A$ if $j\in\{1\dd d\}$). 
We endow $\A$ with the product operation
$$
\alpha \ast \beta:= (\alpha^1 \dd \alpha^h, \beta^1 \dd \beta^{\ell}), 
$$
for any $\alpha=(\alpha^1 \dd \alpha^h)$ and 
$\beta=(\beta^1 \dd \beta^{\ell})$ in $\A$.  If  $\alpha\in\A$, we define the {\em length} of $\alpha$, denoted by $\|\alpha\|$, to be the integer
$$
\|\alpha\|:= h+\mbox{card}\{i: \alpha_i=0\} , \qquad \mbox{if } \alpha=(\alpha^1 \dd \alpha^h) \,.
$$
For any $m \in \N, m\geq 1$, we then  introduce the sets
\begin{align*}
&\A_m=\{\alpha \in \A: \| \alpha\|\leq m   \} \,.
\end{align*}

Given a vector field (or, equivalently, a first order differential operator) $V=(V^1(x), V^2(x),$ $..., V^N(x))$  on $\R^N$, we refer to the functions $\{V^j(x)\}_{1 \leq j \leq N}$ as to  the {\em components} or {\em coefficients} of the vector field.   We  say that a vector field is smooth or that it is $C^{\infty}$ if all the components $V^j(x)$, $j=1 \dd N$, are $C^{\infty}$ functions. 
Given two differential operators $V$ and $W$, the commutator between $V$ and $W$ is defined as
$$
[V,W]:= VW -WV\,.
$$
Let now $\{V_i: i=0 \dd d \}$ be a collection of vector fields on $\R^N$ and let us define the following ``hierarchy" of operators:
\begin{align*}
V_{[i]} &:= V_i \qquad i=0, 1 \dd d\\
V_{[\alpha \ast i]} & := [V_{[\alpha]}, V_{[i]}], \qquad \alpha \in \A, i=0,1 \dd d\,.
\end{align*}
 This hierarchy is completely analogous to the one constructed in the Introduction, here we just need a more detailed notation. 
Note that if $\| \alpha\|=h$ then $\| \alpha \ast i \| = h+1$ if $i \in \{1 \dd d\}$ and $\| \alpha \ast i\|= h+2$ if $i=0$. If $\alpha \in \A$ is a multi-index of length $h$, with abuse of nomenclature we will say that $V_{[\alpha]}$ is a  differential operator of length $h$. 
 We can then define the space  $\mathcal{R}_m$ to be  the space containing all the operators of the above  hierarchy, up to and including the operators of length $m$ (but excluding $V_0$):
\be\label{(R)}
\mathcal{R}_m:=\left\{ V_{[\alpha]}, \alpha \in \A_m\right\}. 
\ee
Let $C^{\infty}_V(\R^N)$ denote the set  of  bounded smooth functions,  $\varphi= 
\varphi(x): \R^N \rightarrow \R$,  which have bounded derivatives of all orders and such that
\be\label{supphi}
\sup_{x \in \R^N}\lv V_{[\gamma_{1}]} \dots V_{[\gamma_{k}]} \varphi(x) \rv < \infty
\ee
for all $k$ and all $\gamma_{1} \dd \gamma_{k} \in \A_m$. \footnote{The definition of the set $C^{\infty}_V(\R^N)$ depends on $m$ as well, but we do not include this dependence in the notation for simplicity.}
With this notation in place we can now introduce the definition that will be central in this paper.

\begin{definition}[UFG Condition]\label{defufg}
Let $\{V_i: i=0 \dd d \}$ be a collection of 
smooth vector fields on $\R^N$ and assume that the coefficients of such vector fields have bounded partial derivatives (of any order). We say that the fields  $\{V_i: i=0 \dd d \}$ satisfy the UFG condition if there exists $m\in\N$ such that for any $\alpha \in \A$ of the form 
$$
\alpha = \alpha'\ast i, \qquad    \alpha' \in \A_m, \, i \in \{0 \dd d\}, $$ 
 one can find  bounded smooth functions $\varphi_{\alpha, \beta}=\varphi_{\alpha, \beta}(x) \in C^{\infty}_V(\R^N)$ such that
\begin{equation}\label{eq:UFG}
V_{[\alpha]}(x) = \sum_{\beta \in \A_m} \varphi_{\alpha,\beta}(x) V_{[\beta]} (x) \,.
\end{equation}

\end{definition}
Again we emphasize that the set of vector fields appearing in the linear combination on the right hand side of the above identity does not include $V_{0}$. 
It may be useful to compare the UFG condition with the H\"ormander condition (HC), the uniform parabolic H\"ormander condition (UPHC) and the Parabolic H\"ormander condition (PHC),  which we recall. The HC is satisfied if 
\begin{equation}
\mathrm{span}\left(\mathrm{Lie}\{V_0(x) \dd V_d(x)\}\right)=\R^N \quad \mbox{for every } x \in \R^N\,.  \tag{\bf HC}
\end{equation}
In other words, the HC is satisfied if $\delnn(x)=\R^N$ for every $x \in \R^N$. 
The PHC has been recalled in the introduction, see ({\bf PHC}). We notice in passing that while the space $\mathcal{R}_m$ is in general different from the space $\mathfrak{L}_m$, \footnote{  $V_{[\alpha]} \in \mathfrak{L}_j$ if and only if $\| \alpha\| \leq 1+2 (j-1) =2j-1$, hence $\mathcal{R}_{2j-1}= \cup_{k=1}^j \mathfrak{L}_j$. }it is the case that $\cup_{j\geq 1}\mathcal{R}_j= \cup_{j\geq 1} \mathfrak{L}_j$.  The UPHC  (see \cite{{CrisanGhazali}}) is instead satisfied if 
\be
\exists \,\ell \geq 1 \mbox{ and } \kappa>0 : \, \sum_{\alpha \in \mathcal{A}_{\ell}}\lv V_{[\alpha]}(x) \cdot y \rv^2 \geq \kappa \lv y \rv^2 \quad  \mbox{for every } x,y \in \R^N\,.  \tag{\bf UPHC}
\ee
In the above each term of the sum is the scalar product between the vector $V_{[\alpha]}(x)$ and the vector $y\in \R^N$.  
Notice that the UPHC is the strongest of all these conditions, in the sense that
\begin{align*}
(\bf{UPHC}) & \Rightarrow (\bf{PHC}) \Rightarrow (\bf{HC})\\
(\bf{UPHC}) & \Rightarrow (\bf{UFG}) \,.
\end{align*}
However neither the HC nor the PHC imply the UFG condition (as one may, in general, need infinitely many fields to satisfy the PHC or the HC). We also note that while the various  H\"ormander conditions are imposed on  an appropriate Lie Algebra,  the UFG condition is rather a condition on the set of vectors $\{V_{[\alpha]}, \alpha \in \A_m\}$, seen as a  module over the ring $C^{\infty}_V$.  

\begin{example}\label{ex:GBM}
Consider one-dimensional geometric Brownian motion, that is, the solution of the following SDE: 
\begin{align*}
dX_t&=-X_tdt+\sqrt{2}X_t dB_t\\
&=-2X_tdt+\sqrt{2}X_t\circ dB_t.
\end{align*}
Here $V_1=x\partial_x, V_0=-2V_1$ and $[V_1,V_0]=0$. These vector fields satisfy the UFG condition with $m=1$ however $V_0$ and $V_1$ vanish when $x=0$ so the HC is not satisfied.
\end{example}

\begin{example}\textup{
Consider the following first order differential operators on $\R^2$
$$
V_0=\sin x \,\partial_y \qquad V_1=\sin x\,\partial_x \,.
$$
Then $\{V_0,V_1\}$ do not satisfy the H\"ormander condition (e.g. there is always a degeneracy at $x=0$) but they do satisfy the UFG condition with $m=4$. If the role of the fields is exchanged, i.e. if we set 
$$
V_0=\sin x\,\partial_x ,  \qquad V_1=  \sin x \,\partial_y\,
$$
then $\{V_0, V_1\}$ still satisfy the UFG condition, this time with $m=1$ (indeed,  $[V_0,V_1]=\cos x V_1$).
{\hfill $\Box$}}
\end{example}

\begin{note}\label{note:higherorderUFG}\textup{ Because the functions $\varphi_{\alpha,\beta}$ appearing in \eqref{eq:UFG} belong to $C_V^\infty(\R^N)$, if the UFG condition holds for some $m \in \N$ then it also holds for any $\ell \geq m, \ell \in \N$.  In other words, if the UFG condition holds for some $m$ in $\N$ then for any $V_{[\gamma]}$ with $\|\gamma\|> m$ one has 
$$
V_{[\gamma]}(x)= \sum_{\beta \in \A_m} \varphi_{\gamma,\beta}(x) V_{[\beta]} (x) \, ,
$$
for some    functions $\varphi_{\gamma,\beta}\in C_{V}^{\infty}(\R^N)$. 
For this reason it is appropriate to remark that in the remainder of the paper, when we assume that ``the UFG condition is satisfied for some $m$", we mean the smallest such $m$.  
}{\hfill$\Box$} 
\end{note}

We will  consider diffusion semigroups $\{\cP_t\}_{t\geq 0}$ of the form \eqref{semigroup}; that is, we consider Markov semigroups associated with the stochastic dynamics \eqref{SDE}. In particular, we will be interested in studying the semigroup $\cP_t$ when the vector fields $\{V_0, V_1 \dd V_d\}$ satisfy the UFG condition. 

As we have already mentioned, the UFG condition is strictly weaker than the uniform Parabolic H\"ormander condition. However one can still prove that, when the UFG condition is satisfied by the vector fields $\{V_0, V_1\dd  V_d\}$ appearing in the generator \eqref{gen1}, the semigroup $\cP_t$ still enjoys good smoothing properties: if $f(x)$ is continuous then $(\cP_t f)(x)$ is differentiable (infinitely many times) in all the directions spanned by the vector fields contained in $\mathcal{R}_m$ (we recall that the set $\mathcal{R}_m$ is defined in \eqref{(R)}). See Appendix \ref{app:UFG} for more details. 
 
 \subsection{Obtuse Angle Condition}
When the semigroup $\cP_t$ is elliptic or  hypoelliptic, several works have dealt with the study of the long and short time behaviour of the derivatives of the semigroup, for a review see \cite{Bakry, Lorenzi}. To the best of our knowledge, the only work addressing the study of  the long-time behaviour of the derivatives of UFG semigroups is \cite{CrisanOttobre}.  In \cite{CrisanOttobre} the authors identify a sufficient condition for exponential decay of the derivatives of the solution of \eqref{linearPDE}. To be more precise, they proved the following: suppose the vector fields $\{V_0, V_1 \dd V_d\}$ satisfy the UFG condition and assume   there exists $\lambda_0>0$ such that for all $f$ sufficiently smooth and  for every $\alpha\in \mathcal{A}_m$ we have 
\be\label{eq:OAC}
([V_{[\alpha\ast 0]}]f(x))(V_{[\alpha]}f(x)) \leq -\lambda_0 \lv V_{[\alpha]} f(x)\rv^2, \quad \mbox{for every } x \in \R^N\,.
\ee
If $\lambda_0$ is sufficiently large then, for every $r>0$  and $t_0>0$, we may find a constant $c_{t_0,r}>0$ such that for any $f\in C_b(\mathbb{R})$, $t\geq t_0$ and $\alpha\in\mathcal{A}_m$ we have
\be\label{eq:graddecayest}
\sup_{\mathbb{B}(0,r)}\lv V_{[\alpha]}(\cP_tf)(x) \rv \leq c_{r,t_0} \lVert f \rVert_\infty e^{-\lambda (t-t_0)},
\ee
for some $\lambda>0$ (which depends on $\lambda_0$). In the above $\mathbb{B}(0,r)$ is the centered ball (of $\R^N$) of radius $r$. Condition \eqref{eq:OAC} was named the {\em Obtuse Angle Condition} (OAC) in \cite{CrisanOttobre}. Here we will need a second order version of such a result, as well.

\begin{lemma}\label{seconorderdecay}
Let $\cP_t$ be the semigroup associated with the SDE \eqref{SDE} and assume that the vector fields $V_0 \dd V_d$ satisfy the UFG condition. Suppose moreover that the following holds: there exists $\lambda_0>0$ such that
\be \label{eq:OAC2}
(V_{[\alpha]}V_{[\beta]}f)(x) \, ([V_{[\alpha]}V_{[\beta]}, V_0] f)(x) \leq  - \lambda_0 \lv (V_{[\alpha]}V_{[\beta]}f)(x) \rv^2, 
\ee
for every $x \in\R^N$ and for all $\alpha, \beta \in \A_m$ such that $\alpha \neq \beta$ and $\alpha, \beta \notin \{1 \dd d\}$. 
If $\lambda_0>0$ is large enough then,  for any $t_0\in (0,1)$ and any $r>0$
 there exists a constant $c_{t_0,r}>0$ such that, for some $\lambda>0$ (which depends, among other things, on $\lambda_0$),  one has  
\be\label{seconorderdecayeq}
\sup_{x \in \mathbb{B}(0,r)}\lv V_{[\beta]}V_{[\alpha]} (\cP_tf) (x)\rv ^2 \leq c_{t_{0},r} \,  e^{-\lambda (t-t_0)}
\|f\|_{\infty},  
\ee
for all $ \alpha, \beta \in \A_m $, all $t>t_0$ and for every $f$ continuous and bounded. 
\end{lemma}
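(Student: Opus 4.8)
The plan is to reduce the second-order decay estimate \eqref{seconorderdecayeq} to the first-order estimate \eqref{eq:graddecayest} by treating $V_{[\alpha]}V_{[\beta]}$ as a single ``generalised direction'' and mimicking the argument of \cite{CrisanOttobre} at this higher level. First I would introduce, for fixed $\alpha,\beta\in\A_m$ with $\alpha\neq\beta$ and $\alpha,\beta\notin\{1\dd d\}$, the function $g_t(x):=V_{[\alpha]}V_{[\beta]}(\cP_tf)(x)$ and compute $\mathcal{V}g_t = (\pa_t - V_0)g_t$. Using that $u(t,x)=(\cP_tf)(x)$ solves \eqref{linearPDE}, i.e. $\mathcal{V}u=\sum_i V_i^2 u$, and commuting the operator $V_{[\alpha]}V_{[\beta]}$ through $\mathcal{V}$, one gets an evolution equation of the form
\begin{equation*}
\mathcal{V}g_t = \sum_{i=1}^d V_i^2 g_t + [V_{[\alpha]}V_{[\beta]}, V_0]u(t,\cdot) + (\text{lower-order commutator terms involving } V_i).
\end{equation*}
The crucial input is that, under the UFG condition, every commutator $[V_{[\alpha]}V_{[\beta]},V_i]$ and $[V_{[\alpha]}V_{[\beta]},V_0]$ can be re-expressed, via Definition \ref{defufg} and Note \ref{note:higherorderUFG}, as a $C_V^\infty$-combination of terms $V_{[\gamma]}V_{[\delta]}u$ and $V_{[\gamma]}u$ with $\gamma,\delta\in\A_m$; so the family $\{V_{[\alpha]}V_{[\beta]}u\}$ (together with the already-controlled first-order family $\{V_{[\gamma]}u\}$) is closed under the dynamics.

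Next I would run a Bismut/Lyapunov-type argument on the squared quantity. Setting $\Psi_t := \sum_{\alpha\neq\beta} |g_t^{(\alpha,\beta)}(x)|^2$ (sum over the admissible index pairs), and differentiating along $\mathcal{V}$, the diffusion part $\sum_i V_i^2$ contributes a nonpositive carré-du-champ term, the dangerous term is $2\sum g_t^{(\alpha,\beta)}[V_{[\alpha]}V_{[\beta]},V_0]u$, which by hypothesis \eqref{eq:OAC2} is bounded above by $-2\lambda_0\Psi_t$, and the remaining cross terms with $[V_{[\alpha]}V_{[\beta]},V_i]$ are handled by Young's inequality, absorbing part into the carré-du-champ term and leaving a contribution controlled by $\Psi_t$ plus the first-order quantities. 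This yields a differential inequality $\mathcal{V}\Psi_t \le \sum_i V_i^2\Psi_t - c\lambda_0\Psi_t + C\Phi_t$, where $\Phi_t := \sum_\gamma |V_{[\gamma]}u(t,\cdot)|^2$ is the first-order quantity already known to decay exponentially by \eqref{eq:graddecayest} (applied on a slightly larger ball, using the interior regularity / semigroup property to start from $t_0/2$). Choosing $\lambda_0$ large enough that $c\lambda_0$ exceeds the first-order decay rate, one gets, after representing the solution of this inequality probabilistically (Feynman–Kac / Gr\"onwall along the stochastic characteristics, exactly as in \cite{CrisanOttobre}), that $\Psi_t(x) \le e^{-\lambda(t-t_0)}(\sup_{\mathbb{B}(0,r')}\Psi_{t_0} + \text{const}\cdot\|f\|_\infty)$ on $\mathbb{B}(0,r)$.

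The final step is to bound the initial term $\sup_{\mathbb{B}(0,r')}\Psi_{t_0}$ by $c_{t_0,r}\|f\|_\infty$. This follows from the UFG smoothing property recalled after Note \ref{note:higherorderUFG} (and detailed in Appendix \ref{app:UFG}): for $t_0\in(0,1)$ the semigroup maps $C_b$ into functions that are infinitely differentiable in all directions spanned by $\mathcal{R}_m$, with the relevant seminorms bounded by $c_{t_0,r}\|f\|_\infty$ on compacts, with a blow-up in $t_0$ as $t_0\to0$ that is irrelevant since $t_0$ is fixed. Combining the two bounds gives \eqref{seconorderdecayeq}.

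I expect the main obstacle to be the bookkeeping in the evolution equation for $g_t$: commuting $V_{[\alpha]}V_{[\beta]}$ past $V_0$ and the $V_i^2$ produces a proliferation of terms, and one must check carefully that, after applying the UFG relations, all of them land either in the span of $\{V_{[\gamma]}V_{[\delta]}u : \gamma\neq\delta,\ \gamma,\delta\notin\{1\dd d\}\}$ (so that \eqref{eq:OAC2} applies to the sign-definite part) or in the first-order span $\{V_{[\gamma]}u\}$ (already controlled). In particular one must verify that the index restrictions in the hypothesis ($\alpha\neq\beta$, $\alpha,\beta\notin\{1\dd d\}$) are exactly what is needed to close the system, and that the terms excluded by these restrictions — those where a $V_i$, $i\in\{1\dd d\}$, appears as an outer operator — are either absent after commutation or reducible to lower-order quantities via the UFG identities. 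Establishing this closure precisely, together with the right sign in the carré-du-champ computation, is the technical heart of the proof; the rest is a routine adaptation of the first-order argument of \cite{CrisanOttobre}.
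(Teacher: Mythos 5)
Your proposal is correct in spirit and follows essentially the same route as the paper: the paper's own ``proof'' of Lemma \ref{seconorderdecay} is simply a two-line referral to \cite{CrisanOttobre}, noting that the first-order argument under the obtuse angle condition extends verbatim to derivatives of any order, which is precisely the adaptation you outline (evolution equation for $V_{[\alpha]}V_{[\beta]}\cP_tf$, closure of the system via the UFG relations, the second-order condition \eqref{eq:OAC2} supplying the dissipative term, first-order quantities as an exponentially decaying forcing, and the short-time smoothing estimates of Appendix \ref{app:UFG} to control the data at time $t_0$). The bookkeeping issues you flag (index restrictions, closure under commutation) are exactly the details the paper leaves to the cited reference, so your sketch is, if anything, more explicit than the paper's own treatment.
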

\begin{proof}[Proof of Lemma \ref{seconorderdecay}]
The proof is simple and can be done by following the same procedures presented in \cite{CrisanOttobre}, using the modifications outlined  in \cite[comments after Corollary 4.9]{CrisanOttobre}. We do not repeat all the details here
\end{proof} 
\begin{example}[UFG condition and Obtuse Angle Condition for linear SDEs]\textup{
	Consider  SDEs in $\R^N$ of the form
	\begin{equation} \label{linearUFG}
	dX_t=(AX_t+D)dt+\sqrt{2}\sum_{i=1}^d C_idB_t^i \, ,
	\end{equation}
	where  $A$ is a constant $N\times N$ matrix,  $B_t^1,\ldots,B_t^d$ are one-dimensional standard Brownian motions and  $D,C_1,\ldots,C_d\in \R^N$ are constant vectors. In this case $V_0(x)=Ax+D, V_i(x)=C_i$, and 
	\begin{equation*}
	V_{[i\ast 0]}=[V_i,V_0]=AC_i, \quad i\in\{1,\ldots,d\}.
	\end{equation*}
	Because $[V_i, V_j]=0$ for every $i,j \in \{1 \dd d\}$, the only relevant commutators are those of the form $V_{[(i,0,\ldots,0)]}$, i.e. repeated commutators with $V_0$. For simplicity, let $\alpha_{i,k}$ be the $(k+1)$-tuple such that $\alpha_{i,k}^1=i$ and $\alpha_{i,k}^j=0$ for $j>1$;  then 
	\begin{equation*}
	V_{[(i,\underbrace{0,\ldots,0}_{k \text{ times}})]} = A^kC_i.
	\end{equation*}
It is now easy to show that, {\em irrespective of the choice of  $A, D, C_1 \dd C_d$ as above,  the UFG condition is always satisfied by SDEs of the form \eqref{linearUFG}}. Indeed, by the Cayley Hamilton Theorem there is a polynomial $p$ of degree at most $N-1$ such that
	$
	A^{N}=p(A)$;  
	so we can write any $V_{[\alpha_{i,k}]}$ as a linear combination of the vectors $V_{[\alpha_{i,\ell}]}$ with $\ell\leq N$.}
	\textup{For comparison we recall that \eqref{linearUFG} is hypoelliptic if and only if the Kalman rank condition is satisfied, namely if
$$
\mathrm{rank}[Q, AQ, A^2Q \dd A^{N-1}Q] = N,
$$
where $Q$ is the overall diffusion matrix of \eqref{linearUFG}, see e.g.  \cite{Lorenzi}. 
As for  the OAC \eqref{eq:OAC}, this is  is satisfied if and only if there exists some  $\lambda>0$ such that for all $f\in C^1(\R^N)$ we have 
	\begin{equation*}\label{eq:linearOAC}
	(\nabla f)^T A^{k+1}C_i C_i^T (A^k)^T \nabla f \leq -\lambda_0 (\nabla f)^T A^{k}C_i C_i^T (A^k)^T (\nabla f) \, ,
	\end{equation*}
for all $i\in\{1,\ldots,d\}$ and $k\in\{0,1,\ldots, N-1\}$. This holds if and only if
\begin{equation*}
(A+\lambda_0 I) G \leq 0
\end{equation*}
for some $\lambda_0>0$, 
where $G=A^kC_iC_i^T(A^k)^T$ for all $i\in\{1,\ldots,d\}$ and $k\in\{0,1,\ldots, N-1\}$.
	\hf
}
\end{example}

\subsection{Geometry}\label{subsec:geometry}

In this section we cover some  basic notions from differential geometry and geometric control theory  on which the rest  of the paper relies. Further details can be found in the excellent references \cite{Sontag, Isidori, Sussman}. For the reader who is already familiar with this material, we point out that, among the results recalled  in  this section, Theorem \ref{thm:Hermann} is possibly  the one which will play the most important role in the remainder of the paper.

Given a vector field $V(x)$ on $\R^N$, we denote by $e^{tV}(x)$ the integral curve of $V$ starting at $t=0$ from $x$, i.e. the curve $\gamma(t): \R \rightarrow \R^N$ such that $\gamma(0)=x$ and $\dot{\gamma}(t)= V(\gamma(t))$ for all $t \in \R$  such that the curve is defined. In general,  integral curves exist only locally. In this paper we consider only smooth, globally defined and globally Lipschitz vector fields (see Hypothesis \ref{SA}), so  integral curves actually exist for every $t\in \R$. 
As already mentioned, a {\em distribution} $\Delta$ on $\R^N$ is a map that, to each point $x \in \R^N$, associates a linear subspace of the tangent space $T_x \R^N$.   Given a set $ \set $ of smooth vector fields on $\R^N$, the distribution generated by $\set$, denoted by $\Delta_{\set}$,  is the map $x \rightarrow \mathrm{span}\{X(x): X \in \set\}$.  Distributions generated by a set of smooth vector fields  are usually referred to as  {\em smooth distributions}. When we write $\Delta_{\set}$ instead of just $\Delta$ it is understood that we are considering smooth distributions rather than general distributions. As customary, we say that the vector field $X$ on $\R^N$ belongs to the distribution $\Delta$ if $X(x) \in \Delta(x)$ for all $x \in \R^N$.  The {\em rank} of $\Delta$ at $x$ is the dimension of the vector space $\Delta(x)$. A {\em piecewise integral curve}, $\gamma$,  of vector fields in the set $\set$ is a curve of the form
$$
\gamma(t_1 \dd t_h)= e^{t_1X_1 }e^{t_2 X_2}\, {\cdots} \, e^{t_h X_h} x          \qquad  h \in \N,  t_j \in \R, x \in \R^N, 
$$
where $X_1\dd X_h \in \set$ (and they are not necessarily all distinct). A submanifold $M \subseteq \R^N$ is an {\em integral manifold} of $\Delta$ if 
$T_xM= \Delta(x)$ for every  $x \in M$. A {\em maximal integral manifold} (MIM) of  $\Delta$, $\mathcal{M}$,  is a connected integral manifold of $\Delta$ which is maximal in the sense  that every other connected integral manifold of $\Delta$ that contains $\mathcal{M}$ coincides with $\mathcal{M}$.  Therefore, two MIMs either coincide or they are disjoint.
\begin{definition}\label{basicdefditstr}
Let $\Delta$ be a distribution on $\R^N$. 
\begin{itemize}
\item $\Delta$ is {\em involutive} if 
$$
X, Y \in \Delta \quad \Longrightarrow \quad [X,Y] \in \Delta \,.
$$
\item $\Delta$ is invariant under the vector field $V$ if the Jacobian matrix $\jacobian{(e^{tV}x)}{x}$ maps $\Delta (x)$ into $\Delta(e^{tV}x)$ for all $x$  and for all $t$.\footnote{A useful criterion to check whether a distribution is invariant under a vector field will be given in Note \ref{Tom}. } 
\item Suppose $\Delta$ is generated by the collection of vector fields $\set= \{X_1 \dd X_k\}$, i.e. $\Delta=\Delta_{\set}$. Then two points   $x,y \in \R^N$  belong to  the same {\em orbit} of $\Delta_{\set}$  if there exists a curve $\gamma: [a,b] \rightarrow \R^N$ such that $\gamma(a)=x$,  $\gamma(b)=y$ and $\gamma$ is a piecewise integral curve of vectors in  $\set$. 
\end{itemize}
\end{definition}

\noindent
In general, the integral manifolds of a given distribution are ``smaller" than the orbits; we refer the reader to   \cite{Sussman} for a detailed explanation of this matter, see in particular \cite[Eqn. (3.1)]{Sussman}. Here  we just illustrate this fact with  a simple but  important classical example. 
\begin{example}\label{ex:nonufgexample}\textup{In $\R^2$, consider the vector fields $X=\pa_x$ and $Y = \psi(x) \pa_y$ where $\psi(x)$ is a smooth function vanishing on the half-plane $x\leq 0$. The orbit of the distribution generated by $X$ and $Y$,  $\Delta_{X,Y}$, is the whole $\R^2$. That is, given any two points in $\R^2$ there is a piecewise integral curve of $\{X,Y\}$ which joins the two points. However the integral manifolds through points $(x,y)$ with $x\leq 0$ are one dimensional. Notice that the distribution in this example is involutive but it  satisfies neither the H\"ormander condition nor the UFG condition. More precisely, in the sense that whether we take $X=V_0$ and $Y=V_1$ or vicecersa, either ways the UFG condition is not satisfied (more precisely,  in the language of Definition \ref{def:LFT} below, the set $\{X,Y\}$ is neither locally nor globally of finite type). The fact that $\{X,Y\}$ don't satisfy the UFG condition can be either seen as a consequence of  Theorem \ref{thm:Hermann} below (if it did, the orbits would have to coincide with the integral manifolds) or it can be shown with direct calculations (the problem arising on the line $x=0$). For the reader's convenience this calculation is contained in the Appendix, see Lemma \ref{lem:nonufgexamplecont}. \hf
}
\end{example} 
We say that a distribution $\Delta$ on $\R^N$ satisfies the {\em (maximal) integral manifolds property}  if through every point of $\R^N$ there passes a (maximal) integral manifold of $\Delta$. 
The following fundamental result, due to Sussman (see \cite[Theorem 4.2]{Sussman}), completely characterizes the distributions enjoying the maximal integral manifolds property. 
\begin{theorem}[Sussman's Orbit Theorem]\label{thm:sussorbit}
If $\Delta =\Delta_{\set}$ is a smooth distribution on $\R^N$, then the following statements are equivalent  
\begin{description}
\item[(a)] $\Delta_{\set}$ satisfies the maximal integral manifolds property;
\item[(b)] $\Delta_{\set}$ satisfies the integral manifolds property;
\item[(c)] the orbits of $\Delta_{\set}$ coincide with the integral manifolds of the distribution and the rank of  $\Delta_{\set}$ at each point $x \in \R^N$ is equal to the dimension of the integral manifold of  $\Delta_{\set}$ through $x$; 
\item[(d)] $\Delta_{\set}$  coincides with the smallest distribution which contains the Lie algebra generated by $\set$, $Lie\{\set\}$,  and is invariant under the vectors in $\set$. 
\end{description}
\end{theorem}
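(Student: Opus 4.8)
The plan is to organise the argument around an auxiliary distribution $P_{\set}$, the \emph{orbit distribution} of $\set$, defined at $x\in\R^N$ by
\[
P_{\set}(x):=\mathrm{span}\Big\{\big(\jacobian{\Phi}{q}\big)\,X(q)\ :\ X\in\set,\ \Phi=e^{t_1X_1}\cdots e^{t_hX_h},\ X_1\dd X_h\in\set,\ t_j\in\R,\ q=\Phi^{-1}(x)\Big\},
\]
i.e.\ the span at $x$ of all fields of $\set$ transported along finite compositions of flows of fields of $\set$. One always has $\Delta_{\set}\subseteq P_{\set}$ (take $\Phi=\mathrm{id}$). I would then isolate, as the one genuinely substantial ingredient, the following \emph{core fact}, which is the heart of Sussmann's theorem: each orbit $\mathcal{O}$ of $\set$ admits a unique structure of connected immersed submanifold of $\R^N$ with $T_x\mathcal{O}=P_{\set}(x)$ for every $x\in\mathcal{O}$, and $P_{\set}$ is the smallest distribution on $\R^N$ that contains $\mathrm{Lie}\{\set\}$ and is invariant under every vector field in $\set$. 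Granting this, all of (a)--(d) turn out to be equivalent to the single identity
\[
(\star)\qquad\qquad \Delta_{\set}=P_{\set},
\]
and the whole proof reduces to the cycle $(a)\Rightarrow(b)\Rightarrow(\star)\Rightarrow(c)\Rightarrow(d)\Rightarrow(\star)\Rightarrow(a)$, carried out by elementary manipulations with flows.

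The step $(a)\Rightarrow(b)$ is trivial. For $(b)\Rightarrow(\star)$: fix $x$ and let $M$ be an integral manifold of $\Delta_{\set}$ through $x$; every $X\in\set$ is tangent to $M$ because $X(y)\in\Delta_{\set}(y)=T_yM$ for $y\in M$, so by uniqueness of integral curves each flow $e^{tX}$ (and its inverse), hence each composition $\Phi$ as above, maps $M$ into $M$; then $\big(\jacobian{\Phi}{q}\big)$ carries $T_qM$ into $T_{\Phi(q)}M=T_xM=\Delta_{\set}(x)$ and $X(q)\in T_qM$, so every generator of $P_{\set}(x)$ lies in $\Delta_{\set}(x)$, giving $P_{\set}(x)\subseteq\Delta_{\set}(x)$ and with the reverse inclusion $(\star)$. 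For $(\star)\Rightarrow(c)$: by $(\star)$ and the core fact the integral manifolds of $\Delta_{\set}=P_{\set}$ are exactly the orbits and $\mathrm{rank}\,\Delta_{\set}(x)=\mathrm{rank}\,P_{\set}(x)=\dim\mathcal{O}_x$, which is (c). For $(c)\Rightarrow(d)$: fixing an integral manifold $M$ of $\Delta_{\set}$, all fields of $\set$ and hence all their iterated Lie brackets are tangent to $M$, so $\mathrm{Lie}\{\set\}(y)\subseteq T_yM=\Delta_{\set}(y)$; the integral manifolds are invariant under the flows $e^{tX}$, $X\in\set$, so $\Delta_{\set}$ is invariant under $\set$; and $\Delta_{\set}$ is, by definition, the smallest distribution containing $\set$, hence also the smallest one containing $\mathrm{Lie}\{\set\}$ and invariant under $\set$ — i.e.\ (d). For $(d)\Rightarrow(\star)$: by the core fact $P_{\set}$ contains $\mathrm{Lie}\{\set\}$, is invariant under $\set$, and is minimal with these two properties, while (d) asserts the same minimality for $\Delta_{\set}$; minimality being unique, $\Delta_{\set}=P_{\set}$. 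Finally $(\star)\Rightarrow(a)$: by the core fact the orbits are integral manifolds of $\Delta_{\set}=P_{\set}$; if $N$ is any connected integral manifold of $P_{\set}$ containing an orbit, then $N$ is invariant under the $\set$-flows and $\set$ spans $T_yN$ at each $y\in N$ (here $(\star)$ is used again), so every orbit meeting $N$ is relatively open in $N$, and by connectedness $N$ is a single orbit; thus the orbits are maximal integral manifolds and one passes through every point, which is (a).

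The only part of the above that is not bookkeeping is the core fact; everything else uses nothing beyond uniqueness of ODE solutions, the fact that a composition of flows is a diffeomorphism of $\R^N$, and a dimension count on integral manifolds. I would prove the core fact by Sussmann's original construction: equip each orbit $\mathcal{O}$ with the finest topology making every map $(t_1\dd t_h)\mapsto e^{t_1X_1}\cdots e^{t_hX_h}(x)$ continuous, build local charts from finitely many such maps whose differentials at the base point span $P_{\set}(x)$ (using that $\dim P_{\set}$ is locally constant along $\mathcal{O}$, which itself follows from invariance), verify smoothness of the transition maps from smoothness of flow composition, and identify $T_x\mathcal{O}$ with $P_{\set}(x)$. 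The invariance/Lie-algebra content is concentrated in the elementary identity
\[
\frac{d}{dt}\Big|_{t=0}\Big(\big(\jacobian{(e^{-tX})}{e^{tX}(y)}\big)\,Y(e^{tX}(y))\Big)=[X,Y](y),
\]
which shows at once that any $\set$-invariant distribution containing $\set$ is closed under brackets with fields of $\set$ (hence contains $\mathrm{Lie}\{\set\}$), and that $P_{\set}$, being $\set$-invariant and containing $\set$, is contained in every such distribution and so is the minimal one. I expect this core fact to be the main obstacle; once it is in hand, the equivalence of (a)--(d) is the short formal argument above.
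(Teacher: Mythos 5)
The paper offers no proof of this statement at all: it is quoted directly from Sussmann's paper \cite{Sussman} (his Theorem 4.2), so there is no internal argument to compare against. Your proposal is, in substance, a reconstruction of Sussmann's own proof: the orbit distribution $P_{\set}$, the ``core fact'' that each orbit is an immersed integral manifold with tangent space $P_{\set}$ and that $P_{\set}$ is the smallest distribution containing $\mathrm{Lie}\{\set\}$ and invariant under $\set$, and the reduction of (a)--(d) to the single identity $\Delta_{\set}=P_{\set}$ are exactly the skeleton of that argument, and your chain of implications does close logically. Be aware, though, that the core fact is not an auxiliary lemma: it carries the entire analytic content of the theorem (orbit topology, charts built from compositions of flows, local constancy of $\mathrm{rank}\,P_{\set}$ along orbits), and you only outline it, so what you have is a proof architecture plus a pointer to Sussmann's construction --- which is acceptable here only in the same sense in which the paper's bare citation is.

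One step as written would fail and needs repair. In (b)$\Rightarrow(\star)$ you assert that ``by uniqueness of integral curves each flow $e^{tX}$ (and its inverse) maps $M$ into $M$'' for an integral manifold $M$ through $x$. For a general, non-maximal integral manifold this is false for large $t$: a small integral disc is preserved by the flow only for short times, and you cannot invoke maximal integral manifolds at this stage since (a)$\Leftrightarrow$(b) is part of what is being proved. The standard fix uses (b) in full strength: along the flow line $s\mapsto e^{sX}(q)$, $s\in[0,t]$, take an integral manifold through each point of the curve, observe that on a short time interval the curve remains in the corresponding integral manifold (a solution of the ODE inside the immersed submanifold is also a solution in $\R^N$, so uniqueness identifies the two), deduce that over each short interval the Jacobian of the flow maps $\Delta_{\set}$ into $\Delta_{\set}$, and compose finitely many such steps to get that $\jacobian{e^{tX}}{q}$ carries $\Delta_{\set}(q)$ into $\Delta_{\set}(e^{tX}(q))$, and likewise for compositions of flows; this yields $P_{\set}\subseteq\Delta_{\set}$ and hence $(\star)$. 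With that correction, and granting the core fact, your bookkeeping for $(\star)\Rightarrow$(c)$\Rightarrow$(d)$\Rightarrow(\star)\Rightarrow$(a) is sound.
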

In view of the equivalence of $(a)$ and $(b)$ above, when either property hold we just say that the smooth distribution is {\em integrable}. It is clear that in this case every integral manifold is a maximal integral manifold. 
 Some standard facts about integrable distributions which are useful to bear in mind and that follow (easily) from what we have said so far:  if $\Delta_{\set}$ is integrable,  then 
\begin{description}
\item[i)] $\Delta_{\set}$ is involutive; 
\item[ii)] the  state space $\R^N$ is partitioned into orbits of $\Delta_{\set}$; 
\item[iii)] the rank of the distribution  is constant along the orbits (of $\Delta_{\set}$, which coincide with the integral manifolds of such a distribution). 
\end{description}
The latter fact is a consequence of the fact that $\Delta_{\set}$ is invariant under the vectors in $\set$ together with the observation that the maps $e^{tV}$ are diffeomorphisms for every fixed $t \in \R$ (hence the Jacobian matrix $\jacobian{(e^{tV}x)}{x}$, which maps the tangent space at $x$ into the tangent space at $e^{tV}x$, is always invertible).

\begin{definition} [{\cite[page 185]{Sussman}}] \label{def:LFT}Let $\set$ be a set of everywhere defined, smooth vector fields on $\R^N$ and 
$\Delta_{\set}$ be the associated distribution. The set $\set$ (as well as the distribution $\Delta_{\set}$) is {\em locally of finite type} or {\em locally finitely generated} (LFG) if for every $x \in \R^N$ there exist vector fields $X_1 \dd X_k$ such that
\begin{description}
\item[i)]  $\mathrm{span}\{X_1(x) \dd X_k(x) \} = \Delta_{\set}(x)$ 
\item[ii)] for every $X \in \set$ there exists a neighbourhood $U$ of $x$ and $C^{\infty}$ functions $\varphi_{i,j}$ defined on $U$ such that
$$
[X,X_i]=\sum_{j=1}^k \varphi_{i,j}(x) X_j(x) \quad {\mbox{for all }} x \in U \mbox{and every } i \in \{1 \dd k\}.
$$
\end{description} 
\end{definition}
We emphasize that if $\Delta_{\set}$ is LFG then the rank of $\Delta_{\set}$ need not be constant. 
\begin{note}\label{Tom}
\textup{We recall  the following useful criterion (see \cite [Lemma 2.1.4]{Isidori}):  if a distribution $\Delta$ is either of constant rank or locally of finite type, then it is invariant under a vector field $V$ if and only if $[V, \tau] \in \Delta$ whenever $\tau \in \Delta$.\hf}
\end{note}
\begin{definition} \label{def:GFT} With the same notation and setting as in Definition \ref{def:LFT}, $\set$ is {\em globally of finite type} if point i) of Definition \ref{def:LFT} holds with $k$ independent of $x$ and if for every $X\in \set$ there exist $C^{\infty}$ functions $\varphi_{i,j}$ defined on $\R^N$ such that
$$
[X,X_i]=\sum_{j=1}^k \varphi_{i,j}(x) X_j(x) \quad {\mbox{for all }} x \in \R^N \mbox{and every } i \in \{1 \dd k\}.
$$
\end{definition}
The next theorem gives a sufficient condition for integrability, which is easy to check in practice.
\begin{theorem}[Hermann, Lobry, Stephan and Sussman]\label{thm:Hermann}
If $\set$ is locally of finite type then $\Delta_{\set}$ is integrable; in particular,  the integral manifolds of $\Delta_{\set}$ coincide with the orbits of the vector fields of the set  $\set$. 
\end{theorem}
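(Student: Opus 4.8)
The plan is to derive the statement from Sussman's Orbit Theorem (Theorem~\ref{thm:sussorbit}). By the equivalence of (a)--(d) there, it is enough to verify condition~(d): that $\Delta_{\set}$ equals the smallest distribution $\bar\Delta$ containing $\mathrm{Lie}\{\set\}$ and invariant under every vector field of $\set$; the concluding assertion, that the integral manifolds of $\Delta_{\set}$ are exactly the orbits of $\set$, is then condition~(c). One inclusion is automatic: any distribution containing $\mathrm{Lie}\{\set\}$ contains $\set$ pointwise, hence contains $\Delta_{\set}(x)=\mathrm{span}\{X(x):X\in\set\}$, so $\Delta_{\set}\subseteq\bar\Delta$. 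For the reverse inclusion, minimality of $\bar\Delta$ reduces matters to showing that $\Delta_{\set}$ itself contains $\mathrm{Lie}\{\set\}$ and is invariant under each $X\in\set$, and both follow from the single assertion
\begin{equation}\label{eq:Herm-key}
[X,\tau]\in\Delta_{\set}\qquad\textrm{for every }X\in\set\textrm{ and every vector field }\tau\in\Delta_{\set}.
\end{equation}
Indeed, granting~\eqref{eq:Herm-key}: invariance of $\Delta_{\set}$ under each $X\in\set$ is immediate from Note~\ref{Tom}, applicable precisely because $\Delta_{\set}$ is locally of finite type; and iterating~\eqref{eq:Herm-key} starting from $\set\subseteq\Delta_{\set}$ shows inductively that every iterated bracket of elements of $\set$ lies in $\Delta_{\set}$, i.e. $\mathrm{Lie}\{\set\}\subseteq\Delta_{\set}$.

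To prove~\eqref{eq:Herm-key} I would work locally. Fix $x_0\in\R^N$ and let $X_1,\dots,X_k$ and the neighbourhood $U$ be as supplied by Definition~\ref{def:LFT}; shrinking $U$ we may assume that $X_1,\dots,X_k$ are everywhere linearly independent on $U$, so that $\hat\Delta:=\mathrm{span}\{X_1,\dots,X_k\}$ has constant rank $k=\mathrm{rank}\,\Delta_{\set}(x_0)$ on $U$ and $\hat\Delta(x_0)=\Delta_{\set}(x_0)$. Combining the Leibniz rule for Lie brackets with condition~(ii) of Definition~\ref{def:LFT}, for any $\tau=\sum_i g_i X_i$ with $g_i\in C^{\infty}(U)$ one has
\begin{equation*}
[X,\tau]=\sum_i (Xg_i)\,X_i+\sum_i g_i\,[X,X_i]=\sum_i (Xg_i)\,X_i+\sum_{i,j} g_i\,\varphi_{i,j}\,X_j\in\hat\Delta,
\end{equation*}
so that $\hat\Delta$ is invariant under every $X\in\set$ (by Note~\ref{Tom}, now in its constant-rank form). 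If $\hat\Delta$ and $\Delta_{\set}$ agree on a neighbourhood of $x_0$, every $\tau\in\Delta_{\set}$ is locally of the above form and the display already gives $[X,\tau](x_0)\in\hat\Delta(x_0)=\Delta_{\set}(x_0)$. In general I would reduce to this by passing to the orbit $\mathcal{O}$ of $\set$ through $x_0$: choosing the frame so that, in addition, $\hat\Delta$ is involutive and $\hat\Delta\subseteq\Delta_{\set}$ (one takes $X_1,\dots,X_k\in\set$, so that condition~(ii) with $X=X_i$ yields $[X_i,X_j]\in\hat\Delta$), Frobenius' theorem produces an integral manifold $M\ni x_0$ of $\hat\Delta$; invariance of $\hat\Delta$ under $\set$ together with $X(x_0)\in\hat\Delta(x_0)$ for each $X\in\set$ then forces every $X\in\set$ to be tangent to $M$ --- a short computation in a Frobenius chart, where the components of $X$ transverse to $\hat\Delta$ are constant along plaques and vanish at $x_0$. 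Hence $M$ is invariant under all flows $e^{tX}$, $\mathcal{O}$ is locally contained in the $k$-dimensional manifold $M$, and $\dim\mathcal{O}\le k$; since also $\Delta_{\set}(x_0)\subseteq T_{x_0}\mathcal{O}$ (each $X\in\set$ is tangent to the orbit $\mathcal{O}$), we conclude $T_{x_0}\mathcal{O}=\Delta_{\set}(x_0)$. Thus $\Delta_{\set}$ agrees at $x_0$ with the tangent distribution of the orbits, and as $x_0$ is arbitrary these distributions coincide: this is condition~(d), and in particular~\eqref{eq:Herm-key} holds.

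I expect the genuine difficulty to lie in this last local reduction. The rank of $\Delta_{\set}$ is only lower semicontinuous, so it can a priori exceed $k$ at points of $U$ arbitrarily close to $x_0$; then the frame $X_1,\dots,X_k$ does not generate $\Delta_{\set}$ as a $C^{\infty}$-module on any neighbourhood of $x_0$, and the Leibniz computation above does not apply verbatim to a general $\tau\in\Delta_{\set}$. Ruling this out --- equivalently, showing that the local-finite-type hypothesis forces $\mathrm{rank}\,\Delta_{\set}$ to be constant along each orbit and equal there to $\dim\mathcal{O}$, and that the frame can be arranged so that $\hat\Delta$ is at once involutive and contained in $\Delta_{\set}$ --- is precisely the content contributed by local finite generation, and is what Sussman's detailed orbit-theorem machinery is designed to supply. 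The remaining steps (the reduction to~\eqref{eq:Herm-key}, the bracket computation, and the tangency argument in a Frobenius chart) are routine.
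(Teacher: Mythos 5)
First, a point of reference: the paper does not prove Theorem~\ref{thm:Hermann} at all; it quotes it as a classical result (the companion statement, Theorem~\ref{thm:sussorbit}, is likewise quoted from \cite{Sussman}, and no proof of Theorem~\ref{thm:Hermann} appears in the appendices). So there is no in-paper proof to compare yours against, and your proposal has to stand on its own.

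As it stands it does not close. The reduction to condition (d) of Theorem~\ref{thm:sussorbit}, the use of Note~\ref{Tom}, and the Leibniz computation are fine, but the step carrying all the weight is exactly the one you leave open. Definition~\ref{def:LFT} only provides a frame $X_1,\dots,X_k$ spanning $\Delta_{\set}(x_0)$ at the single point $x_0$, together with the bracket relations (ii) \emph{for that particular frame}; since the rank of $\Delta_{\set}$ is merely lower semicontinuous, a general vector field $\tau\in\Delta_{\set}$ need not be a $C^{\infty}$-combination of $X_1,\dots,X_k$ near $x_0$, so your key condition that $[X,\tau]\in\Delta_{\set}$ for every $X\in\set$ and every $\tau\in\Delta_{\set}$ --- and with it invariance of $\Delta_{\set}$ and $\mathrm{Lie}\{\set\}\subseteq\Delta_{\set}$ --- is not established. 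Your attempted repair also mixes two incompatible choices of frame: the frame supplied by Definition~\ref{def:LFT} satisfies (ii) but need not consist of elements of $\set$ (so neither $\hat\Delta\subseteq\Delta_{\set}$ nor involutivity of $\hat\Delta$ follows from the definition), whereas a frame chosen inside $\set$ gives those two properties but is not the frame for which (ii) is assumed. The Frobenius-chart tangency argument is correct \emph{once} one has an involutive, constant-rank, $\set$-invariant $\hat\Delta$ with $\hat\Delta(x_0)=\Delta_{\set}(x_0)$; but producing such a $\hat\Delta$ and showing that the rank of $\Delta_{\set}$ is constant along orbits and equals their dimension is the actual content of the theorem, which you explicitly defer to ``Sussman's machinery''. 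In short, the proposal is an outline of the standard deduction rather than a proof; to complete it you would have to reproduce the orbit-theorem bookkeeping (e.g.\ \cite{Sussman} or the treatment in \cite{Isidori}), or simply cite the result, as the paper does.
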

\begin{note}[Comments on Theorem \ref{thm:Hermann}]\label{note:chow}
\textup{ Seen from a control-theoretical point of view, the above statement gives a global decomposition of the state space $\R^N$ into sets reachable by piecewise integral curves of vector fields in $\set$.  To clarify this fact and provide some context,  it is useful to compare it  with the case where the HC  holds. Start by noting that under the HC the Lie algebra generated by the vectors in $\set$ is required to have constant rank (and the rank is assumed to be precisely $N$ at every point).   The control-theoretical meaning of the HC is expressed by  Chow's Theorem, see \cite{Bellaiche, BLU},  (and indeed in control theory the HC is known as {\em Chow's condition}).  Chow's theorem states that if the vectors $\{V_0 \dd V_d \}$ satisfy ({\bf HC}) then any two points in $\R^N$ are {\em accessible} or {\em reachable} in finite time from each other along integral curves of the vectors in $\set$. That is, given any two points $x,y \in\R^N$, there exists a piecewise integral curve $\gamma$ of vectors in $\set$, and a time $t>0$ such that $\gamma(0)=x$ and $\gamma(t)=y$. 
This is not the case if we simply assume that $\set$ is a LFG set of vector fields.  According to the above theorem, if $\set$ is LFG then,  for every $x \in \R^N$, the set of states { reachable from} $x$  in finite time coincides with the maximal integral manifold of $\Delta_{\set}$ through $x$. Because the rank of the distribution is not constant, and in particular it needs not be $N$ at any point, this implies that, in general, the orbits of $\Delta_{\set}$ will be proper subsets of $\R^N$ (as we have mentioned, they form a partition of $\R^N$).  
} \hf
\end{note}
We conclude this subsection by recalling the following result, which will be used later on. 
\begin{lemma}[{\cite[Theorem 2.1.9]{Isidori}}] \label{thm:intcurvepreservemanifolds}
Let $\Delta_{\set}$ be a smooth involutive distribution invariant under a vector field $W$. Suppose  $\Delta_{\set}$ is locally finitely generated. Let $x_1,x_2$ be two points belonging to the same maximal integral manifold of $\Delta_{\set}$. Then, for all $t \in \R$, the points 
$e^{tW}x_1$ and $e^{tW}x_2$ belong to the same maximal integral submanifold of $\Delta_{\set}$.
\end{lemma}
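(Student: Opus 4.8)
The plan is to reduce the statement to a single fact: that the flow $e^{tW}$ maps a maximal integral manifold of $\Delta_{\set}$ diffeomorphically onto a maximal integral manifold of $\Delta_{\set}$. Once we have this, the claim follows immediately — if $x_1$ and $x_2$ lie on the same maximal integral manifold $\mathcal{M}$, then $e^{tW}x_1$ and $e^{tW}x_2$ both lie on $e^{tW}(\mathcal{M})$, which (by the fact just stated) is itself a maximal integral manifold of $\Delta_{\set}$, so they lie on the same one.

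First I would establish integrability: since $\Delta_{\set}$ is involutive and locally finitely generated, Theorem \ref{thm:Hermann} (together with Sussman's Orbit Theorem, Theorem \ref{thm:sussorbit}) guarantees that $\Delta_{\set}$ is integrable, so through each point there passes a unique maximal integral manifold, and these partition $\R^N$. Next, fix $t \in \R$ and set $\psi := e^{tW}$; this is a diffeomorphism of $\R^N$ (under the standing global-Lipschitz hypothesis on the vector fields the flow is complete). Let $\mathcal{M}$ be a maximal integral manifold of $\Delta_{\set}$; I would show $\psi(\mathcal{M})$ is again a maximal integral manifold. Connectedness is preserved by the continuous map $\psi$. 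For the integral-manifold condition, take a point $y = \psi(x) \in \psi(\mathcal{M})$: the tangent space is $T_y(\psi(\mathcal{M})) = (\jacobian{\psi}{x})(x)\,(T_x\mathcal{M}) = (\jacobian{\psi}{x})(x)\,\Delta_{\set}(x)$, using $T_x\mathcal{M} = \Delta_{\set}(x)$. Now invoke invariance of $\Delta_{\set}$ under $W$: by definition the Jacobian $\jacobian{(e^{tW}x)}{x}$ maps $\Delta_{\set}(x)$ into $\Delta_{\set}(e^{tW}x) = \Delta_{\set}(y)$, and applying the same reasoning to $e^{-tW}$ (which is the inverse flow, and under which $\Delta_{\set}$ is also invariant, since invariance under $W$ for all $t$ includes negative $t$) shows the map is onto $\Delta_{\set}(y)$. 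Hence $T_y(\psi(\mathcal{M})) = \Delta_{\set}(y)$ for every $y \in \psi(\mathcal{M})$, so $\psi(\mathcal{M})$ is a connected integral manifold.

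It remains to upgrade ``integral manifold'' to ``maximal integral manifold''. Suppose $\mathcal{N}$ is a connected integral manifold of $\Delta_{\set}$ containing $\psi(\mathcal{M})$. Then $\psi^{-1}(\mathcal{N})$ is, by the argument above applied to the diffeomorphism $\psi^{-1} = e^{-tW}$, a connected integral manifold of $\Delta_{\set}$ containing $\mathcal{M}$; since $\mathcal{M}$ is maximal, $\psi^{-1}(\mathcal{N}) = \mathcal{M}$, hence $\mathcal{N} = \psi(\mathcal{M})$, so $\psi(\mathcal{M})$ is maximal. This completes the key fact and hence the lemma. The only delicate point — and the one I would be most careful about — is the justification that $\jacobian{\psi}{x}$ carries $\Delta_{\set}(x)$ \emph{onto} (not merely into) $\Delta_{\set}(y)$; this is where one genuinely uses that invariance under $W$ is two-sided in $t$, i.e. invariance of $\Delta_{\set}$ under both the forward and backward flows (equivalently, one could cite Note \ref{Tom}: since $\Delta_{\set}$ is LFG, invariance is equivalent to $[W,\tau]\in\Delta_{\set}$ for all $\tau\in\Delta_{\set}$, a condition symmetric under $W\mapsto -W$, so invariance under $e^{tW}$ and under $e^{-tW}$ are equivalent). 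Everything else is routine chasing of diffeomorphisms through the integral-manifold definition.
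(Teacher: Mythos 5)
Your proof is correct. Note that the paper itself offers no proof of this lemma --- it is quoted directly from \cite[Theorem 2.1.9]{Isidori} --- so what you have written is a self-contained substitute rather than a variant of an internal argument. Your route is the global one: push the whole maximal integral manifold forward by the diffeomorphism $e^{tW}$ and check, via the tangent-space characterisation, that the image is again a (maximal) integral manifold. The proof in Isidori (and the way this paper handles closely related statements, e.g.\ Proposition \ref{changecoordgen}, Lemma \ref{lem:submanifoldlastcoordinate} and the proof of Proposition \ref{prop:SDElivesonsubmanifold}) is instead local: one passes to flat coordinates in which $\Delta_{\set}$ is spanned by the first $n$ coordinate directions and the last components of $W$ depend only on the last coordinates, so the trailing coordinates of points on the same leaf evolve identically under the flow. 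Your global argument buys a coordinate-free statement in one stroke but leans on the flow of $W$ being complete (covered here by the standing assumption \ref{SA1}; in general one would localise in $t$), while the coordinate proof is purely local and needs a patching/connectedness step. The one delicate point you single out --- surjectivity of $\jacobian{(e^{tW}x)}{x}$ from $\Delta_{\set}(x)$ onto $\Delta_{\set}(e^{tW}x)$ --- is handled correctly: the paper's definition of invariance quantifies over all $t\in\R$, so invariance under the backward flow is already included and the chain rule applied to $e^{-tW}\circ e^{tW}=\mathrm{id}$ gives ontoness; your alternative appeal to Note \ref{Tom} is equally valid. A small remark: for the lemma as stated you do not actually need maximality of $e^{tW}(\mathcal{M})$ --- once it is a connected integral manifold it lies inside a single maximal one by Theorem \ref{thm:sussorbit}, which already forces $e^{tW}x_1$ and $e^{tW}x_2$ onto the same leaf --- but your extra maximality step is correct and harmless.
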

To clarify the  above statement: under the asumptions of the lemma, if $x_1, x_2$ belong to a given MIM of $\Delta_{\set}$, say $\mathcal{M}$ then $e^{tW}x_1, e^{tW}x_2 \in \tilde{\mathcal{M}}$, where $\tilde{\mathcal{M}}$ denotes another generic MIM of $\Delta_{\set}$. In general $\tilde{\mathcal{M}}$ will be different from ${\mathcal{M}}$ (unless $W$ belongs to $\Delta_{\set}$). For example see Example \ref{example:UFGHeisenberg}.

\subsection{Assumptions} 
Throughout the paper we will make the following standing assumptions. 
\begin{hypothesis}\label{SA} Standing assumptions:
\begin{enumerate}[label=\textbf{\textup{[SA.\arabic*]}}]
\item All the vector fields we consider in this paper are smooth, everywhere defined and globally Lipschitz.  \label{SA1}
\item In this paper we will consider partitions of $\R^N$ into submanifolds; each one of such submanifolds is generically denoted by $\curlys$, see  definition after Proposition \ref{intmanpro}. Throughout, the manifold topology $\tau$ (on $\curlys$) is assumed to be the Euclidean topology of $\curlys$, seen as a subset of $\R^N$; that is,  the open sets of $\curlys$ in the manifold topology $\tau$ are sets of the form $O \cap \curlys$, where $O$ is a Euclidean open set of $\R^N$. In   Appendix \ref{app:topology} we motivate the choice of such a topology and give further details about this assumption. 
\label{SAtop}
\item When we say that the obtuse angle condition \eqref{eq:OAC} (or its second order version \eqref{eq:OAC2}, respectvely) is satisfied, we mean that it is satisfied for some $\lambda_0>0$ large enough so that the estimate \eqref{eq:graddecayest} (\eqref{seconorderdecayeq}, respectively) follows.  \label{SAOAC}
\end{enumerate}
\end{hypothesis}

\begin{note}
\textup{
Assumption \ref{SA1} will be needed mostly to make sure that all the integral curves of the involved vector fields are well defined (and to guarantee well-posedness of the SDE \eqref{SDE}). However see Note \ref{technicalpoint} on this point. \hf
}
\end{note}

\section{Geometrical significance of the  UFG condition and implications for the corresponding SDE} \label{sec:geomofUFGprocesses}
In this section we come to explain how the general results outlined in Section \ref{subsec:geometry} applies to the study of the dynamics \eqref{SDE}, assuming that the vector fields $V_0 \dd V_d$ satisfy the UFG condition. For clarity, we will compare with the case in which  $V_0 \dd V_d$ satisfy the H\"ormander condition. Subsection 
\ref{Sec4:global} contains {\em global} results, Subsection \ref{sec:Coordinatechange} is focussed on {\em local} results.

\subsection{Global Results}\label{Sec4:global}
 Recalling the notation and nomenclature of Section \ref{subsec:UFG} and motivated by Theorem \ref{thm:sussorbit}, we introduce  two distributions associated with the vector fields $V_0 \dd V_d$; such distributions will play a fundamental role in the analysis of the UFG-dynamics \eqref{SDE}. Let 
\begin{itemize}
\item  $\delnn$ be the smallest distribution which contains the space $\mathrm{span}\{V_0,V_1\dd V_d\}$ and is invariant under the vector fields 
$\{ V_0, V_1 \dd V_d\}$; 
\item $\deln$ be the smallest distribution which contains the space $\mathrm{span}\{V_1\dd V_d\}$ and is invariant under the vector fields $\{V_0,V_1\dd V_d\}$. 
\end{itemize}
Let us denote by $n=n(x)$ the rank of the distribution $\deln(x)$. Notice that $n=n(x)$ is a function of the point $x\in\R^N$ and, as such its value can vary from point to point. As Lemma \ref{lemmaW0} below  demonstrates, if at some point $x \in \R^N$ the rank of $\deln$ is $n$, then the rank of $\delnn$ is {\em at most} $n+1$, hence the index $n+1$  in the notation for  $\delnn$. We will typically assume that $n<N$,  where $N$ is the dimension of the state space $\R^N$ in which the vector fields $V_0 \dd V_d$ live, see  Note \ref{Noten<N} on this point. 
We stress that $\deln$ may not contain the vector field $V_0$ itself (unless for example $V_0$ is a linear combination of $V_1 \dd V_d$).  Lemma \ref{lemmaW0} below gives a simpler equivalent description of the distributions $\deln$ and $\delnn$  (which is the one we gave in the introduction). 
\begin{lemma}\label{lemmaW0}
Let $V_0 \dd V_d$ be $d+1$ vector fields on $\R^N$ which satisfy the UFG condition for some $m$, see Note \ref{note:higherorderUFG}.  
Recall the decomposition \eqref{defvoperp},  the definition  of $\fRm$, given in \eqref{(R)},   and set $\fRmo:= \fRm \cup V_0$. Then 
	\begin{align}
\deln=\mathrm{span}\{ \fRm \} \quad \mbox{and} \quad 
	 \delnn =\mathrm{span}\{\fRmo\} \,.
	\end{align}
	In particular,
	$$
	\delnn(x)=\mathrm{span}(\deln(x),\voperp(x)).
	$$ 
\end{lemma}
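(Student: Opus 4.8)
The plan is to show the two distributions of Lemma \ref{lemmaW0} coincide with $\mathrm{span}\{\fRm\}$ and $\mathrm{span}\{\fRmo\}$ respectively by using the characterization of the ``smallest invariant distribution containing a given one'' together with the UFG condition; the last assertion then follows immediately from the decomposition \eqref{defvoperp}. First I would observe that, by Theorem \ref{thm:sussorbit}(d) and the definitions of $\deln,\delnn$ just given, $\deln$ (resp.\ $\delnn$) is the smallest distribution containing $\mathrm{span}\{V_1\dd V_d\}$ (resp.\ $\mathrm{span}\{V_0\dd V_d\}$) and invariant under all the fields $V_0\dd V_d$; moreover by the criterion of Note \ref{Tom} (once one knows local finite generation, which we will establish along the way) invariance is equivalent to being closed under bracketing with each $V_i$. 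So the natural candidate is exactly the module generated by iterated brackets, which is precisely $\mathrm{span}\{\fRm\}$ once the UFG condition is invoked to truncate the hierarchy at length $m$.

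The key steps, in order, would be the following. (1) \emph{$\mathrm{span}\{\fRm\}$ is a distribution containing $\mathrm{span}\{V_1\dd V_d\}$.} This is immediate since $V_1\dd V_d = V_{[i]}\in\fRm$ (the singletons $(i)$, $i\in\{1\dd d\}$, lie in $\A_m$). (2) \emph{$\mathrm{span}\{\fRm\}$ is invariant under each $V_j$, $j\in\{0\dd d\}$.} By Note \ref{Tom} it suffices to check that $[V_j,\tau]\in\mathrm{span}\{\fRm\}$ whenever $\tau\in\mathrm{span}\{\fRm\}$; writing $\tau=\sum_{\alpha\in\A_m}g_\alpha V_{[\alpha]}$ with $g_\alpha$ smooth, Leibniz gives $[V_j,\tau]=\sum_\alpha (V_j g_\alpha)V_{[\alpha]} + \sum_\alpha g_\alpha [V_j,V_{[\alpha]}]$; the first sum is already in $\mathrm{span}\{\fRm\}$, and for the second, $[V_j,V_{[\alpha]}]=\pm V_{[\alpha\ast j]}$ (up to sign/reordering of the bracket), which has length $\|\alpha\|+1$ or $\|\alpha\|+2$, hence length $>m$ only possibly; when the length exceeds $m$, the UFG condition \eqref{eq:UFG} (together with Note \ref{note:higherorderUFG}) rewrites it as $\sum_{\beta\in\A_m}\varphi_{\cdot,\beta}V_{[\beta]}\in\mathrm{span}\{\fRm\}$. (Here I also need that $\mathrm{span}\{\fRm\}$ is itself locally of finite type so that the Note \ref{Tom} criterion applies — but that is built into the UFG hypothesis, since $\fRm$ is a finite generating set and \eqref{eq:UFG} controls all further brackets, cf.\ Definition \ref{def:LFT}.) (3) \emph{Minimality.} Any distribution $\Delta'$ that contains $\mathrm{span}\{V_1\dd V_d\}$ and is invariant under $V_0\dd V_d$ must, by the Note \ref{Tom} criterion applied to $\Delta'$, contain all iterated brackets $V_{[\alpha]}$ for $\alpha\in\A$, in particular all of $\fRm$, hence $\mathrm{span}\{\fRm\}\subseteq\Delta'$; thus $\mathrm{span}\{\fRm\}$ is the smallest such, i.e.\ $\deln=\mathrm{span}\{\fRm\}$. (4) The identical argument with $\fRmo=\fRm\cup\{V_0\}$ in place of $\fRm$ — noting that adjoining $V_0$ does not create new brackets outside $\mathrm{span}\{\fRmo\}$, since $[V_j,V_0]=V_{[j\ast 0]}\in\fRm\subseteq\fRmo$ — gives $\delnn=\mathrm{span}\{\fRmo\}$. (5) Finally, $\mathrm{span}\{\fRmo\}=\mathrm{span}\{\fRm\cup\{V_0\}\}=\mathrm{span}(\deln(x),V_0(x))$; and by \eqref{defvoperp}, $V_0=\vodel+\voperp$ with $\vodel\in\deln$, so $\mathrm{span}(\deln(x),V_0(x))=\mathrm{span}(\deln(x),\voperp(x))$, which is the displayed identity.

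The main obstacle I expect is step (2), specifically the bookkeeping needed to guarantee that \emph{every} bracket $[V_j,V_{[\alpha]}]$ with $\alpha\in\A_m$ — and hence every bracket of elements of $\mathrm{span}\{\fRm\}$ with coefficients in $C^\infty_V$ — lands back in $\mathrm{span}\{\fRm\}$, uniformly, and that the resulting coefficients stay in the class $C^\infty_V(\R^N)$ so that the distribution is genuinely locally (indeed globally) finitely generated in the sense required to invoke Note \ref{Tom}. This is really just a careful unwinding of Definition \ref{defufg} and Note \ref{note:higherorderUFG} — the brackets of length $\le m$ are in $\fRm$ by definition, and those of length $>m$ are handled by the UFG relations whose coefficients lie in $C^\infty_V$ by hypothesis — but it is the step where one must be precise about the difference between $\fRm$ as a set of vector fields and the module $\mathrm{span}\{\fRm\}$ it generates over $C^\infty_V$, and about why the signs/reorderings in passing from $[V_{[\alpha]},V_{[j]}]$ to $V_{[\alpha\ast j]}$ cause no trouble. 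Everything else is a direct application of Sussman's orbit theorem and the Isidori criterion already recalled in the excerpt.
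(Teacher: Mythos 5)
Your proposal is correct and follows essentially the same route as the paper's proof: both inclusions are obtained via the invariance criterion of Note \ref{Tom}, the Leibniz rule for brackets of $C^\infty_V$-combinations of the $V_{[\alpha]}$, and the UFG relations (with Note \ref{note:higherorderUFG}) to absorb brackets of length exceeding $m$ back into $\mathrm{span}\{\fRm\}$, with minimality giving the reverse containment; the final identity via $V_0=\vodel+\voperp$ is the same one-line observation. Your explicit remark that $\mathrm{span}\{\fRm\}$ is (globally) finitely generated so that the Isidori criterion applies is a point the paper leaves implicit, but it is not a different argument.
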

\begin{proof}[Proof of Lemma \ref{lemmaW0}] A proof of this lemma in a general setting can be found in \cite[Lemma 1.8.7 and Remark 2.2.3]{Isidori}. For completeness (and to spare the reader from having to compare and match notations and setting with those in \cite{Isidori}), we have added a proof  in Appendix \ref{app:sec45}.
\end{proof}

\begin{note}\label{ufginvariant}
\textup{If the vector fields $V_0, V_1 \dd V_d$ satisfy the UFG condition then the distributions $\deln$ and $\delnn$  are locally of finite type because they are {\em globally} of finite type. This can be checked by using Note \ref{note:higherorderUFG} (and the fact that nested commutators can always be expressed as linear combinations of hierarchical commutators, see \cite[page 11-12]{BLU}).
} 
\hf
\end{note}

\begin{example}\label{silly}\textup{
In $\R^2$ let $V_0=1_{A}\pa_x$ and $V_1=1_{A^c}\pa_x+1_{A^c}\pa_y$ (strictly speaking here the coefficients are not smooth), where $A$ is the set $A=\{(x,y) \in \R : x\in [-1,1]\}$. Then $\deln(x,y)=\delnn(x,y)$ and they are both two-dimensional for every $(x,y) \in A^c$ while $\deln(x,y) \neq \delnn(x,y)$ for every $(x,y) \in A$, as on this set $\delnn$ is one dimensional while $\deln (x,y)=0$ for every $(x,y) \in A$. 
\hf
}
\end{example}

Since the UFG condition implies that the sets $\fRm$ and $\fRmo$ are locally of finite type,  we can apply  Theorem \ref{thm:Hermann} to the distributions given by the span of $\fRm$ and $\fRmo$. By Lemma \ref{lemmaW0},   the distributions $\deln$ and $\delnn$ coincide with span of $\fRm$ and $\fRmo$ respectively. As a corollary, we have the following proposition.  
\begin{prop}\label{intmanpro}
If the vector fields $V_0, V_1 \dd V_d$ satisfy the UFG condition, then both $\delnn$ and $\deln$ enjoy the integral manifolds property. In particular the integral manifolds of $\delnn$ coincide with the orbits of $\delnn$ (and the same holds for the distribution $\deln$).  
\end{prop}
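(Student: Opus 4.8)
The plan is to derive Proposition \ref{intmanpro} as an essentially immediate consequence of the machinery already assembled in Subsection \ref{subsec:geometry}, together with Lemma \ref{lemmaW0} and Note \ref{ufginvariant}. First I would recall that, by Lemma \ref{lemmaW0}, the two distributions of interest admit the concrete descriptions $\deln = \mathrm{span}\{\fRm\}$ and $\delnn = \mathrm{span}\{\fRmo\}$, where $\fRmo = \fRm \cup \{V_0\}$. Both are smooth distributions, being generated by a set of smooth vector fields (the hierarchical commutators $V_{[\alpha]}$, $\alpha \in \A_m$, all of which are smooth under the standing assumption \ref{SA1} and the UFG hypothesis). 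So the hypotheses of Sussman's Orbit Theorem (Theorem \ref{thm:sussorbit}) and of Theorem \ref{thm:Hermann} are in force as soon as we verify that the generating sets are locally of finite type.

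Second, I would invoke Note \ref{ufginvariant}, which records precisely that under the UFG condition the sets $\fRm$ and $\fRmo$ are (globally, hence in particular locally) of finite type: the defining identity \eqref{eq:UFG} of the UFG condition, together with the extension to higher-order commutators in Note \ref{note:higherorderUFG} and the standard fact that nested commutators reduce to hierarchical ones, shows that for each $x$ the finitely many vector fields $\{V_{[\beta]} : \beta \in \A_m\}$ span $\deln(x)$ and that all brackets $[V_i, V_{[\beta]}]$ lie in the $C^\infty_V$-module they generate; the same argument with $\fRmo$ in place of $\fRm$ handles $\delnn$, using Lemma \ref{lemmaW0}'s identity $\delnn(x) = \mathrm{span}(\deln(x), \voperp(x))$ to see that $\delnn$ is invariant under $V_0$ as well. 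This is exactly condition (i)--(ii) of Definition \ref{def:LFT}.

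Third, with $\fRm$ and $\fRmo$ shown to be LFG, Theorem \ref{thm:Hermann} applies directly: each of the distributions $\Delta_{\fRm} = \deln$ and $\Delta_{\fRmo} = \delnn$ is integrable, and its integral manifolds coincide with the orbits of the corresponding set of vector fields. By the equivalence of (a), (b), (c) in Theorem \ref{thm:sussorbit}, integrability is the same as the (maximal) integral manifolds property, which is the first assertion of the proposition; and clause (c) of that theorem gives the identification of integral manifolds with orbits, which is the second assertion. I would state this for $\delnn$ and then simply note that the identical reasoning, verbatim, applies to $\deln$.

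I do not expect a genuine obstacle here: the proposition is a corollary, and essentially all the content has been front-loaded into Lemma \ref{lemmaW0} and Note \ref{ufginvariant}. The only point requiring a little care is making sure the "globally of finite type" claim of Note \ref{ufginvariant} is used correctly --- in particular that the \emph{same} finite generating family $\{V_{[\beta]}:\beta\in\A_m\}$ works at every point (so that local finite generation holds uniformly), which is what lets us quote Theorem \ref{thm:Hermann} without fuss; but this is already asserted there, so in the write-up it is a matter of citing it. If one wanted to be fully self-contained one would spell out why invariance of $\deln$ under $\{V_0,\dots,V_d\}$ follows from the bracket identities via the criterion in Note \ref{Tom} (a distribution that is locally of finite type is invariant under $V$ iff $[V,\tau]\in\Delta$ for all $\tau\in\Delta$), but again this is routine given what precedes.
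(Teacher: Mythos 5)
Your argument is correct and follows exactly the paper's route: the proposition is derived as a corollary of Lemma \ref{lemmaW0} (identifying $\deln$ and $\delnn$ with the spans of $\fRm$ and $\fRmo$), Note \ref{ufginvariant} (the UFG condition makes these sets locally of finite type), and Theorem \ref{thm:Hermann} together with Theorem \ref{thm:sussorbit}. No gaps.
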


We denote by $\capitals$ ($\curlys$, respectively) a generic MIM of the distribution $\deln$ ($\delnn$, respectively). Consistently, $\capitals_x$ ($\curlys_x$, respectively) will denote the MIM of $\deln$ ($\delnn$, respectively) through the point $x \in \R^N$. It is easy to see that for every $x\in \R^N$,  $\capitals_x \subseteq\curlys_x$, so that  $\curlys_x$ is a disjoint union of integral manifolds of $\deln$. Notice that $n=n(x)$ is constant along the orbits $S$ of $\deln$.  

It is important to observe that any deterministic dynamics started on a maximal integral manifold $\curlys$ of $\delnn$ and following the integral curves of the  fields $V_0 \dd V_d$, will remain in $\curlys$ for any positive time (see Note \ref{note:chow}). On the other hand, if $X_0=x$ is the initial datum of the  stochastic  dynamics $\eqref{SDE}$ and $X_0 \in \curlys_x$,  then $X_t \in \overline{\curlys}_x$ for all $t \geq 0$. This is a consequence of the Stroock and Varadhan support theorem, which we recall below,  see \cite{Bellet} for more details.

\begin{theorem}[Stroock and Varadhan]\label{thm:stroocksuppthm}
Let $X_t^{(x)}$ be the solution of the SDE \eqref{SDE}.  The support\footnote{The support of the law of a random variable $X$ denotes the smallest closed set $A$ such that $\mathbb{P}(X\in A)=1$. } of the law of $\{X_t^{(x)}\}_{t\in [0,T]}$ in path space, coincides with the closure in $(C([0,T];\mathbb{R}),\lVert\,\cdot\,\rVert_\infty)$ of the set of paths $(p_t)_{t\in[0,T]}$ such that $(p_t)$ satisfies the control problem:
	\be\label{stoccontrprob}
	dp_t=V_0(p_t)dt+\sqrt{2}\sum_{i=1}^dV_i(p_t)\psi_i(t)dt, \quad p_0=x \, , 
	\ee
	for some $\psi_1,\dd\psi_d:[0,T]\to \mathbb{R}$ piecewise constant functions.
\end{theorem}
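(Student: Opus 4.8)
The statement to prove is the Stroock--Varadhan support theorem, which is a classical result; the plan here is to explain how one would prove it rather than to present original mathematics, so I would structure the argument along the well-known lines of Stroock and Varadhan's original approach, specialised to the Stratonovich SDE \eqref{SDE}. First I would establish the easy inclusion: every path $(p_t)$ solving the control problem \eqref{stoccontrprob} for some piecewise-constant controls $\psi_i$ lies in the support of the law of $X^{(x)}$. This is done by a Cameron--Martin/Girsanov argument: approximate the piecewise-constant $\psi_i$ by the increments of the Brownian paths, or more precisely, observe that conditioning the Brownian driving noise to lie in a small tube around the antiderivative of $\psi$ has positive probability, and on that event the solution $X^{(x)}$ is uniformly close to $(p_t)$ by continuity of the It\^o--Stratonovich solution map; since the event has positive probability, $(p_t)$ is in the support. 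One must take some care that Stratonovich integration is the right choice here — this is precisely why the drift in \eqref{stoccontrprob} is $V_0$ and not the It\^o-corrected drift.

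For the reverse (and harder) inclusion — that the support is contained in the closure of the set of controlled paths — the standard route is via the Wong--Zakai approximation theorem. The plan is: given the Brownian motion $B$, let $B^{(\varepsilon)}$ be a piecewise-linear (or mollified) approximation of $B$ at mesh $\varepsilon$, and let $X^{(\varepsilon)}$ solve the corresponding random ODE obtained by replacing $dB^i$ by $\dot B^{(\varepsilon),i}\,dt$. By Wong--Zakai, $X^{(\varepsilon)} \to X^{(x)}$ in probability, uniformly on $[0,T]$, precisely because \eqref{SDE} is written in Stratonovich form (this is the content of the Wong--Zakai correction, and matches the appearance of $V_0$ rather than $V_0 + \sum_i V_i^2$ in the control problem). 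Now each realisation of $X^{(\varepsilon)}$ is a solution of the control problem with controls $\psi_i(t) = \dot B^{(\varepsilon),i}(t)/\sqrt 2$, which are piecewise constant; hence $X^{(\varepsilon)}$ lies in the set of controlled paths almost surely. Passing to the limit, any point in the support of the law of $X^{(x)}$ is a limit of such controlled paths, hence lies in the closure of that set. A small technical point is that one should either allow general piecewise-constant controls in the statement (as the paper does) or note that one may further approximate arbitrary piecewise-constant controls by those arising from Brownian increments; either way this is routine once the Wong--Zakai step is in hand.

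The main obstacle is the Wong--Zakai convergence step together with the uniformity required to conclude a statement about the topological support rather than merely convergence in probability. Concretely, one needs: (i) the Wong--Zakai theorem in a form giving uniform-on-$[0,T]$ convergence in probability under the standing smoothness and global Lipschitz assumptions \ref{SA1} on $V_0,\dots,V_d$ (this is classical — see Ikeda--Watanabe — and the global Lipschitz hypothesis is exactly what makes the approximating ODEs and the limit SDE globally well-posed, so no localisation is needed); and (ii) a clean argument that ``$Y_n \to Y$ in probability in a Polish space and $Y_n$ is supported in a closed set $K_n$ with $K_n$ increasing to $\overline K$'' forces $\mathrm{supp}(\mathrm{Law}(Y)) \subseteq \overline K$. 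Part (ii) is a soft measure-theoretic fact. Given the paper cites \cite{Bellet} for details, I would follow that reference for the precise form of the tube estimates and the Wong--Zakai statement, and simply assemble the two inclusions as above; I would not reproduce the delicate stochastic-calculus estimates, since they are standard and are not the novel content of this paper.
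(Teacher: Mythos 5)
The paper does not prove this statement at all: Theorem \ref{thm:stroocksuppthm} is quoted as a classical result, with the reader referred to \cite{Bellet} for details, so there is no in-paper argument to compare yours against. Your outline is the standard two-inclusion proof — a Girsanov/tube estimate to show controlled paths lie in the support, and a Wong--Zakai approximation (piecewise-linear interpolation of $B$, whose derivative yields piecewise-constant controls) plus the soft fact that limits in probability of random variables valued in a closed set have law supported in that set for the reverse inclusion — and this is structurally correct and consistent with the references the paper points to; deferring the quantitative tube and Wong--Zakai estimates to Ikeda--Watanabe or \cite{Bellet} is reasonable, since reproving them is not the content of this paper.
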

Informally, Theorem \ref{thm:stroocksuppthm} says that  the stochastic dynamics \eqref{SDE} will access in  time $t$ the (closure) of the set  reachable in time $t$ by the control problem \eqref{stoccontrprob}, as we vary the {\em controls } $\psi_1 \dd \psi_d$ in a suitable set of functions. 
\begin{excursus}\textup{
We would like to further elaborate on the comment started before Theorem \ref{thm:stroocksuppthm}. To this end, consider the following one-dimensional SDE:
\be\label{parex}
dX_t= -\sin(X_t) dt + \cos(X_t) \circ dB_t \,. \footnote{This is a known example, see for example \cite{Hairer}.}
\ee
Here $V_0=-\sin(x) \pa_x , V_1 = \cos(x) \pa_x$, and $[V_0, V_1]=\pa_x$, so that these fields satisfy both the HC and the PHC. According to Chow's theorem (see Note \ref{note:chow}),  if $V_0, V_1$ satisfy the HC then any two points in $\R$ can be joined through integral curves of such fields.  However, if we start the dynamics \eqref{parex} at $x \in [-\pi/2, \pi/2]$ then the solution $X_t$ never leaves the interval $[-\pi/2, \pi/2]$. This is not in contradiction to the statement of Chow's theorem. The behaviour of the stochastic dynamics \eqref{parex} is related to the control problem \eqref{stoccontrprob}. On the other hand, when we say that under the HC any two points in $\R^N$ can be joined by integral curves of vectors in $\set$, this   is equivalent  to saying that the set of points reachable from $x$ by the control system
\be\label{dcs}
dp_t=V_0(p_t) \psi_0(t) dt+\sum_{i=1}^dV_i(p_t)\psi_i(t)dt, \quad p_0=x,
\ee
is indeed the whole space $\R^N$ (in the above the functions $\psi_1,\dd\psi_d:[0,T]\to \mathbb{R}$ are say piecewise constant {controls}).
  Clearly,  the set of points accessible by  \eqref{stoccontrprob} is a subset of the set of points accessible by \eqref{dcs}. 
In our example,  the support of the law of the solution to SDE \eqref{parex} is given by the (closure of the) set of points reachable by the control problem
\begin{equation*}\label{cp1}
dX_t= -\sin(X_t) dt + \cos(X_t) \psi_1(t) dt.  
\end{equation*}
On the other hand, Chow's theorem applied to the vector fields $V_0, V_1$ refers to the problem 
\begin{equation*}\label{cp2}
dX_t= -\sin(X_t) \psi_0(t) dt + \cos(X_t) \psi_1(t) dt.  
\end{equation*}
Such a dynamics can indeed be stirred to access the whole real line, no matter where it is started. 
} \hf
\end{excursus}
The theory summarised in Subsection \ref{subsec:geometry} describes completely the sets accessible by the control problem \eqref{dcs}, which are precisely the orbits of the vector fields $V_0 \dd V_d$.  On the other hand, if we want to study the SDE \eqref{SDE} (under the UFG condition) then we are interested in understanding the behaviour of the control problem \eqref{stoccontrprob}. Unfortunately, in full generality, one can only state the  following (see \cite[Section 2.2]{Isidori}). 
\begin{lemma}\label{lemmareachstoc}
With the notation and nomenclature introduced so far,  let $V_0 \dd V_d$ be  smooth vector fields on $\R^N$ satisfying the UFG condition. Then the sets of points reachable from $x$ by the control problem \eqref{stoccontrprob} is a subset of $\curlys_x$ and it contains at least a non-empty open subset of $\curlys_x$. 
\end{lemma}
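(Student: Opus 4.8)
The plan is to prove the two assertions of Lemma~\ref{lemmareachstoc} separately: (i) every point reachable from $x$ by the Stroock--Varadhan control problem \eqref{stoccontrprob} lies in $\curlys_x$, and (ii) the reachable set has non-empty interior relative to $\curlys_x$.

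For (i), I would argue that any solution $p_t$ of \eqref{stoccontrprob} with piecewise-constant controls $\psi_1,\dots,\psi_d$ is a concatenation of integral curves of vector fields of the form $V_0 + \sqrt{2}\sum_i \psi_i(t) V_i$, evaluated on consecutive intervals where the $\psi_i$ are constant. On each such interval, the driving field is $V_0 + c_1 V_1 + \dots + c_d V_d$ for constants $c_i$, and this vector field belongs to the distribution $\delnn = \mathrm{span}\{\mathcal{R}_{m,0}\}$ (by Lemma~\ref{lemmaW0}, since $V_0 \in \delnn$ and each $V_i \in \deln \subseteq \delnn$). By Note~\ref{ufginvariant}, $\delnn$ is locally of finite type, hence by Theorem~\ref{thm:Hermann} it is integrable and its orbits coincide with its integral manifolds. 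Since each driving field lies in $\delnn$, following its flow from a point keeps the trajectory inside the maximal integral manifold through that point (this is the defining property of an integral manifold for an involutive, LFG distribution, and it is exactly the content of Note~\ref{note:chow} and Lemma~\ref{thm:intcurvepreservemanifolds}). Concatenating over the finitely many sub-intervals, $p_t$ never leaves $\curlys_x$, which gives the first assertion. (Equivalently one could invoke Proposition~\ref{intmanpro} directly: a piecewise integral curve of $\{V_0, V_1, \dots, V_d\}$ stays in one orbit of $\delnn$, and the controlled paths \eqref{stoccontrprob} are limits of such curves, with $\curlys_x$ closed within itself under the manifold topology \ref{SAtop}.)

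For (ii), the idea is to compare \eqref{stoccontrprob} with the fully controlled system \eqref{dcs}, whose reachable set from $x$ is precisely the orbit of $\{V_0,\dots,V_d\}$ through $x$, i.e.\ all of $\curlys_x$, by Theorem~\ref{thm:Hermann}. The difference is that in \eqref{stoccontrprob} the $V_0$-direction is forced with unit drift rather than being a free control; one cannot run time backwards along $V_0$ nor suppress it. However, the standard argument (see \cite[Section~2.2]{Isidori}) is that the reachable set of the system with drift still has non-empty interior in the orbit: working in local coordinates adapted to the integral manifold $\curlys_x$ (of dimension $n+1$, say), one builds from $x$ a trajectory that, by switching among the fields $V_0 + \sqrt{2}\sum_i \psi_i V_i$ for suitable constant control values and suitable short times, sweeps out a full-dimensional subset of $\curlys_x$ near the endpoint. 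Concretely, one shows the reachable set contains the image of a map $(t_1,\dots,t_{n+1}) \mapsto e^{t_1 Y_1} \cdots e^{t_{n+1} Y_{n+1}}(x')$ for appropriate $Y_j$ in the admissible family and $x'$ already reachable, whose differential at some parameter value is surjective onto $T_{\cdot}\curlys_x$; the inverse/implicit function theorem then yields an open subset of $\curlys_x$ inside the reachable set.

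The main obstacle is step (ii): the asymmetry introduced by the unremovable $V_0$-drift means one must be careful that the admissible velocities $\{V_0 + \sqrt{2}\sum_i \psi_i V_i : \psi \in \R^d\}$ still generate, via concatenation and the bracket-generating structure within $\deln$ plus the extra direction from $\voperp$, a full-rank ``reachable cone'' at some point — and that the time-parametrisation (all $t_j \ge 0$, and the drift cannot be stopped) does not collapse the dimension. The cleanest route is to first move along a curve long enough that the relevant brackets of the $V_i$'s (which span $\deln$) become ``visible'' as tangent directions to the reachable set — a Chow-type argument within the sub-distribution $\deln$ where there is no drift obstruction since the $V_i$-controls are free and sign-changing — and then account separately for the one extra dimension coming from $V_0$'s component transverse to $\deln$, namely $\voperp$, which is traversed monotonically. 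I would lean on \cite[Section~2.2]{Isidori} for the detailed accessibility estimate rather than reproving it, since the statement there is exactly of this form, and simply note that the hypotheses (LFG, hence the orbit theorem applies) are met here by Note~\ref{ufginvariant}.
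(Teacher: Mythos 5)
Your proposal is correct and takes essentially the same route as the paper, which states Lemma \ref{lemmareachstoc} without proof and simply cites \cite[Section 2.2]{Isidori} — the same reference to which you defer the only non-trivial part (the non-empty relative interior of the reachable set despite the unremovable drift). Your part (i), confining trajectories to $\curlys_x$ because each admissible velocity field lies in $\delnn$ and the orbits of $\delnn$ coincide with its maximal integral manifolds (Note \ref{ufginvariant}, Theorem \ref{thm:Hermann}, Proposition \ref{intmanpro}), is the routine half and is fine.
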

Combining the above and Theorem \ref{thm:stroocksuppthm} we obtain the following. 
\begin{prop}\label{correachstoc}
Consider the SDE \eqref{SDE} with initial datum $X_0=x$ and assume that the vector fields $V_0 \dd V_d$ satisfy the UFG condition.  Then $X_t \in \overline{\curlys_x}$ for every $t \geq 0$. \footnote{We clarify again that  the closure is intended to be in the Euclidean topology.}
\end{prop}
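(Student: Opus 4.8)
The plan is to combine the Stroock--Varadhan support theorem (Theorem~\ref{thm:stroocksuppthm}) with the reachability statement of Lemma~\ref{lemmareachstoc}. Fix $T>0$ and $x \in \R^N$. By Theorem~\ref{thm:stroocksuppthm}, for every $t \in [0,T]$ the point $X_t^{(x)}$ lies, almost surely, in the closure (in the uniform topology on path space, hence in particular in the Euclidean topology on $\R^N$ at the fixed time $t$) of the set of endpoints $p_t$ of solutions to the controlled ODE \eqref{stoccontrprob}, as the piecewise-constant controls $\psi_1,\dots,\psi_d$ range over all admissible choices. So it suffices to show that every such controlled endpoint $p_t$ belongs to $\overline{\curlys_x}$, because then the support of the law of $X_t^{(x)}$ is contained in $\overline{\overline{\curlys_x}} = \overline{\curlys_x}$.

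The key step is therefore to identify the reachable set of \eqref{stoccontrprob} with (a subset of) the maximal integral manifold $\curlys_x$ of $\delnn$ through $x$. This is exactly the content of Lemma~\ref{lemmareachstoc}: the set of points reachable from $x$ by the control problem \eqref{stoccontrprob} is contained in $\curlys_x$. Geometrically the reason is that each controlled trajectory is a concatenation of integral-curve segments of the vector fields $V_0 + \sqrt{2}\sum_i \psi_i V_i$ with the $\psi_i$ constant on each segment; since each such combined field is a linear combination of $V_0,\dots,V_d$, it lies in $\delnn$, and by Proposition~\ref{intmanpro} (together with Note~\ref{ufginvariant}, which gives that $\delnn$ is locally of finite type, hence integrable by Theorem~\ref{thm:Hermann}) its integral curves cannot leave the maximal integral manifold $\curlys_x$. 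Concatenating finitely many such segments keeps the trajectory inside $\curlys_x$; in particular $p_t \in \curlys_x \subseteq \overline{\curlys_x}$ for all admissible controls and all $t \ge 0$.

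Putting the two pieces together: the support of the law of $\{X_s^{(x)}\}_{s \in [0,T]}$ in path space is the uniform-closure of the set of controlled paths, all of which take values in $\curlys_x$; hence every path in the support takes values in $\overline{\curlys_x}$ (uniform limits of $\curlys_x$-valued paths are $\overline{\curlys_x}$-valued paths), and in particular $X_t^{(x)} \in \overline{\curlys_x}$ almost surely for each $t \in [0,T]$. Since $T$ was arbitrary and $\{X_t^{(x)}\}_{t \ge 0}$ has continuous paths, a standard countable-union argument over $T \in \N$ (and continuity to fill in) gives $X_t^{(x)} \in \overline{\curlys_x}$ for every $t \ge 0$, almost surely.

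I expect the only genuine subtlety to be bookkeeping about which closure is taken in which topology: Theorem~\ref{thm:stroocksuppthm} gives closure in $(C([0,T];\R^N),\|\cdot\|_\infty)$, and one must pass to the Euclidean closure of $\curlys_x$ in $\R^N$ at a fixed time, which is harmless since evaluation-at-$t$ is continuous from path space to $\R^N$ and $\overline{\curlys_x}$ is closed in $\R^N$ by assumption \ref{SAtop} on the manifold topology. The heart of the matter, however, is entirely geometric and already packaged in Lemma~\ref{lemmareachstoc} and Proposition~\ref{intmanpro}, so there is no real obstacle beyond assembling these ingredients carefully.
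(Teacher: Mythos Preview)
Your proof is correct and follows exactly the approach indicated in the paper, which simply states that the result is obtained by ``combining the above [Lemma~\ref{lemmareachstoc}] and Theorem~\ref{thm:stroocksuppthm}''. You have fleshed out the details of this combination (including the passage from path-space closure to Euclidean closure at a fixed time) more carefully than the paper does, but the underlying argument is the same.
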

Let us reiterate that Proposition \ref{correachstoc} doesn't say that $X_t^{(x)}$ will explore the whole set $\overline{\curlys_x}$ (that is, it doesn't imply irreducibility of the process on $\overline{\curlys_x}$), it simply means that the process $X_t$ will not leave such a set. 
\subsection{Local considerations: an important change of coordinates}\label{sec:Coordinatechange}
Let  $x \in \R^N$ be a {\em regular point} of  a given  distribution $\Delta$, i.e. suppose there exists a  neighbourhood  of $x$ where the dimension of $\Delta$ is constant, say equal to $n$. If this is the case then,  locally, there exist $n$    linearly independent vector fields, $\{X_1 \dd X_n\}= \set_n$, generating the distribution. 
   Suppose furthermore that $\Delta_{\set_n}$ is involutive and $n<N$ (see Note \ref{Noten<N}) .   For some small enough $\epsilon>0$ we can define the   map $\Psi: (-\epsilon, \epsilon)^N \rightarrow \R^N$  as follows:
\begin{align*}
\Psi :    \, (-\epsilon, \epsilon)^N \quad &  \longrightarrow \quad  \R^N\\
 \mathbf{t}:= (t_1 \dd t_N)   \quad & \longrightarrow \quad  e^{t_1 X_1}e^{t_2 X_2}\, {\cdots} \, e^{t_N X_N} x \, ,
\end{align*}
where $X_1 \dd X_n$ are as above and $X_{n+1} \dd X_N$ are such that 
${\mathrm{span}}\{X_1 \dd X_n,X_{n+1} \dd X_N \} = \R^N$ (at least locally). 
The map $\Psi$ is, at least locally, a diffeomorphism on its image, so it admits an inverse, which we denote by $\Phi$. Differentiating the obvious identity
$(\Phi \circ \Psi)(\mathbf{t}) = \mathbf{t}$, one obtains
$$
(\jacobian{\Phi}{x})(\Psi(\mathbf{t})) \cdot (\jacobian{\Psi}{t}) (\mathbf{t}) = Id_{N \times N}. 
$$
Let us make the above notation more explicit. The map $\Phi$ is a map from (opens sets of) $\R^N$ to (opens sets of) $\R^N$, i.e. 
\begin{equation*}\label{ellell}
\Phi(x)= (\Phi^1(x) \dd \Phi^N(x)), \quad  x \in \R^N \, ,  
\end{equation*}
where   $\Phi^i: \R^N \rightarrow \R$ .
Therefore the $i$-th row of the matrix $\jacobian{\Phi}{x}$ is the gradient $\nabla \Phi^i$. On the other hand, the $j$-th column of the matrix $\jacobian{\Psi}{t}$ is the  vector 
$\frac{\pa \Psi}{\pa t_j}:= \{\frac{\pa \Psi^1}{\pa t_j} \dd \frac{\pa \Psi^N}{\pa t_j}\}^T$.
The first $n$ columns of the Jacobian matrix $(\jacobian{\Psi}{t}) (\mathbf{t})$ are linearly independent (because $\Psi$ is a diffeomorphism) and, from the above, we have
\be\label{ell}
\nabla \Phi^i \cdot \frac{\pa \Psi}{\pa t_j} = 0 \quad  \mbox{ for all } j=1 \dd n, i= n+1 \dd N.
\ee
By the involutivity of $\Delta_{\set_n}$  the vectors  $\{\frac{\pa \Psi}{\pa t_j}\}_{j=1}^n$   belong to  $\Delta_{\set_n}$;\footnote{See e.g. \cite[item (ii) on page 25]{Isidori}} moreover because they are linearly independent, they span $\Delta_{\set_n}$.  Therefore the vectors $\nabla \Phi^i$ are orthogonal to every vector of $\Delta_{\set_n}$, i.e. 
$$
\nabla \Phi^i \cdot \tau =0 \quad \mbox{for every } \tau \in \Delta_{\set_n} \mbox{ and for every } i=n+1 \dd N \,. 
$$
Now notice that $\Phi$ is (locally) invertible so it can be used as a (local) change of coordinates $\bfz=\Phi (x)$.  With these preliminaries in place, we have the following. 
\begin{prop}\label{changecoordgen}
Let  $\Delta$  be a smooth involutive distribution on $\R^N$ and $x_0$ a regular point of $\Delta$.  In particular, assume that there exists a neighbourhood of $x_0$ where the dimension of $\Delta$ is $n$. Then there exists a change of coordinates $\Phi$ (defined locally) such that
\begin{description}
\item[i)]  A vector field $V$ on $\R^N$ belongs to  $\Delta$ if and only if in the coordinates defined by $\Phi$, the last $N-n$ components of $V$ are zero; \footnote{If $V$ is any vector,  then the vector $\tilde{V}(z)= \left[ (\jacobian{\Phi}{\star} ) \cdot V(\star) \right] \vert_{\star = \Phi^{-1}(\bfz)}$ is the representation of $V$ after the change of coordinates $\Phi$. Indeed,  if $\gamma(t)=e^{tV}x$ and 
		$\tilde{\gamma}(t) = \Phi (\gamma(t))$ then the tangent vector to $\tilde{\gamma}$ is precisely $\tilde{V}$.}
\item[ii)] \label{item:characterisationundercoordinatechange}if $\Delta$ is invariant under a vector field $W$ then, in the coordinates defined by $\Phi$, the last $N-n$ components of $W$ are functions independent of the first $n$ coordinates. More explicitely, as per notation introduced in \eqref{coordinateblocks},  let 
$$\bfz=(z^1 \dd z^n, z^{n+1} \dd z^N)=(z^1 \dd z^n, \zeta, a)= \Phi(x^1 \dd x^N)$$ and let $\tilde{W}$ be the representation of $W$ in the new coordinates. Then 
$$
\tilde{W}(\bfz)=(\tilde{W}^1(\bfz) \dd \tilde{W}^n(\bfz), \tilde{W}^{n+1}(\zeta,a) \dd \tilde{W}^N(\zeta,a) \,.
$$
\end{description}

\end{prop}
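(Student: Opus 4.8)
The plan is to construct the change of coordinates $\Phi$ exactly as in the preamble preceding the proposition, via the map $\Psi(\mathbf{t}) = e^{t_1 X_1} \cdots e^{t_N X_N} x_0$, where $X_1 \dd X_n$ are linearly independent vector fields spanning $\Delta$ on a neighbourhood of $x_0$ (these exist because $x_0$ is a regular point, so $\Delta$ has constant rank $n$ there), and $X_{n+1} \dd X_N$ are any fields completing them to a local frame of $\R^N$. The inverse $\Phi = \Psi^{-1}$ exists locally by the inverse function theorem since $\jacobian{\Psi}{t}$ is invertible at $\mathbf{t}=0$. Part i) is essentially the statement that, in the coordinates $\bfz = \Phi(x)$, the distribution $\Delta$ is spanned by $\pa_{z^1} \dd \pa_{z^n}$. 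First I would show that the pushforward of $X_i$ ($i \leq n$) under $\Phi$ lies in $\mathrm{span}\{\pa_{z^1} \dd \pa_{z^n}\}$: this is where involutivity of $\Delta_{\set_n}$ enters. Involutivity plus the classical Frobenius-type argument (or directly the orthogonality relation \eqref{ell} derived in the preamble, $\nabla\Phi^i \cdot \pa\Psi/\pa t_j = 0$ for $j \le n < i$) shows that each $\nabla \Phi^i$, $i = n+1 \dd N$, annihilates every vector of $\Delta$; hence the last $N-n$ components of $\tilde V = \jacobian{\Phi}{x} V$ vanish whenever $V \in \Delta$, and conversely if those components vanish then $\tilde V \in \mathrm{span}\{\pa_{z^1}\dd\pa_{z^n}\}$ which is $\Phi_*\Delta$, so $V \in \Delta$. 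The subtle point in i) is that $\Psi$ is a diffeomorphism only locally, and that the vectors $\{\pa\Psi/\pa t_j\}_{j\le n}$ genuinely span $\Delta$ (not merely lie in it) on the image; I would invoke involutivity for this — the footnote reference to \cite[item (ii), p.25]{Isidori} — exactly as flagged in the preamble.

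For part ii), the goal is to show $\tilde W^i$ for $i > n$ depends only on $(z^{n+1} \dd z^N)$, i.e. $\pa_{z^j} \tilde W^i = 0$ for $j \le n$ and $i > n$. The strategy is to translate invariance of $\Delta$ under $W$ into a statement about Lie brackets. Since $\Delta_{\set_n}$ is involutive, and we may assume it is of constant rank near $x_0$ (or locally finitely generated), Note \ref{Tom} applies: invariance of $\Delta$ under $W$ is equivalent to $[W,\tau] \in \Delta$ for all $\tau \in \Delta$. In the new coordinates $\Delta$ is spanned by $\pa_{z^1} \dd \pa_{z^n}$, so taking $\tau = \pa_{z^j}$, $j \le n$, we get $[\tilde W, \pa_{z^j}] \in \mathrm{span}\{\pa_{z^1}\dd \pa_{z^n}\}$. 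But $[\tilde W, \pa_{z^j}] = -\sum_{i=1}^N (\pa_{z^j}\tilde W^i)\, \pa_{z^i}$; for this to lie in $\mathrm{span}\{\pa_{z^1}\dd\pa_{z^n}\}$ we need $\pa_{z^j}\tilde W^i = 0$ for all $i > n$ and all $j \le n$, which is exactly the claim. This computation is short and is the cleanest route; the only care needed is justifying that the bracket computation is coordinate-invariant (it is, as the bracket of vector fields transforms covariantly) and that Note \ref{Tom}'s criterion is applicable — which it is, since near the regular point $x_0$ we can arrange $\Delta$ to have constant rank.

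The main obstacle, as I see it, is not any single deep step but rather the bookkeeping around locality: everything — the frame $X_1\dd X_n$, the diffeomorphism $\Phi$, the invariance criterion — holds only on a neighbourhood of $x_0$, and one must be careful that the neighbourhoods can be chosen consistently (shrinking finitely many times is harmless). A secondary subtlety is that the proposition as stated presupposes $\Delta$ involutive but not necessarily of constant rank globally; however, since we localize at a \emph{regular} point $x_0$, $\Delta$ has constant rank $n$ on a neighbourhood, which is all that is used, so this is only a matter of stating the localization carefully at the outset. I would also remark that part i) is classical (it is the Frobenius flat-chart theorem dressed up in the language of flows), so the only genuinely new content is the clean derivation of ii) from the bracket criterion of Note \ref{Tom}; I would present i) briskly, citing \cite{Isidori}, and give ii) in full since it is the part the rest of the paper leans on (it yields the ``ODE+SDE'' structure \eqref{OS1}--\eqref{OS2} once applied to $\Delta = \deln$ and $W = V_0$).
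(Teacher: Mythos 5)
Your proposal is correct and follows essentially the same route as the paper: part i) via the construction of $\Phi$ from the flows and the orthogonality relation \eqref{ell}, and part ii) via the criterion of Note \ref{Tom} applied to $\tau=\pa_{z^j}$ together with the explicit bracket computation $[\tilde{W},\pa_{z^j}]=-\sum_i(\pa_{z^j}\tilde{W}^i)\pa_{z^i}$. Your explicit treatment of the converse direction in i) and of the constant-rank localisation is slightly more detailed than the paper's, but the substance is identical.
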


\begin{proof}[Proof of Proposition \ref{changecoordgen}]
The proof is deferred to Appendix \ref{app:sec45}.
\end{proof}

We now  want to apply Proposition \ref{changecoordgen} to the vector fields appearing in the  SDE \eqref{SDE}. We assume that such vector fields  on $\R^N$ satisfy the UFG condition for some $m$. Let $\delnn$ and $\deln$ be the distributions defined at the beginning of Section \ref{sec:geomofUFGprocesses}.   We know that the rank of $\delnn$ is constant along the orbits of $\delnn$ (see comment before Definition \ref{def:LFT}). Let $x \in \R^N$ and consider the orbit of $\delnn$ through $x$. 
In view of Lemma \ref{lemmaW0},   if we assume that $\voperp(x) \neq 0$ then  the rank of $\delnn$ at $x$ is exactly $n+1$. Recall that $N$ is fixed and it is the dimension of the state space $\R^N$, while $n=n(x)$ is the dimension of the orbits of $\deln$ and it is constant along each one of such orbits. Notice that $\deln$ (and $\delnn$)  is  also involutive by construction, so we can use it to apply Proposition \ref{changecoordgen}.

With this in mind, let us   describe the coordinate change. This is obtained by combining the following two steps. 

$\bullet$ {\em Step one}: because $\delnn=\mathrm{span}(\mathcal{R}_m , V_0)$ is the tangent space of  an $(n+1)$-dimensional submanifold of $\R^N$ one can always {\em locally} express the vector fields $V_0 \dd V_d$ as
$$
\tilde{V}_j= (\tilde{V}_j^1 \dd \tilde{V}_j^{n+1}, 0 \dd 0), \quad j=0,1,\dd ,d, 
$$
i.e. the last $N-(n+1)$ coordinates of the vectors $\tilde{V_j}$ are simply zero. 

$\bullet$ {\em Step two}: apply Proposition \ref{changecoordgen} using the distribution $\deln$ (possibly only to the first $n+1$ coordinates of the involved fields). Then, because $V_1 \dd V_d$ belong to $\deln$ and $V_0$ is invariant for $\deln$, one obtains, in the new local coordinates, (and recalling the notation introduced in Section \ref{sec:notation})
\begin{align*}
& \tilde{V}_0 = (\tilde{V}_0^1(\bfz) \dd \tilde{V}_0^{n}(\bfz), \tilde{V}_0^{n+1}(\zeta, a), 0 \dd 0) \\
& \tilde{V}_j = (\tilde{V}_j^1(\bfz) \dd \tilde{V}_j^{n}(\bfz), 0 \dd 0), \quad 
j=1 \dd d \,,
\end{align*}
where we keep the same notation $\tilde{V}_j$ for  the new representation of the vector fields after this further change of coordinates.  This shows that, in the new coordinates, the vector fields $V_0 \dd V_d$ take the form \eqref{newvectors1} - \eqref{newvectors0}.

We now want to express the SDE \eqref{SDE} in the new local coordinates. If $X_t$ is the original process, $\bfzz_t$ is the process in the new coordinates. In particular
$$
\bfzz_t= (Z_t, \zeta_t, a_t), 
$$
where $Z_t \in \R^n$ contains the first $n$ coordinates of $\bfzz_t$, $\zeta_t$ is the $(n+1)$-th coordinate of the process and $a$ contains the remaining $N-(n+1)$ components (which do not change in time, see below). 
  Putting everything together and using the convention \eqref{newvectors1} - \eqref{newvectors0}, one obtains that, in the new coordinates, the SDE \eqref{SDE} with initial datum $\bfzz_0=(z_0, \zeta_0, a_0)$ is simply
\begin{align}
&{Z}_t= z_0+ \int_0^t U_0(Z_s, \zeta_s, a_0) \, ds
+ \sum_{j=1}^d \int_0^t {U}_j({Z}_s, \zeta_s,a_0) \circ dB^j_s 
\label{ashtree1}\\
&\zeta_t=\zeta_0+ \int_0^t {W}_0({\zeta}_s, a_0) \,  ds \label{ashtree2}\\
& a_t = a_0 \,. \label{ashtree3}
\end{align}
Notice that from the above one can also deduce that, in the new coordinates, $\tilde{V_0}^{(\deln)}=(U_0, 0 \dd 0)$ while $\tilde{V_0}^{(\perp)}=(0 \dd 0, W_0, 0 \dd 0)$. Assuming for the moment that at the initial point $x=X_0$ the dimension of $\delnn$ is exactly $n+1$, the fact that  the last $N-(n+1)$ components of the dynamics remain constant  reflects the fact that, at least for a short enough time, the solution of the SDE remains in the integral submanifold of $\delnn$ from which it started, coherently with Lemma \ref{lemmareachstoc} and  Proposition \ref{correachstoc}.

If  at the initial point the rank of $\delnn$ is exactly $N$, i.e. $n+1=N$, then one simply has 
\begin{align}
& Z_t=  z_0+ \int_0^t U_0(Z_s, \zeta_s) \, ds
+ \sum_{j=1}^d \int_0^t  U_j(Z_s, \zeta_s) \circ dB^j_s   \label{localrep1}\\
&\zeta_t=\zeta_0+ \int_0^t W_0 (\zeta_s) \,  ds \, ,  \label{localrep2} 
\end{align}
and this time $\tilde{V_0}^{(\deln)}=(U_0, 0 \dd 0)$ while $\tilde{V_0}^{(\perp)}=(0 \dd 0, W_0)$.
In this  simpler case it is clearer that we have locally reduced  the SDE \eqref{SDE} to an ODE component, $\zeta_t$ (which evolves independently of all the other components) and an $(N-1)$ - dimensional SDE.  We emphasize that, because the change of coordinates is local, such a representation will hold only for small enough $t$.

\begin{example}[UFG-Heisenberg]\label{example:UFGHeisenberg}\textup{ Consider the following dynamics in $\R^3$  
\begin{align*}
dX_t & = - X_t dt \\
dY_t & = - Y_t dt + \sqrt{2} dW^2_t \\
dZ_t & = - 2 Z_t dt - \sqrt{2} Y_t \circ dW^1_t + \sqrt{2} X_t \circ dW^2_t 
\end{align*}
Here $V_0=(-x,-y, -2z)$, $V_1= (0,0,-y)$, $V_2=(0,1,x)$. This example was introduced in \cite{CrisanOttobre} and named the {\em UFG-Heisenberg dynamics} (as it comes from a modification of the Heisenberg group).   This is already globally in the form ODE+SDE. The ODE for the first coordinate can be solved explicitly, giving $X_t=x_0 e^{-t}$. Therefore, if we start the dynamics at $(x_0,y_0,z_0)$ with $x_0>0$ ($x_0<0$, respectively), then the system evolves (at least for finite time) in the semispace with positive $x$-coordinates (negative, respectively).  If the initial datum is on the plane $(0,y_0,z_0)$ then the dynamics remains confined to such a plane for all subsequent times. This is coherent with the following: for the above set of vector fields, one has
$\delnn((x,y,z)) \simeq \R^3$ if $x>0$ or $x<0$ and $\delnn((x,y,z)) \simeq \R^2$ when $x=0$. 
The distribution $\delnn$ has three orbits, namely the sets
\begin{align*}
\curlys_+ &=\{(x,y,z)\in \R^3: x>0\}, 
\quad \curlys_-=\{(x,y,z)\in \R^3: x<0\},\\  
 \curlys_0 &=\{(x,y,z)\in \R^3: x=0\} \,. 
\end{align*}
As for the distribution $\deln$, this spans $\R^2$ at every point. Moreover, the orbit of $\deln$ through the point $(b,y,z)$ is  the plane $x=b$. For this reason,   when working on this example we will simply denote by $S_b$ the orbit through the point $(b,y,z)$. In particular, notice that $\curlys_0=S_0$. \hf
}
\end{example}

\begin{example}[Random Circles]\label{ex:circle}\textup{
Consider the SDE
 	\begin{align}
 	dX_t&=-Y_tdt + \sqrt{2} X_t\circ dB_t  \label{circle1}\\
 	dY_t&=X_t dt + \sqrt{2} Y_t\circ dB_t,   \label{circle2}
 	\end{align}
where $B_t$ is a one-dimensional Brownian Motion. 
 	This system satisfies neither the HC nor the PHC, however the UFG condition is satisfied at level $m=1$. Indeed we have
 	\begin{equation*}
 	{V}_0(x,y)=\left(\begin{array}{c}
 	-y\\
 	x
 	\end{array}\right), \,{V}_1(x,y)=\left(\begin{array}{c}
 	x\\
 	y
 	\end{array}\right) \quad{\mbox{and}} \quad [V_1,V_0]= 0 \,.
 	\end{equation*}
 	For every $(x,y) \in \R^2$,  $\deln (x,y)=\mbox{span}\{{V}_1 (x,y)\}$;   except for the origin, the orbits of $\deln$ are radial half-lines. That is,   $S_{(x,y)}=(0,0)$ if $(x, y)=(0,0)$ and $S_{(x,y)}=\{(sx,sy),  s>0\}$ otherwise.  Indeed,   $S_{(x,y)}$ coincides with the set of points accessible by the integral curves of ${V}_1$, which can be found explicitly:
 	\begin{equation*}
 	e^{tV_1}(x,y)=\left(\begin{array}{c}x\, e^t\\
 	y\,e^t
 	\end{array}\right), \quad t \in \R \,.
 	\end{equation*}
Moreover, $V_0$ is orthogonal to $V_1$, so $\vodel =0$ and $\voperp=V_0$; therefore    $\delnn(0,0)=\{(0,0)\}$, $\delnn(x,y)=\mathbb{R}^2$ outside the origin,  
 	 $\mathscr{S}_{(x,y)}=\mathbb{R}^2\setminus\{(0,0)\}$ if $(x, y)\neq (0,0)$ and  $\mathscr{S}_{(0,0)}=\{(0,0)\}$. In this example the local change of coordinates in the neighbourhood of $(1,0)$ is given by the diffeomorphism
$$
\Phi_{(1,0)}(x,y) = \left(\arctan\left(\frac{y}{x}\right), \frac{1}{2}\log(x^2+y^2)\right).
$$
After such a change of coordinates, the SDE \eqref{circle1} - \eqref{circle2} can be expressed, locally, as 
\begin{align}
& d\zeta_t = dt\label{anglechange} \\
& dZ_t =   \sqrt{2}dW_t\label{radiuschange}
\end{align}
 Let $C_t= (X_t, Y_t) \in \R^2$. In Figure \ref{figcircles} below we plot the evolution of $C_t$, i.e. the solution of \eqref{circle1} - \eqref{circle2}. From the plots it should be clear that $(\zeta_t, Z_t)$ are just the polar coordinates of the point $C_t$: $\zeta_t$  represents the angle, which evolves deterministically with a simple anticlockwise motion,  while $Z_t$ (or, to be more precise, $\exp(2Z_t)$)  is the radius, which changes randomly according to the SDE \eqref{radiuschange}. }
\end{example}
\begin{figure}[h] 
  	\centering
  	\subfloat[A plot of $(X_t,Y_t)$ for $0\leq t \leq \pi/2$.]{
  		\includegraphics[width=65mm]{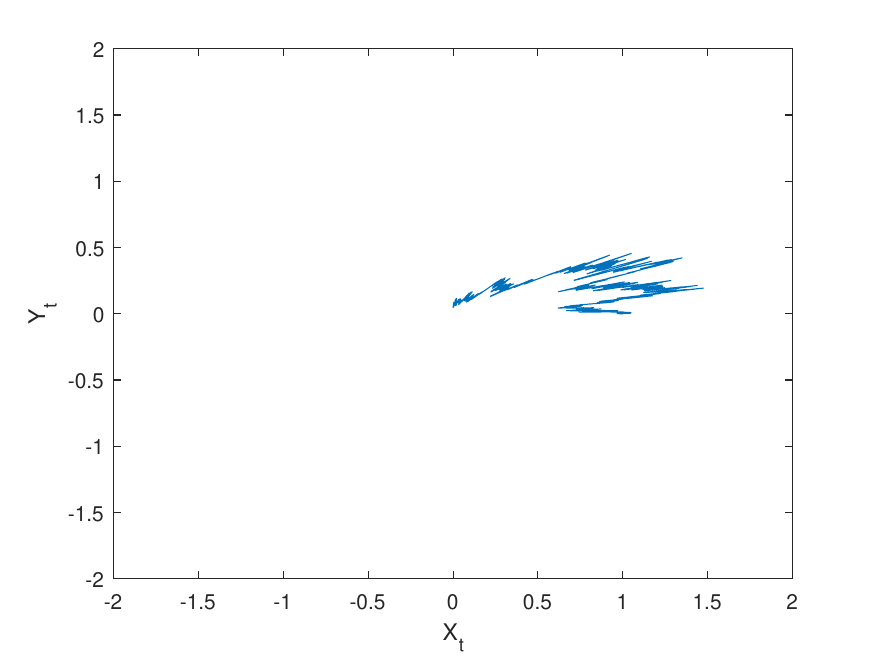}
  	}
  	\subfloat[A plot of $(X_t,Y_t)$ for $0\leq t\leq \pi$.]{
  		\includegraphics[width=65mm]{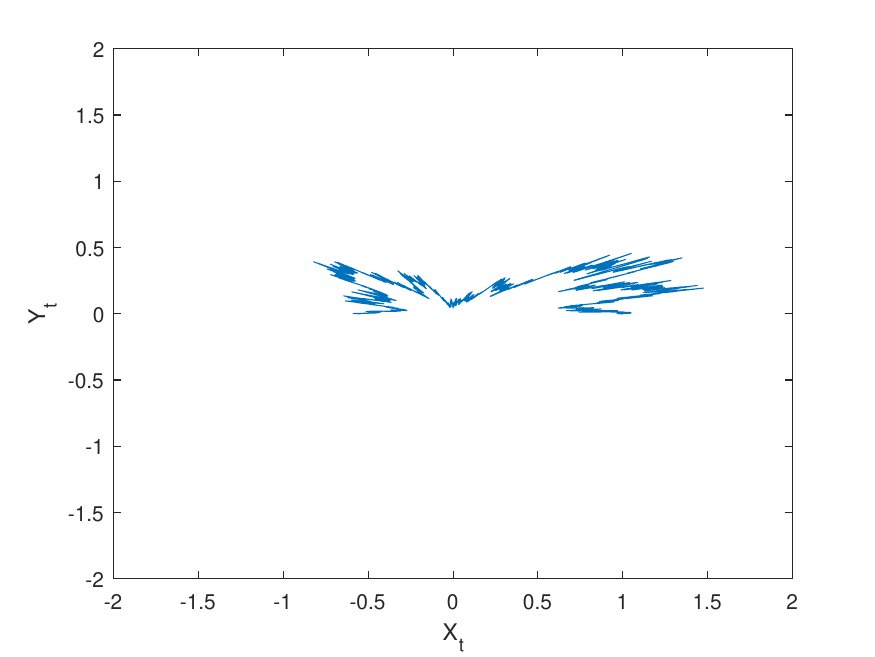}
  	}
  	\hspace{0mm}
  	\subfloat[A plot of $(X_t,Y_t)$ for $0\leq t \leq \frac{3\pi}{2}$]{
  		\includegraphics[width=65mm]{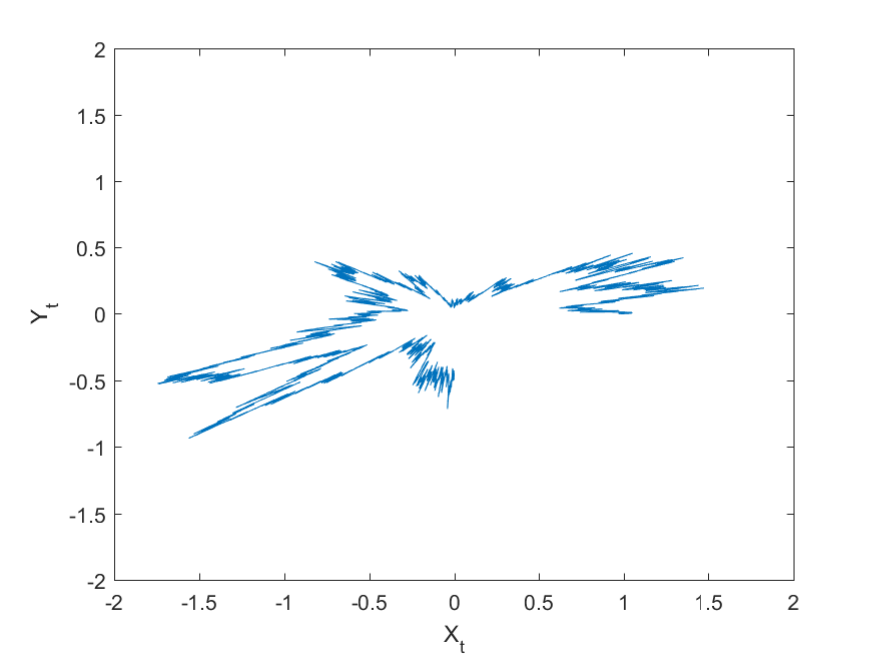}
  	}
  	\subfloat[A plot of $(X_t,Y_t)$ for $0\leq t \leq 2\pi$]{
  		\includegraphics[width=65mm]{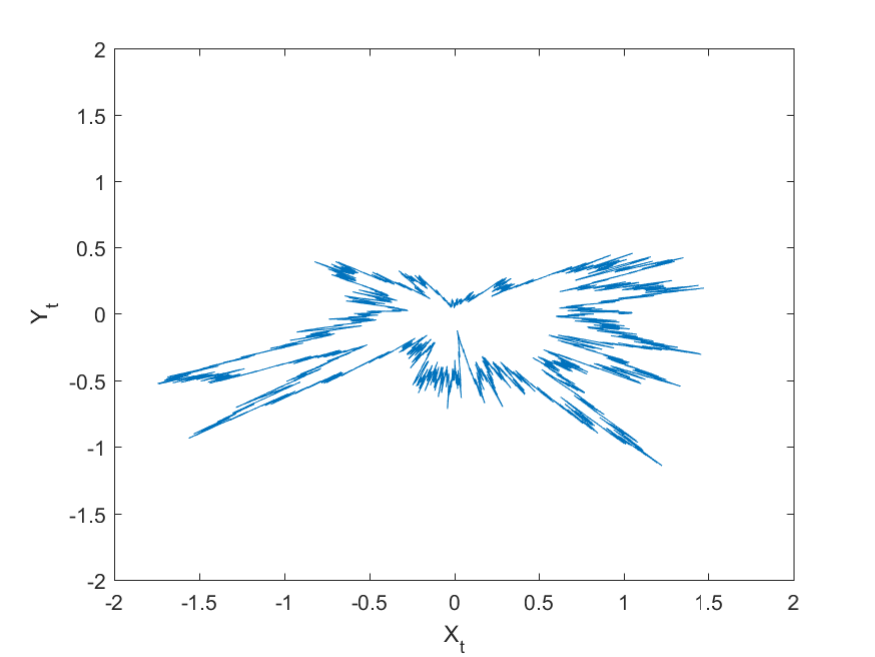}
  	}
  	\caption{The process $(X_t, Y_t)$ of Example \ref{ex:circle},  with initial condition $(X_0, Y_0)=(1,0)$. The angle of rotation evolves deterministically in counterclockwise sense, while the radius changes randomly, according to \eqref{radiuschange}.}\label{figcircles}
  \end{figure}
\begin{note}\label{Noten<N}
\textup{If the dimension $n$ of $\deln$ was equal to $N$ for every $x  \in \R^N$, this would imply that $\deln(x)=\delnn(x)$ for every $x \in \R^N$.  In particular,  the Parabolic H\"ormander Condition ({\bf PHC}) would hold.   This case is well studied in the literature and we do not wish to consider it here. For this reason many of the statements of this section are made under the assumption that $n<N$.  We need to emphasize that it may  happen that the two distributions coincide on a manifold (see Example \ref{example:UFGHeisenberg}, where the two distributions coincide on the plane $x=0$) and it may also happen that they both have full rank $N$ on a manifold, while they differ on other manifolds (see Example \ref{silly} below). The case  that is not interesting to our purposes is the one in which they coincide and have full rank on the whole of $\R^N$. Most of our theorems do cover that case as well (unless otherwise explicitly stated); but they are not really conceived in that framework. \hf}
\end{note}

\begin{note}\label{noteonsmoothness}\textup{The change of coordinates illustrated in this section will be an important technical tool throughout. We would like to point out how such a change of coordinates gives a different (and complementary) perspective on the smoothness results of Kusuoka and Stroock and of Crisan et all \cite{{KusStr82},{KusStr85},{KusStr87}, Kus03, {CrisanDelarue}} that we mentioned in the Introduction. As recalled in  Section \ref{context},  in these works the authors show   that if $f$ is a continuous and bounded function then,  under the UFG condition, the function $(\cP_tf)(x)$ is not necessarily smooth in every direction (as it would be the case under the H\"ormander condition), but it is in general only smooth in the directions $V_{[\alpha]}$, $\alpha \in \A_m$. In particular, it may not be differentiable in the direction $V_0$. In view of the decomposition \eqref{defvoperp} and of the change of coordinates presented in this section, this result is quite intuitive, as we explain. By \eqref{defvoperp}, it is clear that if $\voperp=0$ then $(\cP_t f)(x)$ is differentiable in the direction $V_0$ (as in this case $V_0$ is a combination of the vectors in $\fRm$) and, as a consequence, it is differentiable in $t$ as well. The loss of smoothness happens if and only if $\voperp\neq 0$.  For simplicity (and without any loss of generality), let us restrict to a manifold where $n+1=N$, so that the local change of coordinates gives \eqref{localrep1}-\eqref{localrep2}. As already observed,  the representation of $\voperp$ in the new coordinates is given by $\tilde{V_0}^{(\perp)}= (0 \dd 0, W_0)$, where $W_0$ is the function driving the ODE component. Hence $\voperp$ is inherently linked to the deterministic part of the system, which clearly doesn't provide any smoothness. This also explains why, while there is no smoothness in the direction $V_0$, the semigroup will always be smooth  in the direction $\pa_t - V_0$ (to be more precise, in the direction $\pa_t-\voperp$), as solutions of the ODE are constant in this direction.   Finally, the deterministic part of the dynamics is responsible for the lack of density (i.e. for the fact that the law of the process does not admit a density on $\R^N$).
It is useful to the purposes of this discussion to point out that the one-dimensional transport equation is an extreme example of UFG condition; that is, consider the PDE $\pa_t u(t,x) = \pa_x u(t,x), (t,x) \in \R_+ \times \R^N$, with initial datum $u(0,x)=f(x)$. Here $V_0= \pa_x$.  As is well known, the solution to such a PDE is just $u(t,x)=f(x+t)$, hence no smoothing  occurs in the space direction. However the solution is smooth in the direction $(\pa_t - \pa_x)= \pa_t - V_0$, as it is constant in such a  direction. Therefore, UFG diffusions include a vast range of behaviours, from smooth elliptic diffusions to deterministic equations. 
\hf
}
\end{note}

\begin{note}\label{technicalpoint}
\textup{
A final note on a technical point:
 as we have emphasized, to avoid having problems with the  well-posedness of the integral curves, we work under the standing assumption \ref{SA1}. After the change of coordinates the coefficients of the vector fields (in the new coordinates) may grow more than linearly, but they will still be smooth. Hence, in the neighbourhood in which they are defined, the vector fields will still be locally Lipschitz. The situation is more delicate with the vector $\voperp$: if $V_0$ is smooth, this is not the case for $\voperp$ as well, see Example \ref{ex:ODEwithmanyinvmeasctd}. 
Whenever this may cause issues, we will assume that $\voperp$ is at least such that the integral curve of $\voperp$ through a given point is unique and well defined (at least on given manifolds). \hf
}
\end{note}
We conclude this section by stating a couple of technical lemmata which will be useful in the following. 
\begin{lemma}\label{lem:boundary}
Assume the vector fields $V_0,\ldots, V_d$ satisfy the UFG condition. Let $\curlys$ be a maximal integral manifold of $\delnn$ and 
	 $S$ be an integral submanifold of $\deln$ such that $S \subseteq \curlys$.  Then $\pa S:= \bar{S}\setminus S$ is contained within 
$\pa \curlys:=\bar{\curlys} \setminus \curlys$.\footnote{Closures are meant in the Euclidean topology, see Appendix \ref{app:topology}. }
\end{lemma}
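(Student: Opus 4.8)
The plan is to reduce everything to showing that $S$ is \emph{relatively closed} in $\curlys$. Since $S\subseteq\curlys$ we have $\bar{S}\subseteq\bar{\curlys}$ for free, so the lemma follows once we know $\bar{S}\cap\curlys\subseteq S$: indeed then $(\bar{S}\setminus S)\cap\curlys=(\bar{S}\cap\curlys)\setminus S=\emptyset$, whence $\pa S=\bar{S}\setminus S\subseteq\bar{\curlys}\setminus\curlys=\pa\curlys$. So I would fix a point $q\in\curlys\setminus S$ and build a Euclidean neighbourhood of $q$ that misses $S$.

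First I would check that the rank of $\deln$ is constant along $\curlys$. By Theorem~\ref{thm:Hermann} and Lemma~\ref{lemmaW0}, $\curlys$ is an orbit of the generators $\fRm\cup\{V_0\}$ of $\delnn$, and each of these fields leaves $\deln$ invariant: those in $\fRm$ because $\deln$ is involutive and locally of finite type, so Note~\ref{Tom} applies, and $V_0$ by the definition of $\deln$. Since the associated local flows are diffeomorphisms they preserve the rank of $\deln$, and constancy along the orbit $\curlys$ follows; call this rank $n$. By Lemma~\ref{lemmaW0} the rank of $\delnn$ along $\curlys$ is then $n$ or $n+1$. If it is $n$, then $\voperp\equiv0$ on $\curlys$, so $\curlys$ is a connected integral manifold of $\deln$ containing the maximal integral manifold $S$; maximality forces $S=\curlys$ and there is nothing to prove. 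So the genuine case is $\mathrm{rank}\,\delnn\equiv n+1$ on $\curlys$, in which $\voperp$ is nowhere zero there.

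The core of the argument would be a flow-box analysis inside the manifold $\curlys$. There $\deln|_{\curlys}$ is smooth, involutive and of constant rank $n=\dim\curlys-1$, while $\voperp|_{\curlys}$ is smooth, nowhere zero and everywhere transverse to $\deln|_{\curlys}$, and $\deln$ is invariant under $\voperp$ — this comes from $[\voperp,\tau]=[V_0,\tau]-[\vodel,\tau]\in\deln$ for $\tau\in\deln$ together with Note~\ref{Tom} (the smoothness and flow-completeness of $\voperp$ on $\curlys$ being the technical point flagged in Note~\ref{technicalpoint}). Running the construction behind Proposition~\ref{changecoordgen} inside $\curlys$, with $\voperp$ as the last frame field, then gives around each point of $\curlys$ a flow box with coordinates $(z^1\dd z^n,z^{n+1})$ in which the maximal integral manifolds of $\deln$ are the slices $\{z^{n+1}=\mathrm{const}\}$ and $\voperp=\pa_{z^{n+1}}$. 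I would also note that the period set $\mathrm{Per}(S):=\{t\in\R:\, e^{t\voperp}(S)=S\}$ is a \emph{discrete} subgroup of $\R$: it is clearly a subgroup, and transversality of $\voperp$ to $TS=\deln|_{S}$ forces $e^{t\voperp}(s_0)\notin S$ for $s_0\in S$ and $t\neq0$ small, so $\mathrm{Per}(S)\cap(-\eta,\eta)=\{0\}$ for some $\eta>0$. To conclude, I would take a flow box around $q$ of transverse width less than $\eta$; since each $e^{t\voperp}$ is a diffeomorphism preserving $\deln$, it sends maximal integral manifolds of $\deln$ to maximal integral manifolds while shifting slice height by $t$, so any two slices of the box contained in $S$ have heights differing by an element of $\mathrm{Per}(S)\cap(-\eta,\eta)=\{0\}$. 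Hence $S$ meets the box in at most one slice; that slice is not the one through $q$ (otherwise $q\in S$), so it lies at a height $c\neq0$ and is bounded away from $q$. Using \ref{SAtop} — the box is the trace on $\curlys$ of a Euclidean-open set and $S\subseteq\curlys$ — a small enough Euclidean ball about $q$ is then disjoint from $S$, so $q\notin\bar{S}$, as required.

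The hard part, absorbing most of the work, is the relative closedness of $S$ in $\curlys$, i.e. the absence of ``limit leaves'': a leaf of an involutive distribution can in general accumulate on other leaves (irrational flows on the torus), and it is exactly the UFG structure — invariance of $\deln$ under the transverse field $\voperp$, which makes $\mathrm{Per}(S)$ discrete — together with the embeddedness built into \ref{SAtop} that excludes this. A secondary nuisance is that $\voperp$ need not be smooth on all of $\R^N$, only on $\curlys$ where $\deln$ has constant rank, so the existence of its flow must be invoked in the form discussed in Note~\ref{technicalpoint}.
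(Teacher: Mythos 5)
Your overall plan coincides with the paper's: reduce the lemma to showing $\bar S\cap\curlys\subseteq S$, split off the trivial case $\voperp\equiv 0$ on $\curlys$, and work in the adapted chart of Section \ref{sec:Coordinatechange}, in which the maximal integral manifolds of $\deln$ appear as level sets of the $(n+1)$-st coordinate and $\voperp$ points in the transverse direction. The problem is the one step that carries all the weight: the claimed discreteness of $\mathrm{Per}(S)=\{t:\,e^{t\voperp}(S)=S\}$. You justify it by asserting that transversality of $\voperp$ to $TS=\deln|_S$ forces $e^{t\voperp}(s_0)\notin S$ for all small $t\neq 0$. Transversality at $s_0$ only guarantees that the flow line exits the plaque of $S$ through $s_0$; it does not prevent $e^{t\voperp}(s_0)$ from landing, for arbitrarily small nonzero $t$, on a \emph{different} plaque of the same leaf accumulating at $s_0$. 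That is exactly the limit-leaf phenomenon you mention at the end (an irrational-slope foliation invariant under a transverse translation flow): there the distribution is invariant under the transverse field and transversality holds at every point, yet the period set is a dense subgroup. So discreteness of $\mathrm{Per}(S)$ is equivalent to (a local form of) the very statement you are proving, your justification for it uses no UFG input at all, and as written the argument is circular. (Invoking \ref{SAtop} at the end does not help here, since that assumption, and your use of it, concern $\curlys$, whereas the missing control is on how the leaf $S$ can re-enter a chart.)

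The paper closes precisely this point with Lemma \ref{lem:submanifoldlastcoordinate}: two points of $\curlys$ lying in the same coordinate neighbourhood belong to the same maximal integral manifold of $\deln$ if and only if they have the same $(n+1)$-st coordinate $\Phi^{n+1}$. The ``only if'' direction is where the UFG/LFG structure enters: the leaf is the orbit of the finitely many fields $V_{[\alpha]}$, $\alpha\in\A_m$, so the two points are joined by a piecewise integral curve of these fields, whose $(n+1)$-st components vanish in the chart coordinates, forcing the transverse coordinate to be constant along the curve. Given that lemma, the boundary statement follows in a few lines: if $x\in\pa S\cap\curlys$ and $x_k\in S$ with $x_k\to x$, continuity of $\Phi^{n+1}$ gives $\Phi^{n+1}(x)=\Phi^{n+1}(x_k)$, hence $x\in S$, a contradiction. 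To rescue your period-group argument you would need to derive the absence of small nonzero periods from this kind of control on whole leaves (or from an explicit embeddedness/properness assumption on $S$ itself), not from pointwise transversality.
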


\begin{proof}[Proof of Lemma \ref{lem:boundary}]
The proof is deferred to Appendix \ref{app:sec45}. 
\end{proof}

The statement of Lemma \ref{lem:boundary} would clearly not be true if $S$ and $\curlys$ were two arbitrary sets, it only holds because of the particular structure of the  integral manifolds of $\deln$ and $\delnn$.  
As a side remark, notice that while $S \subseteq \curlys$ implies $\pa S \subseteq \pa \curlys$, it is not the case, in general,  that the boundary of $\curlys$ is the union of boundaries of orbits of $\deln$, see Example \ref{example:UFGHeisenberg}. 

\begin{lemma}\label{lemmavoonmanifold}
With the notation introduced so far, assume the vector fields $V_0 \dd V_d$ satisfy the UFG condition.  Let $x_0 \in\R^N$ and recall that $x_0$ belongs to  exactly one integral manifold of $\delnn$, the manifold  $\curlys_{x_0}$. Consider the vector field $\voperp$ (defined in \eqref{defvoperp}) and assume such a vector field is smooth. Then either    $\voperp(y)=0$ for every $y \in \curlys_{x_0}$ or  $\voperp(y) \neq 0$  for every $y \in \curlys_{x_0}$. 
\end{lemma}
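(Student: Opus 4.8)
The plan is to show that the vanishing locus of $\voperp$ inside $\curlys_{x_0}$ is both open and closed in the manifold topology of $\curlys_{x_0}$, so that by connectedness of the integral manifold it is either empty or all of $\curlys_{x_0}$. The key geometric fact to exploit is Lemma \ref{lemmaW0}, which identifies $\delnn(x)=\mathrm{span}(\deln(x),\voperp(x))$: this tells us that $\voperp(x)=0$ precisely when $\deln(x)=\delnn(x)$, i.e.\ when the two distributions have the same rank at $x$. Since $\curlys_{x_0}$ is a maximal integral manifold of $\delnn$, the rank of $\delnn$ is constant (equal to $\dim \curlys_{x_0}$) along $\curlys_{x_0}$; thus the statement reduces to showing that the rank of $\deln$ is constant along $\curlys_{x_0}$. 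Equivalently, writing $n(x)$ for the rank of $\deln(x)$, I want to show $n$ is locally constant on $\curlys_{x_0}$, hence constant by connectedness, and that it equals $\dim\curlys_{x_0}$ everywhere on $\curlys_{x_0}$ or nowhere.

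The first step is to recall that $\deln$ is integrable (Proposition \ref{intmanpro}), so $\R^N$ is partitioned into orbits $S$ of $\deln$, and along each such orbit the rank $n$ is constant (this was noted right after Proposition \ref{intmanpro}). Moreover each orbit $S$ of $\deln$ that meets $\curlys_{x_0}$ is contained in $\curlys_{x_0}$, because $\deln\subseteq\delnn$ forces the integral manifold of $\deln$ through any $x\in\curlys_{x_0}$ to lie inside the integral manifold of $\delnn$ through $x$, namely $\curlys_{x_0}$. Hence $\curlys_{x_0}$ is itself a disjoint union of orbits of $\deln$. Now I invoke Lemma \ref{thm:intcurvepreservemanifolds} with $W=\voperp$ (using the standing assumption/Note \ref{technicalpoint} that the integral curves of $\voperp$ are well defined, at least on the relevant manifolds, and that $\deln$ is involutive and LFG by Note \ref{ufginvariant}, and invariant under $V_0$ hence under $\voperp=V_0-\vodel$ since $\vodel\in\deln$): if $x_1,x_2$ lie in the same maximal integral manifold of $\deln$ then $e^{t\voperp}x_1$ and $e^{t\voperp}x_2$ lie in a common maximal integral manifold of $\deln$, which therefore has a single well-defined rank. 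The point is that the flow of $\voperp$ maps orbits of $\deln$ to orbits of $\deln$ in a rank-preserving way, while staying inside $\curlys_{x_0}$ (the flow of $\voperp\subseteq\delnn$ preserves $\curlys_{x_0}$, again by Lemma \ref{thm:intcurvepreservemanifolds} applied to $\delnn$, or directly by Note \ref{note:chow}).

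Putting this together: fix $x\in\curlys_{x_0}$. On the orbit $S_x$ of $\deln$ through $x$, the rank $n$ is constant; call it $n_x$. If $\voperp$ vanishes identically on $S_x$ then $S_x$ is an integral manifold of $\delnn$ as well, so $S_x$ is relatively open in $\curlys_{x_0}$ (a $\dim\curlys_{x_0}$-dimensional submanifold sitting inside $\curlys_{x_0}$), forcing $n_x=\dim\curlys_{x_0}=\dim\delnn$, i.e.\ $\voperp=0$ on a neighbourhood. If instead $\voperp(x)\neq 0$ for some $x$, the flow $e^{t\voperp}$ moves $x$ transversally off $S_x$ while staying in $\curlys_{x_0}$ and preserving the rank of $\deln$; sweeping out a neighbourhood of $x$ in $\curlys_{x_0}$ by such flow lines emanating from $S_x$ shows the rank of $\deln$ equals $n_x<\dim\curlys_{x_0}$ on that neighbourhood, so $\voperp\neq 0$ there. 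Thus $\{x\in\curlys_{x_0}:\voperp(x)=0\}$ and $\{x\in\curlys_{x_0}:\voperp(x)\neq 0\}$ are both open in $\curlys_{x_0}$; connectedness of the maximal integral manifold $\curlys_{x_0}$ gives the dichotomy.

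The main obstacle I anticipate is the transversality/sweeping argument in the last step — making precise that the flow of $\voperp$, together with the orbit $S_x$ of $\deln$, locally parametrizes a full neighbourhood in $\curlys_{x_0}$ and that the rank of $\deln$ is genuinely locally constant there. This is essentially the content of the local "ODE+SDE" change of coordinates of Section \ref{sec:Coordinatechange} (Proposition \ref{changecoordgen}): in those coordinates $\voperp=(0,\dots,0,W_0,0,\dots,0)$ and $\deln$ is spanned by the first $n$ coordinate directions, so the claim becomes the transparent statement that $W_0$ depends only on $(\zeta,a)$ and $\deln$ has constant rank $n$ in the chart. A clean write-up would phrase the openness of $\{\voperp=0\}$ and $\{\voperp\neq0\}$ directly in this local chart, bypassing the need to re-derive Lemma \ref{thm:intcurvepreservemanifolds}; the low-regularity caveat on $\voperp$ flagged in Note \ref{technicalpoint} is the only other point requiring care, and is handled by the standing assumption that $\voperp$'s integral curves are well posed on the manifolds in question.
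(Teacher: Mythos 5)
Your overall skeleton (show the zero set of $\voperp$ in $\curlys_{x_0}$ is open and closed, then use connectedness of the maximal integral manifold) is exactly the paper's strategy, but as written your argument has a gap precisely at the crux. Your case split is ``either $\voperp$ vanishes identically on $S_x$, or $\voperp(y)\neq 0$ for some $y\in S_x$'', whereas at a point of the zero set all you know is $\voperp(x)=0$; nowhere do you prove that pointwise vanishing propagates to the whole $\deln$-orbit $S_x$ (equivalently, to a neighbourhood of $x$ in $\curlys_{x_0}$), and your second case, which implicitly assumes $\voperp(x)\neq 0$ at the base point in order to flow transversally off $S_x$, does not cover the scenario ``$\voperp(x)=0$ but $\voperp\not\equiv 0$ on $S_x$''. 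That propagation step is the entire nontrivial content of the lemma. It can be supplied in two lines with ingredients you already list: the rank of $\deln$ is constant along $S_x$ and the rank of $\delnn$ is constant along $\curlys_{x_0}\supseteq S_x$; since by Lemma \ref{lemmaW0} the two ranks agree at $x$, they agree at every $y\in S_x$, so $\deln(y)=\delnn(y)$ and hence $\voperp(y)$, which lies in $\delnn(y)$ and is orthogonal to $\deln(y)$, vanishes. With that inserted, your observation that an orbit on which $\voperp\equiv 0$ is a full-dimensional integral manifold of $\delnn$, hence open in $\curlys_{x_0}$, does give openness of the zero set. The paper instead proves openness directly: at $x$ with $\voperp(x)=0$ pick $n$ vector fields of $\deln$ spanning $\delnn(x)$; by smoothness they remain independent on a neighbourhood, and since the rank of $\delnn$ equals $n$ along the orbit they span $\delnn(y)$ there, so orthogonality of $\voperp(y)$ to $\deln(y)$ forces $\voperp(y)=0$.

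Two further remarks. First, the openness of $\{\voperp\neq 0\}$ is immediate from the assumed smoothness (continuity) of $\voperp$; the flow-transversality ``sweeping'' construction and the rank-preservation discussion are unnecessary for this, and they import exactly the technical issues you flag. Second, the proposed ``clean write-up'' via the chart of Proposition \ref{changecoordgen} is delicate: that proposition is stated at a regular point of $\deln$, i.e.\ it presupposes that the rank of $\deln$ is locally constant in a full $\R^N$-neighbourhood, which is uncomfortably close to (indeed not implied by) what the lemma is trying to establish, so relying on it here risks circularity. The elementary continuity-plus-rank-constancy argument (yours, once repaired, or the paper's) avoids the chart altogether.
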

\begin{proof}[Proof of Lemma \ref{lemmavoonmanifold}]
The proof is deferred to Appendix \ref{app:sec45}. 
\end{proof}

\section{Qualitative Results on UFG diffusions}\label{sec:5}
In this section we study the behaviour of the diffusion $X_t$  \eqref{SDE}  under the sole assumption that the vector fields $V_0 \dd V_d$ appearing in \eqref{SDE}  satisfy the UFG condition.  As observed also in \cite[Note 4.3]{CrisanOttobre}, under the sole UFG condition one cannot expect  to make any quantitative deductions on the behaviour of the process $X_t$. Neither can one expect the UFG condition itself  to imply any results about existstence or uniqueness of invariant measures, as there are many elliptic diffusions that don't have an invariant measure (the simplest example being Brownian motion on $\R$).  In order to study invariant measures and decay to equilibrium we will have to make further assumptions. Nonetheless, the geometric considerations made in the previous sections allow us to prove several qualitative statements on the behaviour of the diffusion.  The main results of this section are Proposition \ref{prop:dimcanonlydecrease}, Proposition \ref{prop:SDElivesonsubmanifold} and Proposition \ref{thm:meszerosetsunderinvmeas}. Collectively, these three results impart a lot of intuition about UFG dynamics and cointain a lot of useful information. After each one of these three statements we have inserted a note to comment on the meaning of these propositions, see Note \ref{Note5.2}, Note \ref{Note5.4} and Note \ref{Note5.8}.  The results of Section \ref{sec:non-autonomous} and Section \ref{sec:longtimebehaviour} heavily rely on the statements of this section.

Recall that we denote by $\capitals$ ($\curlys$, respectively) a generic integral manifold of the distribution $\deln$ ($\delnn$, respectively). Consistently, $\capitals_x$ ($\curlys_x$, respectively) denote the integral manifold of $\deln$ ($\delnn$, respectively) through the point $x \in \R^N$. 

\begin{prop}\label{prop:dimcanonlydecrease}
 Assume that the vector fields $V_0, V_1 \dd V_d$  satisfy the UFG condition and let $X_t$ be the solution of the SDE \eqref{SDE}.  Let $\curlys$ be a maximal integral manifold of $\delnn$ and let $\pa{\curlys}$ be the boundary of $\curlys$, i.e. $\pa \curlys:= \bar{\curlys}\setminus \curlys$. Then the following holds:
\begin{description}
\item[i)] If $\pa \curlys$ is not empty, it  is a union of integral submanifolds of $\delnn$;\label{item:boundaryisunionofmanifolds}
\item[ii)] If $X_0=x \in \pa\curlys$ then $X_t \in \pa\curlys \,\,\,\mbox{for all } t>0 \mbox{ (almost surely)} \,.  $ 
\end{description} 
\end{prop}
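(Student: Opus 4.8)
The plan is to prove the two items in sequence, using the local ``ODE+SDE'' representation of Section~\ref{sec:Coordinatechange} together with the structural results of Section~\ref{subsec:geometry} (Sussman's Orbit Theorem~\ref{thm:sussorbit}, Theorem~\ref{thm:Hermann}, Lemma~\ref{thm:intcurvepreservemanifolds}) and the Stroock--Varadhan support theorem~\ref{thm:stroocksuppthm}.

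For item~(i), first recall that $\delnn$ is locally of finite type (Note~\ref{ufginvariant}), hence integrable by Theorem~\ref{thm:Hermann}, so by Proposition~\ref{intmanpro} its maximal integral manifolds coincide with its orbits and they partition $\R^N$. The key point is that $\pa\curlys = \bar\curlys\setminus\curlys$ is a union of such orbits: I would take a point $y\in\pa\curlys$, let $\curlys_y$ be the (unique) maximal integral manifold of $\delnn$ through $y$, and show $\curlys_y\subseteq\pa\curlys$. Since the MIMs partition $\R^N$ and $y\notin\curlys$, we have $\curlys_y\cap\curlys=\emptyset$, so it suffices to show $\curlys_y\subseteq\bar\curlys$. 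This is where Lemma~\ref{thm:intcurvepreservemanifolds} enters: any point of $\curlys_y$ is reached from $y$ by a piecewise integral curve of the fields $V_0\dd V_d$ (by the orbit characterisation in Theorem~\ref{thm:Hermann}); since $y\in\bar\curlys$ and the flows $e^{tV_i}$ are diffeomorphisms that map $\curlys$ into itself (as $\curlys$ is an integral manifold of $\delnn$, hence invariant under each $V_i$), each such flow maps $\bar\curlys$ into $\bar\curlys$; composing finitely many of them keeps us in $\bar\curlys$, so the whole of $\curlys_y$ lies in $\bar\curlys$, and therefore in $\pa\curlys$. Thus $\pa\curlys$ is a union of maximal integral manifolds of $\delnn$.

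For item~(ii), suppose $X_0=x\in\pa\curlys$, and let $\curlys' := \curlys_x\subseteq\pa\curlys$ be the maximal integral manifold of $\delnn$ through $x$, which is a proper submanifold of $\R^N$ (its rank is strictly less than $\dim\curlys$ by item~(i), since the boundary of a manifold has strictly smaller dimension, as discussed after Proposition~\ref{correachstoc}). By Proposition~\ref{correachstoc} applied with initial datum $x$, we have $X_t\in\overline{\curlys_x}=\overline{\curlys'}$ for all $t\ge 0$ almost surely. It remains to upgrade this to $X_t\in\pa\curlys$ --- i.e., to rule out $X_t$ leaving $\curlys'$ into a lower-dimensional stratum of $\overline{\curlys'}$ in such a way that it exits $\pa\curlys$; but since $\overline{\curlys'}\subseteq\overline{\pa\curlys}$, and $\pa\curlys$ is a union of maximal integral manifolds each of which is relatively closed among the $\delnn$-orbits meeting $\bar\curlys$, one argues that $\overline{\curlys'}\setminus\pa\curlys$ cannot be reached: more precisely, iterating item~(i) (the boundary of $\curlys'$ is again a union of $\delnn$-MIMs, all contained in $\pa\curlys$ by the flow argument above since $\curlys'\subseteq\pa\curlys$), one gets $\overline{\curlys'}\subseteq\pa\curlys$, whence $X_t\in\pa\curlys$ for all $t\ge 0$ a.s.

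The main obstacle is the transition from Proposition~\ref{correachstoc} (which only gives $X_t\in\overline{\curlys_x}$, a closed set that may strictly contain $\curlys_x$ and whose lower-dimensional pieces need not lie in $\pa\curlys$ a priori) to the sharper conclusion $X_t\in\pa\curlys$; this requires the careful observation that the \emph{closure} $\overline{\curlys_x}$ is itself contained in $\pa\curlys$ whenever $x\in\pa\curlys$, which follows from iterating item~(i) together with the invariance of $\bar\curlys$ under all the flows $e^{tV_i}$ and the fact (from Note~\ref{Note5.2}, referenced in the statement) that the rank of the distribution can only decrease along the trajectory. One must also handle the measure-zero set of exceptional Brownian paths carefully, but this is automatic from the almost-sure statement of the Stroock--Varadhan support theorem and separability of path space.
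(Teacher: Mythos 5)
Your argument is correct and follows essentially the same route as the paper: item (i) via continuity of the flows of the generating fields plus orbit invariance and maximality of the integral manifolds, and item (ii) via Proposition \ref{correachstoc} (Stroock--Varadhan) combined with the containment $\overline{\curlys_x}\subseteq\pa\curlys$, obtained by re-running the argument of (i). One caveat: your appeal to Note \ref{Note5.2} (``the rank can only decrease'') is circular, since that note is a consequence of this very proposition, and the parenthetical claim that the boundary has strictly smaller dimension is neither proved in the paper nor needed --- drop both; the load-bearing point is simply that $\overline{\curlys_x}$ cannot meet $\curlys$, which your iteration of item (i) handles at the same level of detail as the paper's own proof.
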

\begin{proof}[Proof of Proposition \ref{prop:dimcanonlydecrease}]
The proof is deferred to Appendix \ref{app:sec45}.
\end{proof}

\begin{note}\label{Note5.2}\textup{Let us explain the meaning and consequences of Proposition \ref{prop:dimcanonlydecrease}. Suppose we start the SDE \eqref{SDE} at $x \in \R^N$. Because the integral manifolds of $\delnn$ partition $\R^N$,  $x$ belongs to one of such integral manifolds, the one  which we denote by $\curlys_x$.  As a consequence of Proposition \ref{correachstoc} we know that the process will never leave the closure of $\curlys_x$; however, if it started in the interior,  it could in principle hit the boundary (which is a manifold  whose dimension is lower than the dimension of $\curlys_x$) and then come back to the interior. What we prove here is that this is not possible. Furthermore, because the boundary of $\curlys_x$ is itself a union of integral manifolds of $\delnn$, one could repeat the previous reasoning once the process enters the boundary (if this is the case). As a result of iterating this line of thought, we have that, along the path of $X_t^{(x)}$, the rank of the distribution $\delnn$ can only decrease (or stay the same). In other words, we have shown that for every $x \in \R^N$ and $t\leq u$, one has
$$
\mathrm{rank}(\delnn(X_u^{(x)}))  \leq \mathrm{rank}(\delnn(X_t^{(x)})). 
$$ }
\hf
\end{note}

Before stating the next result we recall that the vector $\voperp$ has been defined in \eqref{defvoperp}. We also recall our assumption (see Note \ref{technicalpoint})  that $e^{t\voperp}(x)$ is well defined for all $x\in\R^N$ and $t\geq 0$.
\begin{prop}\label{prop:SDElivesonsubmanifold}
Let $X_t$ be the solution of the SDE \eqref{SDE} with initial condition $x_0\in\mathbb{R}^N$. If the vector fields $V_0, V_1 \dd V_d$ appearing in \eqref{SDE} satisfy the UFG condition then   
\begin{equation*}
X_t^{(x_0)}\in \overline{S}_{e^{t\voperp}(x_0)} \, , \quad \mbox{almost surely}. 
\end{equation*}
We clarify that $\overline{S}_{e^{t\voperp}(x_0)}$ is the closure (in the Euclidean topology) of the integral manifold of $\deln$ through the point 
$e^{t\voperp}(x_0) \in \R^N$. 
\end{prop}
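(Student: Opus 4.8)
The plan is to combine the Stroock--Varadhan support theorem with an argument that ``divides out'' the deterministic drift $\voperp$. By Theorem~\ref{thm:stroocksuppthm} it is enough to establish the deterministic statement: for every $T>0$ and every choice of piecewise constant controls $\psi_1,\dots,\psi_d\colon[0,T]\to\R$, the solution $p_t$ of
\begin{equation*}
\dot p_t= V_0(p_t)+\sqrt2\sum_{i=1}^d\psi_i(t)V_i(p_t),\qquad p_0=x_0,
\end{equation*}
satisfies $p_t\in\overline{S}_{e^{t\voperp}(x_0)}$ for all $t\in[0,T]$; the almost sure inclusion for $X_t$ then follows by taking closures in path space. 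Using \eqref{defvoperp} we rewrite the control equation as $\dot p_t=\voperp(p_t)+G(t,p_t)$, where $G(t,y):=\vodel(y)+\sqrt2\sum_i\psi_i(t)V_i(y)$ satisfies $G(t,y)\in\deln(y)$ for every $(t,y)$, because $\vodel(y)\in\deln(y)$ and $V_i(y)\in\deln(y)$ by Lemma~\ref{lemmaW0}. We may assume $\voperp$ does not vanish on $\curlys_{x_0}$: otherwise $\voperp\equiv0$ there by Lemma~\ref{lemmavoonmanifold}, so $e^{t\voperp}(x_0)=x_0$, the distributions $\deln$ and $\delnn$ agree on $\curlys_{x_0}$, hence $S_{x_0}=\curlys_{x_0}$ and the claim reduces to Proposition~\ref{correachstoc}.

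The first key step is that the distribution $\deln$ is invariant under the vector field $\voperp$. As $\deln$ is locally of finite type (Note~\ref{ufginvariant}), by the criterion in Note~\ref{Tom} it suffices to verify that $[\voperp,\tau]\in\deln$ for every $\tau\in\deln$, which is a local statement. Near a regular point of $\deln$ we pass to the adapted coordinates of Section~\ref{sec:Coordinatechange}, obtained by applying Proposition~\ref{changecoordgen} to $\deln$: in them $\deln=\mathrm{span}\{\partial_{z^1},\dots,\partial_{z^n}\}$ and, since $\deln$ is invariant under $V_0$, the component $\voperp$ of $V_0$ transverse to $\deln$ has the form $(0,\dots,0,W_0(\zeta,a),0,\dots,0)$ with $W_0$ independent of $z^1,\dots,z^n$; a direct computation then gives $[\voperp,\partial_{z^j}]=0$ for $j\le n$, whence $[\voperp,\tau]\in\deln$ for all $\tau\in\deln$. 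Covering $\R^N$ by such charts (and invoking Note~\ref{technicalpoint} at points where $\voperp$ fails to be smooth) gives the invariance globally. Consequently Lemma~\ref{thm:intcurvepreservemanifolds}, applied with $W=\voperp$, shows that the flow $e^{t\voperp}$ maps the maximal integral manifold $S_{x_0}$ of $\deln$ bijectively onto $S_{e^{t\voperp}(x_0)}$, and since $e^{\pm t\voperp}$ are homeomorphisms of $\R^N$ it also maps $\overline{S}_{x_0}$ onto $\overline{S}_{e^{t\voperp}(x_0)}$.

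The second step tracks the control curve relative to the $\voperp$-flow: set $q_t:=e^{-t\voperp}(p_t)$. Differentiating, and using that $e^{-t\voperp}$ pushes the vector field $\voperp$ forward to itself, the two occurrences of $\voperp$ cancel and one is left with $\dot q_t=\big(\jacobian{(e^{-t\voperp})}{x}\big)\big\vert_{x=p_t}\,G(t,p_t)$, which by the invariance just established belongs to $\deln(q_t)$ for every $t$. Since $q_0=x_0\in S_{x_0}$ and $\deln$ is integrable (Theorem~\ref{thm:Hermann}), the curve $q_t$ remains in $\overline{S}_{x_0}$; applying the first step we conclude $p_t=e^{t\voperp}(q_t)\in e^{t\voperp}(\overline{S}_{x_0})=\overline{S}_{e^{t\voperp}(x_0)}$, which is exactly the deterministic statement needed.

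The step I expect to be the main obstacle is the implication ``$\dot q_t\in\deln(q_t)$ for all $t$ $\Rightarrow$ $q_t\in\overline{S}_{x_0}$'': because $\deln$ has non-constant rank, the classical (Frobenius) confinement of a curve tangent to a distribution to a single leaf does not apply verbatim, and moreover $\voperp$ need not be smooth (Note~\ref{technicalpoint}), so even the ODE for $q_t$ must be handled with care. The way I would make this rigorous is to bypass the ODE for $q_t$ and argue by a Lie--Trotter splitting directly on $p_t$: on each subinterval where the controls are constant, $p_t$ is the integral curve of a field of the form $\voperp+G_j$ with $G_j$ tangent to $\deln$, and $e^{s(\voperp+G_j)}=\lim_{n\to\infty}\big(e^{(s/n)\voperp}e^{(s/n)G_j}\big)^n$; composing these over the finitely many subintervals writes $p_t$ as a limit of alternating products of short flows, in which each $e^{(\cdot)G_j}$ factor keeps the current leaf of $\deln$ fixed (as $G_j\in\deln$) while each $e^{(\cdot)\voperp}$ factor transports leaves according to Lemma~\ref{thm:intcurvepreservemanifolds}. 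Since the $\voperp$-durations occurring along $[0,t]$ add up to exactly $t$, every term of the approximating sequence lies in $S_{e^{t\voperp}(x_0)}$, so $p_t\in\overline{S}_{e^{t\voperp}(x_0)}$ in the limit; the remaining technical point is to justify convergence of the Trotter products under the weak regularity available for $\voperp$.
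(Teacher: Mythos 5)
Your reduction via Theorem~\ref{thm:stroocksuppthm} to a statement about the control paths, and the dispatching of the degenerate case $\voperp\equiv 0$ on $\curlys_{x_0}$, coincide with the paper's proof. From there you genuinely diverge: the paper fixes a control path $p$, considers $C=\{t: p(t)\in S_{e^{t\voperp}(x_0)}\}$ and shows $C$ is open and closed in $\R$, the openness being proved entirely inside the adapted chart of Section~\ref{sec:Coordinatechange} (via Lemma~\ref{lem:submanifoldlastcoordinate}, one only has to match the $(n+1)$-th coordinate of $p(t)$ with that of $e^{t\voperp}(x_0)$, both of which solve the one-dimensional ODE driven by the smooth chart function $W_0$), and the closedness using Lemma~\ref{thm:intcurvepreservemanifolds} with $W=\voperp$. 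This yields the exact membership $p(t)\in S_{e^{t\voperp}(x_0)}$, with the closure entering only through the support theorem, and it never requires a flow for $\vodel$ or any splitting of flows: the only ODEs used are the smooth control ODE, the scalar ODE for $W_0$, and the integral curve of $\voperp$ through $x_0$, which is needed even to state the result.

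The gap in your version is the one you flag yourself, and it is genuine: the Lie--Trotter step. Under the hypotheses of the proposition (UFG plus \ref{SA1} for $V_0,\dots,V_d$), the projections $\voperp$ and $\vodel$ need not be smooth, or even continuous (Example~\ref{ex:ODEwithmanyinvmeasctd}); Note~\ref{technicalpoint} only guarantees, when needed, existence and uniqueness of integral curves of $\voperp$. The Trotter formula $e^{s(\voperp+G_j)}=\lim_n\bigl(e^{(s/n)\voperp}e^{(s/n)G_j}\bigr)^n$, the completeness of the flow of $G_j=\vodel+\sqrt2\sum_i\psi_iV_i$, and even the standard fact that integral curves of a field belonging to $\deln$ stay in a leaf, all presuppose (locally) Lipschitz or smooth fields, so none of them are available for free here; your argument becomes complete only if one adds an assumption such as ``$\voperp$ (hence $\vodel$) is smooth/Lipschitz'', which the paper reserves for later sections. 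Two smaller points: your chart computation gives $[\voperp,\partial_{z^j}]\in\deln$ rather than $=0$ (the pushforward of $\voperp$ need not have vanishing first $n$ components, only its $(n+1)$-th component is $W_0(\zeta,a)$ and the last $N-n-1$ vanish), which is all you need; and the chart argument only covers regular points of $\deln$, so for the global invariance of $\deln$ under $\voperp$ you should instead argue via Note~\ref{Tom} and $[\voperp,\tau]=[V_0,\tau]-[\vodel,\tau]$, using invariance under $V_0$ and involutivity — the same implicit hypotheses under which the paper itself applies Lemma~\ref{thm:intcurvepreservemanifolds} to $\voperp$. Your leaf-bookkeeping in the Trotter product (the $\voperp$-durations summing to $t$, hence the approximants lying in $S_{e^{t\voperp}(x_0)}$ and $p_t$ in its closure, which still suffices after the support theorem) is correct; only the analytic justification of the splitting is missing.
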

\begin{proof}[Proof of Proposition \ref{prop:SDElivesonsubmanifold}] 
If $\voperp(x_0)=0$ then the result follows immediately from Proposition \ref{correachstoc} and  Lemma \ref{lemmaW0}. Indeed, by Proposition \ref{correachstoc} we know that $X_t^{(x_0)} \in \bar{\curlys}_{x_0}$ and by Lemma \ref{lemmaW0} (and Lemma \ref{lemmavoonmanifold}) we have $\curlys_{x_0}=S_{x_0}$.   So we only need to treat the case $\voperp(x_0)\neq 0$. This will be done by considering the control problem associated with the SDE \eqref{SDE} and by using  Stroock and Varadhan Support Theorem. We postpone this part of the proof to Appendix \ref{app:sec45}.
\end{proof}
\begin{note}\label{Note5.4}\textup{ Proposition \ref{prop:SDElivesonsubmanifold} clarifies the pivotal role of the vector $\voperp$. To convey more intuition about the role of $\voperp$, let us assume that $\voperp(x) \neq 0$ for every $x$ in $\curlys_{x_0}$, $x_0$ being the starting point of the SDE \eqref{SDE}.  We already know by  Proposition \ref{correachstoc} that $X_t^{(x_0)}$  will not leave $\bar{\curlys}_{x_0}$, so that we can consider $\bar{\curlys}_{x_0}$ to be the state space of the dynamics.  As already observed before Proposition \ref{intmanpro},   every $x \in \curlys_{x_0}$, belongs to exactly one orbit $S$ of $\deln$ and, moreover,  the union of the manifolds  $\{S_x\}_{x \in \curlys_{x_0}}$ gives precisely $\curlys_{x_0}$. In other words, the orbits of $\deln$ that belong to $\curlys_{x_0}$  partition $\curlys_{x_0}$. 
Furthermore, because $\voperp\neq0$ on $\curlys_{x_0}$ and the rank of $\delnn$ is constant on $\curlys_{x_0}$, one has (see Lemma \ref{lemmaW0}) that if $\curlys_{x_0}$ has rank $n+1$ then every orbit  $S_x, x \in \curlys_{x_0}$, will be a manifold of dimension $n$.   In particular, there is no $x \in \curlys_{x_0}$ such that $S_x=\curlys_{x}$ (so that the partition of $\curlys_{x_0}$ into orbits of the distribution $\deln$ is not the trivial one).  With this premise, it makes sense to ask the following question: while we know that the process will not leave $\overline{\curlys}_{x_0}$  for every $t\geq 0$, if we fix an arbitrary positive time $t>0$, can we tell more precisely where, within $\curlys_{x_0}$, $X_t^{(x_0)}$ is? In particular, can we  determine which submanifold $S$ it belongs to, i.e. which element of the partition of $\curlys_{x_0}$ is visited at time $t\geq 0$? The answer, given by Proposition \ref{prop:SDElivesonsubmanifold}, is the following: let $y=e^{t\voperp} x_0$. Then,  while $x_0\in S_{x_0}$, $X_t \in \overline{S}_y$. In other words, the vector $\voperp$ will make the SDE move from one submanifold of the partition (of $\curlys_{x_0}$) to another. Another question is whether it is  possible that $X_t$ will visit one of such submanifolds twice or whether it is the case that, once one of these submanifolds has been visited, it will never be hit again.  Example \ref{ex:circlescontinued} below shows that the submanifolds of the partition can be visited an arbitrary number of times.  }
\hf
\end{note}
\begin{example}\textup{
Recall the UFG-Heisenberg SDE introduced in Example \ref{example:UFGHeisenberg}. In this case $\voperp = (-x,0,0)$ and, as we have already mentioned,  $S_{(x_0,y_0,z_0)}$ is the plane $ S_{(x_0,y_0,z_0)} = \{(x,y,z): x=x_0\}.$
If $\voperp = (-x,0,0)$ then the integral curve of $\voperp$ through $(x_0, y_0, z_0)$ is $e^{t\voperp}(x_0, y_0, z_0)=(e^{-t}x_0, y_0, z_0)$ so that 
$$
S_{e^{t\voperp}(x_0, y_0, z_0)}= \{(x,y,z) \in \R^3: x= e^{-t}x_0\}. 
$$
It is therefore clear that if $(X_0,Y_0,Z_0)=(x_0,y_0,z_0)$ then $(X_t,Y_t,Z_t)= (x_0e^{-t}, Y_t, Z_t)\in \overline{S}_{(e^{-t}x_0,y_0,z_0)}$.
}
\end{example}
\begin{example}[Random Circles, Example \ref{ex:circle}, continued] \label{ex:circlescontinued}
\textup{Let us go back to Example \ref{ex:circle}. Consider the integral curve of $\voperp$, namely
 	\be\label{explrandcircvoperp}
 	e^{tV_0}(x,y) = e^{t\voperp}(x,y)= \left(\begin{array}{c}x\cos(t)-y\sin(t)\\
 	x\sin(t)+y\cos(t)
 	\end{array}\right).
 	\ee
 	To fix ideas, let $(x_0,y_0)=(1,0)$ be the initial condition of the SDE; then the integral curve of $\voperp$ through $(x_0,y_0)=(1,0)$ is the unit circle: 
$$
 e^{t\voperp}(1,0) = (\cos(t),\sin(t))
$$
and $S_{e^{t\voperp}\!\!(1,0)}= S_{(\cos(t), \sin(t))}$ is the (open half) radial line at an angle $t$ from the $x$-axis; that is, it is the (open half) radial line  that intersects the unit circle at the point $(\cos(t),\sin(t))$. 
On the other hand the solution of the SDE with initial datum $(x_0,y_0)=(1,0)$ is given by
 	\begin{align}
 	X_t^{(x_0,y_0)}&=\cos(t) e^{\sqrt{2} B_t}, \label{solcircles1}\\
 	Y_t^{(x_0,y_0)}&=\sin(t) e^{\sqrt{2} B_t}.\label{solcircles2}
 	\end{align}
 Therefore one can again explicitly verify that for every $t>0$,  $(X_t^{(x_0,y_0)}, Y_t^{(x_0,y_0)})$ belongs
  to $\overline{S}_{(\cos(t),\sin(t))}$. 
\hf
  }
\end{example}

\begin{prop}\label{thm:meszerosetsunderinvmeas}
With the notation introduced so far, assume the vector fields $V_0 \dd V_d$ satisfy the UFG condition and let 
$$E_t:=\{x\in\mathbb{R}^N:\mathbb{P}_x(X_t^{(x)}\notin \curlys_x )>0\} $$
and 
$$
E:=\{x\in\mathbb{R}^N:\mathbb{P}_x(X_t^{(x)}\notin \curlys_x)>0 \, \mbox{ for some } t>0\}.  
$$
Then, for any invariant measure $\mu$ of the SDE \eqref{SDE} (should at least one exist), we have
$\mu(E_t)=0$ for every $t>0$.  As a consequence,  
$\mu(E)=0$ as well. 
\end{prop}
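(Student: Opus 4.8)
The plan is to transfer the whole statement onto the integer-valued function $\rho(x):=\mathrm{rank}(\delnn(x))$. By Lemma \ref{lemmaW0} the distribution $\delnn$ is the span of the finitely many smooth vector fields in $\fRmo$, so $\rho$ is the rank of a matrix of smooth functions, hence lower semicontinuous and in particular Borel, with $0\leq\rho\leq N$. The first thing I would do is assemble the geometric facts already at hand: by Proposition \ref{correachstoc}, $X_t^{(x)}\in\overline{\curlys_x}$ almost surely; by Note \ref{Note5.2}, $t\mapsto\rho(X_t^{(x)})$ is almost surely nonincreasing, so in particular $\rho(X_t^{(x)})\leq\rho(x)$ a.s.; and by Proposition \ref{prop:dimcanonlydecrease}, part (i), together with Note \ref{Note5.2}, the topological boundary $\pa\curlys_x=\overline{\curlys_x}\setminus\curlys_x$ is a union of integral submanifolds of $\delnn$, each of dimension strictly less than $\dim\curlys_x=\rho(x)$. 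Combining these I would establish the key equivalence: for fixed $x$, almost surely, $X_t^{(x)}\notin\curlys_x$ holds if and only if $\rho(X_t^{(x)})<\rho(x)$. (If $\rho(X_t^{(x)})<\rho(x)$ then $X_t^{(x)}$ lies on a maximal integral manifold of $\delnn$ of dimension smaller than that of $\curlys_x$, hence outside $\curlys_x$; conversely, since $X_t^{(x)}\in\overline{\curlys_x}$, leaving $\curlys_x$ forces landing in $\pa\curlys_x$, where $\rho$ is strictly smaller.) This equivalence also makes $\{X_t^{(x)}\notin\curlys_x\}$ measurable jointly in $(x,\omega)$ --- it agrees up to null sets with $\{\rho(X_t^{(x)})<\rho(x)\}$ --- so $E_t$ is a Borel set and $E_t=\{x\in\R^N:\mathbb{P}_x(\rho(X_t)<\rho(x))>0\}=\{x\in\R^N:\E_x[\rho(X_t)]<\rho(x)\}$, the last identity because $\rho(X_t)\leq\rho(x)$ a.s.

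I would then bring in invariance of $\mu$ through the bounded observable $\rho$. Since $0\leq\rho\leq N$, invariance gives $\int\E_x[\rho(X_t)]\,\mu(dx)=\int(\cP_t\rho)\,d\mu=\int\rho\,d\mu$, whereas pointwise $\E_x[\rho(X_t)]\leq\rho(x)$ for every $x$. Hence the nonnegative Borel function $x\mapsto\rho(x)-\E_x[\rho(X_t)]$ has vanishing $\mu$-integral and therefore vanishes $\mu$-almost everywhere; by the description of $E_t$ obtained above this says exactly that $x\notin E_t$ for $\mu$-a.e.\ $x$, i.e., $\mu(E_t)=0$ for every $t>0$. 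One may read this as the statement that an invariant measure cannot keep leaking mass into the smaller, absorbing, lower-rank strata $\{\rho\leq k\}$, the single computation with $\rho$ encoding all levels $k$ simultaneously. Finally, for $E$ itself I would invoke monotonicity of $\rho$ along trajectories once more: for $0<s\leq t$ one has $\rho(X_t)\leq\rho(X_s)$ a.s., so $\{\rho(X_s)<\rho(x)\}\subseteq\{\rho(X_t)<\rho(x)\}$ up to a $\mathbb{P}_x$-null set, whence $E_s\subseteq E_t$; consequently $E=\bigcup_{t>0}E_t=\bigcup_{n\geq1}E_n$ is a countable increasing union and $\mu(E)\leq\sum_{n\geq1}\mu(E_n)=0$.

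The one genuinely substantive step is the equivalence in the first paragraph, and in particular the assertion that $\pa\curlys_x$ carries only points of strictly smaller $\delnn$-rank: this is precisely the geometric content of the orbit/integrability theory of Section \ref{subsec:geometry} as distilled in Proposition \ref{prop:dimcanonlydecrease} and Note \ref{Note5.2}, and a full write-up should spell out why no maximal integral manifold of $\delnn$ contained in $\overline{\curlys_x}\setminus\curlys_x$ can have dimension $\geq\dim\curlys_x$. Everything downstream is soft: a bounded-observable invariance identity and a pointwise monotonicity inequality. The only other point needing a word is measurability of $E_t$ and $E$, which the rank reformulation settles; and if one wished to allow infinite invariant measures one would run the argument separately on each absorbing set $\{\rho\leq k\}$ to avoid subtracting infinite quantities, but for invariant probability measures --- the relevant case here --- the computation above is the cleanest.
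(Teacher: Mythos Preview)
Your proof is correct and in fact more streamlined than the paper's. Both arguments rest on the same geometric input --- that every point of $\partial\curlys_x$ has strictly smaller $\delnn$-rank than $x$, which you rightly flag as the one substantive step --- but the analytic layer differs. The paper stratifies $\R^N$ by the level sets $\curlys^\ell=\{x:\rho(x)=\ell\}$, shows $E_t=\bigcup_\ell E_t^\ell$ with $E_t^\ell=\{x\in\curlys^\ell:\mathbb P_x(X_t\notin\curlys^\ell)>0\}$, and then argues by contradiction: assuming $\mu(E_t^\ell)>0$ for some $\ell$, it picks the maximal such $\bar\ell$ and balances the $\mu$-mass of $\curlys^{\bar\ell}$ via invariance together with the fact that mass can only leak to lower strata, reaching a contradiction. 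Your device of using $\rho$ directly as a bounded observable with $\cP_t\rho\leq\rho$ pointwise and $\int\cP_t\rho\,d\mu=\int\rho\,d\mu$ collapses this level-by-level bookkeeping into a single identity $\int(\rho-\cP_t\rho)\,d\mu=0$ with nonnegative integrand; it is the same mass balance, encoded all at once. As a bonus, your formulation makes the monotonicity $E_s\subseteq E_t$ and the Borel measurability of $E_t$ explicit, whereas the paper asserts the former without justification.
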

\begin{proof}[Proof of Proposition \ref{thm:meszerosetsunderinvmeas}]
The proof is deferred to Appendix \ref{app:sec45}.
\end{proof}

\begin{note}\label{Note5.8}\textup{ Informally, 
Proposition \ref{thm:meszerosetsunderinvmeas} says that any invariant measure (should at least one exist) gives zero weight to the set of points that, under the action of the dynamics prescribed by the SDE \eqref{SDE},  leave in finite time the submanifold from which they start. That is,  the set of points $x$ such that $X_t^{(x)}\notin \curlys_x$ for some time $t>0$, has $\mu$-measure zero.  In view of Proposition \ref{prop:dimcanonlydecrease} this result is intuitive: in general,  if the dynamics leaves a set it can return infinitely many times to that set (when this happens the set is said to be  recurrent). Because along the trajectories of $X_t^{(x)}$ the rank of the distribution can only decrease, if the process $X_t^{(x)}$ leaves the  integral manifold $\curlys_x$ from which it started, it will  never return to it. The dynamics will therefore spend an infinite amount of time outside the manifold $\curlys_x$,  so that the invariant measure, if it exists, it can only be supported outside  such a manifold.  In other words,  the theorem says that  an integral submanifold $\curlys$ is a recurrent set if and only if the process never leaves it  (once it enters it). This argument constitutes  an informal proof of the theorem.  Notice also that this theorem doesn't say anything about say Geometric Brownian motion (see Example \ref{ex:GBM}) or the  UFG-Heisenberg process of Example \ref{example:UFGHeisenberg}, as such dynamics only leave the initial submanifold in infinite time; for any finite time  they stay in the submanifold from which they started.\hf}
\end{note}

\begin{lemma}\label{prop:pointwiselimit}
	Assume  the vector fields $V_0 \dd V_d$ appearing in \eqref{SDE} satisfy the UFG condition and that the Obtuse Angle Condition \eqref{eq:OAC} holds. Let $\cP_t$ be the semigroup defined in \eqref{semigroup}. Then, given a maximal integral submanifold $\capitals$ of $\deln$, we have
	\be\label{eq:pointwiselimit}
	\lim_{t\to\infty} \lv\cP_t f(x)-\cP_t f(y)\rv = 0, 
	\ee
 for all $f\in C_b(\mathbb{R}^N)$ and $x,y\in \capitals$.
\end{lemma}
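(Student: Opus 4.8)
The plan is to combine the exponential decay of the directional derivatives of $\cP_t f$ along the vector fields $V_{[\alpha]}$, $\alpha\in\A_m$ --- which is precisely what the Obtuse Angle Condition provides via estimate \eqref{eq:graddecayest} --- with the geometric description of the maximal integral manifold $\capitals$. First I would recall that, by Lemma \ref{lemmaW0}, $\deln=\mathrm{span}\{\fRm\}$, and by Note \ref{ufginvariant} the set $\fRm$ is globally of finite type; hence Proposition \ref{intmanpro} (which is Theorem \ref{thm:Hermann} applied to $\fRm$) identifies $\capitals$ with the orbit of $\fRm$ through any of its points. Since $x,y\in\capitals$, there are therefore multi-indices $\alpha_1,\dots,\alpha_h\in\A_m$ and reals $s_1,\dots,s_h$ with
\[
y=e^{s_1 V_{[\alpha_1]}}e^{s_2 V_{[\alpha_2]}}\cdots e^{s_h V_{[\alpha_h]}}(x),
\]
i.e.\ $x$ and $y$ are joined by a piecewise integral curve $\gamma$ of vector fields in $\fRm$; being the continuous image of a compact interval, the range of $\gamma$ lies in some Euclidean ball $\mathbb{B}(0,r)$, with $r$ depending only on $x$, $y$ and the chosen curve.

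Next I would fix some $t_0\in(0,1)$ and estimate the increment of $\cP_t f$ along each of the $h$ smooth pieces of $\gamma$, for $t\ge t_0$. On the $k$-th piece, where $\gamma$ solves $\dot\gamma(u)=V_{[\alpha_k]}(\gamma(u))$ over a parameter interval $[a_k,b_k]$ of length $|s_k|$, the smoothing properties of UFG semigroups (see Appendix \ref{app:UFG}) ensure that $\cP_t f$ is differentiable in the direction $V_{[\alpha_k]}$, so that by the fundamental theorem of calculus along the flow
\[
\cP_t f(\gamma(b_k))-\cP_t f(\gamma(a_k))=\int_{a_k}^{b_k}\bigl(V_{[\alpha_k]}\cP_t f\bigr)(\gamma(u))\,du.
\]
Applying \eqref{eq:graddecayest} with the ball $\mathbb{B}(0,r)$ fixed above, the absolute value of the right-hand side is at most $|s_k|\,c_{r,t_0}\,\|f\|_\infty\,e^{-\lambda(t-t_0)}$ for every $t\ge t_0$.

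Summing over $k=1,\dots,h$ and telescoping the increments yields
\[
\bigl|\cP_t f(x)-\cP_t f(y)\bigr|\le\Bigl(\sum_{k=1}^h|s_k|\Bigr)\,c_{r,t_0}\,\|f\|_\infty\,e^{-\lambda(t-t_0)},
\]
and letting $t\to\infty$ gives \eqref{eq:pointwiselimit}, since $h$, the lengths $|s_k|$ and the radius $r$ depend only on the fixed points $x$ and $y$.

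The two places where I would be careful are: (i) the passage, through Theorem \ref{thm:Hermann} and Proposition \ref{intmanpro}, from the statement that $x$ and $y$ lie on the same maximal integral manifold of $\deln$ to the statement that they are connected by a piecewise integral curve of $\fRm$ --- this is exactly where the UFG condition, via the locally-finite-type property of $\fRm$, is indispensable; and (ii) the justification that differentiating $\cP_t f$ along the flow of $V_{[\alpha_k]}$ yields $(V_{[\alpha_k]}\cP_t f)\circ\gamma$ at every point of the curve, and not merely at its starting point, which follows from the directional smoothness of $\cP_t f$ together with the group property of the flow. I do not expect either to be a genuine obstacle; the real content of the argument is the uniform exponential decay, given by \eqref{eq:graddecayest}, of the derivatives of $\cP_t f$ along the vector fields of $\fRm$.
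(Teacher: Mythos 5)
Your proposal is correct and follows essentially the same route as the paper's own proof: connect $x$ and $y$ by a piecewise integral curve of the fields $V_{[\alpha]}$, $\alpha\in\A_m$ (which is possible because the maximal integral manifolds of $\deln$ coincide with the orbits of $\fRm$), apply the fundamental theorem of calculus along each smooth piece (Lemma \ref{lem:fundamentalthmofcalc}), and conclude via the exponential decay estimate \eqref{eq:graddecayest} on a compact set containing the curve, then telescope and let $t\to\infty$. The only differences are cosmetic (a ball instead of a generic compact set, and explicit citation of Proposition \ref{intmanpro}), so no further changes are needed.
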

\begin{proof}[Proof of Lemma \ref{prop:pointwiselimit}]
The proof  is deferred to Appendix \ref{app:sec45}
\end{proof}

\begin{prop}\label{invmesonstathyperpl}
	Consider the assumptions and setting of the previous lemma and 
	let $S$ be a maximal integral manifold of $\deln$.  Then, among all the invariant measures  $\mu$ of \eqref{SDE} (assuming at least one such measure exists), there exists at most one such that $\mu(S)=1$. Moreover, if such a measure exists,  then it is  ergodic (in the sense that $\cP_t\mathbbm{1}_E=\mathbbm{1}_E$ for some Borel set $E$,  implies that $\mu(E)=1$ or $0$) and for every $x\in S$ and $f\in C_b(\mathbb{R}^N)$ we have
	\begin{equation}\label{eq:convtoequil}
	\cP_t f(x) \to \int_{S} f(y) \mu(dy).
	\end{equation}
	
\end{prop}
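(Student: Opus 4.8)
The plan is to deduce all three assertions from Lemma \ref{prop:pointwiselimit}, which already absorbs the analytic content (the Obtuse Angle Condition \eqref{eq:OAC} and the gradient decay estimate \eqref{eq:graddecayest}); the rest is dominated convergence together with the classical invariant-set reduction. For \emph{uniqueness}, I would take two invariant measures $\mu_1,\mu_2$ of \eqref{SDE} with $\mu_1(S)=\mu_2(S)=1$ and fix $f\in C_b(\R^N)$. Invariance gives $\mu_i(f)=\mu_i(\cP_t f)$ for every $t\ge 0$, and, writing both integrals over $S$ and inserting the factors $\mu_2(S)=1$, $\mu_1(S)=1$,
$$
\mu_1(f)-\mu_2(f)=\int_S\!\int_S\big(\cP_t f(x)-\cP_t f(y)\big)\,\mu_1(dx)\,\mu_2(dy).
$$
The integrand is bounded in modulus by $2\|f\|_\infty$, hence $(\mu_1\otimes\mu_2)$-integrable, and by Lemma \ref{prop:pointwiselimit} it tends to $0$ pointwise on $S\times S$ as $t\to\infty$. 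Dominated convergence then forces the right-hand side to $0$, while the left-hand side does not depend on $t$; thus $\mu_1(f)=\mu_2(f)$ for every $f\in C_b(\R^N)$, i.e.\ $\mu_1=\mu_2$.

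For the \emph{convergence} \eqref{eq:convtoequil}, let $\mu$ be the (now unique) invariant measure with $\mu(S)=1$, assumed to exist, and fix $x\in S$ and $f\in C_b(\R^N)$. Since $\mu(S)=1$ and $\mu$ is invariant, $\int_S f\,d\mu=\int\cP_t f\,d\mu=\int_S\cP_t f(y)\,\mu(dy)$, so
$$
\cP_t f(x)-\int_S f\,d\mu=\int_S\big(\cP_t f(x)-\cP_t f(y)\big)\,\mu(dy).
$$
The integrand is again dominated by $2\|f\|_\infty$ and tends to $0$ for every $y\in S$ by Lemma \ref{prop:pointwiselimit}, so a second application of dominated convergence yields \eqref{eq:convtoequil}.

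For \emph{ergodicity}, I would use the standard invariant-set argument. Let $E$ be Borel with $\cP_t\mathbbm{1}_E=\mathbbm{1}_E$ $\mu$-a.s.\ for all $t\ge 0$, and assume $\mu(E)>0$ (otherwise $\mu(E)=0$ and we are done). Applying the positivity-preserving operator $\cP_t$ to the sandwich $0\le\mathbbm{1}_E g\le\|g\|_\infty\mathbbm{1}_E$ for $g\ge 0$ bounded Borel, and then splitting $g=g^+-g^-$, one gets $\cP_t(\mathbbm{1}_E g)=\mathbbm{1}_E\,\cP_t g$ $\mu$-a.s.\ for all bounded Borel $g$; hence the normalised restriction $\nu(A):=\mu(A\cap E)/\mu(E)$ is an invariant probability measure, because $\int\cP_t g\,d\nu=\mu(E)^{-1}\int\cP_t(\mathbbm{1}_E g)\,d\mu=\mu(E)^{-1}\int\mathbbm{1}_E g\,d\mu=\int g\,d\nu$. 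Since $\nu\ll\mu$ and $\mu(S^c)=0$, we have $\nu(S)=1$, so the uniqueness part already proved gives $\nu=\mu$; but $\nu(E)=1$, whence $\mu(E)=1$.

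I do not expect a genuine obstacle once Lemma \ref{prop:pointwiselimit} is available: the whole proof is a routine dominated-convergence argument plus the classical construction of an invariant measure from a $\mu$-a.s.\ invariant set. The only steps needing a line or two of care are the identity $\cP_t(\mathbbm{1}_E g)=\mathbbm{1}_E\,\cP_t g$ used in the ergodicity step, and the measurability of $S$ — which is an embedded submanifold of $\R^N$ under Assumption \ref{SAtop}, hence Borel, so that $\mu(S)$ is meaningful.
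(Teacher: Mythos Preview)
Your argument for uniqueness and for \eqref{eq:convtoequil} is essentially the paper's: integrate the pointwise limit of Lemma \ref{prop:pointwiselimit} against an invariant measure concentrated on $S$ and use dominated convergence. The paper does convergence first and reads off uniqueness from the limit; you do uniqueness first via the double integral over $S\times S$. Same idea, different bookkeeping.

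For ergodicity the two proofs genuinely diverge. The paper upgrades \eqref{eq:convtoequil} to $L^2_\mu$-convergence, uses density of $C_b(\R^N)$ in $L^2_\mu$ to get $\cP_{nt}f\to\mu(f)$ in $L^2_\mu$ for every $f\in L^2_\mu$, and then specialises to $f=\mathbbm{1}_E$. Your route is the classical restriction argument: build $\nu=\mu(\,\cdot\cap E)/\mu(E)$, show it is invariant, and invoke the uniqueness already proved. Your derivation of $\cP_t(\mathbbm{1}_E g)=\mathbbm{1}_E\,\cP_t g$ $\mu$-a.e.\ via the sandwich is clean and correct.

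One point to flag: the paper interprets the ergodicity clause as ``there exists some $t>0$ with $\cP_t\mathbbm{1}_E=\mathbbm{1}_E$'', i.e.\ invariance at a \emph{single} time, whereas you assume it for all $t\ge 0$. With your hypothesis the argument goes through as written. Under the paper's (stronger) reading, your $\nu$ is a priori only $\cP_{t}$-invariant for the fixed $t$ and its integer multiples, so you cannot invoke your uniqueness result verbatim. The fix is immediate, though: since $\nu\ll\mu$ gives $\nu(S)=1$, for $f\in C_b(\R^N)$ you have $\nu(f)=\nu(\cP_{nt}f)\to\mu(f)$ by \eqref{eq:convtoequil} and dominated convergence along $n\to\infty$, hence $\nu=\mu$. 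The paper's $L^2$ route handles this single-$t$ case without any extra remark, which is the small advantage it buys; your approach is otherwise more elementary and avoids the density step.
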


\begin{proof}[Proof of Proposition \ref{invmesonstathyperpl}]
The proof is deferred to Appendix \ref{app:sec45}.
\end{proof}

\section{Long-time behaviour of UFG processes: the case of ``non-autonomous hypoelliptic diffusions"}\label{sec:non-autonomous}

In this section we set $N=n+1$  and study  stochastic dynamics in $\R^N = \R^{n+1}$ of the form 
\begin{align}
dZ_t & = U_0(Z_t, \zeta_t) dt +  \sum_{j=1}^d   U_j(Z_t, \zeta_t) \circ dB^j_t ,  \,  \label{SDEglobal}\\
d\zeta_t & = W_0 (\zeta_t)  dt   \label{ODEglobal}\\
Z_0 & =z, \quad \zeta_0=\zeta   \label{incondglobal}\,.
\end{align}
In other words, we consider systems for which the representation of the form ``ODE+SDE" \eqref{localrep1}- \eqref{localrep2} is global. \footnote{We are not claiming that this representation  necessarily results from the change of coordinates presented in Section \ref{sec:Coordinatechange}. } The above system consists of an $n-$dimensional  process, $Z_t \in \R^n$,  satisfying an SDE, equation \eqref{SDEglobal},  which is coupled with a one-dimensional autonomous ODE, \eqref{ODEglobal}. As in previous sections, $U_j: \R^n\times \R \rar \R^n, j \in \{0 \dd d\}$ and $W_0: \R \rightarrow \R$.  The evolution of $Z_t$ depends on the evolution of $\zeta_t$, but the ODE solution $\zeta_t$ evolves independently of the SDE. For the purposes of this paper, we don't think of $\zeta_t$ as  representing time, but rather as representing an additional space-coordinate. However notice that if $W_0 \equiv 1$ and $\zeta(0)=0$ then $\zeta_t=t$ and we recover a standard  time-inhomogeneous setting, i.e. in this case \eqref{SDEglobal} becomes a general  time-inhomogeneous SDE, namely 
\be\label{SDEnonhomog}
dZ_t  = U_0(Z_t, t) dt +  \sum_{j=1}^d   U_j(Z_t, t) \circ dB^j_t \,.
\ee

Going back to the representation of the form ``ODE+SDE'' \eqref{SDEglobal}-\eqref{ODEglobal} under consideration,  if we denote by $X_t$ the process $\R^N \ni X_t= (Z_t, \zeta_t)$,  then $X_t$ is the solution of an autonomous SDE.  The one-parameter semigroup associated to $X_t$  is, as usual, given by 
$$
(\cP_t f)(x): = \E f(X_t \vert X_0=x) = \E  \left[f (Z_t, \zeta_t)\vert (Z_0, \zeta_0)= (z, \zeta) \right], \quad x=(z, \zeta) \in \R^{n+1}, 
$$   
for any $f \in C_b(\R^{n+1}; \R)$. 
On the other hand one could consider the two-parameter semigroup associated with the non-autonomous process $Z_t$ alone. Indeed, if  we solve the ODE for $\zeta_t$ and substitute the solution back into the SDE for $Z_t$, then we can simply consider equation \eqref{SDEglobal} rather than the whole system.  To be more precise, let us denote by $\zeta_t^{\zeta}$ the solution at time $t$ of \eqref{ODEglobal} with initial datum $\zeta(0)=\zeta$. That is, $\zeta_t^{\zeta}=e^{tW_0}\zeta$.   Let also $Z_t^{s,z,\zeta}$ be the solution of the following SDE:
$$
Z_t=z+ \int_s^t U_0(Z_u, \zeta_u^{\zeta}) du +  \sum_{j=1}^d \int_s^t  U_j(Z_u, \zeta_u^{\zeta}) \circ dW^j_u \,.
$$
The two-parameter semigroup associated with the above non-autonomous SDE is given by
$$
(\cQ_{s,t}^{\zeta}g)(z)  := \E \left[ g(Z_t^{s,z,\zeta})\right],  \qquad  z \in \R^n, s\leq t \,,
$$
We emphasize that this two-parameter semigroup depends on $\zeta$, i.e. on the initial datum of the ODE. When we do not wish to stress this dependence we may just write $\cQ_{s,t}$. 
 With this notation, one can equivalently rewrite  the definion of $\cP_t$ as 
\be\label{semigrglobalX}
(\cP_t f)(x)=\E \left[  f(Z_t^{0,z,\zeta}, \zeta_t^{\zeta})\right], \quad x=(z, \zeta) \in \R^{n+1}.
\ee
 To make explicit the relation between the two-parameter semigroup $\cQ_{s,t}^{\zeta}$ and the one-parameter semigroup $\cP_t$, fix $s \in \R$ and let $\hat{\zeta}=e^{sW_0}\zeta$. Notice that 
$$
Z_t^{0,z,\hat{\zeta}} = Z_{t+s}^{s, z, \zeta}
$$
where the equality is intended in law.
Therefore, for every $f \in C_b(\R^{n+1}; \R)$, and $z \in \R^n$, we have
\begin{align*}
(\cP_{t}f)(z, \hat{\zeta}) &  = \E \left[ f(Z_t^{0,z,\hat{\zeta}}, \zeta_t^{\hat{\zeta}})\right] 
 = \E \left[  f(Z_t^{0,z,\hat{\zeta}}, \zeta_{t+s}^{{\zeta}}) \right]\\
&= \E \left[  f(Z_{t+s}^{s, z, \zeta}, \zeta_{t+s}^{\zeta}) \right].
\end{align*}
Hence, 
\be\label{linksmanysemigr}
(\cP_{t}f)(z, \hat{\zeta}) = 
(\cQ_{s,s+t} f(\cdot, \zeta_{t+s}^{\zeta}) )(z) \,.
\ee
On  the right hand side of the above we mean to say that the semigroup $\cQ$ is acting on the function $f(\cdot, a)$ obtained by freezing the value of the last coordinate of the argument.  

From now on, unless otherwise specified, we write $Z_t$ for $Z_t^{0,z,\zeta_0}$. With this set up in place, we can start commenting on the long-time behaviour. Heuristically, if the solution of the ODE \eqref{ODEglobal} is unbounded, then one can't expect the process $X_t$ to have an invariant measure (see Proposition \ref{nounbounded})-- though the process $Z_t$ may still admit an invariant measure. So we restrict to the case in which the solution of the ODE is bounded. However, because \eqref{ODEglobal} is a one-dimensional  time-homogeneous ODE, if $\zeta_t$ is bounded then it can only either increase or decrease towards stable {\em stationary points} of the dynamics (a stationary point of the ODE \eqref{ODEglobal} is a point $\barz \in \R$ such that $W_0(\barz)=0$). We emphasise that there may be many such points.
For these reasons, we work under the assumption that $\zeta_t$ admits a finite limit, i.e. we assume that  the initial  datum $\zeta_0 \in \R$ is such that there exists a point $\barz= \barz(\zeta_0) \in \R$ such that 
\be\label{convODE}
\zeta_t^{\zeta_0} \rightarrow  \barz=\barz(\zeta_0) \quad \mbox{as } t\rightarrow \infty.  
\ee 
As customary, the notation $\barz=\barz(\zeta_0)$ is to emphasise the fact that the limit point will depend on the initial datum (when we don't wish to stress such a dependence we just denote a stationary point of the ODE by $\barz$). The dynamics \eqref{SDEglobal}-\eqref{ODEglobal} will, in general,  admit several invariant measures. As pointed out in the introduction, when this is the case, it is typically extremely difficult to determine the basin of attraction of each invariant measure. However in the setting of this section the basin of attraction of a given invariant measure will only depend on the behaviour of the ODE. (In the next section we will show that, despite the fact that the representation of the form ``ODE+SDE" is only local  for generic UFG processes, it is still the case that we can relate in a simple way the initial datum to the invariant measure to which the process is converging). Given an initial datum $\zeta_0$ for \eqref{ODEglobal}, let $\barz=\barz(\zeta_0)$ be the corresponding limit point of the ODE dynamics, as in \eqref{convODE}. Consider the SDE
\be\label{limitingproc}
d\barZ_t= U_0(\barZ_t, \barz)\, dt + \sum_{j=1}^d   U_j(\barZ_t, \barz) \circ dB^j_t ,  \quad \barZ_0=\bar{z},
\ee
with associated semigroup     
\begin{equation*}\label{limitingsemigr}
(\barQ_{t}g)(\bar{z}):= \E g(\barZ_t \vert \barZ_0=\bar{z}), \quad 
\bar{z} \in \R^n, g \in C_b(\R^n) \,. 
\end{equation*}
We will assume that the dynamics \eqref{limitingproc} is hypoelliptic, see Hypothesis 
\ref{item:smoothnessassumption} below for a more precise statement of assumptions. 
Moreover, under Hypothesis \ref{item:tightnessassumption},  the semigroup $\bar{\cQ}_t$ admits a unique invariant measure, $\mub=\mub(\barz, \zeta_0)$ (see Lemma \ref{lem:ergbarZ}). We emphasise that the asymptotic behaviour  of $\barZ_t$ is independent of the initial datum $\bar{z}$, see Lemma \ref{lem:ergbarZ}. 

In view of \eqref{convODE}, it is reasonable to guess that  the asymptotic behaviour of $Z_t= Z_t^{0,z,\zeta_0}$ is the same as the asymptotic behaviour of $\barZ_t$ which is the solution of \eqref{limitingproc}. This is the content of Theorem \ref{thm:mainglobalthm} below. Theorem \ref{thm:mainglobalthm} and  Theorem \ref{thm:mainglobalthmforX} are  the main results of this section; the former is concerned with the asymptotic behaviour of the semigroup $\cQ_{s,t}$, the latter describes the related asymptotic behaviour of the semigroup $\cP_{t}$. 
  We set first the assumptions used in the rest of this section and we comment on their significance in Note \ref{noteonhypglobal}. 

\begin{hypothesis}\label{hyp:nonautoassumptions}
With the notation introduced so far, we will consider the following assumptions:
\begin{enumerate}[label=\textbf{\textup{[H.\arabic*]}}]
\item The vector fields $V_0=(U_0, W_0),V_1=(U_1, 0),\dots,V_d=(U_d, 0)$ satisfy the UFG condition for some $m\geq 1$; moreover,  
$$span\{\mathcal{R}_m\}=\mbox{span} \{V_{[\alpha]}(x):\alpha\in \A_m\} \simeq \R^n, 
\quad \mbox{for every } x \in \R^{n+1}. $$	
\label{item:smoothnessassumption}
\item Define the measures $q_t^{s,z}$ by $q_t^{s,z}(A):=\cQ_{s,t}\mathbbm{1}_A(z)$,  for any Borel measurable $A \subseteq \mathbb{R}^{n}$. Then we require that for each $z\in\mathbb{R}^{n}$ the family of measures $\{q_t^{0,z}:t\geq 0\}$ on $\R^n$ is tight.\label{item:tightnessassumption}
\item The Obtuse angle condition \eqref{eq:OAC} is satisfied (with the understanding of Hypothesis \ref{SA} \ref{SAOAC}).
	\label{item:OAC}
\item The ODE \eqref{ODEglobal} has at least one stationary point $\barz$ and the initial datum   $\zeta_0 \in \R$ of  \eqref{ODEglobal} is such that \eqref{convODE} holds, for some limit point $\barz=\barz(\zeta_0)$. \label{item:convODE}
	\end{enumerate}
\end{hypothesis}

\begin{note}\label{noteonhypglobal}
\textup{
Some comments on the above assumptions, in particular on Hypothesis \ref{item:smoothnessassumption}. }
	\begin{itemize}
\item \textup{We start by remarking on the obvious fact that if  $X_t=(Z_t, \zeta_t)$, where $Z_t, \zeta_t$ are as in \eqref{SDEglobal}-\eqref{ODEglobal}, then $X_t$  solves an SDE of the form \eqref{SDE}, with $V_0=(U_0, W_0),V_1=(U_1, 0),\dots,V_d=(U_d, 0)$.
}

 \item\textup{With the notation of Section \ref{sec:geomofUFGprocesses} and Section \ref{sec:5}, assumption  \ref{item:smoothnessassumption} implies that the distribution $\deln(x)$ is $n$-dimensional for every $x \in \R^N$, with $n=N-1$. In the setting of this section, this is the maximum rank that the distribution $\deln$ can have (as $V_0= (U_0, W_0)$ is not contained in $\mathcal{R}_m$ when $W_0 \neq 0$).  In other words, for every $x \in \R^N$, the integral manifolds $S_x$ of $\deln(x)$ are $(N-1)$-dimensional manifolds. Because of the particularly simple structure of the SDE, such manifolds are just hyperplanes: for $x=(z,\zeta)$, $S_x=S_{(z,\zeta)}=\{u\in \R^{n+1}: u=(z, \eta), \eta=\zeta,   z \in \R^n \}$.  In this explicit setting Proposition \ref{prop:SDElivesonsubmanifold} is easy to check.  }  
\item \textup{ To reconcile the present work with the framework of  \cite{Cattiaux} and further elaborate on the meaning of Hypothesis \ref{item:smoothnessassumption}, let us assume for the moment that $W_0 \equiv 1$ and that $\zeta(0)=0$, so that \eqref{SDEglobal} becomes a standard time inhomogeneous SDE of the form \eqref{SDEnonhomog}. 
In this case the vector fields $U_0, \dd U_d$ are $\R^n$-valued maps  whose coefficients depend on time, i.e. $(z,t) \mapsto U_j(z,t) \in \R^n$. For simplicity, let also $n=1$. Then $V_0$ acts both on space and time, while   $V_1 \dd V_d$ act on the space coordinate $z$ only. That is, $V_0 = U_0(z,t) \pa_z + \pa_t$ while $V_j=U_j(z,t) \pa_z$  for $j=1\dd d$,  so that
\be\label{commtime}
[V_0, V_j]= [U_0, U_j] + (\pa_t U_j(z,t)) \pa_z  \quad j \in \{1 \dd d\}.
\ee
One can then rephrase Hypothesis \ref{item:smoothnessassumption} just in terms of the fields $U_0 \dd U_d$; from \eqref{commtime} it is then clear that Hypothesis \ref{item:smoothnessassumption} is equivalent to assuming that the Lie algebra 
$$
\bigcup_{k\geq 1} \mathrm{span} \{\mathfrak{L}_k^U(z,t)\},
$$
where $\mathfrak{L}_1^U(z,t):=\{U_1(z, t) \dd U_d(z, t)\}$ and, for $k >1 $, $\mathfrak{L}_k^U(z,t):=\{[U, U_j], U \in \mathfrak{L}_{k-1}^U, 1 \leq j \leq d\} \cup \{[U, U_0+ \pa_t], U \in \mathfrak{L}_{k-1}^U \}$,   
should be finitely generated and span $\R^n_z$, for every $(z,t) \in \R^n \times \R$.
 }

\textup{ Let us now go back to the general representation of the form ``ODE+SDE" \eqref{SDEglobal} - \eqref{ODEglobal}, without assuming $W_0=1$. 
Recall that in this context the vector fields $U_j$ are $\R^n$-valued functions of $n+1$ variables; that is, we view them as maps $\R^n \times \R \ni(z, \zeta) \mapsto U_j(z, \zeta) \in \R^n$.  Set again $n=1$ just for simplicity (everything we write in this comment would be true anyway). Then, as differential operators, $U_0 \dd U_j$ only act on  the variable $z$, while $W_0$ only  acts on the variable $\zeta$,  i.e.  we have the correspondence
$$
U_j(z,\zeta) \longleftrightarrow  U_j(z,\zeta)\pa_z \,\, \mbox{ for }j \in \{0 \dd d\}\quad \mbox{and} \quad W_0(\zeta) \longleftrightarrow W_0(\zeta)\pa_{\zeta} \,.
$$
One has  
\begin{align*}
[V_0, V_j]  &= [U_0\pa_z, U_j \pa_z]   +[W_0\pa_{\zeta}, U_j\pa_z]  
=[U_0\pa_z, U_j \pa_z]  + W_0(\zeta) (\pa_{\zeta} U_j) \pa_z  \, , \quad 1 \leq j \leq d\,.
\end{align*}
If we calculate the second term on the RHS of the above along a solution $\zeta_t$ of the ODE, we obtain 
  $$ W_0(\zeta_t) (\pa_{\zeta} U_j(z, \zeta_t)) \pa_z = \pa_t(U_j(z, \zeta_t)) \pa_z  .$$ 
This suggests that we may evaluate the vector fields along the solution of the ODE and then think of them  as functions of $z$ and time $t$, rather than as functions of $z$ and $\zeta$, i.e. $\R^n_z\times \R_t \ni (z,t)\mapsto U_j(z,\zeta_t^\zeta) \in \R^n, \, j \in \{0 \dd d\}$. If we do so, then Hypothesis \ref{item:smoothnessassumption} can be equivalently rephrased as follows:  the Lie algebra 
$$
\bigcup_{k\geq 1} \mathrm{span} \{\mathfrak{L}_k^U(z,\zeta_t)\}
$$
 is finitely generated and spans $\R^n_z$  for every $z \in \R^n$ and along any solutions $\zeta_t$ of the ODE \eqref{ODEglobal}. \footnote{Given an initial datum, the solution of the ODE is unique. When we say that this should hold along any solutions, we mean along all the solutions that one can obtain by starting from different initial data.}
}
\item \textup{ As is well known,  Hypothesis \ref{item:tightnessassumption} is implied by a Lyapunov-type condition;  namely, if  there exists some non-negative function $\varphi\in C^2(\R^n)$ with compact level sets and such that 
\be\label{LyapCond}
\mathcal{L}_t\varphi(z)\leq C_1-C_2\varphi(z), \quad \text{ for every } z\in \R^n, t\geq0, 
\ee
then \ref{item:tightnessassumption} is satisfied. 
Here $\mathcal{L}_t$ is the operator
\begin{equation*}
\mathcal{L}_t\psi(z)=U_0(z,\zeta_t)\cdot \nabla \psi(z)+\sum_{i=1}^d U_i(z,\zeta_t) \cdot \nabla(U_i(z,\zeta_t) \cdot \nabla \psi(z)), 
\end{equation*}
where $\nabla=(\pa_{z_1} \dd \pa_{z_n})$. 
\item The Obtuse angle condition does not imply tightness; in Example \ref{ex:grushin}  we show  that \ref{item:tightnessassumption} does not imply \ref{item:OAC} and  \ref{item:OAC} does not imply \ref{item:tightnessassumption}. Viceversa, we also show that  \ref{item:OAC} does not imply the existence of a Lyapunov function, condition \eqref{LyapCond}. }
	\end{itemize}
\hf
\end{note}

\begin{note}\label{notelinksec67}
\textup{ As already pointed out, if $X_t=(Z_t, \zeta_t)$, where $Z_t, \zeta_t$ are given by a representation of the form ``ODE+SDE'' \eqref{SDEglobal}-\eqref{ODEglobal}, then $X_t$  solves an SDE of the form \eqref{SDE}, with $V_0=(U_0, W_0),V_1=(U_1, 0),\dots,V_d=(U_d, 0)$. Hence $\voperp=(0 \dd 0, W_0)$ (see definition \eqref{defvoperp}).
We note in passing  that in this case one has
$$
\cZ_t:=e^{-t \voperp}X_t= e^{-t \voperp}(Z_t, \zeta_t)=
e^{-t \voperp}(Z_t, e^{t\voperp}\zeta_0)= (Z_t, \zeta_0). 
$$
(This is not of much use at the moment, but it will help at the beginning of Section \ref{sec:longtimebehaviour} to make a link between the setting of this section and the setting of the next). 
Therefore, while  $X_t$ belongs to the hyperplane $\mathcal{H}_{\zeta_t}:=\{x \in \R^{n+1}: x=(z,\zeta_t), z \in \R^n\}$ for each $t\geq 0$, $\cZ_t$ remains, for every $t \geq 0$,  on the same hyperplane, namely the hyperplane  $\mathcal{H}_{\zeta_0}:=\{x \in \R^{n+1}: x=(z,\zeta_0), z \in \R^n\}$ (which is precisely the manifold $S_{x_0}=S_{(z_0, \zeta_0)}$, see second bullet point in Note \ref{noteonhypglobal}) for every $t\geq 0$. \hf
}
\end{note}

\begin{lemma}\label{lem:ergbarZ}
Let Hypothesis \ref{hyp:nonautoassumptions} hold. Then the SDE \eqref{limitingproc} admits a unique invariant measure, $\mub$. Moreover, 
$$
(\barQ_tg)(z) \rar \int_{\R^n} g(z) \,\mub(dz), \quad \mbox{for every } z \in \R^n \mbox{ and every } g\in C_b(\R^{n})\,. 
$$
\end{lemma}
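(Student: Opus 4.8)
The plan is to deduce the statement from results already proved for the \emph{autonomous} process $X_t=(Z_t,\zeta_t)$ governed by \eqref{SDEglobal}--\eqref{ODEglobal} (with $V_0=(U_0,W_0)$, $V_j=(U_j,0)$), by observing that $\barZ$ is nothing but this process restricted to the hyperplane it never leaves. Put $\mathcal{H}:=\{(z,\barz):z\in\R^n\}\subset\R^{n+1}$. By the second bullet of Note~\ref{noteonhypglobal} (or Lemma~\ref{lemmaW0} together with Note~\ref{notelinksec67}), $\mathcal H$ is a maximal integral manifold of $\deln$; and since $\barz$ is a stationary point of \eqref{ODEglobal} (Hypothesis~\ref{item:convODE}) we have $W_0(\barz)=0$, so $\voperp=(0,W_0)$ vanishes identically on $\mathcal H$. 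Hence the dynamics \eqref{SDEglobal}--\eqref{ODEglobal} started from a point of $\mathcal H$ keeps $\zeta_t\equiv\barz$, and its $Z$-component is exactly the solution of \eqref{limitingproc}. Specialising \eqref{linksmanysemigr} to $s=0$, $\zeta=\barz$ (so that $\zeta_t^{\barz}\equiv\barz$ and $\cQ_{0,t}^{\barz}=\barQ_t$) gives
\be\label{eq:restrsg}
(\cP_t f)(z,\barz)=\barQ_t\big[f(\cdot,\barz)\big](z),\qquad f\in C_b(\R^{n+1}),\ z\in\R^n,
\ee
and, since $\cP_t$ maps $\mathcal H$ into itself, the map $\pi\mapsto\hat\pi:=\pi\otimes\delta_{\barz}$ is a bijection between $\barQ$-invariant probability measures on $\R^n$ and $\cP$-invariant probability measures supported on $\mathcal H$.

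Existence of an invariant measure for \eqref{limitingproc} comes from tightness. By \ref{item:smoothnessassumption} and \ref{SA1} the fields $U_j(\cdot,\barz)$ are smooth and globally Lipschitz, so \eqref{limitingproc} is well posed and $\barQ_t$ is Feller; and Hypothesis~\ref{item:tightnessassumption}, applied with the ODE started at $\barz$ (for which $\cQ_{0,t}^{\barz}=\barQ_t$; equivalently, this follows from the Lyapunov bound \eqref{LyapCond} evaluated at $\barz$), says that $\{\barQ_t(z,\cdot)\}_{t\ge0}$ is tight for each $z$, hence so are the Ces\`aro averages $T^{-1}\int_0^T\barQ_t(z,\cdot)\,dt$. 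By the Krylov--Bogolyubov theorem any weak subsequential limit of these averages is a $\barQ$-invariant probability $\pi$ on $\R^n$, and then $\hat\pi$ is a $\cP$-invariant measure with $\hat\pi(\mathcal H)=1$. Now apply Proposition~\ref{invmesonstathyperpl} to $X_t$, which satisfies the UFG condition by \ref{item:smoothnessassumption} and the obtuse angle condition by \ref{item:OAC}, with $S=\mathcal H$: among all invariant measures of $X_t$ at most one, say $\hat{\mub}$, is supported on $\mathcal H$; by the previous line such a measure exists, so it is unique, and by the bijection $\hat{\mub}=\mub\otimes\delta_{\barz}$ for a unique $\barQ$-invariant probability $\mub$ on $\R^n$ --- the asserted invariant measure of \eqref{limitingproc}. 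Moreover Proposition~\ref{invmesonstathyperpl} gives $(\cP_t f)(x)\to\int f\,d\hat{\mub}$ for all $x\in\mathcal H$ and $f\in C_b(\R^{n+1})$; applying this to $f(z,\zeta):=g(z)$ with $g\in C_b(\R^n)$ and using \eqref{eq:restrsg} yields $\barQ_t g(z)\to\int_{\R^n}g\,d\mub$ for every $z$, as claimed.

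All the analytic content is imported: Proposition~\ref{invmesonstathyperpl} (which rests on the contraction Lemma~\ref{prop:pointwiselimit} and hence on the second-order decay Lemma~\ref{seconorderdecay} under the obtuse angle condition) supplies uniqueness and convergence, while \ref{item:tightnessassumption} supplies the compactness needed for existence. The one genuinely new point --- and it is the ``hypoellipticity of \eqref{limitingproc}'' anticipated in the text --- is that \ref{item:smoothnessassumption} forces \eqref{limitingproc} to satisfy the parabolic H\"ormander condition on $\R^n$: every iterated bracket $V_{[\alpha]}$, $\alpha\in\A$, has vanishing last component (bracketing a field with zero last component either with another such field or with $V_0=(U_0,W_0)$, whose last component $W_0$ depends only on $\zeta$, produces again a zero last component), and, evaluated on $\mathcal H$, the $\R^n$-part of $V_{[\alpha]}$ coincides with the corresponding iterated bracket of the frozen fields $U_0(\cdot,\barz),\dots,U_d(\cdot,\barz)$ --- here the stationarity $W_0(\barz)=0$ is exactly what kills the would-be correction terms. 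Thus \ref{item:smoothnessassumption} gives $\mathrm{span}\{V_{[\alpha]}(z,\barz):\alpha\in\A_m\}\simeq\R^n$ for every $z$, which is what makes the conclusion of Proposition~\ref{invmesonstathyperpl} genuinely ergodic on $\mathcal H$. Accordingly, the only real obstacle --- were one to seek a self-contained proof --- would be re-deriving this ergodicity on the manifold $\mathcal H$, i.e.\ promoting hypoellipticity to strong Feller plus irreducibility of $\barQ_t$ and then to uniqueness and total-variation convergence, rather than anything in the reduction above.
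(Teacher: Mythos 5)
Your proof is correct and follows essentially the same route as the paper, which omits the details and simply notes that existence follows from the tightness assumption \ref{item:tightnessassumption} (Krylov--Bogolyubov) while uniqueness and convergence follow from the obtuse angle condition \ref{item:OAC} together with Proposition \ref{invmesonstathyperpl} applied on the hyperplane $\mathcal{H}_{\barz}$. Your additional observations (the restriction identity for $\barQ_t$, the bijection of invariant measures, and the frozen-bracket/PHC remark) are accurate elaborations of that same argument rather than a different approach.
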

\begin{proof}[Proof of Lemma \ref{lem:ergbarZ}]
This is completely standard and we omit it. See for example \cite{DragKonZeg}. We just point out that the existence of the invariant measure comes  from assumption \ref{item:tightnessassumption} and the uniqueness is  a consequence of Hypothesis \ref{item:OAC} and Proposition \ref{invmesonstathyperpl}  . 
\end{proof}

\begin{theorem}\label{thm:mainglobalthm}
Let Hypothesis \ref{hyp:nonautoassumptions} hold. In particular, let $\barz=\barz(\zeta_0)$ be a stationary point for the ODE \eqref{ODEglobal} and $\mub$ be the invariant measure of the process \eqref{limitingproc}.  Then,  for every $s\geq 0$, 
$$
\lim_{t\rar \infty} (\cQ_{s,t}^{\zeta_0}g)(z) =  \int_{\R^n} g(z) \,\mub(dz),  \quad \mbox{for every } z \in \R^n \mbox{ and every } g\in C_b(\R^{n})\, .
$$ 
\end{theorem}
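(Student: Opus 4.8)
The plan is to reduce to the case $s=0$ and then compare the non-autonomous two-parameter semigroup $\cQ_{0,t}^{\zeta_0}$ with the autonomous semigroup $\barQ_t$ of the limiting diffusion \eqref{limitingproc}, whose ergodic behaviour is already recorded in Lemma \ref{lem:ergbarZ}. For the reduction: by time-homogeneity of the ODE \eqref{ODEglobal} and a shift of the driving Brownian motion one has $\cQ_{s,t}^{\zeta_0}=\cQ_{0,t-s}^{\tilde\zeta_0}$ with $\tilde\zeta_0:=e^{sW_0}\zeta_0$, and since $\zeta_v^{\tilde\zeta_0}=\zeta_{v+s}^{\zeta_0}\to\barz$ the shifted problem again satisfies Hypothesis \ref{hyp:nonautoassumptions}\ref{item:convODE} with the same limit point $\barz$; hence it suffices to prove the statement for $s=0$, arbitrary $z\in\R^n$ and $g\in C_b(\R^n)$.

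The first main step is a \emph{short-horizon comparison}. Fix $T>0$. From the cocycle identity $\cQ_{t,t+T}^{\zeta_0}=\cQ_{0,T}^{\zeta_t^{\zeta_0}}$ (same computation as above, with $s=t$) and continuous dependence of SDE solutions on initial datum and coefficients, using that the coefficients $U_j(\cdot,\zeta_u^{\zeta_t^{\zeta_0}})=U_j(\cdot,\zeta_{u+t}^{\zeta_0})$, $u\in[0,T]$, converge as $t\to\infty$ to $U_j(\cdot,\barz)$ uniformly on compacts (because $\zeta_{\cdot}^{\zeta_0}\to\barz$ and the $U_j$ are continuous), I would show that for every compact $K\subset\R^n$,
\[
\sup_{y\in K}\bigl|(\cQ_{0,T}^{\zeta_t^{\zeta_0}}g)(y)-(\barQ_Tg)(y)\bigr|\longrightarrow 0\qquad\text{as }t\to\infty,
\]
the uniformity over $y\in K$ (for $g$ merely bounded and continuous) coming from joint continuity of $(\eta,y)\mapsto(\cQ_{0,T}^{\eta}g)(y)$ on the compact set $\bigl(\{\zeta_u^{\zeta_0}:u\ge 0\}\cup\{\barz\}\bigr)\times K$. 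Then, by the two-parameter semigroup property, $(\cQ_{0,t+T}^{\zeta_0}g)(z)=\int_{\R^n}(\cQ_{0,T}^{\zeta_t^{\zeta_0}}g)\,dq_t^{0,z}$; splitting this integral over $K$ and $K^{c}$ and controlling the $K^{c}$ part uniformly in $t$ by the tightness Hypothesis \ref{hyp:nonautoassumptions}\ref{item:tightnessassumption}, one gets $(\cQ_{0,t+T}^{\zeta_0}g)(z)-(\cQ_{0,t}^{\zeta_0}[\barQ_Tg])(z)\to 0$ as $t\to\infty$. Writing $L^{+}(\psi):=\limsup_{t\to\infty}(\cQ_{0,t}^{\zeta_0}\psi)(z)$ and $L^{-}(\psi):=\liminf_{t\to\infty}(\cQ_{0,t}^{\zeta_0}\psi)(z)$, this gives $L^{\pm}(g)=L^{\pm}(\barQ_Tg)$ for every $T>0$.

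The second step is \emph{long-horizon ergodicity of the limit}, followed by the conclusion. By Lemma \ref{lem:ergbarZ}, $\barQ_Tg\to c:=\int_{\R^n}g\,d\mub$ pointwise as $T\to\infty$; I claim this convergence is in fact uniform on compacts, because the limiting diffusion \eqref{limitingproc} inherits both the H\"ormander spanning of Hypothesis \ref{hyp:nonautoassumptions}\ref{item:smoothnessassumption} and the Obtuse Angle Condition \ref{item:OAC} when the last coordinate is frozen at the stationary point $\barz$ (recall $W_0(\barz)=0$, so the terms involving $\partial_\zeta W_0$ drop out of the relevant commutators), so that the gradient estimate \eqref{eq:graddecayest} applies to $\barQ_t$ and, the fields $V_{[\alpha]}$, $\alpha\in\A_m$, spanning $\R^n$ at every point, $\barQ_Tg$ becomes arbitrarily flat on any fixed compact $K$, hence close to its pointwise limit $c$ uniformly on $K$. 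Finally, fix $\varepsilon>0$ and, by tightness, a compact $K=K(\varepsilon,z)$ with $q_t^{0,z}(K^{c})<\varepsilon$ for all $t$. Splitting $\int_{\R^n}(\barQ_Tg-c)\,dq_t^{0,z}$ over $K$ and $K^{c}$ and taking the relevant $\limsup$/$\liminf$ in $t$ gives, for every $T$,
\[
|L^{\pm}(\barQ_Tg)-c|\le \sup_{y\in K}|\barQ_Tg(y)-c|+2\|g\|_\infty\,\varepsilon ;
\]
letting $T\to\infty$, the first term on the right vanishes, and since $L^{\pm}(g)=L^{\pm}(\barQ_Tg)$ for all $T$ this forces $|L^{\pm}(g)-c|\le 2\|g\|_\infty\varepsilon$, whence $L^{+}(g)=L^{-}(g)=c$, i.e. $(\cQ_{0,t}^{\zeta_0}g)(z)\to\int_{\R^n}g\,d\mub$, which is the assertion.

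The part I expect to demand the most care is the pair of uniform-on-compacts statements. In the short-horizon step, making the SDE perturbation estimate uniform in $z$ over compact sets while $g$ is only in $C_b(\R^n)$ requires the joint-continuity argument on the compact ODE trajectory indicated above rather than a bare continuous-dependence bound. In the long-horizon step, the real obstacle is upgrading the pointwise ergodicity of Lemma \ref{lem:ergbarZ} to uniform convergence on compacts; this rests on checking that the frozen system \eqref{limitingproc} genuinely satisfies Hypothesis \ref{hyp:nonautoassumptions}\ref{item:OAC} (and the spanning condition), which is a short but essential commutator computation exploiting that $\barz$ is a stationary point of the ODE. Once both uniformities are in hand, the rest is the elementary $\limsup$/$\liminf$ bookkeeping above.
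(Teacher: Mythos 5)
Your argument is essentially correct, but it is a genuinely different proof from the one in the paper. The paper works through an evolution system of measures: it sets $\nu_t=\cQ_{0,t}^*\delta_{z_0}$ (tight by Hypothesis \ref{hyp:nonautoassumptions} \ref{item:tightnessassumption}), shows $\cQ_{s,t}g(z)-\nu_t(g)\to 0$ via the OAC gradient decay \eqref{eq:graddecayest} (Lemma \ref{lem:convtoevolutionsystem}), extracts weak limits $\mu_k$ of $\nu_{t_\ell-k}$, proves locally uniform convergence $\cQ_{t-k,t}g\to\barQ_k g$ by Arzela--Ascoli and the UFG short-time estimates (Lemma \ref{lem:semigpconvergetosemigp}), and then identifies $\mu_0=\mub$ through the intertwining relation of Lemma \ref{lem:invarianceofmuk} and Lemma \ref{lem:convofevolsystemtoequilibria}. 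You avoid evolution systems altogether and run a two-scale $\limsup/\liminf$ argument: over a fixed window $T$ you replace $\cQ_{t,t+T}^{\zeta_0}=\cQ_{0,T}^{\zeta_t^{\zeta_0}}$ by $\barQ_T$ using stability of the SDE in the coefficient parameter (the ODE has converged), with tightness controlling the mass outside a compact set; then you let $T\to\infty$ using ergodicity of the frozen system. What the paper's route buys is that the only place smoothing is needed for the non-autonomous semigroup is the OAC decay, and the uniform-on-compacts comparison is obtained from the semigroup derivative bounds rather than from an SDE perturbation estimate (which you invoke but which is nowhere proved in the paper --- it is standard under the global Lipschitz standing assumption \ref{SA1}, but you should state it as a lemma). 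What your route buys is conceptual economy: no evolution systems, no diagonal extraction of the $\mu_k$, and the mechanism (``freeze the ODE at its limit, then use ergodicity of the frozen diffusion'') is more transparent.

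Two caveats. First, your reduction to $s=0$ by replacing $\zeta_0$ with $e^{sW_0}\zeta_0$ tacitly requires tightness of the laws started at time $s$, i.e. Hypothesis \ref{hyp:nonautoassumptions} \ref{item:tightnessassumption} for the shifted initial datum, which is not literally what is assumed (the hypothesis is stated for $\{q_t^{0,z}\}$ with the given $\zeta_0$). This is easily repaired: either note that the Lyapunov criterion \eqref{LyapCond} gives tightness uniformly in the starting time, or deduce the case $s>0$ from the case $s=0$ by the OAC-based fact that $\lv\cQ_{s,t}g(z)-\cQ_{s,t}g(w)\rv\to0$ for all $z,w$ (the mechanism of Lemma \ref{prop:pointwiselimit}/Lemma \ref{lem:convtoevolutionsystem}), integrating $w$ against the fixed measure $q_s^{0,y}$ and using the already-proved convergence of $\cQ_{0,t}g(y)$; this is essentially how the paper covers all $s\geq 0$ at once. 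Second, in the long-horizon step your parenthetical is slightly off: the terms that disappear from the commutators at $\zeta=\barz$ are those carrying a factor $W_0(\barz)=0$ multiplying $\partial_\zeta$-derivatives of the $U_j$'s, not ``terms involving $\partial_\zeta W_0$''; the inheritance of \ref{item:smoothnessassumption} and \ref{item:OAC} by the frozen system is indeed true (the brackets of $V_0,\ldots,V_d$ have no $\partial_\zeta$-component, and their restrictions to $\zeta=\barz$ coincide with the brackets of the frozen fields), but you can bypass this verification entirely by writing $\barQ_Tg(y)=\cP_Tf(y,\barz)$ with $f(z,\zeta):=g(z)$, since starting the ODE at the stationary point freezes it; then \eqref{eq:graddecayest} for $\cP_T$ together with the spanning in \ref{item:smoothnessassumption} gives the uniform-on-compacts flatness directly, which is exactly how the paper upgrades the pointwise limit of Lemma \ref{lem:ergbarZ} to local uniformity.
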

The proof of this theorem can be found after the statement of Theorem \ref{thm:mainglobalthmforX}.  Theorem \ref{thm:mainglobalthm} describes the asymptotic behaviour of the process $Z_t$. However, in this paper we are interested in  the process $X_t$. The long-time behaviour of the process $X_t$ is described by  Theorem \ref{thm:mainglobalthmforX} below, which is  just a straightforward consequence of Theorem  \ref{thm:mainglobalthm}. In order to state Theorem \ref{thm:mainglobalthmforX}, we clarify the following: while $Z_t$ is a process in $\R^n$ with invariant measure(s) $\mub=\mub(\barz, \zeta_0)$ supported  on $\R^n$,  $X_t$ is a process in $\R^{n+1}$; so, strictly speaking, any invariant measure of $X_t$ is a probability measure on $\R^{n+1}$. However such a measure is supported on the $n$-dimensional hyperplane 
$$\mathcal{H}_{\barz}:=\{x \in \R^{n+1}: x=(z,\barz), z \in \R^n\}$$ 
and it is just a trivial extension of the measure $\mub$. That is, let $\mu=\mu(\barz, \zeta_0)$ be the measure on $\R^{n+1}$ such that  
\be\label{extensioninvmeasZ}
\mu(A)= \mub(A \cap \mathcal{H}_{\barz}) \quad \mbox{ for every Borel set }  A \subseteq\R^{n+1}.
\ee
In particular, 
$\mu(A)=\mub(A)$ if  $A \subseteq \mathcal{H}_{\barz}$ and 
$\mu(A)=0$ {if } $A \cap \mathcal{H}_{\barz}= \emptyset$.
Let $I_0(\barz)=\{\zeta_0 \in \R :  \zeta_t^{\zeta_0} \rightarrow \barz \mbox{ as } t \rar \infty\}$.  Let also $\mathcal{I}_0= \mathcal{I}_0(\barz):=\{x_0 \in \R^{n+1}: x_0=(z_0, \zeta_0), \zeta_0\in I_0(\barz), z_0 \in \R^n\}$. 

\begin{theorem}\label{thm:mainglobalthmforX}
Consider the process $X_t=(Z_t, \zeta_t) \in \R^{n+1}$ satisfying a representation the form of ``SDE+ODE'' \eqref{SDEglobal}-\eqref{ODEglobal} with  initial condition \eqref{incondglobal}  and  associated semigroup $\cP_t$, defined in \eqref{semigrglobalX}.  Let Hypothesis \ref{hyp:nonautoassumptions} hold. In particular, according to Hypothesis \ref{hyp:nonautoassumptions} \ref{item:convODE}, let  $\barz$ be a (any) stationary point of the ODE \eqref{ODEglobal}  and
$\mub$ be the invariant measure of the corresponding process \eqref{limitingproc}; let also  $\mu=\mu(\barz, \zeta_0)$ be the measure on $\R^{n+1}$ defined in \eqref{extensioninvmeasZ} and supported on the hyperplane $\mathcal{H}_{\barz}$. Then, for every $x \in \mathcal{I}_0= \mathcal{I}_0(\barz)$, we have 
$$
\lim_{t\rar \infty} (\cP_{t}f)(x) =  \int_{\R^{n+1}} f(u) \,\mu(du)
= \int_{\mathcal{H}_{\barz}} f(u) \,\mu(du),   
$$
for every $f\in C_b(\R^{n+1})$. The limit in the above  does not hold if $x\notin \mathcal{I}_0$; that is, $\mathcal{I}_0$ is the whole basin of attraction of the measure $\mu=\mu(\barz,\zeta_0)$.  
\end{theorem}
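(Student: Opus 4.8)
The plan is to deduce the statement directly from Theorem \ref{thm:mainglobalthm}, using the tightness assumption \ref{item:tightnessassumption} to pass from convergence of the $Z$-marginal to convergence of $\cP_t f$ for a general $f\in C_b(\R^{n+1})$. Fix $x=(z_0,\zeta_0)\in\mathcal{I}_0(\barz)$, so that $\zeta_0\in I_0(\barz)$, i.e.\ \eqref{convODE} holds. Using the representation \eqref{semigrglobalX} I would write
\begin{equation*}
(\cP_t f)(x)=\E\!\left[f(Z_t^{0,z_0,\zeta_0},\barz)\right]+\E\!\left[f(Z_t^{0,z_0,\zeta_0},\zeta_t^{\zeta_0})-f(Z_t^{0,z_0,\zeta_0},\barz)\right].
\end{equation*}
For the first term, $g:=f(\cdot,\barz)$ belongs to $C_b(\R^n)$ and $\E[f(Z_t^{0,z_0,\zeta_0},\barz)]=(\cQ_{0,t}^{\zeta_0}g)(z_0)$, so Theorem \ref{thm:mainglobalthm} with $s=0$ gives convergence to $\int_{\R^n}g(z)\,\mub(dz)$, which by the definition \eqref{extensioninvmeasZ} of $\mu$ equals $\int_{\mathcal{H}_{\barz}}f(u)\,\mu(du)$, the claimed limit.

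It then remains to show the second (error) term vanishes. Given $\varepsilon>0$, tightness of $\{q_t^{0,z_0}:t\geq0\}$ (Hypothesis \ref{item:tightnessassumption}) provides a compact $K\subset\R^n$ with $\P(Z_t^{0,z_0,\zeta_0}\in K)\geq1-\varepsilon$ for all $t$; since $f$ is uniformly continuous on the compact set $K\times[\barz-1,\barz+1]$, one finds $\delta\in(0,1]$ with $|f(z,\zeta)-f(z,\barz)|<\varepsilon$ for $z\in K$, $|\zeta-\barz|<\delta$; finally \eqref{convODE} gives $|\zeta_t^{\zeta_0}-\barz|<\delta$ for $t$ large, whence the error term is bounded by $\varepsilon+2\|f\|_\infty\varepsilon$ for such $t$. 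Letting $\varepsilon\to0$ finishes the convergence claim for $x\in\mathcal{I}_0$.

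For the optimality statement, suppose $x=(z_0,\zeta_0)\notin\mathcal{I}_0(\barz)$. Since \eqref{ODEglobal} is a one-dimensional autonomous ODE with $W_0$ globally Lipschitz (Hypothesis \ref{SA}), the solution $t\mapsto\zeta_t^{\zeta_0}$ is globally defined and monotone, hence either converges to a stationary point $\barz'\neq\barz$ or diverges to $\pm\infty$. In the first case the convergence just proved, applied with $\barz'$ in place of $\barz$, yields $(\cP_t f)(x)\to\int_{\mathcal{H}_{\barz'}}f\,d\mu'$ for the probability measure $\mu'$ supported on the disjoint hyperplane $\mathcal{H}_{\barz'}$; taking $f\in C_b(\R^{n+1})$ equal to $1$ near $\mathcal{H}_{\barz}$ and to $0$ near $\mathcal{H}_{\barz'}$ separates the two limits, so $(\cP_t f)(x)$ cannot tend to $\int_{\mathcal{H}_{\barz}}f\,d\mu=1$. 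In the second case, taking $f(z,\zeta)=\phi(\zeta)$ with $\phi\in C_b(\R)$ compactly supported near $\barz$ and $\phi(\barz)=1$ gives $(\cP_t f)(x)=\phi(\zeta_t^{\zeta_0})\to0\neq1=\int_{\mathcal{H}_{\barz}}f\,d\mu$. Hence $\mathcal{I}_0(\barz)$ is exactly the basin of attraction of $\mu(\barz,\zeta_0)$.

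I expect the only genuinely delicate point to be the vanishing of the error term: because a generic $f\in C_b$ need not be uniformly continuous, one must localise the $Z$-component, which is precisely where Hypothesis \ref{item:tightnessassumption} is used; the remainder is routine bookkeeping on top of Theorem \ref{thm:mainglobalthm} and the elementary phase-line analysis of the scalar ODE \eqref{ODEglobal}.
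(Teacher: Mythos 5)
Your proof is correct and follows essentially the route the paper intends: the paper states Theorem \ref{thm:mainglobalthmforX} as a ``straightforward consequence'' of Theorem \ref{thm:mainglobalthm} without writing out a proof, and your decomposition of $(\cP_t f)(x)$ into $\E[f(Z_t,\barz)]$ plus an error term controlled by tightness (Hypothesis \ref{item:tightnessassumption}), uniform continuity of $f$ on compacts, and the convergence $\zeta_t^{\zeta_0}\to\barz$ is exactly the missing bookkeeping. One small remark on the optimality part: in the case $\zeta_t^{\zeta_0}\to\barz'\neq\barz$ you should not invoke the convergence theorem ``with $\barz'$ in place of $\barz$'' (the hypotheses — tightness for that initial datum and ergodicity of the frozen SDE at $\barz'$ — are not assumed there), but this is harmless since your test function $f$, which vanishes in a neighbourhood of $\mathcal{H}_{\barz'}$, already gives $(\cP_t f)(x)=\E[f(Z_t,\zeta_t^{\zeta_0})]=0$ for all large $t$, hence directly rules out convergence to $\mu(f)=1$.
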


We now introduce some definitions that will be needed for the proof of Theorem \ref{thm:mainglobalthm}.  A family $\{\nu_t\}_{t\geq 0}$ of probability measures on $\R^n$ is said to be an \emph{evolution system of measures} for the two-parameter semigroup $\cQ_{s,t}$ if for all $0\leq s\leq t$ and $g\in C_b(\mathbb{R}^{n})$ we have
	\begin{equation}\label{eq:evolutionsystem}
	\int_{\mathbb{R}^{n}} \cQ_{s,t}g(z) \nu_s(dz)= \int_{\mathbb{R}^{n}} g(z) \nu_t(dz).
	\end{equation}
Let $\cQ_{s,t}^*$ denote the adjoint of $\cQ_{s,t}$ over the space $C_b(\mathbb{R}^{n})$, that is
\begin{equation*}
(\cQ_{s,t}^*\nu)(A) := \int_{\mathbb{R}^n}\cQ_{s,t}\mathbbm{1}_A(z) \nu(dz), \quad \text{for any Borel measurable } A\subseteq\mathbb{R}^{n}.
\end{equation*}
Then we can write \eqref{eq:evolutionsystem} as 
\begin{equation*}
\cQ_{s,t}^*\nu_s=\nu_t, \quad \text{ for all } \,0\leq s\leq t.
\end{equation*}
Further background  on  evolution system of measures can be found in \cite{Kunze, {daPratoRoeckner}}.

\begin{proof}[Proof of Theorem \ref{thm:mainglobalthm}]
The proof is  in three steps. 

{$\bullet $ {\em Step 1}}: We first construct a tight evolution system of measures, $\{\nu_t\}_{t\geq 0}$, for the semigroup $\cQ_{s,t}$. To this end,  take any point $z_0\in \mathbb{R}^{n}$, define $\nu_0=\delta_{z_0}$  and then let $\nu_t:=\cQ_{0,t}^*\nu_0$. Now $\nu_t$ is an evolution system of measures;  indeed, 
\begin{equation*}
\cQ_{s,t}^*\nu_s=\cQ_{s,t}^*\cQ_{0,s}^*\nu_0=\cQ_{0,t}^*\nu_0= \nu_t.
\end{equation*}
(A more general construction of the evolution system is given in \cite[Section 5]{Kunze}). 
To see that $\{\nu_t\}_{t\geq 0}$ is tight, fix $\varepsilon>0$; by Hypothesis \ref{item:tightnessassumption} we may take a compact set $K_\varepsilon \subset \R^n$ such that $q_t^{0,z_0}(K_\varepsilon) \geq 1-\varepsilon$. By definition of $\nu_t$ we then have
\begin{equation*}
\nu_t(K_\varepsilon) =(\cQ_{0,t}^* \nu_0) (K_{\epsilon})
= \cQ_{0,t}\mathbbm{1}_{K_\varepsilon}(z_0) \geq 1-\varepsilon.
\end{equation*} 

{$\bullet $ {\em Step 2}}: $\cQ_{s,t}g(z)-\nu_t(g)$ converges to zero as $t$ tends to $\infty$ for all $s\geq0, z\in\R^n, g\in C_b(\R^{n})$. We defer the proof of this fact to Lemma \ref{lem:convtoevolutionsystem}. Since $\{\nu_t\}_{t\geq0}$ is tight, by Prokhorov's Theorem there exists a diverging sequence $t_\ell$ such that $\nu_{t_\ell}$ converges weakly to some probability measure $\mu_0$, as $t_{\ell}$ tends to $\infty$. 

{$\bullet $ {\em Step 3}}: Show that $\mu_0=\bar{\mu}$. We defer the proof of this equality to Lemma \ref{lem:convofevolsystemtoequilibria}.
If $\mu_0=\bar{\mu}$, then $\nu_t$ converges weakly to $\bar{\mu}$ and 
the claim of the theorem follows; indeed,
\begin{equation*}
\lv\cQ_{s,t}g(z)-\bar{\mu}(g)\rv \leq \lv\cQ_{s,t}g(z)-\nu_{t}(g)\rv+ \lv\nu_t(g)-\bar{\mu}(g)\rv.
\end{equation*}
The first term converges to zero by Step 2 and the second term vanishes in the limit since $\nu_t$ converges weakly to $\bar{\mu}$ as $t\to\infty$. 
\end{proof}
\begin{note}\label{note:comparelunardi}\textup{The statements and proofs  of  Lemma \ref{lem:convtoevolutionsystem} and 
Lemma \ref{lem:convofevolsystemtoequilibria} are the core of the proof of Theorem \ref{thm:mainglobalthm}. The arguments used in the proofs of such lemmata are analogous in structure to those presented in \cite[Section 6]{AngiuliLorenziLunardi}. The main differences arise when dealing with the regularity of the semigroup, as \cite{AngiuliLorenziLunardi} assumes uniform ellipticity. Lemma \ref{lem:semigpconvergetosemigp} (needed to prove Lemma \ref{lem:convofevolsystemtoequilibria}) is the main place where we take care of the relaxed regularity assumptions.}
\hf
\end{note}

Let  $p_t^x$ denote the measure defined by
\begin{equation*}
p_t^x(A)=\cP_t\mathbbm{1}_A(x), \quad \text{ for all Borel sets } A\subseteq \R^{n+1}.
\end{equation*}

\begin{prop}\label{nounbounded}
If $\zeta_t^\zeta \rightarrow \infty$ then the family of measures $\{p_t^{(z,\zeta)}\}_{t\geq 0}$ is not tight for any $z\in\R^n$ (hence, by Prokhorov's Theorem, there is no probability measure $\mu$ such that
$
\cP_tf(z,\zeta) \to \mu(f),
$
for all $f\in C_b(\R^{n+1})$). 
\end{prop}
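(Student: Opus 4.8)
The plan is to exploit the fact that, in the representation \eqref{SDEglobal}--\eqref{ODEglobal}, the last coordinate of $X_t = (Z_t, \zeta_t)$ is \emph{deterministic}: it equals $\zeta_t^\zeta = e^{tW_0}\zeta$, the solution of the scalar ODE \eqref{ODEglobal} with $\zeta_0 = \zeta$. Hence, for each fixed $t$, the measure $p_t^{(z,\zeta)}$ is carried by the hyperplane $\{x \in \R^{n+1} : x^{n+1} = \zeta_t^\zeta\}$, and when $\zeta_t^\zeta \to \infty$ this support escapes to infinity, which is incompatible with tightness on $\R^{n+1}$.

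First I would fix an arbitrary compact set $K \subseteq \R^{n+1}$ and a point $(z,\zeta)$. Since $K$ is bounded, there is $M > 0$ with $K \subseteq \R^n \times [-M,M]$. By hypothesis $\zeta_t^\zeta \to \infty$, so there is $T = T(M,\zeta)$ with $\zeta_t^\zeta > M$ for all $t \geq T$. As $X_t = (Z_t, \zeta_t^\zeta)$ almost surely, for every $t \geq T$ we have $X_t \notin K$ almost surely, hence $p_t^{(z,\zeta)}(K) = \cP_t\mathbbm{1}_K(z,\zeta) = 0$. Taking $\varepsilon = 1/2$, no compact $K$ satisfies $\inf_{t\geq 0} p_t^{(z,\zeta)}(K) \geq 1-\varepsilon$, so the family $\{p_t^{(z,\zeta)}\}_{t\geq 0}$ is not tight, proving the first assertion.

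For the parenthetical claim I would argue by contradiction: suppose there is a probability measure $\mu$ on $\R^{n+1}$ with $\cP_t f(z,\zeta) \to \mu(f)$ for every $f \in C_b(\R^{n+1})$. For each $k \in \N$ pick $\phi_k \in C_c(\R^{n+1})$ with $\mathbbm{1}_{\mathbb{B}(0,k)} \leq \phi_k \leq 1$. Since $\phi_k$ has compact support, the argument above gives $\phi_k(X_t) = 0$ almost surely for all large $t$, hence $\cP_t\phi_k(z,\zeta) = \E[\phi_k(X_t)] \to 0$, so $\mu(\phi_k) = 0$ and a fortiori $\mu(\mathbb{B}(0,k)) = 0$. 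Letting $k \to \infty$ and using continuity of $\mu$ from below yields $\mu(\R^{n+1}) = 0$, contradicting that $\mu$ is a probability measure. (One may instead simply note that such convergence would mean $p_t^{(z,\zeta)} \to \mu$ weakly, which by Prokhorov's Theorem would force $\{p_t^{(z,\zeta)}\}_{t\geq 0}$ to be tight, contradicting what was just shown.)

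There is essentially no serious obstacle in this proof; the only point requiring care is conceptual rather than technical, namely that $\zeta_t$ is treated here as a genuine space coordinate (so that unboundedness of the ODE really does destroy tightness on $\R^{n+1}$), which is precisely the reason Hypothesis \ref{hyp:nonautoassumptions}\ref{item:convODE} restricts attention to ODE initial data for which \eqref{convODE} holds.
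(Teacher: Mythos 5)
Your argument is correct and is essentially the paper's own proof: both hinge on the observation that the last coordinate of $X_t$ is the deterministic quantity $\zeta_t^\zeta$, which eventually exits every bounded interval $[-R,R]$, so $p_t^{(z,\zeta)}$ eventually assigns zero (rather than $\geq 1-\varepsilon$) mass to any compact set — you phrase this directly while the paper phrases it as a contradiction with an assumed tight family. Your extra paragraph spelling out the parenthetical claim via compactly supported $\phi_k$ is a fine addition (the paper leaves that to the one-line appeal to Prokhorov in the statement).
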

\begin{proof}[Proof of Proposition \ref{nounbounded}]
Fix $z\in\R^n$ and let $x=(z,\zeta) \in \R^{n+1}$. Assume by contradiction  that $\{p_t^x\}_{t\geq 0}$ is tight. Then, for any fixed  $\varepsilon>0$  there exists a compact set $K_\varepsilon \subset \R^{n+1}$ such that $p_t^x(K_\varepsilon)>1-\varepsilon$ for all $t\geq 0$. Since $K_\varepsilon$ is compact we may take $R$ sufficiently large such that $K_\varepsilon\subseteq \R^n \times [-R,R]$;  then one has
\begin{equation}\label{eq:tightnessimpliesODEbounded}
\mathbb{P}_x \left(\vert\zeta_t^\zeta\vert \leq R \right) \geq p_t^x(K_\varepsilon)\geq 1-\varepsilon, \quad \mbox{for all } t\geq 0.
\end{equation}
However $\zeta_t^\zeta\to\infty$ so we may take $t$ sufficiently large that $\vert\zeta_t^\zeta\vert > R$. This contradicts \eqref{eq:tightnessimpliesODEbounded},  hence $p_t^x$ is not tight.
\end{proof}

\begin{example}[UFG-Gru\v{s}in Plane] \label{ex:grushin}
\textup{ We give here a simple example of a process that satisfies the Obtuse Angle Condition but is not tight. 
	Let $d=1$,  $N=2$ and
	\begin{equation*}
	V_0=k\zeta\partial_\zeta, \quad V_1=\zeta\partial_z, \quad  k \in \R \,.
	\end{equation*}
	This corresponds to the SDE
	\begin{align*}
	d\zeta_t&=k\zeta_tdt\\
	dZ_t&=\sqrt{2}\zeta_t\circ dB_t,
	\end{align*}
where $\{B_t\}_{t\geq 0}$ is a one-dimensional Brownian motion. Because $[V_1,V_0]=-kV_1$, we have
\begin{equation*}
([V_1,V_0]f)(V_1f)=-k(V_1f)^2
\end{equation*}
therefore the Obtuse Angle Condition, \eqref{eq:OAC} is satisfied if and only if $k>0$ (it is also shown in \cite[Example 4.4]{CrisanOttobre} that, if $k>0$,  then $V_1 (\cP_tf) (\cdot)$ decays exponentially fast with rate $-2k$). On the other hand, if $k>0$ the process is not tight. Indeed,  Hypothesis \ref{hyp:nonautoassumptions} \eqref{item:tightnessassumption} is satisfied if and only if $k<0$, as we come to show. To this end,   we first solve the SDE, and  find
	\begin{align*}
	\zeta_t & = \zeta e^{kt}\\
	Z_t&=Z_0+\sqrt{2}\zeta\int_0^t e^{ks} \circ dB_s.
	\end{align*}
	As a consequence of Proposition \ref{nounbounded}, the whole process $(Z_t,\zeta_t)$ is not tight if $k>0$. However in this case also the process $Z_t$, seen as a non-autonomous one dimensional SDE, is not tight when $k>0$. Indeed suppose by contradiction that \eqref{item:tightnessassumption} holds; then for any $\varepsilon>0$ there exists $R>0$ such that
	\begin{equation}\label{contresempio}
	Q_{0,t}\mathbbm{1}_{[-R,R]} (z)\geq 1-\varepsilon \,, \quad \mbox{for all } t\geq 0.
	\end{equation}
	 However if $Z_0=z$ then $Z_t$ is normally distributed with mean $z$ and variance $\zeta^2(e^{2k t}-1)/k$, so we may write
	\be\label{gaussz}
	Z_t=z+\zeta \sqrt{\frac{e^{2kt}-1}{k}} \xi
	\ee
	where $\xi$ is  a one-dimensional standard normal random variable. Then we have
	\begin{equation*}
	\cQ_{0,t}^\zeta\mathbbm{1}_{[-R,R]}(z) = 
\mathbb{E} \mathbbm{1}_{[-R,R]}(Z_t^{0,z,\zeta})= \mathbb{P}\left(\lv z+\zeta\sqrt{\frac{e^{2kt}-1}{k}} \xi\rv \leq R\right) \rightarrow 0 \quad \mbox{as } t \rar \infty, 
	\end{equation*}
	which contradicts \eqref{contresempio}.  Note that if $k=0$ then $Z_t=\sqrt{2}\zeta B_t$ which is not tight by a similar argument. However if $k<0$ then the process $Z_t$ is tight. 
Indeed,  assume that $k=-\ell<0$;   to see that $\{q_t^{0,z}\}_{t\geq0}$ is a tight family of measures,  it is sufficient to apply a Lyapunov criterion and show that the function  $\varphi(z)=z^2$ satisfies $\sup_{t}\cQ_{0,t}\varphi(y)<\infty$ (when  $k=-\ell<0$).  To prove the latter fact, observe that if  $(Z_s,\zeta_0)=(z,\zeta)$ then, by \eqref{gaussz}, we get 
\begin{equation*}
\cQ_{s,t}^\zeta\varphi(z)= \mathbb{E}[Z_t^2\rvert \zeta_0=\zeta, Z_s=z] = z^2+ \frac{\zeta^2e^{-2 \ell s}}{\ell}(1-e^{-2\ell (t-s)}) \leq z^2+\frac{e^{-2\ell s}}{\ell}\zeta^2.
\end{equation*}
If $k<0$ we see that $X_t=(Z_t,\zeta_t)$ converges in law.
}
\hf
\end{example}

\begin{example}\label{ex:ODEwithmanyinvmeas}
\textup{ We conclude this section with an example which satisfies all the points in  Hypothesis \ref{hyp:nonautoassumptions} in a non-trivial way, in the sense that it  exhibits many invariant measures. 
Take $k>1$ and consider the following SDE
\begin{align*}
d\zeta_t&=-\sin(\zeta_t)dt\\
dZ_t&=-kZ_tdt+\sqrt{2}\zeta_t\circ dB_t.
\end{align*}
In this case
\begin{equation*}
V_0=-\sin(\zeta)\partial_\zeta-kz\partial_z, \quad V_1=\zeta\partial_z, U_0=-kz\partial_z, U_1=\zeta\partial_z.
\end{equation*}
Then we have
\begin{align*}
[V_1,V_0]=[\zeta\partial_z,-\sin(\zeta)\partial_\zeta-kz\partial_z]=-k\zeta\partial_z-\sin(\zeta)\partial_z = \left(-k +\frac{\sin(\zeta)}{\zeta}\right)V_1.
\end{align*}
Note that the function $h(\zeta)={\sin(\zeta)}/{\zeta}$ is bounded and smooth,  when extended to the origin with the value $h(0)=1$,  so the UFG condition is satisfied at level $m=1$. Moreover,
\begin{equation*}
([V_1,V_0]f)(V_1f) = -(k+\frac{\sin(\zeta)}{\zeta})\lv V_1f \rv^2 \leq -(k-1)\lv V_1f \rv^2 
\end{equation*}
and hence \eqref{eq:OAC} is satisfied. To apply the results of Section \ref{sec:non-autonomous} we must show that Hypothesis \ref{hyp:nonautoassumptions} holds. Note that the vector field $V_1$ is non-zero except when $\zeta=0$ therefore Hypothesis \ref{hyp:nonautoassumptions} \ref{item:smoothnessassumption} is satisfied everywhere that $\zeta\neq 0$. To show that Hypothesis \ref{hyp:nonautoassumptions} \ref{item:tightnessassumption} holds we consider a function $\varphi\in C^2(\R)$ such that $\varphi(z)=\lv z\rv$ for $\lv z\rv>1$. Then, for $\lv z \rv >1$, one has
\begin{equation*}
\Lt_t\varphi(z)=-kz\varphi'(z)+\zeta_t^2\varphi''(z) =-kz\mathrm{sign}(z) = -k\varphi(z).
\end{equation*}
Therefore $\varphi$ is a Lyapunov function so by Note \ref{noteonhypglobal} we have that the measures $\{q_t^{0,z}:t\geq 0\}$ are tight for any $z\in \R$ and Hypothesis \ref{hyp:nonautoassumptions} \ref{item:tightnessassumption} is satisfied. We also have that $\zeta_t$ converges for any $\zeta\in\R$ and the limit $\overline{\zeta}$ is given by 
\begin{equation*}
\overline{\zeta} = \left\{\begin{array}{ll}
2n\pi \quad &\text{ for } \zeta \in ((2n-1)\pi,(2n+1)\pi) \text{ for some } n\in\mathbb{Z}\setminus\{0\}\\
(2n+1)\pi \quad &\text{ for } \zeta=(2n+1)\pi \text{ for some } n\in\mathbb{Z}\\
0 \quad &\text{ for } \zeta \in (-\pi,\pi).
\end{array}\right.
\end{equation*}
Hence for $\zeta\notin (-\pi,\pi)$ we may apply Theorem \ref{thm:mainglobalthmforX} to obtain that $X_t=(Z_t,\zeta_t)$ converges in law to a random variable which whose law corresponds to the unique invariant measure defined on the line $\R\times \{\overline{\zeta}(\zeta_0)\}$. Moreover, for $\zeta=n\pi$ for some $n\in\mathbb{Z}\setminus\{0\}$ we see that $\zeta_t=\zeta$ and $Z_t$ satisfies the Ornstein Uhlenbeck SDE
\begin{equation*}
dZ_t=-kZ_tdt+\sqrt{2}\zeta dB_t.
\end{equation*}
In particular, in this case $Z_t$ has a unique invariant measure and this is given by a Gaussian measure with mean $0$ and variance $\zeta^2/k$. Therefore for any $n\in\mathbb{Z}\setminus\{0\}$ and $\zeta\in ((2n-1)\pi,(2n+1)\pi)$ we have that $X_t$ converges in law to $(\frac{2n\pi}{\sqrt{k}}\xi ,2n\pi)$,  where $\xi$ is a one-dimensional standard normal random variable.\footnote{Since $Z_t$ satisfies a non-autonomous Ornstein Uhlenbeck equation one can also study its asymptotic behaviour more directly, see e.g. \cite{Geissert}. }
}
\hf
\end{example}

\section{Long-time behaviour of UFG diffusions: general case} 
\label{sec:longtimebehaviour}
In the previous section we investigated the case in which the representation of the form ``ODE+SDE" is global. In this section we study the  general UFG-case, in which such a representation is, in general,  only local. That is, we finally address the full problem of analysing the asymptotic behaviour of $\eqref{SDE}$, assuming that the vector fields $V_0 \dd V_d$ satisfy the UFG condition (see Definition \ref{defufg}).  This case is substantially richer than the one considered in Section \ref{sec:non-autonomous}; however the fact that, locally, we can always represent the SDE \eqref{SDE} as a system of the form ``ODE+SDE", still means that we should be able to identify a suitable ODE which drives the dynamics.  We will demonstrate that this is indeed the case and that such an ODE is the integral curve of the vector field $\voperp$; that is, the curve
\be\label{ODElocall}
\zeta_t=e^{t\voperp}x_0 \,, 
\ee
where $x_0$ is the initial datum of the SDE \eqref{SDE}, i.e. $X_0=x_0$. This should not be a surprise in view of Proposition \ref{prop:SDElivesonsubmanifold}. Nevertheless, 
to understand why this is the case, it is useful to build an analogy with the setting of the previous section: if the SDE  is of the form \eqref{SDEglobal}-\eqref{ODEglobal}, then $\voperp=(0 \dd 0, W_0)$. Therefore in the simplified setting \eqref{SDEglobal}-\eqref{ODEglobal}, the ODE \eqref{ODElocall} substantially reduces to \eqref{ODEglobal}. The previous sentence is correct for less than observing that \eqref{ODElocall} is an $N$-dimensional ODE, while \eqref{ODEglobal} is a one-dimensional curve. We keep using the notation $\zeta_t$ for both curves only to emphasize the analogy; however, while  the one-dimensional autonomous nature of the ODE \eqref{ODEglobal} implies that its solution has a limit, the zoology of possible behaviours for the curve \eqref{ODElocall} is much more varied. In this paper we only analyse the case in which the curve \eqref{ODElocall} converges to a limit and in future work we will treat more general cases. \footnote{For example the curve \eqref{ODElocall} could be periodic,  ergodic or chaotic (this is a non-exhaustive list of possibilities).} However, roughly speaking, in Theorem \ref{prop:invmeasureforODE}, we prove that a necessary condition for the SDE \eqref{SDE} to have  an invariant measure is that the ODE \eqref{ODElocall} should admit  one as well (notice that if the curve \eqref{ODElocall} converges to a limit point $\bar{x}$, then it admits the Dirac measure $\delta_{\bar{x}}$ as invariant measure). 

As anticipated in the introduction, the above discussion motivates introducing the process
\begin{equation}\label{eq:Zdef}
\cZ_t:=e^{-t\wo}(X_t^{(x_0)}).
\end{equation}
Clearly $\cZ_{0}=x_0$, so $\cZ_t$ and $X_t^{(x_0)}$ start from the same point.  This process is  time-inhomogeneous (as we show at the beginning of Section \ref{sec7core}) and it  will have a central role in what follows, hence further comments on the definition  \eqref{eq:Zdef}  are in order:
\begin{itemize}
\item To continue drawing the useful parallel with Section  \ref{sec:non-autonomous}, notice that  this process plays in this context an analogous role to the one that $Z_t$ (solution of \eqref{SDEglobal}) has in Section \ref{sec:non-autonomous}, see Note \ref{notelinksec67}.  
\item Let us   recall   that if $X_0\in S_{x_0}$ then $X_t \in \overline{\curlys}_{x_0}$ for every $t\geq 0$ (see Proposition \ref{correachstoc}); more precisely, for every $t \geq 0$ $X_t$ belongs to the integral submanifold $\Stbar$ almost surely (see Proposition \ref{prop:SDElivesonsubmanifold}). We will make assumptions to guarantee that $X_t$  hits neither  the boundary of $S_{e^{t\voperp}\!\!x_0}$ nor the boundary of $\curlys_{x_0}$ {\em in finite time} (see  Hypothesis \ref{hyp:gennonautoassumptions} \ref{item:gentightnessofq}, Lemma \ref{lem:boundary} and Note \ref{noteonhyplocal} for more precise comments on this). Therefore $\cZ_t$ lives on the manifold  $S_{x_0}$, for every $t \geq 0$. So, in the end, while $X_t$ takes values in $\curlys_{x_0}$, $\cZ_t$ takes values in $S_{x_0}\subseteq \curlys_{x_0}$. One can informally think of $\cZ_t$ as being a ``projection" of $X_t$ on the submanifold $S_{x_0}\subseteq \curlys_{x_0}$,  see again Note \ref{notelinksec67}.  
\item Finally, on a small technical point, as we have already observed in Note \ref{technicalpoint}, $\voperp$ may not be uniformly Lipschitz. However, to avoid problems of well posedness and uniqueness, throughout this section we assume that $\voperp$ is indeed Lipschitz. 
\end{itemize}

We will show that the  time-inhomogeneous  process $\cZ_t$ can be studied by means of slight modifications of the approach used in Section \ref{sec:non-autonomous} to study the process \eqref{SDEglobal}. Therefore
 the strategy (and one of the main novelties) of this section is to use the auxiliary  time-inhomogeneous process $\cZ_t$ in order to make deductions on the behaviour of the time-homogeneous process $X_t$. We carry out this programme in Section  \ref{sec7core} below. Before moving on, we give a simple example which demonstrates that $\cZ_t \in S_{x_0}$ for every $t\geq 0$ and, in Section \ref{ztsec}, we gather further preliminary results on the process $\cZ_t$. 


\begin{example}[Random Circles continued] 
\textup{Consider again Example \ref{ex:circle}, in the case in which the initial datum is $(x_0, y_0)=(1,0)$ . Using \eqref{explrandcircvoperp} and \eqref{solcircles1}-\eqref{solcircles2}, we have 
\begin{align*}
 	\cZ_t:=e^{-t\voperp}(X_t,Y_t) &= \left(\begin{array}{c}X_t\cos(-t)-Y_t\sin(-t)\\
 	X_t\sin(-t)+Y_t\cos(-t)
 	\end{array}\right)\\
 	&= \left(\begin{array}{c}e^{\sqrt{2} B_t}\cos(t) \cos(t)+e^{\sqrt{2} B_t}\sin(t) \sin(t)\\
 	-e^{\sqrt{2} B_t}\cos(t) \sin(t)+e^{\sqrt{2} B_t}\sin(t) \cos(t)
 	\end{array}\right)\\
 	&=e^{\sqrt{2} B_t}\left(\begin{array}{c}1\\
 	0
 	\end{array}\right)
 	\end{align*}
	In particular, $\cZ_t$ takes values in the positive half-line,  which is precisely $S_{(1,0)}= S_{(x_0, y_0)}$. 
}\hf
\end{example}
\begin{note}\label{note:compareBismut1}
\textup{In \cite{Bismut} the author considers the curve $e^{tV_0}(X_t^{(x)})$ while here we consider the curve $e^{t\voperp}(X_t^{(x)})$. This is due to two things: i) in \cite{Bismut}  the author was not concerned with the dynamics  of the process but was instead interested in studying the density of the SDE; ii) the work \cite{Bismut} assumes that the HC is satisfied, so that $\delnn$ has constant rank (constant and equal to $N$) and the decomposition of state space into integral manifolds of $\delnn$ becomes in that case trivial - there is only one submanifold, which is the whole $\R^N$.  \\
When studying the density of the law of the process there is no advantage in using $\voperp$ over $V_0$ - indeed even though $V_0$ is smooth,  $\voperp$ need not be continuous, see Note \ref{technicalpoint}. On the other hand, if one is interested in the dynamics of the process (and the HC is not enforced), then $\voperp$ becomes an indicator of whether the path has left the integral manifold of $\delnn$ from which it started or not. In other words for us it is important to know the behaviour of the manifolds $S_{e^{t\voperp}x}$ as $t$ varies. In particular, we wish to distinguish between the case when $S_{e^{t\voperp}x}$ is constant in $t$ which corresponds to the situation in which $\voperp=0$ and when $S_{e^{t\voperp}x}$ varies in time,  see Section \ref{sec:5} and Lemma \ref{lemmavoonmanifold}. 
}
\end{note}

\subsection{The auxiliary process $\cZ_t$ and its associated two-parameter semigroup}\label{ztsec}

By differentiating \eqref{eq:Zdef} we see that $\cZ_t$  satisfies the following SDE
\begin{align*}
d\cZ_t&= -\wo(e^{-t\wo}\!\!(X_t^{(x)}))dt+\left( \jacobian{e^{-t\wo}}{x}\right)\!\!(X_t^{(x)}) V_0(X_t^{(x)})dt\\
&+\sqrt{2}\sum_{i=1}^d\left(\jacobian{e^{-t\wo}}{x}\right)\!\!(X_t^{(x)})V_i(X_t^{(x)})\circ dB_t^i\\
&=-\wo(\cZ_t)dt+\Ad_{t\wo}V_0(\cZ_t)dt+\sqrt{2}\sum_{i=1}^d\Ad_{t\wo}V_i(\cZ_t)\circ dB_t^i, 
\end{align*}
where, as customary, we have set   $(\Ad_{tV}Y)(x):= (\jacobian{e^{-tV}}{x})(e^{tV}(x)) \cdot Y(e^{tV}x)$, for any two smooth vector fields $V$ and $Y$. By using \eqref{defvoperp}, the elementary property $\Ad_{tV}V = V$ and introducing the notation 
\begin{equation}\label{block}
\begin{aligned}
\cV_{0,t} &:= \Ad_{t\voperp}\vodel \\
\cV_{j,t} & := \Ad_{t\voperp}V_j \quad j \in \{1 \dd d\}, \\
\end{aligned}
\end{equation}
we conclude that $\cZ_t$ satisfies the following SDE with time-dependent coefficients: 
\begin{align}
d\cZ_t&=\Ad_{t\wo}\vodel(\cZ_t)dt+\sqrt{2}\sum_{i=1}^d\Ad_{t\wo}V_i(\cZ_t)\circ dB_t^i \nonumber\\
&=\cV_{0,t}(\cZ_t) dt+\sqrt{2}\sum_{i=1}^d\cV_{i,t}(\cZ_t)  \circ dB_t^i \,, \label{eq:SDEnonauto}
\end{align}
As usual, we denote by $\cP_t$ the one parameter semigroup associated with $X_t$; the two-parameter semigroup associated with $\cZ_t$ is instead given by
\begin{equation*}
\zQ_{s,t}f(\lz) = \mathbb{E}\left[f(\cZ_t)\lvert \cZ_s=\lz\right], \quad \lz \in \So, s\leq t, f\in C_b(\R^N).
\end{equation*}
The semigroups $\zQ_{s,t}$ and $\cP_t$ are related as follows:
\begin{align}
\cP_tf(x) &= \zQ_{s,s+t}(f\circ e^{(s+t)\wo})(e^{-s\wo}(x)), &&x\in {\capitals}_{e^{s\voperp}(x_0)}, \, s\in\R,t\geq 0, \, f\in C_b(\R^N), \nonumber\\
\zQ_{s,t}g(\lz) &= \cP_{t-s}(g\circ e^{-t\wo})(e^{s\wo}(\lz)), &&\lz\in S_{x_0}, \, 0\leq s\leq t, \,  g\in C_b(\Sbar)\, . \label{eq:autonomoustonon}
\end{align}
We stress that $\{\zQ_{s,t}\}_{0\leq s\leq t}$ is defined on $S_{x_0}$ (as per Hypothesis \ref{hyp:gennonautoassumptions} below). In \eqref{eq:autonomoustonon} we consider functions $g$ which are continuous up to and including the boundary of $\So$ for purely technical reasons (see proof of Proposition \ref{lemTie}). \\
In Proposition \ref{lemTie} we make some clarifications on the smoothing properties of the semigroup $\zQ_{s,t}$. To state such a lemma, we need to  properly formulate  some preliminary facts. Consider the following ``hierarchy" of operators:
\begin{align*}
\cV_{[i],t} &:= \cV_{i,t} \qquad i=0, 1 \dd d \mbox{ (defined as in } \eqref{block}\mbox{)}\\
\cV_{[\alpha \ast 0],t} & := [\cV_{[\alpha],t}, \cV_{[0],t}+\partial_t], \qquad \alpha \in \A,\\
\cV_{[\alpha \ast i],t} & := [\cV_{[\alpha],t}, \cV_{[i],t}], \qquad \alpha \in \A, i=1 \dd d\,.
\end{align*}
For each $\alpha\in \A$ we can view the vector field $(\lz,t)\mapsto \cV_{[\alpha],t}(\lz)$ as a vector field on $\R^N$, the coefficients of which  depend on time or as a vector field on $\R^N\times\R$. We can define the UFG condition for vector fields in $\R^N\times\R$ in an analogous way to Definition \ref{defufg}. In Proposition \ref{lem:VufgimpliesAdVUFG} we prove that the set of vector fields $\{\cV_{[0],t}+\partial_t,\cV_{[1],t},\ldots,\cV_{[d],t}\}$ satisfy the UFG condition on $\R^N\times\R$ provided the vector fields $\{V_0,V_1,\ldots,V_d\}$ satisfy the UFG condition on $\R^N$.

\begin{prop}\label{lem:VufgimpliesAdVUFG} 
Assume that the vector fields $\{V_0,V_1,\ldots,V_d\}$ on $\R^N$ satisfy the UFG condition at level $m$; then the vector fields $\{\partial_t+\cV_{[0],t},\cV_{[1],t},\ldots, \cV_{[d],t}\}$ satisfy the UFG condition at level $m$ when viewed as vector fields on $\mathbb{R}^N\times \R$. Moreover, for any $\alpha\in\mathcal{A}_m$,
\begin{equation}\label{eq:commutatorsofcV}
\cV_{[\alpha],t} = \Ad_{t\voperp}V_{[\alpha]}.
\end{equation}
\end{prop}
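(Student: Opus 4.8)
The plan is to prove the identity \eqref{eq:commutatorsofcV} first, by induction on the length $\|\alpha\|$, and then derive the UFG property as a consequence. The key algebraic facts I would use are: (i) $\Ad_{t\voperp}$ is, for each fixed $t$, a Lie-algebra homomorphism on vector fields, i.e. $\Ad_{t\voperp}[Y,Z] = [\Ad_{t\voperp}Y, \Ad_{t\voperp}Z]$; (ii) the ``Adjoint'' vector field satisfies the transport-type identity $\partial_t(\Ad_{t\voperp}Y) = -[\voperp, \Ad_{t\voperp}Y] = \Ad_{t\voperp}([Y,\voperp])$ (this is the standard ODE for the pushforward of a vector field along a flow, and it is exactly why one bundles $\cV_{[0],t}$ together with $\partial_t$); (iii) $\Ad_{t\voperp}\voperp = \voperp$. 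I would state (i)--(iii) as elementary lemmas (or cite them as standard) and keep the verification to a line each.

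For the base case, $\cV_{[i],t} = \cV_{i,t} = \Ad_{t\voperp}V_i$ for $i\in\{1,\dots,d\}$ holds by definition \eqref{block}, and for $i=0$ we have $\cV_{[0],t} = \cV_{0,t} = \Ad_{t\voperp}\vodel = \Ad_{t\voperp}(V_0 - \voperp) = \Ad_{t\voperp}V_0 - \voperp$; writing $V_{[0]}=V_0$ this is consistent once one is careful that the ``$\partial_t$'' piece is carried separately. For the inductive step, suppose $\cV_{[\alpha],t}=\Ad_{t\voperp}V_{[\alpha]}$. For a suffix $i\in\{1,\dots,d\}$,
\[
\cV_{[\alpha\ast i],t} = [\cV_{[\alpha],t},\cV_{[i],t}] = [\Ad_{t\voperp}V_{[\alpha]},\Ad_{t\voperp}V_i] = \Ad_{t\voperp}[V_{[\alpha]},V_i] = \Ad_{t\voperp}V_{[\alpha\ast i]},
\]
using (i). For the suffix $0$, the definition is $\cV_{[\alpha\ast 0],t} = [\cV_{[\alpha],t},\cV_{[0],t}+\partial_t]$; expanding the bracket on $\R^N\times\R$, the $\partial_t$ contribution is precisely $-\partial_t(\Ad_{t\voperp}V_{[\alpha]})$, which by (ii) equals $-\Ad_{t\voperp}[V_{[\alpha]},\voperp] = \Ad_{t\voperp}[\voperp,V_{[\alpha]}]$, while $[\cV_{[\alpha],t},\cV_{[0],t}] = [\Ad_{t\voperp}V_{[\alpha]},\Ad_{t\voperp}\vodel] = \Ad_{t\voperp}[V_{[\alpha]},\vodel]$. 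Adding, $\cV_{[\alpha\ast 0],t} = \Ad_{t\voperp}\big([V_{[\alpha]},\vodel]+[\voperp,V_{[\alpha]}]\big) = \Ad_{t\voperp}[V_{[\alpha]},\vodel-\voperp]\cdot(-1)$... here I need to be careful with signs: $[V_{[\alpha]},\vodel] - [V_{[\alpha]},\voperp] = [V_{[\alpha]},\vodel-\voperp]$ is not $[V_{[\alpha]},V_0]$, so the correct bookkeeping is $[V_{[\alpha]},\vodel]+[\voperp,V_{[\alpha]}] = [V_{[\alpha]},\vodel]-[V_{[\alpha]},\voperp] = [V_{[\alpha]},\vodel-\voperp]$; since $V_0 = \vodel+\voperp$ this is $[V_{[\alpha]},V_0-2\voperp]$, which looks wrong, so the resolution must be that the ``$+\partial_t$'' in the hierarchy for $\cV$ is precisely designed so that the $\voperp$-terms cancel against the definition $\cV_{[0],t}=\Ad_{t\voperp}V_0-\voperp$ rather than $\Ad_{t\voperp}\vodel$ — I would recheck \eqref{block} carefully at this point. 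This sign/bookkeeping reconciliation is the one genuinely delicate computation, and it is where I expect to spend the most care.

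Once \eqref{eq:commutatorsofcV} is established for all $\alpha\in\A_m$, the UFG property transfers immediately: by Definition \ref{defufg} applied to $\{V_0,\dots,V_d\}$, for any $\alpha=\alpha'\ast i$ with $\alpha'\in\A_m$ we have $V_{[\alpha]} = \sum_{\beta\in\A_m}\varphi_{\alpha,\beta}V_{[\beta]}$ with $\varphi_{\alpha,\beta}\in C_V^\infty(\R^N)$. Applying $\Ad_{t\voperp}$ (which is $C^\infty$-linear over functions, so $\Ad_{t\voperp}(\varphi Y)(x) = \varphi(e^{t\voperp}x)\,(\Ad_{t\voperp}Y)(x)$) and using \eqref{eq:commutatorsofcV} on both sides gives
\[
\cV_{[\alpha],t}(\lz) = \sum_{\beta\in\A_m}\varphi_{\alpha,\beta}(e^{t\voperp}\lz)\,\cV_{[\beta],t}(\lz),
\]
so the new structure functions are $\psi_{\alpha,\beta}(\lz,t):=\varphi_{\alpha,\beta}(e^{t\voperp}\lz)$. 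The remaining point is that these $\psi_{\alpha,\beta}$ lie in $C_V^\infty(\R^N\times\R)$ for the vector-field system $\{\partial_t+\cV_{[0],t},\cV_{[1],t},\dots,\cV_{[d],t}\}$: this follows because differentiating $\psi_{\alpha,\beta}$ by any $\cV_{[\gamma],t}$ (for $\gamma\in\A_m$) or by $\partial_t+\cV_{[0],t}$ amounts, after the change of variables $\lz\mapsto e^{t\voperp}\lz$, to differentiating $\varphi_{\alpha,\beta}$ by the corresponding $V_{[\gamma]}$ or by $V_0$ — and $\varphi_{\alpha,\beta}\in C_V^\infty(\R^N)$ controls all such derivatives uniformly, while boundedness of $\voperp$ and its derivatives (Lipschitz assumption on $\voperp$ in this section, plus smoothness) controls the Jacobian factors. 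I would isolate this last bounded-derivatives check as a short lemma; it is routine but needs the standing assumptions on $\voperp$. The main obstacle, to reiterate, is getting the sign/grouping in the suffix-$0$ inductive step exactly right so that it matches the definitions in \eqref{block} and the $\cV_{[\alpha\ast0],t}$ hierarchy.
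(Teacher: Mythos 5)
Your overall route (induction on the word length, using that $\Ad_{t\voperp}$ is a Lie--algebra morphism plus the flow/transport identity, then pushing the UFG relation through $\Ad_{t\voperp}$) is exactly the mechanism of the paper's proof, but the one step you yourself single out as delicate is broken by a sign error, and you leave it unresolved. With the paper's convention $(\Ad_{tV}Y)(x)=(\jacobian{e^{-tV}}{x})(e^{tV}(x))\cdot Y(e^{tV}(x))$, the transport identity is
\begin{equation*}
\partial_t\bigl(\Ad_{t\voperp}Y\bigr)=\Ad_{t\voperp}[\voperp,Y]=[\voperp,\Ad_{t\voperp}Y],
\end{equation*}
(this is the form the paper quotes from Sontag), whereas your fact (ii) asserts $\partial_t(\Ad_{t\voperp}Y)=-[\voperp,\Ad_{t\voperp}Y]=\Ad_{t\voperp}[Y,\voperp]$, which has the opposite sign. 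With the correct sign the suffix-$0$ step closes cleanly: $[\cV_{[\alpha],t},\partial_t]=-\partial_t\Ad_{t\voperp}V_{[\alpha]}=\Ad_{t\voperp}[V_{[\alpha]},\voperp]$, and adding $[\cV_{[\alpha],t},\cV_{0,t}]=\Ad_{t\voperp}[V_{[\alpha]},\vodel]$ gives $\Ad_{t\voperp}[V_{[\alpha]},\voperp+\vodel]=\Ad_{t\voperp}[V_{[\alpha]},V_0]$, so no ``$[V_{[\alpha]},\vodel-\voperp]$'' anomaly ever appears. The escape route you propose is vacuous: by your own fact (iii), $\Ad_{t\voperp}V_0-\voperp=\Ad_{t\voperp}(V_0-\voperp)=\Ad_{t\voperp}\vodel$, so ``redefining'' $\cV_{0,t}$ in \eqref{block} changes nothing; the definition is not at fault, your lemma (ii) is. Since the argument as written ends with an unreconciled contradiction at precisely the point the proposition turns on, this is a genuine gap; once the sign is corrected, your induction reproduces the paper's proof (the paper states the two bracket identities for general $\alpha\in\A_m$ directly rather than running a formal induction, but it is the same computation).

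For the second half, your scheme is again the paper's: apply $\Ad_{t\voperp}$ to \eqref{eq:UFG} and take $\psi_{\alpha,\beta}(\lz,t)=\varphi_{\alpha,\beta}(e^{t\voperp}(\lz))$ as the new structure functions. One simplification worth adopting: you do not need any bounds on the Jacobian of the flow or on derivatives of $\voperp$. The related-vector-fields identity $\cV_{[\gamma],t}\bigl(\varphi\circ e^{t\voperp}\bigr)=(V_{[\gamma]}\varphi)\circ e^{t\voperp}$, itself an immediate consequence of \eqref{eq:commutatorsofcV}, shows that iterated $\cV_{[\gamma_1],t}\cdots\cV_{[\gamma_k],t}$ derivatives of $\psi_{\alpha,\beta}$ are just the corresponding $V_{[\gamma_1]}\cdots V_{[\gamma_k]}$ derivatives of $\varphi_{\alpha,\beta}$ composed with the flow, so every supremum required for $\psi_{\alpha,\beta}\in C_V^\infty(\R^N\times\R)$ is inherited verbatim from $\varphi_{\alpha,\beta}\in C_V^\infty(\R^N)$; this is how the paper argues, and it keeps the hypotheses on $\voperp$ to smoothness alone.
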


\begin{proof}[Proof of Proposition \ref{lem:VufgimpliesAdVUFG}]
The proof is deferred to Appendix \ref{app:proofsoflongtime}.
\end{proof}

Recall from Section \ref{sec:preliminaries} that the map $\lz\in S_{x_0} \mapsto \cP_tf(\lz)$ is smooth (along the directions $V_{[\alpha]}$, $\alpha \in \A_m$) for any $f\in C_b(\R^N)$. In Proposition \ref{lemTie} we show that for each fixed $s<t$ the map $\lz\in \So \mapsto \zQ_{s,t}g(\lz)$ is also smooth in the directions $\cV_{[\alpha],s}$ for any $g\in C_b(\Sbar)$ and $\alpha\in \mathcal{A}_m$. A key observation to understand the statement of Proposition \ref{lemTie} is the following one:
\be\label{keyobs}
 V\in \deln  \mbox{ and } \deln \mbox{ is invariant under the vector field } W  \quad \Rightarrow \quad   \Ad_{tW}V\in \deln.\footnote{Indeed, by the definition of invariance (see Definition \ref{basicdefditstr}), we have that $\jacobian{e^{tW}}{x}(x)$ maps $\deln(x)$ to $\deln(e^{tW}(x))$.  Therefore $\jacobian{e^{tW}}{x}(e^{-tW}(x))$ maps $\deln(e^{-tW}(x))$ to $\deln(x)$. Now $V\in \deln$, so $V(e^{-tW}(x)) \in \deln(e^{-tW}(x))$ and we have that
$
\Ad_{tW}V(x)  = \jacobian{e^{tW}}{x}(x)V(e^{-tW}(x))  \in \deln(x).
$
That is, $\Ad_{tW}V\in \deln$.}
\ee
In particular, $\cV_{j,t} \in \deln$ for every $j \in \{0 \dd d\}$. 
\begin{prop}\label{lemTie} 
 Assume the vector fields $\{V_0,\ldots,V_d\}$ satisfy the UFG condition and that the vector $\voperp$ is uniformly Lipschitz. Then, for any $g\in C_b(\Sbar)$, the map $(z,s)\mapsto \zQ_{s,t}g(\lz)$ is differentiable in the time variable $s$ and in the spatial directions $\cV_{[\alpha],s}$ for any $\lz\in\Sbar, t>s, \alpha\in\mathcal{A}_m$. Moreover $\zQ_{s,t}g(\lz)$ satisfies the equation
	\begin{equation}\label{eq:bKeq}
	\partial_s\zQ_{s,t}g(\lz) = -\mathcal{L}_s\zQ_{s,t}g(\lz), \quad \mbox{ for any } z\in \So, s<t.
	\end{equation} 
	Here $\mathcal{L}_s$ is the differential operator defined as 
	\begin{equation*}
	\mathcal{L}_s\psi(\lz) = \cV_{0,s}\psi(\lz) + \sum_{i=1}^d \cV_{i,s}^2\psi(\lz)\, ,
	\end{equation*}
	for $\psi:\Sbar\to \R$ sufficiently smooth. 
\end{prop}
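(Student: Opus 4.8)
The plan is to transfer the smoothing and PDE statements known for the one-parameter semigroup $\cP_t$ (recalled in Section~\ref{sec:preliminaries} and Appendix~\ref{app:UFG}) to the two-parameter semigroup $\zQ_{s,t}$ via the explicit identity \eqref{eq:autonomoustonon}, namely $\zQ_{s,t}g(\lz) = \cP_{t-s}(g\circ e^{-t\wo})(e^{s\wo}(\lz))$. Observe first that by Proposition~\ref{lem:VufgimpliesAdVUFG} the vector fields $\{\partial_t+\cV_{[0],t},\cV_{[1],t},\ldots,\cV_{[d],t}\}$ satisfy the UFG condition at level $m$ on $\R^N\times\R$, with $\cV_{[\alpha],t}=\Ad_{t\voperp}V_{[\alpha]}$ for $\alpha\in\A_m$. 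The UFG smoothing theory applied on the product space $\R^N\times\R$ then yields that the function $u(t,x):=\cP_tf(x)$, viewed as a function of $(t,x)$, is differentiable in the directions $\{\partial_t+\cV_{[0],t},\cV_{[1],t},\ldots,\cV_{[d],t}\}$ and all their iterated commutators, provided $f\in C_b(\R^N)$; this is essentially a restatement of the Kusuoka--Stroock result that $\cP_tf$ is smooth in the direction $\mathcal{V}=\partial_t-V_0$ and in the directions of $\mathcal{R}_m$, combined with the $\Ad$-covariance.

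First I would reduce the spatial smoothness claim to the case $s=0$ by noting that $\zQ_{s,t}=\zQ_{0,t-s}$ up to the change of spatial variable $\lz\mapsto e^{s\wo}(\lz)$ and up to composing $g$ with a diffeomorphism; more precisely, from \eqref{eq:autonomoustonon} the map $\lz\mapsto\zQ_{s,t}g(\lz)$ is $\cP_{t-s}(g\circ e^{-t\wo})$ evaluated at $e^{s\wo}(\lz)$, so differentiating along $\cV_{[\alpha],s}(\lz)=\Ad_{s\voperp}V_{[\alpha]}(\lz)$ corresponds, by the chain rule and the definition of $\Ad$, to differentiating $\cP_{t-s}(g\circ e^{-t\wo})$ at the point $e^{s\wo}(\lz)$ along $V_{[\alpha]}$. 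Since $g\circ e^{-t\wo}\in C_b(\R^N)$ (here I use that $e^{-t\wo}$ is a diffeomorphism, that $g$ is bounded continuous on $\Sbar$, and that $e^{-t\wo}$ maps the relevant manifold into $S_{x_0}$ — this is exactly why \eqref{eq:autonomoustonon} was set up with $g\in C_b(\Sbar)$), the UFG smoothing result for $\cP_{t-s}$ along $\mathcal{R}_m$ gives differentiability in every $\cV_{[\alpha],s}$, $\alpha\in\A_m$, and iteratively in all compositions thereof. Differentiability in $s$ (jointly with the spatial variable) follows from the smoothness of $\cP_r f$ in the direction $\partial_t - V_0$ together with the smoothness of the flow $(s,\lz)\mapsto e^{s\wo}(\lz)$ in $s$, since $\voperp$ is assumed uniformly Lipschitz and (being a projection of the smooth $V_0$) smooth where it is nonzero — one must be slightly careful here and invoke Note~\ref{technicalpoint} / the standing Lipschitz assumption on $\voperp$ in this section.

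For the backward Kolmogorov equation \eqref{eq:bKeq}, I would differentiate the semigroup identity \eqref{eq:autonomoustonon} in $s$ directly. Writing $v(r,y):=\cP_r(g\circ e^{-t\wo})(y)$, we have $\zQ_{s,t}g(\lz)=v(t-s, e^{s\wo}(\lz))$, so
\begin{equation*}
\partial_s \zQ_{s,t}g(\lz) = -(\partial_r v)(t-s,e^{s\wo}(\lz)) + \big(\nabla_y v\big)(t-s,e^{s\wo}(\lz))\cdot \voperp(e^{s\wo}(\lz)).
\end{equation*}
Using the PDE \eqref{linearPDE} for $\cP_r$, namely $\partial_r v = V_0 v + \sum_i V_i^2 v$, and then expressing everything back in terms of the $\cV_{j,s}$ via $\Ad_{s\voperp}$ (using $\Ad_{s\voperp}V_0 = \vodel_{,s} + \wo$ where $\vodel_{,s}:=\Ad_{s\voperp}\vodel = \cV_{0,s}$ and $\Ad_{s\voperp}\voperp=\voperp$, together with $\Ad_{s\voperp}V_i=\cV_{i,s}$), the $\voperp$-terms cancel in exactly the way the $-\wo(\cZ_t)dt$ term cancelled the $\wo$-component of $\Ad_{t\wo}V_0$ when deriving \eqref{eq:SDEnonauto}, leaving $\partial_s\zQ_{s,t}g(\lz) = -\big(\cV_{0,s} + \sum_i \cV_{i,s}^2\big)\zQ_{s,t}g(\lz) = -\mathcal{L}_s\zQ_{s,t}g(\lz)$. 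The computation that $\sum_i (\Ad_{s\voperp}V_i)^2 = \Ad_{s\voperp}(\sum_i V_i^2)$ as second-order operators — i.e. that conjugation by the flow commutes with squaring the first-order operators $V_i$ acting on the already-transformed function — is the routine-but-delicate bookkeeping step, and combined with the appearance of the Jacobian factors $\jacobian{e^{-s\wo}}{x}$ it is the main place where care is needed; this is the analogue at the level of generators of the passage from the first to the second line of \eqref{eq:SDEnonauto}.

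The main obstacle I anticipate is not any single hard estimate but rather the careful justification that $g\circ e^{-t\wo}$ (and more generally the composed functions appearing after the change of variables) genuinely lies in the class $C_b$ on which the UFG smoothing theory of Appendix~\ref{app:UFG} applies, given that $\voperp$ need only be Lipschitz rather than smooth, and the bookkeeping of Jacobian factors when translating derivatives along $V_{[\alpha]}$ at the point $e^{s\wo}(\lz)$ into derivatives along $\cV_{[\alpha],s}$ at $\lz$. Once \eqref{eq:commutatorsofcV} from Proposition~\ref{lem:VufgimpliesAdVUFG} is in hand, the identification of the transformed directions is clean, so the real work is the domain/regularity tracking and the second-order operator identity described above.
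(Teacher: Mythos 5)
Your overall strategy coincides with the paper's: differentiate the conjugation identity \eqref{eq:autonomoustonon} in $s$, use the Kolmogorov equation for $\cP$, the covariance relation $\cV_{[\alpha],s}=\Ad_{s\voperp}V_{[\alpha]}$ (equivalently \eqref{eq:relatedvectorfields}), and the cancellation of the $\voperp$-contributions. But the two points you flag as ``obstacles'' are genuine gaps that your argument does not close, and they are exactly where the paper does its work. First, $g\circ e^{-t\wo}$ is \emph{not} an element of $C_b(\R^N)$: $g$ is defined only on $\Sbar$, so the composition is defined only on $e^{t\wo}(\Sbar)$, and neither the smoothing estimates of Appendix \ref{app:UFG} nor the PDE \eqref{linearPDE} for $\cP_{t-s}$ can be applied to it as it stands. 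The paper resolves this by extending $g$ via the Tietze Extension Theorem to some $f\in C_b(\R^N)$ with $f=g$ on $\Sbar$, and then using that $\cZ_t\in\So$ almost surely, so that $\zQ_{s,t}g=\zQ_{s,t}f$ on $\Sbar$; this is precisely why the hypothesis requires $g$ continuous up to and including the closure of $\So$ (continuity on the open manifold alone would not suffice for the extension). Your proposal names the issue but supplies no mechanism for it.

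Second, your chain-rule display splits $\partial_s\zQ_{s,t}g(\lz)$ into $-(\partial_r v)$ plus $(\nabla_y v)\cdot\voperp$ evaluated separately, with $v(r,\cdot)=\cP_r(g\circ e^{-t\wo})$. Under the UFG condition neither term is individually well defined when the datum is merely bounded and continuous: $\cP_r f$ is guaranteed differentiable only along the $V_{[\alpha]}$, $\alpha\in\A_m$, and along the combined direction $\partial_t-V_0$, not in $t$ alone nor along $\voperp$ alone, so writing $\partial_r v = V_0v+\sum_i V_i^2 v$ as two separate ingredients is only formal. The paper avoids this by a two-stage argument: it first carries out the computation for $f\in C_V^\infty(\R^N)$, where $\cP_tf$ is smooth in every direction by \ref{item:smoothmapstosmooth}, and then recovers $f\in C_b(\R^N)$ through the density/integral-representation argument of Lemma \ref{lem:denistyargforvs} (approximate by $f_n\in C_V^\infty$, write the increment of the semigroup as a time integral of the generator, pass to the limit in $n$ and then in the increment). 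Your proposal would need either this approximation step or an argument that only ever differentiates along $\partial_t-V_0$, $\vodel$ and the $V_{[\alpha]}$; as written, the key display is not justified at the stated level of generality.
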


\begin{proof}[Proof of Proposition \ref{lemTie}]
The proof is deferred to Appendix \ref{app:proofsoflongtime}.
\end{proof}


\subsection{Convergence to Equilibria}\label{sec7core}
We now turn to the asymptotic behaviour of the process $\cZ_t$. As we have already stated, we will concentrate on the case in which the solution of the ODE \eqref{ODElocall} converges. Let us define the map
\begin{align*}
\Wlim:\mathrm{Dom}(\Wlim) \subseteq\R^N & \longrightarrow \R^N\\
x & \longrightarrow \lim_{t\to\infty}e^{t\voperp}(x).
\end{align*}
Here $\mathrm{Dom}(\Wlim)$ is the set of all points $x\in \R^N$ such that the integral curve $e^{t\voperp}(x)$ converges to a finite limit as $t$ tends to $\infty$. 

\begin{hypothesis}\label{hyp:gennonautoassumptions}
	Assume the following:
	\begin{enumerate}[label=\textbf{\textup{[A.\arabic*]}}]
		\item The vector fields $\{V_0,V_1,\ldots,V_d\}$ satisfy the UFG condition. \label{item:UFGgen}
\item The vector field $\voperp$ is uniformly Lipschitz. \label{item:LipschitzVoperp}
		\item Define the measures $p_{t}^{x}$ by $p_t^{x}(A)=\cP_{t}\mathbbm{1}_A(x)$ for any Borel measurable $A\subseteq \R^N$. The family $\{p_t^{x}: t\geq 0\}$ is tight for all $x\in\mathbb{R}^N$.\label{item:gentightnessofp}
		\item Define the measures $\fq_{t}^{s,\lz}$ by $\fq_t^{s,\lz}(A)=\zQ_{s,t}\mathbbm{1}_A(\lz)$ for any Borel measurable $A\subseteq \So$. Then we require that for each $\lz\in\So$ the measures $\{\fq_t^{0,\lz}:t\geq 0\}$ are tight on $\So$; that is,  for all $\varepsilon>0$ there exists a compact set $K_\varepsilon\subseteq \So$ such that $\fq_t^{0,\lz}(K_\varepsilon)\geq 1-\varepsilon$ for all $t\geq 0$.\label{item:gentightnessofq}
		\item \label{item:OACgen} The Obtuse Angle Conditions \eqref{eq:OAC} and \eqref{eq:OAC2} are satisfied (with the understanding of Hypothesis \ref{SA} \ref{SAOAC}).
\item \label{item:conv} The initial datum $x_0$ of the SDE \eqref{SDE}  is such that the curve \eqref{ODElocall} started at $x_0$, admits a limit, i.e. there exists $\bar{x} \in \R^N$ such that $\voperp(\bar{x}) =0$ and  $e^{t\voperp}(x_0) \rar \bar{x}$ as $t \rar \infty$. 
		\item \label{item:genODEconv} 
		Assumptions on the map $\Wlim$: the domain of $\Wlim$ contains the whole manifold $\Sbar$ and  the image of $S_{x_0}$ through $\Wlim$ is all contained in a submanifold of $\deln$. More explicitly,     there exists an integral submanifold  of $\deln$, $\Sinfty$,  such that $\voperp=0$ on $\Sinfty$  and the image of $S_{x_0}$ through $\Wlim$ is all contained in $\Sinfty$,  $\Wlim(S_{x_0})\subseteq \Sinfty$.  Furthermore we assume that $\Wlim$ is a continuous map from $\Sbar\cup \curlys_{x_0}\cup \Sinfty$ into $\R^N$.
	\end{enumerate}
\end{hypothesis}


\begin{note}\label{noteonhyplocal}\textup{
Some comments on the above assumptions, in the order in which they are stated.  }
	\begin{itemize}
\item \textup{As a general premise, observe that, for every fixed $t\geq0$,  $X_t \in \curlys_{x_0}$ if and only if $\cZ_t \in \So$. Indeed,  $X_t=e^{t\voperp} \cZ_t$ so if $\cZ_t$ is in $\So$ then in particular it is in $\curlys_{x_0}$ and $X_t$ is just obtained by moving along an integral curve of $\voperp$; hence, by construction of the manifold $\curlys_{x_0}$, $X_t$ is still in $\curlys_{x_0}$. The validity of the reverse implication can be argued similarly (using Lemma \ref{lem:boundary} and Proposition \ref{prop:SDElivesonsubmanifold} as well). As a consequence, if  $\cZ_t$ doesn't hit the boundary of $\So$ {\em in finite time} then $X_t$ doesn't hit the boundary of $\curlys_{x_0}$   {\em in finite time}.
}
\item \textup{Hypothesis \ref{hyp:gennonautoassumptions} \ref{item:gentightnessofq} implies that $\cZ_t\in \So$ almost surely, for every $t \geq 0$, i.e. it implies that $\cZ_t$ doesn't hit the boundary of $\So$ in finite time.    Indeed, assume by contradiction that there exists $t_0>0$ such that $\mathbb{P}(\cZ_{t_0}\in \partial\So) =:\varepsilon>0$. Recall $\partial\So:=\Sbar\setminus\So$. By the previous bullet point  if $\cZ_{t_0}$  belongs to $\partial\So$ then $X_{t_0}\in\partial\curlys_{x_0}$. By Proposition \ref{prop:dimcanonlydecrease} we then have that $X_t$ is in the boundary of $\curlys_{x_0}$ for any $t>t_0$. That is, 
	$$
	\mathbb{P}(X_t\in\partial\curlys_{x_0}) \geq \mathbb{P}(X_{t_0}\in\partial\curlys_{x_0}) \geq \mathbb{P}(\cZ_{t_0}\in\partial\capitals_{x_0})=\varepsilon>0\, , \quad \mbox{for any } t>t_0. \footnote{The second inequality is an inequality rather than an equality because of Lemma \ref{lem:boundary}.}
	$$
We know from \ref{item:gentightnessofp} that, given $\varepsilon$ as in the above, there exists	 a compact set  $K_{\varepsilon/2} \subseteq S_{x_0}$ such that $\mathbb{P}(\cZ_t \in K_{\epsilon/2}) = \fq_t(K_{\varepsilon/2})\geq 1-\varepsilon/2$ for every $t \geq 0$.
Now using that $\curlys_{x_0}$ and $\partial\curlys_{x_0}=\overline{\curlys}_{x_0}\setminus\curlys_{x_0}$ are disjoint, for every $t>t_0$ we have
\begin{align*}	
1 &=  \mathbb{P}(X_t\in\partial\curlys_{x_0}) +\mathbb{P}(X_t\in\curlys_{x_0}) \\
	& \geq \mathbb{P}(\cZ_{t_0}\in\partial\capitals_{x_0}) + \mathbb{P}(\cZ_t\in K_{\varepsilon/2}) + \mathbb{P}(\cZ_t\in (K_{\varepsilon/2})^C)\\
& \geq  \mathbb{P}(\cZ_{t_0}\in\partial\capitals_{x_0}) + \mathbb{P}(\cZ_t\in K_{\varepsilon/2})
	\geq \varepsilon + 1- \varepsilon/2= 1+\varepsilon/2 \, ,
\end{align*}
where in the first inequality we have used the observation in the first bullet point of this note and $(K_{\varepsilon/2})^C$ denotes complement in $\curlys_{x_0}$. 
	  Hence $\varepsilon=0$, i.e. $\cZ_t$ belongs to $\So$ almost surely.
\item \textup{Hypothesis \ref{hyp:gennonautoassumptions} \ref{item:conv} is the analogous of Hypothesis \ref{hyp:nonautoassumptions}. \ref{item:convODE}.
}
	\item \textup{Hypothesis \ref{hyp:gennonautoassumptions} \ref{item:genODEconv} is slightly more complicated to explain, so we observe that it is satisfied in  the representation of the form ``ODE+SDE''  \eqref{SDEglobal}-\eqref{incondglobal} of the previous section, if  $\zeta_t=e^{tW_0}\zeta_0$ converges to some $\barz$. Indeed in that case if $x_0=(z_0, \zeta_0)$ then $S_{x_0}=\mathcal{H}_{\zeta_0}$ and $\Sinfty=\mathcal{H}_{\barz}$ (both of these manifolds are $n$-dimensional hyperplanes in $\R^{n+1}$, hence they are closed).  Moreover, for every $x=(z, \zeta)\in \curlys_{x_0}$,  $\Wlim(x)= \Wlim((z,\zeta))=(z, \barz)$, hence   the map   $\Wlim$ is  continuous on $\curlys_{x_0}$.  If $x=(z, \barz) \in \Sinfty$ then $\Wlim(x)=x$, so $\Wlim$ is continuous on $\Sinfty$ as well.  Because in this case the map $\Wlim$ is just a projection on the plane $\Sinfty$,  $\Wlim$ is continuous on $\So \cup \curlys_{x_0} \cup \Sinfty = \curlys_{x_0} \cup \Sinfty$ (the equality holding because $\So \subset \curlys_{x_0}$).  
  }
\item \textup{ By \ref{item:conv} $\voperp(\bar{x})=0$; using Lemma \ref{lemmaW0}, this implies that $S_{\bar{x}}=\curlys_{\bar{x}}$. Hence, by Lemma \ref{lemmavoonmanifold}, $\voperp(x)=0$ for every $x \in S_{\bar{x}}$. So in reality \ref{item:conv} implies that part of \ref{item:genODEconv}  where we require  $\voperp$ to vanish on the whole $S_{\bar{x}}$. }
\item \textup{If we don't make any assumptions on the map $\Wlim$, when we look at the set $\Wlim(\So)$,  it may occur that this is not a connected set and, even if it were connected, it may be contained in more than one submanifold of $\deln$ (see Example \ref{ex:grushin}). If we assume that $\Wlim$ is continuous, because $\So$ is connected then also $\Wlim(\So)$ is; for simplicity,  we are also explicitly  assuming that $\Wlim(\So)$ is contained in just one submanifold of $\deln$, the manifold $\Sinfty$.  It could  also occur that on the limit manifold $\Wlim(\So)$ we have that $\voperp(x)\neq 0$ for every $ x \in \Wlim(\So)$,  see for instance  Example \ref{ex:ODEwithmanyinvmeas}. If this is the case, then one can take such a manifold as starting manifold and apply the theory that we explain here by taking starting points on this manifold; i.e. one can sort of ``repeat the procedure" illustrated here by starting the dynamics again on that manifold. So, in conclusion one just needs to study the case in which $\voperp(x)= 0$ for every $ x \in \Wlim(\So)$. Again for simplicity, we assume $\voperp(x)\neq 0$ for every $ x \in {\Sinfty}$.}
  \item {\textup{Finally, notice that if  $\Wlim$ is well defined and continuous on $\So$ then $\Wlim$ is also a well-defined and continuous map from $\curlys_{x_0}$ to $\R^N$. We show this fact in Lemma \ref{lem:continuityofWlim} , contained in Appendix \ref{app:misc}. Notice also that $\Wlim$ is the identity when restricted to  $\Sinfty$, hence $\Wlim$ is always well defined and continuous on $\Sinfty$. What we are requiring with the last point of  Hypothesis \ref{hyp:gennonautoassumptions} is that the map should be continuous not only on each one of the manifolds $\Sbar, \curlys_{x_0}$ and $\Sinfty$, but also that it should be continuous on the union of these three sets. The reason why we need continuity also on the closure of $S_{x_0}$ is, again,  technical, see proof of Lemma \ref{lem:invarianceofmukgen}}}  
  }
	\end{itemize}
\end{note}

Before we consider the behaviour of $X_t^{(x)}$ in the  case when $e^{t\voperp}(x)$ is convergent, we must first consider the trivial case, i.e. the behaviour of the process when we start it from the ``equilibrium manifold" $\Sinfty$, where $\voperp(x)=0$. We do this in Proposition \ref{prop:limitmeasureonmanifold} below, which is the analogous of  Lemma \ref{lem:ergbarZ}.
	
\begin{prop}\label{prop:limitmeasureonmanifold}
Let Hypothesis \ref{hyp:gennonautoassumptions} \ref{item:UFGgen}, \ref{item:gentightnessofp} and \ref{item:OACgen} hold. Let $\capitals$ be an integral submanifold of $\deln$ such that $\voperp=0$ on $S$. Then there exists a unique invariant measure $\overline{\mu}^{S}$ of $\cP_t$ supported on $\overline{\capitals}$ such that
\begin{equation}\label{eq:muSbarlimit}
\lim_{t\to\infty}\cP_tf(x) = \overline{\mu}^{S}(f), \text{ for all } x\in S,f\in C_b(\R^N).
\end{equation}
Moreover the convergence is uniform on compact subsets of $S$; that is,  for every compact set $K \subseteq S$ and every $f\in C_b(\R^N)$ we have  
$$
\lim_{t \rar \infty} \sup_{x \in K}\lv \cP_tf(x) - \overline{\mu}^{S}(f) \rv =0 \,.
$$
\end{prop}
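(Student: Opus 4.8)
The plan is to produce the measure $\overline{\mu}^{S}$ by a Krylov--Bogoliubov (tightness) argument, to prove the limit \eqref{eq:muSbarlimit} by combining the gradient decay coming from the Obtuse Angle Condition with the invariance of $\overline{\mu}^{S}$, and to deduce uniqueness from \eqref{eq:muSbarlimit}. Throughout I take $S$ to be a maximal integral submanifold of $\deln$ on which $\voperp$ vanishes; by Lemma \ref{lemmaW0} and Lemma \ref{lemmavoonmanifold} such an $S$ also carries the structure of a maximal integral manifold of $\delnn$ and $\voperp\equiv 0$ on it. Fix $x_0\in S$. Since $\voperp(x_0)=0$ we have $e^{t\voperp}(x_0)=x_0$ for all $t$, so Proposition \ref{prop:SDElivesonsubmanifold} gives $X_t^{(x_0)}\in\overline{S}$ almost surely, i.e.\ each $p_t^{x_0}$ is carried by the closed set $\overline{S}$, and $\cP_tf(x)$ for $x\in\overline{S}$ depends only on $f|_{\overline{S}}$. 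For \emph{existence}: $\cP_t$ is Feller (cf.\ \eqref{semigroup}), and by \ref{item:gentightnessofp} the family $\{p_t^{x_0}\}_{t\geq 0}$, hence the family of time averages $\tfrac1T\int_0^Tp_t^{x_0}\,dt$, is tight; a weak subsequential limit $\overline{\mu}^{S}$ of these averages is then an invariant probability measure for $\cP_t$, and since each $p_t^{x_0}$ is supported on the closed set $\overline{S}$, so is $\overline{\mu}^{S}$.

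For the \emph{convergence}, note that under \ref{item:OACgen} the first-order decay estimate \eqref{eq:graddecayest} is available: for all $r>0$, $t_0\in(0,1)$ and $\alpha\in\A_m$, $\sup_{\mathbb{B}(0,r)}\lv V_{[\alpha]}(\cP_tf)\rv\leq c_{r,t_0}\|f\|_\infty e^{-\lambda(t-t_0)}$. On $S$ the fields $\{V_{[\alpha]}:\alpha\in\A_m\}$ span the tangent space $T_xS=\deln(x)$ (Lemma \ref{lemmaW0}), so, integrating the directional derivative of $\cP_tf$ along paths in $S$ exactly as in the proof of Lemma \ref{prop:pointwiselimit}, one obtains that $\cP_tf$ becomes asymptotically constant on compact subsets of $S$; by continuity of $\cP_tf$, and since every compact subset of $\overline{S}$ lies in $\overline{S\cap\mathbb{B}(0,r)}$ for some $r$, the same holds on compact subsets of $\overline{S}$. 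Fix $x_\ast\in S$, set $c_t:=\cP_tf(x_\ast)$ and $\varepsilon_{r,t}:=\sup_{\overline{S}\cap\mathbb{B}(0,r)}\lv\cP_tf-c_t\rv$, so $\varepsilon_{r,t}\to 0$ as $t\to\infty$ for each fixed $r$. By invariance $\overline{\mu}^{S}(f)=\int_{\overline{S}}\cP_tf\,d\overline{\mu}^{S}$, whence
\[
\lv\overline{\mu}^{S}(f)-c_t\rv\;\leq\;\varepsilon_{r,t}+2\|f\|_\infty\,\overline{\mu}^{S}\big(\overline{S}\setminus\mathbb{B}(0,r)\big);
\]
letting $t\to\infty$ and then $r\to\infty$ (tightness of $\overline{\mu}^{S}$) gives $c_t\to\overline{\mu}^{S}(f)$, and combined with the asymptotic constancy this yields $\cP_tf(x)\to\overline{\mu}^{S}(f)$ for every $x\in S$, uniformly on compact subsets of $S$ — i.e.\ \eqref{eq:muSbarlimit} and the final claim. (Invariance of $\overline{\mu}^{S}$ is also seen a posteriori from $\overline{\mu}^{S}(\cP_sf)=\lim_t\cP_{t+s}f(x_\ast)=\overline{\mu}^{S}(f)$; in the sub-case $\overline{\mu}^{S}(S)=1$ this is moreover covered by Proposition \ref{invmesonstathyperpl}, but in general $\overline{\mu}^{S}$ may charge $\partial S$.)

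For \emph{uniqueness}: if $\nu$ is any invariant measure of $\cP_t$ supported on $\overline{S}$, then $\nu(f)=\int_{\overline{S}}\cP_tf\,d\nu$ for every $t$; splitting the integral on $\mathbb{B}(0,r)$ and its complement and using the asymptotic constancy just established, tightness of $\nu$, and $\|\cP_tf\|_\infty\leq\|f\|_\infty$, the right-hand side converges to $\overline{\mu}^{S}(f)$, so $\nu=\overline{\mu}^{S}$; in particular $\overline{\mu}^{S}$ is the unique invariant measure supported on $\overline{S}$, a fortiori the unique one satisfying \eqref{eq:muSbarlimit}. The main obstacle is the step, in the convergence part, of upgrading the pointwise decay of $\cP_tf(x)-\cP_tf(y)$ for $x,y\in S$ (Lemma \ref{prop:pointwiselimit}) to asymptotic constancy that is \emph{uniform} on compact subsets of $S$: for a general, possibly non-closed and metrically incomplete, integral manifold $S$ one must either show that points of $S$ lying in a fixed ball can be joined by paths in $S$ of controlled ``cost'' remaining in a fixed larger ball, or combine the pointwise statement with the equi-Lipschitz bound for $\{\cP_tf\}_{t\geq t_0}$ along $\deln$ afforded by \eqref{eq:graddecayest} via a standard equicontinuity argument; the remaining estimates are routine.
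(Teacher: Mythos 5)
Your existence step (Krylov--Bogoliubov limits of time averages of $p_t^{x_0}$, supported on $\overline{S}$ because $e^{t\voperp}(x_0)=x_0$) is fine, but the convergence and uniqueness steps rest on a claim that is false in general, namely that $\varepsilon_{r,t}:=\sup_{\overline{S}\cap\mathbb{B}(0,r)}\lv \cP_tf-c_t\rv\to0$. Lemma \ref{prop:pointwiselimit} and the bound \eqref{eq:graddecayest} only control increments of $\cP_tf$ along piecewise integral curves of the fields $V_{[\alpha]}$, $\alpha\in\A_m$, and such curves connect points lying in the \emph{same} maximal integral manifold $S$; points of $\partial S$ belong to different, lower-dimensional orbits, and there is no mechanism forcing $\cP_tf$ at those points to approach $c_t$. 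Continuity of each $\cP_tf$ does not transfer asymptotic constancy from compact subsets of $S$ to compact subsets of $\overline{S}$: in the one-dimensional example given after the proof of Theorem \ref{thm:mainlocalthm} (with $V_0=\sin(z)\partial_z$, $V_1=(1-\cos(z))\partial_z$, so $\voperp=0$ everywhere and $S=(0,2\pi)$), one has $\cP_tf(0)=f(0)$ for all $t$ while $\cP_tf(x)\to\overline{\mu}^{S}(f)$ on $S$, so the limit function on $\overline{S}$ is not constant and $\varepsilon_{r,t}\not\to0$. This is fatal where you use it: your bound $\lv\overline{\mu}^{S}(f)-c_t\rv\leq\varepsilon_{r,t}+2\|f\|_\infty\overline{\mu}^{S}(\overline{S}\setminus\mathbb{B}(0,r))$ integrates $\lv\cP_tf-c_t\rv$ against $\overline{\mu}^{S}$, which (as you yourself note) may charge $\partial S$ — precisely where the control fails; Proposition \ref{thm:meszerosetsunderinvmeas} does not help here because points of $S$ never leave $\curlys$ in finite time. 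The same example also refutes your final, stronger conclusion that $\overline{\mu}^{S}$ is the unique invariant measure supported on $\overline{S}$: there $\delta_0$, $\delta_{2\pi}$ and the absolutely continuous measure with density $\rho_0$ are all invariant and supported on $\overline{S}=[0,2\pi]$. The proposition only claims uniqueness among invariant measures satisfying \eqref{eq:muSbarlimit}, which is immediate once the limit is established.

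The paper's proof avoids the boundary entirely and runs through a step your argument does not have: it first takes a weak subsequential limit $\overline{\mu}^{S}$ of $p_{t_k}^x$ itself, and then proves that the \emph{full} limit $\lim_{t\to\infty}\cP_tf(x)$ exists for $x\in S$ by writing $\cP_{t_k}f(x)-\cP_{s_k}f(x)=\int_{s_k}^{t_k}\mathcal{L}\cP_uf(x)\,du$ — legitimate because $\voperp=0$ on $S$ makes $t\mapsto\cP_tf(x)$ differentiable there with $\partial_t\cP_tf=\mathcal{L}\cP_tf$ — and bounding $\lv\vodel\cP_uf(x)\rv$ and $\lv V_i^2\cP_uf(x)\rv$ by $Ce^{-\lambda u}$ using \eqref{eq:graddecayest} \emph{and} the second-order estimate \eqref{seconorderdecayeq}. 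This is exactly why Hypothesis \ref{hyp:gennonautoassumptions} \ref{item:OACgen} includes the second-order condition \eqref{eq:OAC2}; your proof never invokes it, which is a symptom of the missing step. Invariance then follows a posteriori from $\overline{\mu}^{S}(\cP_sf)=\lim_t\cP_{t+s}f(x)=\overline{\mu}^{S}(f)$, and uniform convergence on compact subsets of $S$ is obtained by Arzel\`a--Ascoli with the gradient bounds, as you suggest for that part. To salvage your route you would have to either prove $\overline{\mu}^{S}(\partial S)=0$ (not available under the stated hypotheses) or replace the asymptotic-constancy-on-$\overline{S}$ claim by the time-derivative/Cauchy argument above.
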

\begin{proof}[Proof of Proposition \ref{prop:limitmeasureonmanifold}]
The proof is deferred to Appendix \ref{app:proofsoflongtime} .
\end{proof}

\begin{note}
	\textup{The assumption that $\voperp=0$ on $S$ implies that, if $x \in S$,  then the map $t\mapsto\cP_tf(x)$ is differentiable,   for any  $f\in C_b(\R^N)$. Indeed, as explained in the Introduction,  in general we have that $\cP_tf$ is differentiable in the direction $\partial_t-V_0$ and in the directions contained in $\deln$ (see Appendix \ref{app:UFG}) and satisfies
\begin{equation*}
(\partial_t-V_0)\cP_tf = \sum_{i=1}^d V_i^2\cP_tf.
\end{equation*}
However if $\voperp(x)=0$ for all $x\in S$ then $V_0(x)\in \deln(x)$ for all $x\in S$ and hence $\cP_tf$ is also differentiable in the direction $V_0$ on $S$. Therefore we have that $\cP_t$ is also differentiable in time, i.e. as a map $t\mapsto\cP_tf$,   and satisfies
\begin{equation*}
\partial_t\cP_tf =V_0\cP_tf +\sum_{i=1}^d V_i^2\cP_tf.
\end{equation*}
} \hf
\end{note}

By Hypothesis \ref{item:genODEconv} $\voperp=0$ on $\Sinfty$ so we can apply Proposition \ref{prop:limitmeasureonmanifold} to the manifold $\Sinfty$ and throughout the rest of the section we shall denote by $\mulim$ the invariant measure supported on $\Sinftybar$ such that \eqref{eq:muSbarlimit} holds for all $x\in \Sinfty$.  Such a measure exists and is unique by Proposition \ref{prop:limitmeasureonmanifold}. Similarly to what we did in Section \ref{sec:non-autonomous}, equation \eqref{extensioninvmeasZ},  we shall extend this to a measure $\mu^{\Sinfty}$ defined on $\R^N$ by setting
$$
\mu^{\Sinfty}(A) = \mulim(A\cap \Sinftybar), \text{ for any Borel measurable set } A\subseteq \R^N.
$$

For any $\overline{x}\in \R^N$, let $\mathcal{I}_0(\overline{x}) = \{x\in \R^N: \Wlim(S_x) \subseteq \Sinfty\}$. The set $\mathcal{I}_0$ is contained within the basin of attraction for the measure $\mu^{\Sinfty}$. Indeed, Theorem \ref{thm:mainglobalthm} below shows that for all $x\in \mathcal{I}_0(\overline{x})$ we have that $\cP_tf(x)$ converges to $\mu^{\Sinfty}(f)$, for all $f\in C_b(\R^N)$. 

\begin{theorem}\label{thm:mainlocalthm}
	Let Hypothesis \ref{hyp:gennonautoassumptions} hold. 
	Let $\overline{x}\in \R^N$ be such that $\voperp(\overline{x})=0$. Then there exists an invariant measure $\mu^{\Sinfty}$ supported on $\Sinftybar$ such that for each $x_0\in \mathcal{I}_0(\overline{x})$,  and $f\in C_b(\R^N)$ we have that $\cP_tf(x_0)$ converges to $\mu^{\Sinfty}(f)$. 
\end{theorem}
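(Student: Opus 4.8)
The plan is to transfer the convergence statement for the time-inhomogeneous process $\cZ_t$ (studied via the two-parameter semigroup $\zQ_{s,t}$) to the autonomous process $X_t$, exactly mirroring how Theorem \ref{thm:mainglobalthmforX} was deduced from Theorem \ref{thm:mainglobalthm} in the globally split case. First I would fix $x_0 \in \mathcal{I}_0(\overline{x})$, so that by Hypothesis \ref{hyp:gennonautoassumptions} \ref{item:conv} and \ref{item:genODEconv} the curve $e^{t\voperp}(x_0)$ converges to $\overline{x}$ and $\Wlim(\So) \subseteq \Sinfty$. Using the relation \eqref{eq:autonomoustonon} between $\cP_t$ and $\zQ_{s,t}$, namely $\cP_tf(x_0) = \zQ_{0,t}(f\circ e^{t\wo})(x_0)$, the task reduces to analysing $\zQ_{0,t}g_t(x_0)$ where $g_t := f\circ e^{t\wo}$. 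The key point is that, as $t\to\infty$, $g_t$ converges locally uniformly to $f\circ\Wlim$ by continuity of $\Wlim$ (Hypothesis \ref{hyp:gennonautoassumptions} \ref{item:genODEconv}), so morally $\cP_tf(x_0)$ behaves like $\zQ_{0,t}(f\circ\Wlim)(x_0)$ for large $t$.

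The core analysis is the asymptotics of $\zQ_{s,t}$, which I would carry out by adapting the three-step argument in the proof of Theorem \ref{thm:mainglobalthm}. \textbf{Step 1:} build a tight evolution system of measures $\{\nu_t\}_{t\geq 0}$ on $\So$ for $\zQ_{s,t}$ by setting $\nu_0 = \delta_{x_0}$, $\nu_t := \zQ_{0,t}^*\nu_0$; tightness on $\So$ follows from Hypothesis \ref{hyp:gennonautoassumptions} \ref{item:gentightnessofq}. \textbf{Step 2:} show $\zQ_{s,t}g(z) - \nu_t(g) \to 0$ as $t\to\infty$ for all $g\in C_b(\Sbar)$, $z\in\So$; this is the analogue of Lemma \ref{lem:convtoevolutionsystem} and rests on the second-order gradient decay estimate of Lemma \ref{seconorderdecay} (which is why Hypothesis \ref{item:OACgen} requires both \eqref{eq:OAC} and \eqref{eq:OAC2}), transported to the $\cV_{[\alpha],s}$ directions via Proposition \ref{lemTie} and Proposition \ref{lem:VufgimpliesAdVUFG}. \textbf{Step 3:} extract (Prokhorov) a weak limit $\mu_0$ of a subsequence $\nu_{t_\ell}$ and identify $\mu_0$ with $\mulim$, the invariant measure on $\Sinftybar$ from Proposition \ref{prop:limitmeasureonmanifold} applied to $\Sinfty$. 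This identification is the analogue of Lemma \ref{lem:convofevolsystemtoequilibria}: one uses that $\Wlim(\So)\subseteq\Sinfty$ together with the facts that (i) on $\Sinfty$ we have $\voperp = 0$, so $\cZ_t$ started on $\Sinfty$ coincides with $X_t$ and converges to $\mulim$ by Proposition \ref{prop:limitmeasureonmanifold}, and (ii) $\Wlim$ intertwines the dynamics in the sense that pushing the evolution system forward and then applying $\Wlim$ lands one in the basin governed by $\mulim$. Concretely I would test $\nu_{t_\ell}$ against $f\circ\Wlim$, use the continuity of $\Wlim$ on $\Sbar\cup\curlys_{x_0}\cup\Sinfty$ to pass to the limit, and invoke an invariance property of $\mulim$ under the relevant limiting semigroup (the analogue of Lemma \ref{lem:invarianceofmukgen}) to pin down $\mu_0 = \mulim$. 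Once $\nu_t \rightharpoonup \mulim$, combining Steps 2--3 with the decomposition
$$
\lv \cP_tf(x_0) - \mu^{\Sinfty}(f)\rv \leq \lv \zQ_{0,t}g_t(x_0) - \zQ_{0,t}(f\circ\Wlim)(x_0)\rv + \lv \zQ_{0,t}(f\circ\Wlim)(x_0) - \nu_t(f\circ\Wlim)\rv + \lv\nu_t(f\circ\Wlim) - \mulim(f)\rv
$$
gives the claim: the first term is controlled because $g_t \to f\circ\Wlim$ uniformly on the (compact, by tightness) sets carrying most of the mass of $\fq_t^{0,x_0}$, the second vanishes by Step 2, and the third vanishes since $\nu_t\rightharpoonup\mulim$ and $\mulim(f\circ\Wlim) = \mulim(f)$ because $\mulim$ is supported on $\Sinftybar$ where $\Wlim$ is the identity.

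The main obstacle I anticipate is Step 3, the identification $\mu_0 = \mulim$, and more precisely the bookkeeping needed to handle the boundary: $\zQ_{s,t}$ lives on $\So$ but the limit measure sits on $\Sinftybar$, which is a \emph{different} submanifold, and the map $\Wlim$ is only assumed continuous (not smooth) on the union $\Sbar \cup \curlys_{x_0}\cup\Sinfty$. Making rigorous the statement that the evolution system $\nu_t$, after transport by $\Wlim$, relaxes to the invariant measure on the limit manifold requires care: one must show the limiting two-parameter dynamics restricted to $\Wlim(\So)$ agrees with the autonomous dynamics on $\Sinfty$, which is where the relaxed (UFG rather than elliptic) regularity forces one to work with the $\cV_{[\alpha],s}$-directional smoothness of Proposition \ref{lemTie} rather than full spatial regularity — this is exactly the point flagged in Note \ref{note:comparelunardi} as the place where the argument departs from \cite{AngiuliLorenziLunardi}. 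A secondary technical nuisance is ensuring all the tightness and convergence statements are uniform enough on compact subsets (needed to absorb the $g_t \to f\circ\Wlim$ error), which should follow from the uniform-on-compacts convergence in Proposition \ref{prop:limitmeasureonmanifold} together with Hypothesis \ref{item:gentightnessofq}, but must be threaded through each step.
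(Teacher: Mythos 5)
Your treatment of the auxiliary semigroup is essentially the paper's: you build the evolution system $\nu_t=\zQ_{0,t}^*\delta_{x_0}$, get tightness on $\So$ from Hypothesis \ref{hyp:gennonautoassumptions} \ref{item:gentightnessofq}, extract a weak limit $\mu_0$ by Prokhorov, and identify it through $\Wlim$ using the analogues of the Section \ref{sec:non-autonomous} lemmata (Lemmas \ref{lem:semigpconvergetosemigpgen}, \ref{lem:invarianceofmukgen}, \ref{lem:limofnut} in the paper). One slip of notation: the correct identification is $\mu_0\circ(\Wlim)^{-1}=\mulim$, not $\mu_0=\mulim$ (these measures live on different manifolds, $\Sbar$ versus $\Sinftybar$); since your final decomposition only uses $\nu_t(f\circ\Wlim)\to\mulim(f)$, this is harmless. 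The genuine gap is in the transfer from $\cZ_t$ back to $X_t$, i.e. your first term $\lv\zQ_{0,t}(f\circ e^{t\voperp})(x_0)-\zQ_{0,t}(f\circ\Wlim)(x_0)\rv$. To control it you need $\sup_{z\in K}\lv f(e^{t\voperp}(z))-f(\Wlim(z))\rv\to 0$ on the compact sets $K\subseteq\So$ carrying most of the mass of $\fq_t^{0,x_0}$, i.e. locally uniform convergence of the flow $e^{t\voperp}$ to its limit map. Hypothesis \ref{hyp:gennonautoassumptions} \ref{item:genODEconv} gives only pointwise convergence on $\Sbar$ plus continuity of $\Wlim$, and in dimension $N\geq 2$ this does not imply uniform convergence on compacts: for arbitrarily large $t$ there can be points of $K$ whose trajectories are still on a long excursion far from their (common, hence continuously varying) limit, so there is no Dini-type argument, and there is no almost-sure convergence of $\cZ_t$ that would let you substitute dominated convergence. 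You assert this uniformity rather than prove it, and it is not a consequence of the stated hypotheses.

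The paper is structured precisely to avoid this comparison. It never puts $f\circ e^{t\voperp}$ next to $f\circ\Wlim$ inside $\zQ_{0,t}$: it tests only against $\hat f=f\circ\Wlim$, which is exactly flow-invariant, $\hat f\circ e^{t\voperp}=\hat f$, so that $\cP_t\hat f(x_0)=\zQ_{0,t}\hat f(x_0)=\nu_t(\hat f)$ with no error term at all. A general $f\in C_b(\R^N)$ is then handled by a separate limit-point argument that your plan never uses and which is where Hypothesis \ref{hyp:gennonautoassumptions} \ref{item:gentightnessofp} enters: by tightness of $\{p_t^{x_0}\}$ one extracts a weak limit $\nu^{x_0}$ of $p_{t_\ell}^{x_0}$, shows via Proposition \ref{prop:SDElivesonsubmanifold} (testing against functions constant on the sets $\overline{\capitals}_y$, for which $\cP_{t}f(x_0)=f(e^{t\voperp}(x_0))\to f(\Wlim(x_0))$) that $\nu^{x_0}$ is supported on $\Sinftybar$, where $\Wlim$ is the identity, and concludes $\nu^{x_0}(f)=\nu^{x_0}(\hat f)=\mu_0(\hat f)=\mulim(f)=\mu^{\Sinfty}(f)$. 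To repair your proposal, either adopt this support-plus-identification step (the paper's route), or add and prove locally uniform convergence of $e^{t\voperp}$ to $\Wlim$ on compacts of $\So$ as an extra hypothesis or lemma; as written, the first term of your decomposition is not justified.
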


\begin{proof}[Proof of Theorem \ref{thm:mainlocalthm}]
	Throughout the proof we fix an arbitrary point $x_0\in \mathcal{I}_0$. The proof is split into 3 steps.
	
	{$\bullet $ {\em Step 1}}: We first construct a tight evolution system of measures, $\{\nu_t\}_{t\geq 0}$, for the semigroup $\{\zQ_{s,t}\}_{0\leq s\leq t}$ which are supported on $\So$. This can be done be acting analogously   to what we have done in  Step 1 of the proof of Theorem \ref{thm:mainglobalthm};  in particular we may define $\nu_t:=\zQ_{0,t}^*\delta_{x_0}$.\footnote{Note that using the same argument we could define $\nu_t=\zQ_{0,t}^*\delta_{x_0}$.} Note that $\nu_t(\So)=1$; indeed  by  Note \ref{noteonhyplocal} (second bullet point) we have that $\cZ_t\in \So$ almost surely when $\cZ_0=x_0$; hence
	\begin{equation*}
	\nu_t(\So) = \zQ_{0,t}^*\delta_{x_0}(\So) = \zQ_{0,t}\mathbbm{1}_{\So}(x_0) = \mathbb{P}(\cZ_t\in \So\rvert \cZ_0=x_0)=1, \quad \text{ for every } t\geq 0.
	\end{equation*}
	Moreover, analogously to Step 2 in the proof of Theorem \ref{thm:mainglobalthm}, since the family $\{\nu_t\}_t$ is tight,  there exists a diverging sequence $\{t_\ell\}_\ell$ such that $\nu_{t_\ell}$ converges weakly to some probability measure $\mu_0$ as $t_{\ell}$ tends to $\infty$. 
		
	{$\bullet $ {\em Step 2}}: By construction, the measure $\mu_0$ is a measure on $\Sbar$;  we then  consider the probability measure  $\mu_0\circ (\Wlim)^{-1}$. \footnote{Here $({\Wlim})^{-1}(A)$ denotes preimage of $A$. } The latter measure is supported on $\Sinftybar$. One needs to show that $\mu_0\circ (\Wlim)^{-1}=\mulim$. Recall that $\mulim$ is the restriction of the measure $\mu^{\Sinfty}$ to $\Sinftybar$. The proof of this fact is deferred to  Lemma \ref{lem:limofnut}. Note that this is one of the places where we use that $x_0\in \mathcal{I}_0(\overline{x})$.
	This implies that $\nu_t$ converges weakly to $\mulim\circ \Wlim$ as $t$ tends to $\infty$.
Furthermore, 	by Hypothesis \ref{item:gentightnessofp} we can take a sequence $\{t_\ell\}_{\ell}$ such that $t_\ell\nearrow\infty$ and $p_{t_\ell}^{x_0}$ converges weakly to some probability measure $\nu^{x_0}$. 
	
	{$\bullet$ {\em Step 3}}: We show that $\nu^{x_0}$ is supported on $\Sinftybar$  and,  when we restrict it   to $\Sinftybar$, we have $\nu^{x_0}\vert _{\Sinftybar}=\mu_0\circ (\Wlim)^{-1}$. Lemma \ref{lem:limofpt} is devoted to proving this fact. Therefore, by Step 2 and the definition of $\mu^{\Sinfty}$ we have that $$\nu^{x_0}=\mu^{\Sinfty}.$$
	This implies that $p_t^{x_0}$ converges weakly to $\mulim$ as $t$ tends to $\infty$ for any $x\in \curlys_{x_0}$, that is, for every $f\in C_b(\R^N)$, $\cP_tf(x_0)$ converges to $\mu^{\Sinfty}(f)$ as $t$ tends to $\infty$. 
\end{proof}
We now give a one dimensional example which satisfies all the assumptions we have made in this section. In particular, this example fits our framework in a non-trivial way as it exhibits many invariant measures. 
\begin{example}
\textup{ 
	Consider the SDE
\begin{equation*}
dZ_t^z=\sin(Z_t^z)dt+\sqrt{2}(1-\cos(Z_t^z))\circ dB_t, \quad Z_0=z,  \quad Z_t \in \R, 
\end{equation*}
where $(B_t)_{t\geq0}$ is a one-dimensional Wiener process.
In this case $V_0=\sin(z)\partial_z$, $V_1=(1-\cos(z))\partial_z$ and we have
\begin{equation*}
[V_1,V_0] = [(1-\cos(z))\partial_z,\sin(z)\partial_z] = \cos(z)(1-\cos(z))\partial_z-\sin(z)^2\partial_z=-V_1.
\end{equation*}
Therefore the vector fields $V_0, V_1$  satisfy the UFG condition; the above also shows that the obtuse angle condition \eqref{eq:OAC} is satisfied, with $\lambda_0=1$. Moreover, it is easy to show that  the function $(V_1\cP_t f)(x)$ decays exponentially fast in time, i.e. $\lambda_0$ is big enough that \eqref{eq:OAC} implies an estimate of the type \eqref{eq:graddecayest} for the fields $V_1$.   Because the coefficients of the equation are bounded the estimate is uniform on the whole real line, see \cite[Proposition 3.1, Proposition 3.4 and Theorem 4.2]{CrisanOttobre} alternatively by a direct calculation, see \cite[Example 4.4]{CrisanOttobre}. 
Since $V_0$ and $V_1$ both vanish whenever $z\in 2\pi\mathbb{Z}$ we have that the point measures $\delta_{2n\pi}$ are invariant measures for any $n\in\mathbb{Z}$. However there also exist invariant measures supported on $(2n\pi, 2(n+1)\pi)$ for any $n\in \mathbb{Z}$. 
Indeed let
\begin{equation*}
\rho_n(z):=\frac{\exp\left(-\frac{1}{1-\cos(z)}\right)}{C(1-\cos(z))} \mathbbm{1}_{(2n\pi, 2(n+1)\pi)}(z)
\end{equation*}
where $C$ is the normalization constant and $\mathbbm{1}_{(2n\pi, 2(n+1)\pi)}(z)$ is the characteristic function of the interval $[2n\pi, 2(n+1)\pi)]$. By direct calculation one can verify that, for every $n \in \mathbb{Z}$,   $\rho_n(z)$ satisfies  the stationary Fokker-Planck equation $\mathcal{L}^*\rho_n=0$, where
    \begin{equation*}
    \mathcal{L}^*\rho_n(z) = -\partial_z(\sin(z)\rho_n(z)) +\partial_z\left[(1-\cos(z))\partial_z\left((1-\cos(z))\rho_n(z)\right)\right] \,.
    \end{equation*}
Notice that if $X_0 \in [2n\pi, 2(n+1)\pi]$ (for some fixed $n \in \mathbb{Z}$) then $X_t \in [2n\pi, 2(n+1)\pi]$ for every $t\geq 0$. However, even if we restrict to one of the intervals $[2n\pi, 2(n+1)\pi]$, the process still admits three invariant measures on each one of such intervals. 
}
\hf
\end{example}
\begin{example}[Example \ref{ex:ODEwithmanyinvmeas} continued]\label{ex:ODEwithmanyinvmeasctd}
\textup{ Recall that in this example $V_0=\sin\zeta \pa_{\zeta}-kz \pa_z$ and $V_1=\zeta\pa_z$. While $V_0$ is smooth,   $\voperp$ is not continuous.  Indeed,  for $\zeta\neq 0$ $\voperp(z,\zeta) = -\sin(\zeta)\partial_\zeta$, however for $\zeta=0$ $\voperp(z,0)=V_0(z,0)=-kz\partial_z$.
}
\hf
\end{example}

We conclude this section by stating and proving Theorem \ref{prop:invmeasureforODE} below. In order to state it, let us
define the following equivalence relation on  $\R^N$:
$$
x \sim y \quad \Leftrightarrow \quad x \in S_y \,.
$$
As customary, we denote by $[x]$ the equivalence class of $x$ under the equivalence relation $\sim$. Note that by Lemma \ref{thm:intcurvepreservemanifolds}, if $x \sim y$ then also $e^{t\voperp}x \sim e^{t\voperp}y$, therefore the flow map 
\be\label{flowmap}
[x] \longrightarrow [e^{t\voperp} x]=:e^{t\voperp}[x]
\ee
is well defined. Let now $q$ be the map $q: \R^N \rightarrow 
\R^N/ \sim$, defined as $q(x)=[x]$. If we endow the quotient set $\R^N/ \sim$ with the $\sigma$-algebra $\{E \subseteq \R^N/ \sim \,s.t.\, q^{-1}(E) \mbox{ is a Borel set of }\R^N\}$, then $q$ is a measurable map. If $\mu$ is a probability measure on $\R^N$,  we define the pullback measure $\tilde{\mu}$ on $\mathbb{R}^N/\sim$ as $\tilde{\mu}(E)=\mu(q^{-1}(E))$ for all $E\subseteq \mathbb{R}^N/\sim$.
\begin{theorem}\label{prop:invmeasureforODE} Consider the SDE \eqref{SDE} and the associated semigroup $\cP_t$ and assume that the vector fields $V_0 \dd V_d$ satisfy the UFG condition. 
If $\mu$ is an invariant measure for  $\cP_t$,  then $\tilde{\mu}$ is an invariant measure for the flow map \eqref{flowmap}. 
\end{theorem}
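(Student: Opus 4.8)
The plan is to exploit the intertwining between the semigroup $\cP_t$ of the SDE and the deterministic flow $e^{t\voperp}$, which follows from Proposition \ref{prop:SDElivesonsubmanifold}: for $X_0 = x$ one has $X_t^{(x)} \in \overline{S}_{e^{t\voperp}(x)}$ almost surely. First I would establish the key measurability and compatibility facts: that the map $q$ is measurable with respect to the stated $\sigma$-algebra, that the flow map $[x]\mapsto [e^{t\voperp}x]$ is well-defined (already noted, via Lemma \ref{thm:intcurvepreservemanifolds}) and measurable, and that $\tilde\mu$ is a genuine probability measure on $\R^N/\!\sim$. Denote by $\theta_t: \R^N/\!\sim \,\to\, \R^N/\!\sim$ the flow map \eqref{flowmap}; I must show $\tilde\mu \circ \theta_t^{-1} = \tilde\mu$ for all $t\geq 0$, i.e. $\tilde\mu$ is invariant for the (deterministic) Markov semigroup generated by $\theta_t$ acting on functions on the quotient.

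The core computation: fix $t\geq 0$ and a bounded measurable $g:\R^N/\!\sim\,\to\R$, and set $f := g\circ q: \R^N\to\R$, which is bounded and measurable. The crucial observation is that $f$ is constant on each orbit $S_x$ of $\deln$, so by Proposition \ref{prop:SDElivesonsubmanifold} we have, for $\mu$-a.e.\ $x$,
\begin{equation*}
(\cP_t f)(x) = \E\!\left[ f(X_t^{(x)}) \right] = \E\!\left[ f(e^{t\voperp}x) \right] = f(e^{t\voperp}x) = g\!\left(\theta_t([x])\right) = \left(g\circ\theta_t\circ q\right)(x),
\end{equation*}
where the second equality uses that $X_t^{(x)}$ lies in $\overline{S}_{e^{t\voperp}(x)}$ and that $f$ takes a single value there. (Here I would be careful that $f$ is constant not just on $S_{e^{t\voperp}x}$ but on its closure, or alternatively argue the a.s.\ statement directly; closures of distinct orbits may overlap on lower-dimensional strata, but this is a $\mu$-null event by Proposition \ref{thm:meszerosetsunderinvmeas} once one checks that $E$ contains the relevant bad set — this is the delicate point.) Integrating against $\mu$ and using invariance of $\mu$ for $\cP_t$:
\begin{equation*}
\int_{\R^N/\sim} g \, d\tilde\mu = \int_{\R^N} (g\circ q)\, d\mu = \int_{\R^N} \cP_t(g\circ q)\, d\mu = \int_{\R^N} (g\circ\theta_t\circ q)\, d\mu = \int_{\R^N/\sim} (g\circ\theta_t)\, d\tilde\mu = \int_{\R^N/\sim} g \, d(\tilde\mu\circ\theta_t^{-1}).
\end{equation*}
Since $g$ is arbitrary bounded measurable, $\tilde\mu\circ\theta_t^{-1} = \tilde\mu$, which is precisely the statement that $\tilde\mu$ is invariant for the flow map \eqref{flowmap}.

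The main obstacle I anticipate is the passage from "$f$ constant on each orbit $S_x$" to "$f(X_t^{(x)}) = f(e^{t\voperp}x)$ almost surely." Proposition \ref{prop:SDElivesonsubmanifold} only gives $X_t^{(x)} \in \overline{S}_{e^{t\voperp}(x)}$, the closure, and a function that is constant on $S_y$ need not be constant on $\overline{S}_y$. The clean way around this is to use Proposition \ref{thm:meszerosetsunderinvmeas}: for $\mu$-a.e.\ starting point $x$, the process never leaves $\curlys_x$ in finite time, and combining this with Proposition \ref{prop:dimcanonlydecrease} (the rank of $\delnn$, hence membership in a fixed stratum, is preserved), one restricts to the $\mu$-full set of $x$ for which $X_t^{(x)}$ stays in $S_{e^{t\voperp}(x)}$ itself for all $t$, not merely its closure. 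On that set the computation above is exact, and the null set does not affect the integral identities. A secondary, purely bookkeeping obstacle is verifying that $\theta_t$ is measurable on the quotient — this follows because $q\circ e^{t\voperp} = \theta_t\circ q$ and $q, e^{t\voperp}$ are measurable, so $\theta_t^{-1}(E)$ has measurable preimage under $q$ for every $E$ in the quotient $\sigma$-algebra, which is exactly the definition of measurability there.
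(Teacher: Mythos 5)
Your proposal is correct and follows essentially the same route as the paper: both reduce the claim to the identity $\int \cP_t(g\circ q)\,d\mu=\int (g\circ q)\circ e^{t\voperp}\,d\mu$, using invariance of $\mu$, Proposition \ref{prop:SDElivesonsubmanifold} to localise $X_t^{(x)}$ on $\overline{S}_{e^{t\voperp}(x)}$, and Proposition \ref{thm:meszerosetsunderinvmeas} (together with Lemma \ref{lem:boundary}, which is the precise tool behind your ``boundary is a $\mu$-null event'' step) to discard the contribution of $\pa S_{e^{t\voperp}(x)}$. The only difference is bookkeeping: you establish the pointwise $\mu$-a.e. identity for $\cP_t(g\circ q)$ first and then integrate, whereas the paper integrates first and splits the law into its interior and boundary parts, which amounts to the same argument.
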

\begin{proof}[Proof of Theorem \ref{prop:invmeasureforODE}]
	Denote by $B_b(\R^N/\sim;\R)$ to be the set of all bounded and measurable functions $f:\R^N/\sim\to\R$. If $f\in B_b(\mathbb{R}^N/\sim; \R)$, then $f \circ q \in B_b(\R^N; \R^N)$, i.e. $f\circ q$ is a bounded and measurable function mapping from $\R^N$ to $\R^N$. By the definition of invariant measure, we have
	\begin{align*}
	\int_{\mathbb{R}^N/\sim}f([x]) \tilde{\mu}(d[x]) &= \int_{\mathbb{R}^N} f(q(x)) \mu(dx)\\
	&=  \int_{\mathbb{R}^N} \left( \cP_t(f\circ q)\right)(x) \mu(dx)\\
	&= \int_{\mathbb{R}^N} \mathbb{E}_x[f(q(X_t^x))] \mu(dx) \,.
	\end{align*}
	Let us now look more closely at the  expected value on the right hand side of the above: for any bounded and measurable function $h$ we can write
	\begin{align*}
	\int_{\mathbb{R}^N} \mathbb{E}_x[h(X_t^x)] \mu(dx) & = 
	\int_{\R^N} \mu(dx) \int_{\R^N}h(y)\mathbb{P}_x (X_t^x \in dy)\\
	& =\int_{\R^N} \mu(dx) \int_{\overline{\capitals_{e^{t\voperp}\!\!\! x}}}h(y)\mathbb{P}_x (X_t^x \in dy)\\
	&=\int_{\R^N} \mu(dx) \int_{\capitals_{e^{t\voperp}\!\!\! x}}h(y)\mathbb{P}_x (X_t^x \in dy)+\int_{\R^N} \mu(dx)
	\int_{\pa \capitals_{e^{t\voperp}\!\!\!x}}h(y)\mathbb{P}_x (X_t^x \in dy)\, ,
	\end{align*}
	where the second equality follows from Proposition \ref{prop:SDElivesonsubmanifold}.  Now the second term in the above vanishes by Proposition \ref{thm:meszerosetsunderinvmeas} (easy to prove for positive $h$, if $h$ is not positive just split into positive and negative part).  Indeed if $X_t^x \in \pa \capitals_{e^{t\voperp}\!\!x}$ then 
	$X_t^x \in \pa\curlys_x$ (see Lemma \ref{lem:boundary} for a proof of this fact).  Putting everything together we can write
	\begin{align*}
	\int_{\mathbb{R}^N/\sim}f([x]) \tilde{\mu}(d[x]) &= \int_{\mathbb{R}^N} \mathbb{E}_x[f(e^{t\voperp}([x]))]\mu(dx)\\
	&=\int_{\mathbb{R}^N} f(e^{t\voperp}(x))\mu(dx)\\
	&=\int_{\mathbb{R}^N/\sim} f(e^{t\voperp}([x]))\tilde{\mu}(d[x]), 
	\end{align*}
	where the penultimate equality follows from the fact that the object on the second line is completely deterministic and the last equality holds by the definition of the measure $\tilde{\mu}$.  This concludes the proof. 
\end{proof}

\section{Existence of a density}\label{sec:8}

Analogously to what we did for the study of the long-time behaviour, we split this section into two subsections. That is, in Section \ref{sec:densityinSDEplusODE} we  consider the setting of Section \ref{sec:non-autonomous} and study SDEs of the form \eqref{SDEglobal}-\eqref{incondglobal}. In Section \ref{sec:densitygen} we consider the general UFG-case. This section makes use of several notions from Malliavin calculus, we will recall only some basic facts and  refer the reader to \cite{Nualart} for  more detailed background material. 
\begin{note}\label{note:compareBismut2}
\textup{
The techniques of proof that we use here are not different from those in \cite{Nualart, Watanabe, Bismut}; however our assumptions are significantly more general, so we can't just quote results and some proofs need to be re-sketched. Moreover, the results of this section can be seen as a generalization of the results of \cite[Section 5.2]{Bismut} to the UFG case. We indeed recall that in \cite[Section 5.2]{Bismut} the author assumes the validity of the H\"ormander condition (HC), which we have recalled in Section \ref{sec:preliminaries}, as well as boundedness of the coefficients of the SDE. Here we remove both such assumptions. We emphasize that, by a geometric point of view, imposing the validity of the H\"ormander condition amounts to assuming that the distribution $\delnn$ is equal to $\R^N$ at every point. } \hf 
\end{note}

Let $\mathbb{D}^{k,p}\subseteq L^p(\Omega)$ denote the Malliavin Sobolev space, that is the domain of the $k$th order Malliavin derivative in the space $L^p(\Omega)$. We also define the space
\begin{equation*}
\mathbb{D} = \bigcap_{p>1, k\in\mathbb{N}} \mathbb{D}^{k,p}.
\end{equation*}
We shall denote by $\mathbb{D}'$ the dual space of $\mathbb{D}$, that is the space of all continuous linear maps from $\mathbb{D}$ to $\mathbb{R}$.  
Let us recall the following lemma, which is quoted from \cite[Theorem 2.2.1]{Nualart}.

\begin{lemma}\label{lem:existenceofmallmatrix}
	Fix $T>0$, let $\{X_t\}_{t\in[0,T]}$ denote the solution of the SDE \eqref{SDE} and assume that $V_0,V_1,\ldots,V_d$ are smooth vector fields which are globally Lipschitz. Then $X^i_t$ belongs to $\mathbb{D}^{1,p}$ for any $t\in[0,T], p\geq 1$ and $i=1,\ldots,N$. Moreover, for all $0 \leq t\leq T,p\geq 1$
	\begin{equation*}
	\sup_{0\leq r \leq t} \mathbb{E}\left[\sup_{r\leq s\leq T} \lvert D_r^j X^i_s\rvert^p\right]< \infty
	\end{equation*}
	and the Malliavin derivative $D_r^jX^i_t$ satisfies the following SDE, 
	\begin{equation}\label{eq:MalliavinderivativeSDE}
	D_r^jX_t^i = V_j^i(X_r) +\sum_{k=1}^{N}\int_r^t \partial_{x^k} V_0^i(X_s) D_r^j(X_s^k) ds +\sqrt{2}\sum_{\ell=1}^d\sum_{k=1}^{N}\int_r^t \partial_{x^k} V_\ell^i(X_s) D_r^j(X_s^k) \circ dW_s^\ell \, ,
	\end{equation}
for every $r\leq t$. 
\end{lemma}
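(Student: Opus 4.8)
The statement to be proven, Lemma~\ref{lem:existenceofmallmatrix}, is quoted verbatim from \cite[Theorem 2.2.1]{Nualart}, so the "proof" is essentially a pointer to the literature together with a check that the hypotheses match. The plan is therefore to verify that the standing assumption \ref{SA1} (smoothness, global definition, global Lipschitz continuity of $V_0,\ldots,V_d$) is exactly the regularity needed in Nualart's framework, and then to invoke the cited result. I would phrase the proof as: "This is \cite[Theorem 2.2.1]{Nualart}; we only need to observe that under \ref{SA1} the coefficients of the Stratonovich SDE \eqref{SDE}, once rewritten in It\^o form, have bounded first derivatives and linear growth, which is the setting in which that theorem is stated." One subtlety worth spelling out is the passage between the Stratonovich formulation \eqref{SDE} and the It\^o formulation used in \cite{Nualart}: the It\^o drift acquires the usual correction term $\tfrac12\sum_i \sum_k V_i^k \partial_{x^k}V_i$, which is again smooth with bounded derivatives when the $V_i$ are, so the hypotheses of the cited theorem are preserved.

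Concretely, the steps in order are: (i) recall from \ref{SA1} that each $V_j$ is $C^\infty$ with globally Lipschitz (hence linearly growing) coefficients, so \eqref{SDE} has a unique strong solution with moments of all orders, $\sup_{t\le T}\mathbb{E}[|X_t|^p]<\infty$; (ii) cite \cite[Theorem 2.2.1]{Nualart} to conclude $X_t^i\in\mathbb{D}^{1,p}$ for all $p\ge1$, $i=1,\ldots,N$, together with the uniform bound $\sup_{0\le r\le t}\mathbb{E}[\sup_{r\le s\le T}|D_r^jX_s^i|^p]<\infty$; (iii) record that the Malliavin derivative solves the linearized equation, which in Stratonovich form is exactly \eqref{eq:MalliavinderivativeSDE}, obtained by formally differentiating \eqref{SDE} with respect to the driving path and using that $D_r^j$ commutes with the (adapted, smooth) coefficients; the initial condition at time $r$ is $D_r^jX_r^i=V_j^i(X_r)$ because the increment $\circ\,dB^j$ at time $r$ contributes $V_j(X_r)$. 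I would note that the Stratonovich integral in \eqref{eq:MalliavinderivativeSDE} makes sense because the integrand $\partial_{x^k}V_\ell^i(X_s)D_r^j(X_s^k)$ is a semimartingale with the requisite integrability from step (ii).

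There is essentially no hard part here: the only thing one must be slightly careful about is consistency of conventions (Stratonovich vs. It\^o, the factor $\sqrt2$, and the index placement $D_r^j$ for the $j$-th Brownian direction), and making sure the global Lipschitz hypothesis in \ref{SA1} genuinely suffices --- it does, since \cite{Nualart} only requires $C^\infty$ coefficients with bounded derivatives of order $\ge1$ (equivalently Lipschitz with linear growth) for the first-order Malliavin differentiability and the moment bound on $D_r^jX_t^i$. Accordingly the proof is a two- or three-line citation. A reasonable write-up:

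\begin{proof}[Proof of Lemma \ref{lem:existenceofmallmatrix}]
Under Hypothesis \ref{SA}, in particular \ref{SA1}, the vector fields $V_0,\ldots,V_d$ are $C^\infty$ with globally Lipschitz coefficients; rewriting the Stratonovich SDE \eqref{SDE} in It\^o form introduces the drift correction $\tfrac12\sum_{i=1}^d\sum_{k=1}^N V_i^k\,\partial_{x^k}V_i$, which is again smooth with bounded first derivatives and linear growth. Hence \eqref{SDE} falls within the setting of \cite[Theorem 2.2.1]{Nualart}, which yields that $X_t^i\in\mathbb{D}^{1,p}$ for every $t\in[0,T]$, $p\geq 1$ and $i=1,\ldots,N$, together with the bound
\begin{equation*}
\sup_{0\leq r\leq t}\mathbb{E}\left[\sup_{r\leq s\leq T}\lvert D_r^jX_s^i\rvert^p\right]<\infty ,
\end{equation*}
and that the Malliavin derivative solves the linearized equation \eqref{eq:MalliavinderivativeSDE} (in Stratonovich form), with initial condition $D_r^jX_r^i=V_j^i(X_r)$ coming from the contribution of the increment $\circ\,dB^j$ at time $r$. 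The Stratonovich integral on the right-hand side of \eqref{eq:MalliavinderivativeSDE} is well defined because the integrands are continuous semimartingales with the integrability just stated.
\end{proof}
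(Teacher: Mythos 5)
Your proposal is correct and matches the paper's treatment: the paper gives no independent argument, it simply quotes this as \cite[Theorem 2.2.1]{Nualart}, exactly the citation-plus-hypothesis-check you carry out (including the Stratonovich/It\^o conversion, which the paper leaves implicit). No gap to report.
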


Here we use the notation $D^k$ to denote the Malliavin derivative operator with respect to the Brownian motion $B^k$.\footnote{Note that $D^k$ denotes the 1st order Malliavin derivative with respect to the $k$th Brownian motion and is not to be confused with the $k$th-order Malliavin derivative.} Define the Malliavin matrix $\mathscr{M}_t=(\mathscr{M}_t^{ij})_{i,j=1}^{N}$ to be
\begin{equation*}
\mathscr{M}_t^{ij} = \sum_{k=1}^d\int_0^t D_s^k(X_t^i) D_s^k(X_t^j) ds. 
\end{equation*}
Again by \cite[Section 2.3]{Nualart} we can rewrite the Malliavin matrix in terms of the Jacobian matrix $J_t:=\frac{\partial X_t}{\partial x_0}$, details can be found in \cite[Section 2.3]{Nualart}. There it is also shown that $J_t$ is an invertible matrix and that the following holds
\begin{equation*}
\mathscr{M}_t = J_t\left(\sum_{k=1}^d\int_0^t J_s^{-1}V_k(X_s) V_k(X_s)^T(J_s^{-1})^T ds\right) J_t^T = J_t\mathscr{C}_t J_t^T
\end{equation*}
where the matrix $\mathscr{C}_t$ is the reduced Malliavin covariance matrix defined as
\begin{equation*}
\mathscr{C}_t = \sum_{k=1}^d\int_0^t J_s^{-1}V_k(X_s) V_k(X_s)^T(J_s^{-1})^T ds.
\end{equation*}

\subsection{Existence of a density on a suitable hyperplane}\label{sec:densityinSDEplusODE}

In this section we consider the SDE \eqref{SDEglobal}-\eqref{incondglobal}. We shall also assume Hypothesis \ref{hyp:nonautoassumptions} \ref{item:smoothnessassumption}, which states that the set of vector fields $\{V_{[\alpha]}(\lz_0,\zeta_0): \alpha \in \A_m\}$ span the $n$-dimensional hyperplane $\mathcal{H}_{\zeta_0}:=\{x=(\lz,\zeta):\zeta=\zeta_0\}$ for all $(\lz_0,\zeta_0)\in \mathbb{R}^{n}\times\R$. In this setting it is clear that the law of $X_t=(Z_t,\zeta_t)$ does not admit a density with respect to Lebesgue measure on $\R^{n+1}$; indeed for each fixed $t$, $\zeta_t$ is a deterministic point which implies that $\mathbb{P}_x\left(X_t\in\R^n\times\{\zeta_t\}\right)=1$ while $\R^n\times\{\zeta_t\}$ is a null set with respect to Lebesgue measure on $\R^{n+1}$. We prove that for every fixed $t\geq 0$ the law of the random variable $Z_t$ admits a density with respect to Lebesgue measure on $\R^n$. In terms of the process $X_t$ this implies that the law of  $X_t$ admits (for every fixed $t\geq 0$) a density with respect to the Lebesgue measure on the hyperplane $\mathcal{H}_{\zeta_t}:=\{x=(\lz,\zeta):\zeta=\zeta_t\}$. Moreover, since  from Section \ref{sec:non-autonomous} $X_t \in \mathcal{H}_{\zeta_t}$ almost surely, we have that $\mathcal{H}_{\zeta_t}$ is the maximal manifold such that $X_t$ admits a density with respect to the volume element on such a manifold. \footnote{Throughout our discussion we need to fix a canonical reference measure on
the manifold. Here, and subsequently, when we refer to the volume element on
a submanifold $M\subset 
\mathbb{R}
^{N}$ we mean the measure on $M$ which is determined from the Riemannian
density associated to the induced Riemannian metric on $M$.}

To prove that the law of $Z_t$ admits a density we shall follow the same strategy of \cite[Section~2.3]{Nualart}. Note that by Hypothesis \ref{SA} and Lemma \ref{lem:existenceofmallmatrix} for each $t\geq 0$ and $i\in\{1,\ldots,n\}$ we have that $Z_t^i$ and $\zeta_t$ belong to $\mathbb{D}^{1,p}$ for all $p\geq 1$. First we note that the solution $X_t=(Z_t,\zeta_t)$ admits a Malliavin derivative. 

\begin{lemma}\label{lem:formofmalliavinmatrix}
	Let $\mathscr{M}_t$ denote the Malliavin matrix corresponding to the solution $X_t=(Z_t,\zeta_t)$ of the SDE \eqref{SDEglobal} - \eqref{incondglobal}. Then $\mathscr{M}_t$ has the form 
	\begin{equation}\label{eq:formofmalliavinmatrix}
	\mathscr{M}_t = \left(\begin{array}{cc}
	M_t & 0\\
	0 & 0\\
	\end{array}\right)
	\end{equation}
	where the matrix $M_t$ is the Malliavin matrix corresponding to $Z_t$.
\end{lemma}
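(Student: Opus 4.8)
The plan is to compute the Malliavin matrix $\mathscr{M}_t$ directly from its definition, using the block structure of the vector fields $V_0 = (U_0, W_0)$, $V_j = (U_j, 0)$ for $j = 1 \dd d$, and to exploit the fact that the last coordinate $\zeta_t$ of the process is deterministic. First I would recall from Lemma \ref{lem:existenceofmallmatrix} that the Malliavin derivative $D_r^j X_t$ satisfies the linear SDE \eqref{eq:MalliavinderivativeSDE}. Writing $X_t = (Z_t, \zeta_t)$ and recalling that the $(n+1)$-th component of each $V_j$ with $j \in \{1 \dd d\}$ vanishes and that the $(n+1)$-th component of $V_0$, namely $W_0$, depends only on $\zeta$, one sees that the equation for $D_r^j \zeta_t$ reads
\begin{equation*}
D_r^j \zeta_t = 0 + \int_r^t \partial_\zeta W_0(\zeta_s)\, D_r^j \zeta_s\, ds,
\end{equation*}
since $V_j^{n+1} \equiv 0$ for $j \geq 1$ gives a zero initial term, and the Stratonovich correction terms involving $V_\ell^{n+1}$ for $\ell \geq 1$ also vanish; the only surviving term is the drift contribution from $\partial_{x^k} V_0^{n+1} = \partial_\zeta W_0 \cdot \delta_{k, n+1}$. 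This is a linear homogeneous ODE in $D_r^j\zeta_t$ with zero initial condition, hence $D_r^j \zeta_t = 0$ for all $r \leq t$ and all $j \in \{1 \dd d\}$.

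With $D_r^j \zeta_t = 0$ established, I would substitute this back into the definition $\mathscr{M}_t^{ik} = \sum_{j=1}^d \int_0^t D_s^j(X_t^i) D_s^j(X_t^k)\, ds$. Whenever $i = n+1$ or $k = n+1$, at least one factor $D_s^j(X_t^{n+1}) = D_s^j \zeta_t$ is zero, so $\mathscr{M}_t^{ik} = 0$. This immediately gives the claimed block form with the last row and column equal to zero. For the top-left $n \times n$ block, when $i, k \in \{1 \dd n\}$ the entry is $\sum_{j=1}^d \int_0^t D_s^j(Z_t^i) D_s^j(Z_t^k)\, ds$, which is by definition the $(i,k)$ entry of the Malliavin matrix $M_t$ associated with the $\R^n$-valued process $Z_t$. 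Here I should also note that $D_r^j Z_t$ satisfies the linear SDE obtained by restricting \eqref{eq:MalliavinderivativeSDE} to the first $n$ coordinates, in which $\zeta_s$ enters only as a (deterministic) parameter through $U_0(Z_s, \zeta_s)$ and $U_\ell(Z_s, \zeta_s)$ — this is exactly the Malliavin derivative one would obtain by treating \eqref{SDEglobal} as a non-autonomous SDE in $\R^n$ with the given deterministic coefficient curve $\zeta_s = \zeta_s^{\zeta_0}$, so $M_t$ is genuinely the Malliavin matrix of $Z_t$.

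I expect the argument to be essentially routine; there is no real obstacle, only the bookkeeping of which terms in \eqref{eq:MalliavinderivativeSDE} survive given the block structure. The one point that deserves a careful sentence is the vanishing of the Stratonovich integral terms in the equation for $D_r^j \zeta_t$: one must check that $\partial_{x^k} V_\ell^{n+1}(X_s) \equiv 0$ for $\ell \in \{1 \dd d\}$ and all $k$, which holds because $V_\ell^{n+1} \equiv 0$ identically; and that the only nonzero contribution to the drift is $\partial_{x^{n+1}} V_0^{n+1} = \partial_\zeta W_0$, since $W_0$ depends only on $\zeta$ (in the notation of \eqref{newvectors0}, actually on $(\zeta, a)$, but in the setting $N = n+1$ of Section \ref{sec:non-autonomous} there is no $a$-block, so $W_0 = W_0(\zeta)$). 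Once this is in place, the uniqueness of solutions to the resulting linear ODE forces $D_r^j \zeta_t = 0$, and the block decomposition \eqref{eq:formofmalliavinmatrix} follows directly from the definition of $\mathscr{M}_t$.
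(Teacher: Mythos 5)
Your proposal is correct and follows essentially the same route as the paper: derive from \eqref{eq:MalliavinderivativeSDE} that $D_r^j\zeta_t$ solves a homogeneous linear equation with zero initial condition (since $V_\ell^{n+1}\equiv 0$ for $\ell\geq 1$ and $W_0$ depends only on $\zeta$), conclude $D_r^j\zeta_t=0$, and read off the block structure directly from the definition of $\mathscr{M}_t$. Your extra remarks on the vanishing Stratonovich terms and on identifying the top-left block with the Malliavin matrix of the non-autonomous process $Z_t$ are sound and merely make explicit what the paper leaves implicit.
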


\begin{proof}[Proof of Lemma \ref{lem:formofmalliavinmatrix}]
The proof is deferred to Appendix \ref{sec:proofsofMalliavinsec}.
\end{proof}

In \cite{{Nualart}} it is shown that if the Malliavin matrix is invertible then the law of $X_t$ admits a density on $\mathbb{R}^{n+1}$. We can see from \eqref{eq:formofmalliavinmatrix} that the matrix is not invertible; however we show that the Malliavin matrix $M_t$ corresponding to $Z_t$ is invertible almost surely and hence the law of $Z_t$ admits a density on $\R^n$, for every fixed $t>0$.

\begin{prop}\label{prop:malliavinmatrixinv}
	The reduced Malliavin covariance matrix $\mathscr{C}_t$ corresponding to the solution $X_t=(Z_t,\zeta_t)$ of the SDE \eqref{SDEglobal} - \eqref{incondglobal} is of the form
	\begin{equation*}\label{eq:malliavinmatrixform}
	\mathscr{C}_t=\left(\begin{array}{cc}
	C_t & 0 \\
	0   & 0
	\end{array}\right),
	\end{equation*}
	where $C_t$ is a random $n\times n$ symmetric matrix. Moreover, if we assume Hypothesis \ref{hyp:nonautoassumptions} \ref{item:smoothnessassumption} holds then $C_t$ is invertible $\mathbb{P}$-almost surely.
\end{prop}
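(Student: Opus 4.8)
The plan is to split the argument exactly as the statement does: first establish the block form of $\mathscr{C}_t$, which is essentially linear algebra, and then prove the almost-sure invertibility of the surviving $n\times n$ block $C_t$ by a Norris-type argument in the spirit of \cite[Theorem 2.3.2]{Nualart}, with the parabolic H\"ormander condition there replaced by Hypothesis \ref{hyp:nonautoassumptions} \ref{item:smoothnessassumption}.

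For the block structure I would first observe that, since $\zeta_t=e^{tW_0}\zeta_0$ solves an autonomous scalar ODE that does not see $Z_t$, the Jacobian $J_s=\partial X_s/\partial x_0$ (with $x_0=(z_0,\zeta_0)$) is block upper triangular, $J_s=\left(\begin{smallmatrix} A_s & b_s\\ 0 & c_s\end{smallmatrix}\right)$ with $A_s=\partial Z_s/\partial z_0\in\R^{n\times n}$ and $c_s=\partial\zeta_s/\partial\zeta_0\in\R$; hence $J_s^{-1}$ is again block upper triangular with top-left block $A_s^{-1}$. Because $V_k=(U_k,0)$ for $k=1,\dots,d$, this gives $J_s^{-1}V_k(X_s)=(A_s^{-1}U_k(X_s),0)^T$, so $\mathscr{C}_t=\sum_{k=1}^d\int_0^t J_s^{-1}V_k(X_s)V_k(X_s)^T(J_s^{-1})^T\,ds$ has the claimed form with $C_t=\sum_{k=1}^d\int_0^t A_s^{-1}U_k(Z_s,\zeta_s)U_k(Z_s,\zeta_s)^T(A_s^{-1})^T\,ds$, which is precisely the reduced Malliavin covariance matrix of the non-autonomous process $Z_t$. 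I would also record here, for later use, that every hierarchical bracket $V_{[\alpha]}$, $\alpha\in\A$, has vanishing last component: this follows by induction on the length, since the coefficient of $\partial_\zeta$ in $[V_{[\alpha]},V_0]$ equals $V_{[\alpha]}(W_0)=0$ ($W_0$ depending on $\zeta$ only, and $V_{[\alpha]}$ having no $\partial_\zeta$-component by the inductive hypothesis), and likewise for brackets with $V_j$, $j\geq 1$ (consistently with Lemma \ref{lemmaW0}). Consequently any $v\in\R^N$ of the form $\tilde v=(v,0)$, $v\in\R^n$, satisfies $\tilde v^TV_{[\alpha]}(x)=v^T\big(\text{first $n$ components of }V_{[\alpha]}(x)\big)$.

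For the invertibility, fix $t>0$ and set $\Omega_0:=\{\det C_t=0\}$. On $\Omega_0$ one can choose (measurably) a unit vector $v=v(\omega)\in\R^n$ with $v^TC_tv=0$, i.e. $\sum_{k=1}^d\int_0^t\lv v^TA_s^{-1}U_k(X_s)\rv^2\,ds=0$, and by path continuity this forces $v^TJ_s^{-1}V_{[k]}(X_s)=0$ for all $s\in[0,t]$ and $k=1,\dots,d$ (identifying $v$ with $\tilde v=(v,0)$ as above). For $\beta\in\A$ put $Y^\beta_s:=v^TJ_s^{-1}V_{[\beta]}(X_s)$; using the standard identity $d\big(J_s^{-1}V_{[\beta]}(X_s)\big)=J_s^{-1}V_{[\beta\ast 0]}(X_s)\,ds+\sqrt 2\sum_{i=1}^d J_s^{-1}V_{[\beta\ast i]}(X_s)\circ dB^i_s$, each $Y^\beta$ is a continuous semimartingale. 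If $Y^\beta\equiv 0$ on $[0,t]$ then, after converting to It\^o form, uniqueness of the semimartingale decomposition forces both the local-martingale and the finite-variation parts to vanish; the quadratic variation then yields $Y^{\beta\ast i}\equiv 0$ for $i=1,\dots,d$, and the It\^o-corrected drift yields $Y^{\beta\ast 0}\equiv 0$ as well, the correction term being a sum of $Y^{\beta\ast i\ast i}$'s already shown to vanish. Iterating from $\beta=(k)$, $k=1,\dots,d$, propagates $Y^\beta\equiv 0$ to all $\beta\in\A$, in particular to all $\beta\in\A_m$; evaluating at $s=0$, where $J_0=\mathrm{Id}$, gives $v^TV_{[\beta]}(x_0)=0$ for every $\beta\in\A_m$. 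By Hypothesis \ref{hyp:nonautoassumptions} \ref{item:smoothnessassumption} the vectors $\{V_{[\beta]}(x_0):\beta\in\A_m\}$ span the $n$-dimensional space $\deln(x_0)$, whose first-$n$-component projections are all of $\R^n$, so $v=0$, contradicting $\lv v\rv=1$. Hence $\mathbb{P}(\Omega_0)=0$, which is the claim.

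The routine block computation aside, the delicate point is the propagation step: one must carry out the Stratonovich--It\^o conversion carefully so that the bracket hierarchy $V_{[\beta\ast 0]}$, $V_{[\beta\ast i]}$ appears with the correct coefficients, check that the induction closes after finitely many steps (it does, since only the finite family $\A_m$ is needed, and the UFG condition guarantees such an $m$ exists), and produce the measurable selection of the ``bad direction'' $v$ on $\Omega_0$. One also needs enough integrability and smoothness of $J_s$, $J_s^{-1}$ and of the vector fields to justify the repeated use of It\^o's formula; under the standing assumptions this is standard, localizing by stopping times should the coordinate-changed coefficients fail to be globally Lipschitz (cf. Note \ref{technicalpoint}).
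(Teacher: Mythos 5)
Your block-structure argument is correct and essentially the paper's: the paper obtains the same form by writing $\mathscr{C}_t=J_t^{-1}\mathscr{M}_t(J_t^{-1})^T$ and invoking Lemma \ref{lem:formofmalliavinmatrix}, whereas you compute directly from the definition of $\mathscr{C}_t$ using the block-triangularity of $J_s$ and $V_k=(U_k,0)$; either route is fine, and your inductive observation that every $V_{[\alpha]}$, $\alpha\in\A_m$, has vanishing last component is consistent with Lemma \ref{lemmaW0}.

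The gap is in the invertibility step. On $\Omega_0=\{\det C_t=0\}$ your null direction $v=v(\omega)$ is $\mathcal{F}_t$-measurable, not $\mathcal{F}_0$-measurable, so the processes $Y^{\beta}_s=v^TJ_s^{-1}V_{[\beta]}(X_s)$ are not adapted and are not semimartingales with respect to the driving filtration; the appeal to uniqueness of the semimartingale decomposition and to the quadratic variation to force the martingale and drift parts to vanish is therefore unjustified, since for a fixed path nothing like a canonical decomposition is available, and you also cannot instead fix a deterministic $v$ and take a union over directions, as that would be an uncountable union of null sets. This is precisely the difficulty the paper's proof (following Nualart's Theorem 2.3.2) is built to handle: it introduces the increasing random spaces $K_s=\operatorname{span}\{J_r^{-1}V_k(X_r):0\leq r\leq s,\ k=1,\dots,d\}$, uses the Blumenthal zero--one law to conclude that $K_{0+}=\bigcap_{s>0}K_s$ is almost surely a \emph{deterministic} subspace, chooses a deterministic $v\perp K_{0+}$ together with the stopping time $\tau=\inf\{s>0:\dim K_s>\dim K_{0+}\}>0$, and only then runs the bracket-propagation argument (your iteration) for genuinely adapted semimartingales on $[0,\tau)$; Hypothesis \ref{hyp:nonautoassumptions} \ref{item:smoothnessassumption} then forces $K_{0+}=\R^n$, hence $K_t=\R^n$ almost surely and $C_t$ is invertible. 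Your propagation of the hierarchy and the evaluation at $s=0$ are fine once $v$ is deterministic and the identities hold on $[0,\tau)$; what is missing is this zero--one-law reduction (or, alternatively, a quantitative Norris-lemma argument), and a measurable selection of $v$ on $\Omega_0$ is not a substitute for it.
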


\begin{proof}[Proof of Proposition \ref{prop:malliavinmatrixinv}]
The proof is deferred to Appendix \ref{sec:proofsofMalliavinsec}.
\end{proof}

\begin{theorem}\label{thm:denistyofZt}
	Assume Hypothesis \ref{hyp:nonautoassumptions} \ref{item:smoothnessassumption} and let $\{Z_t\}_{t\geq 0}$ be the solution of \eqref{SDEglobal}. Then the law of $Z_t$ is absolutely continuous with respect to the Lebesgue measure on $\R^n$. 
\end{theorem}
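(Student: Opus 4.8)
The plan is to combine the standard Malliavin-calculus criterion for absolute continuity (e.g. \cite[Theorem 2.1.2]{Nualart}) with the structural results just established in Lemma \ref{lem:formofmalliavinmatrix} and Proposition \ref{prop:malliavinmatrixinv}. Recall the criterion: if a random vector $F = (F^1,\dots,F^n)$ has components $F^i \in \mathbb{D}^{1,p}$ for some $p>1$ and its Malliavin matrix is invertible $\mathbb{P}$-almost surely, then the law of $F$ is absolutely continuous with respect to Lebesgue measure on $\R^n$. We apply this with $F = Z_t$ for a fixed $t>0$.

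First I would record that the integrability hypothesis is already in hand: by Hypothesis \ref{SA} the vector fields $V_0,\dots,V_d$ are smooth and globally Lipschitz, so Lemma \ref{lem:existenceofmallmatrix} gives $Z_t^i \in \mathbb{D}^{1,p}$ for every $i \in \{1,\dots,n\}$ and every $p \geq 1$; in particular each $Z_t^i$ is Malliavin differentiable. Next, by Lemma \ref{lem:formofmalliavinmatrix}, the Malliavin matrix $\mathscr{M}_t$ of the full solution $X_t=(Z_t,\zeta_t)$ has the block form with $M_t$ in the top-left corner being precisely the Malliavin matrix of $Z_t$; and by Proposition \ref{prop:malliavinmatrixinv}, writing $\mathscr{M}_t = J_t \mathscr{C}_t J_t^T$ with $\mathscr{C}_t$ the reduced Malliavin covariance matrix, the corresponding $n\times n$ block $C_t$ of $\mathscr{C}_t$ is invertible $\mathbb{P}$-almost surely under Hypothesis \ref{hyp:nonautoassumptions} \ref{item:smoothnessassumption}.

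It then remains to transfer invertibility of $C_t$ to invertibility of $M_t$. Here one uses that the Jacobian $J_t = \partial X_t/\partial x_0$ is invertible (as recalled after Lemma \ref{lem:existenceofmallmatrix}, following \cite[Section 2.3]{Nualart}) and respects the same block structure: because the $\zeta$-component of the dynamics \eqref{ODEglobal} is an autonomous ODE not depending on $Z$, the variational equation for $J_t$ is block lower/upper-triangular with the top-left $n\times n$ block, call it $J_t^Z = \partial Z_t/\partial z_0$, itself invertible. Consequently $M_t = J_t^Z \, C_t \, (J_t^Z)^T$ (the off-diagonal blocks contributing nothing because the last row and column of $\mathscr{C}_t$ vanish), and since $J_t^Z$ and $C_t$ are both invertible $\mathbb{P}$-almost surely, so is $M_t$. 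Applying the Malliavin criterion to $Z_t$ then yields that the law of $Z_t$ is absolutely continuous with respect to Lebesgue measure on $\R^n$, which is the claim.

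The main obstacle I anticipate is purely bookkeeping: making precise the block-triangular structure of the Jacobian flow $J_t$ and checking that the product $M_t = J_t^Z C_t (J_t^Z)^T$ really holds with no contamination from the (degenerate) $\zeta$-directions. Strictly speaking this is essentially contained in the proofs of Lemma \ref{lem:formofmalliavinmatrix} and Proposition \ref{prop:malliavinmatrixinv}, so in the write-up it may be cleanest to simply invoke those two results together with the almost-sure invertibility of $J_t$ and conclude directly from \cite[Theorem 2.1.2]{Nualart}, rather than re-deriving the block identities.
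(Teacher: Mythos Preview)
Your proposal is correct and follows essentially the same approach as the paper: invoke Lemma \ref{lem:formofmalliavinmatrix} and Proposition \ref{prop:malliavinmatrixinv} to obtain that $M_t$ (the Malliavin matrix of $Z_t$) factors as $\tilde{J}_t C_t \tilde{J}_t^T$ with both factors invertible, then apply \cite[Theorem 2.1.2]{Nualart}. Your write-up is in fact slightly more careful than the paper's (which writes $M_t=J_t C_t J_t^T$ without distinguishing the $n\times n$ block $\tilde{J}_t$ from the full Jacobian), and your closing remark is exactly right---the block identities are already contained in the proofs of the two cited results, so you need only invoke them.
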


\begin{proof}[Proof of Theorem \ref{thm:denistyofZt}]
	Note the Malliavin matrix corresponding to $Z_t$ is $M_t$ which is invertible, indeed $M_t=J_tC_tJ_t^T$ and $C_t$ is invertible by Proposition \ref{prop:malliavinmatrixinv} therefore $M_t$ is invertible since the product of invertible matrices is invertible. By \cite[Theorem 2.1.2]{Nualart} we have that the law of $Z_t$ is absolutely continuous with respect to Lebesgue measure on $\R^n$, for each $t>0$.
\end{proof}

\subsection{Existence of a density on integral submanifolds}\label{sec:densitygen}

We now return to studying the general UFG-case. As in the previous section we cannot expect that the law of $X_t$ will in general admit a density with respect to Lesbegue measure on $\R^N$ and we will instead show that the law of $X_t$ admits a density with respect to the volume element on a suitable manifold. Indeed, we shall show that the law of $X_t$ admits a density with respect to the volume element on $\St$. Note that by Proposition \ref{prop:SDElivesonsubmanifold} we have $X_t^{(x)} \in \overline{S_{e^{t\voperp}(x)}}$ almost surely. In this section we shall assume Hypothesis \ref{hyp:gennonautoassumptions} \ref{item:UFGgen} and that $X_t$ cannot hit the boundary of the integral manifold ${S_{e^{t\voperp}(x)}}$, that is $X_t^{(x)} \in S_{e^{t\voperp}(x)}$, almost surely. In the first and second comment in Note \ref{noteonhyplocal} it is shown that under Hypothesis \ref{hyp:gennonautoassumptions} \ref{item:gentightnessofq} implies that $X_t$ cannot hit the boundary of the maximal integral submanifold.

Recall from Section \ref{sec:longtimebehaviour} the process $\{\cZ_t\}_t$ defined by \eqref{eq:Zdef}. Since $e^{-t\voperp}$ is a diffeomorphism the law of $X_t$ admits a density with respect to the volume element on $\St$ if and only if the law of $\cZ_t$ admits a density with respect to the volume element on $\So$. Let $\mathcal{V}_{[\alpha],t}$ be defined as in \eqref{eq:commutatorsofcV}, then recall that the process $\{\cZ_t\}_{t\geq 0}$ satisfies the SDE \eqref{eq:SDEnonauto}. 
Now we wish to apply \cite[Theorem 3.4]{Schiltz} to show that the law of $\{\cZ_t\}_{t\geq0}$ admits a density with respect to the volume measure on $\So$. However, as noted in \cite{Cattiaux}, there is a mistake in the proof of \cite[Theorem 3.4]{Schiltz}, in particular the form of the H\"ormander condition  given by \cite[Assumption (H)]{Schiltz} is not sufficient for the conclusions of \cite[Theorem 3.4]{Schiltz} to hold. More precisely, they rely upon \cite[Theorem 1.1.3]{Florchinger} to show that  \cite[Assumption (H)]{Schiltz} implies a suitable integration by parts formula, which is shown to be incorrect by \cite{Cattiaux}. However under our conditions there is an integration by parts formula as shown in \cite[Section 3]{Nee}. Therefore we may use the strategy given in \cite{Schiltz} and the results of \cite{Nee} to prove that the law of $\cZ_t$ admits a density with respect to the volume measure on $\So$.

A vital tool for this argument is the integration by parts formula proved in \cite[Theorem 3.10]{Nee}; namely,  for $\Phi\in \mathbb{D}$ and $\alpha_1,\ldots,\alpha_M\in \mathcal{A}_m$ we have
\begin{equation*}\label{eq:IBPformula}
\mathbb{E}_x\left[\Phi V_{[\alpha_1]}\ldots V_{[\alpha_M]}f(X_t)\right]= t^{\frac{-\lVert\alpha_1\rVert - \ldots -\lVert\alpha_M\rVert}{2}} \mathbb{E}_x[\Phi_{\alpha_1,\ldots,\alpha_M}(t,x)f(X_t)], \quad \text{ for any } f\in C_V^\infty(\R^N),
\end{equation*}
for some random variable $\Phi_{\alpha_1,\ldots,\alpha_M}(t,x)$. By taking $f=g\circ e^{-t\voperp}$ we have
\begin{equation}\label{eq:IBPformulaZ}
\mathbb{E}_x\left[\Phi \cV_{[\alpha_1],t}\ldots \cV_{[\alpha_M],t}g(\cZ_t)\right]= t^{\frac{-\lVert\alpha_1\rVert - \ldots -\lVert\alpha_M\rVert}{2}} \mathbb{E}_x[\Phi_{\alpha_1,\ldots,\alpha_M}(t,x)g(\cZ_t)], \quad \text{ for any } g\in C_V^\infty(\R^N).
\end{equation}

Let us denote by $\mathcal{E}(\So)$ the space of all distributions (in this sentence distribution is meant in an analytic sense) on $\So$ with compact support. Recall that for any smooth function $f$ we can view this as a member of $\mathcal{E}(\So)$, denoted $F_f$, by setting 
\begin{equation*}
\langle F_f, \phi\rangle = \int_{S} f(x) \phi(x) \lambda_{\So}(dx), \quad \mbox{ for any } \phi\in C_c^\infty(\So)
\end{equation*}
where $\lambda_{\So}$ denotes the volume measure on $\So$.

\begin{lemma}
	Assume that $\cZ_t$ satisfies \eqref{eq:IBPformulaZ}. Then there exists a map $\Psi_t:\mathcal{E}(\So)\to \mathbb{D}'$ with the following properties
	\begin{enumerate}
		\item If $f\in C_c^\infty(\So)$ then $\Psi_t(f)=f(\cZ_t)$. Note that $f(\cZ_t)$ is identified as an element in $\mathbb{D}'$ by setting $\langle f(\cZ_t), G\rangle=\mathbb{E}[f(\cZ_t)G]$ for any $G\in \mathbb{D}$.
		\item The map $\Psi_t$ is continuous as a map from $\mathcal{E}(\So)$ to $\mathbb{D}'$. 
	\end{enumerate}
\end{lemma}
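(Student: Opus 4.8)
The plan is to follow the construction of \cite{Schiltz}, replacing the integration-by-parts ingredient used there (which, as pointed out in \cite{Cattiaux}, rests on an incorrect claim) by the formula \eqref{eq:IBPformulaZ}, which is available under Hypothesis \ref{hyp:gennonautoassumptions} \ref{item:UFGgen} thanks to \cite[Theorem 3.10]{Nee} and Proposition \ref{lem:VufgimpliesAdVUFG}. The first step would be a representation lemma for compactly supported distributions on $\So$. The key point is that by \eqref{keyobs} each $\cV_{[\alpha],t}=\Ad_{t\voperp}V_{[\alpha]}$ lies in $\deln$, and since $\{V_{[\alpha]}:\alpha\in\A_m\}$ generates $\deln$ while $\Ad_{t\voperp}$ is a diffeomorphism, the fields $\{\cV_{[\alpha],t}:\alpha\in\A_m\}$ span the tangent space $T_\lz\So=\deln(\lz)$ at every $\lz\in\So$. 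Combining this (through an induction on order together with Leibniz and commutator identities) with the fact that a compactly supported distribution has finite order, and using a smooth partition of unity subordinate to a locally finite atlas of $\So$, one would write each $T\in\mathcal{E}(\So)$ as a finite sum
\begin{equation*}
T=\sum_{\ell}\cV_{[\alpha_1^\ell],t}\cdots\cV_{[\alpha_{M_\ell}^\ell],t}\,g_\ell,\qquad g_\ell\in C_c(\So),\ \ \alpha_i^\ell\in\A_m,
\end{equation*}
with $\max_\ell\|g_\ell\|_\infty$, $\bigcup_\ell\mathrm{supp}\,g_\ell$ and $\max_\ell M_\ell$ bounded in terms of the order of $T$ and finitely many seminorms of $T$ on a fixed compact neighbourhood of $\mathrm{supp}\,T$.

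Next I would define $\Psi_t$. For $G\in\mathbb{D}$ and $\alpha_1,\dots,\alpha_M\in\A_m$, let $\Phi^{G}_{\alpha_1,\dots,\alpha_M}(t,x)$ denote the weight produced by \eqref{eq:IBPformulaZ} with $\Phi=G$; by \cite[Section 3]{Nee} it belongs to every $L^p(\Omega)$ and depends linearly and continuously on $G\in\mathbb{D}$. Given a representation of $T$ as above, one sets
\begin{equation*}
\langle\Psi_t(T),G\rangle:=\sum_{\ell}t^{-\frac{\|\alpha_1^\ell\|+\cdots+\|\alpha_{M_\ell}^\ell\|}{2}}\,\mathbb{E}_x\!\left[\Phi^{G}_{\alpha_1^\ell,\dots,\alpha_{M_\ell}^\ell}(t,x)\,g_\ell(\cZ_t)\right].
\end{equation*}
Since each $g_\ell$ is bounded and each weight is integrable, the sum is finite; it is linear in $G$, and by H\"older's inequality and the $L^p$ bounds on the weights it is continuous on $\mathbb{D}$, so $\Psi_t(T)\in\mathbb{D}'$. (One extends the $g_\ell$ to functions in $C_c^\infty(\R^N)\subseteq C_V^\infty(\R^N)$; because the $\cV_{[\alpha],t}$ are tangent to $\So$ and $\cZ_t\in\So$ almost surely, the right-hand side depends only on $g_\ell|_{\So}$, so \eqref{eq:IBPformulaZ} may be invoked.)

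Finally I would verify the remaining points. Independence of the chosen representation would be obtained by mollification: if $g^\varepsilon\in C_c^\infty(\So)$ converges uniformly to $g\in C_c(\So)$, then $\cV_{[\alpha_1],t}\cdots\cV_{[\alpha_M],t}g^\varepsilon$ is smooth, so for it the displayed sum reduces, via \eqref{eq:IBPformulaZ} read in the forward direction, to $\mathbb{E}[(\cV_{[\alpha_1],t}\cdots\cV_{[\alpha_M],t}g^\varepsilon)(\cZ_t)G]$; letting $\varepsilon\to0$ shows that $\langle\Psi_t(T),G\rangle$ depends only on $T$ and $G$. Taking the trivial representation ($M=0$) then yields property (1), $\Psi_t(f)=f(\cZ_t)$ for $f\in C_c^\infty(\So)$, with $f(\cZ_t)$ identified in $\mathbb{D}'$ through $\langle f(\cZ_t),G\rangle=\mathbb{E}[f(\cZ_t)G]$. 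Continuity of $\Psi_t:\mathcal{E}(\So)\to\mathbb{D}'$ would follow from the quantitative control in the representation step: if $T_j\to0$ in $\mathcal{E}(\So)$, one may choose representations with uniformly bounded supports and $\max_\ell\|g_\ell^{(j)}\|_\infty\to0$, whence $\langle\Psi_t(T_j),G\rangle\to0$ for every $G\in\mathbb{D}$.

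I expect the main obstacle to be the first step: turning the pointwise spanning of $\{\cV_{[\alpha],t}\}$ into a \emph{global} representation of an arbitrary compactly supported distribution as iterated derivatives along precisely these vector fields, with usable quantitative bounds. This is exactly where the UFG structure (through $\cV_{[\alpha],t}=\Ad_{t\voperp}V_{[\alpha]}$ and the invariance \eqref{keyobs}) and careful bookkeeping of the partition of unity and of the Leibniz/commutator expansions are genuinely needed. A secondary technical issue, inherited from \cite{Nee}, is verifying that the weights $\Phi^{G}_{\alpha_1,\dots,\alpha_M}$ lie in all $L^p(\Omega)$ and depend linearly and continuously on $G\in\mathbb{D}$, which is what makes $\Psi_t(T)$ a well-defined element of $\mathbb{D}'$.
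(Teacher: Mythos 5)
You should first note that the paper does not actually prove this lemma: immediately after the statement it writes ``For a proof see \cite[Proposition 2.1]{Taniguchi}'', so the comparison here is between your from-scratch construction and the cited Watanabe--Taniguchi composition theory. Your overall strategy (define $\Psi_t(T)$ by writing $T$ as iterated derivatives along the fields $\cV_{[\alpha],t}$ of compactly supported continuous functions and then trading those derivatives for the weights of \eqref{eq:IBPformulaZ}) is in the right spirit, but as written it has a genuine gap at the well-definedness step. Your mollification argument only shows that, for a \emph{single} term with smooth density $g^\varepsilon$, the weighted expectation coincides with $\mathbb{E}[(\cV_{[\alpha_1],t}\cdots\cV_{[\alpha_M],t}g^\varepsilon)(\cZ_t)G]$. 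To prove independence of the representation you would need: if $\sum_\ell \cV_{[\alpha_1^\ell],t}\cdots\cV_{[\alpha_{M_\ell}^\ell],t}g_\ell=0$ in $\mathcal{D}'(\So)$, then the corresponding sum of weighted expectations vanishes. After mollifying, the sum $\sum_\ell \cV_{[\alpha_1^\ell],t}\cdots\cV_{[\alpha_{M_\ell}^\ell],t}g_\ell^\varepsilon$ converges to $0$ only in the topology of distributions, not uniformly or pointwise, and you cannot pass this distributional convergence through $\mathbb{E}[\,\cdot\,(\cZ_t)\,G]$ unless you already know that the law of $\cZ_t$ (weighted by $G$) is given by a sufficiently regular density on $\So$ --- which is precisely what the whole section is trying to establish. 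So the argument is circular at the crucial point. A secondary issue is that the global structure theorem along the $\cV_{[\alpha],t}$ with the quantitative bounds you invoke for continuity of $\Psi_t$ is asserted rather than proved; you flag it as the main obstacle, but without it neither the definition nor property (2) is available.

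The standard way to close the gap --- and what the cited result of Taniguchi, following Watanabe, actually does --- is to avoid choosing representations altogether: one proves, for smooth compactly supported $f$, an estimate of the form $\lvert\mathbb{E}[f(\cZ_t)G]\rvert\leq C_K\,\lVert f\rVert_{-2k}\,\lVert G\rVert_{\mathbb{D}^{2k,p}}$, where $\lVert\cdot\rVert_{-2k}$ is a negative-order Sobolev norm on a compact neighbourhood $K$ in $\So$, obtained by writing $f=L^k u$ for an operator $L$ which is elliptic along $\So$ and built from the $\cV_{[\alpha],t}$, and then moving $L^k$ onto the Malliavin weights via \eqref{eq:IBPformulaZ}. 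The map $f\mapsto f(\cZ_t)$ then extends to $\mathcal{E}(\So)$ by continuity and density of $C_c^\infty(\So)$, and well-definedness of $\Psi_t$, property (1), and continuity (2) are automatic from the extension-by-continuity, with no representation bookkeeping needed. If you want a self-contained proof rather than the citation the paper uses, that is the route to take; your IBP ingredient and the observation that the $\cV_{[\alpha],t}=\Ad_{t\voperp}V_{[\alpha]}$ span $T_\lz\So$ are exactly the inputs it requires.
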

For a proof see \cite[Proposition 2.1]{Taniguchi}. 

Now we shall state some properties of the map $\Psi$, as proven in \cite[Proposition 2]{Watanabe}.

\begin{prop}
	Fix $t>0$ and let $\cZ_t$ be such that the map $\Psi_t$ is well defined for every $f\in \mathcal{E}(\So)$. Then let $I$ be some open set 
	\begin{enumerate}
		\item If $I\ni s\mapsto F_s$ is continuous (continuously differentiable), then $I\ni s\mapsto \Psi_t(F_s)$ is continuous (resp. continuously differentiable). In particular, for every $G\in\mathbb{D}$ the map $I\ni s\mapsto \langle \Psi_t(F_s),G\rangle$ is continuous and respectively continuously differentiable and 
		\begin{equation*}
		\left\langle \Psi_t\left(\frac{dF_s}{ds}\right), G\right\rangle = \frac{d}{ds}\left\langle \Psi_t(F_s), G\right\rangle.
		\end{equation*}
		\item If $I\ni s\mapsto F_s$ is continuous then for every $G\in\mathbb{D}$
		\begin{equation*}
		\left\langle \Psi_t\left(\int_I F_s ds\right),G\right\rangle=\int_I\langle  \Psi_t(T_s),G\rangle ds
		\end{equation*} 
		where $\int_I T_s ds$ is a tempered distribution (here distribution is meant in an analytic sense) and is defined by $\langle\int_I T_s ds, \phi\rangle = \int_I \langle T_s, \phi\rangle ds$.
	\end{enumerate}
\end{prop}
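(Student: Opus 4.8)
The plan is to derive all three assertions from a single structural input: the map $\Psi_t$ is \emph{linear and continuous} from $\mathcal{E}(\So)$ — the space of compactly supported distributions on $\So$, topologised as the strong dual of $C^\infty(\So)$ — into $\mathbb{D}'$, equipped with a topology for which evaluation against each fixed $G\in\mathbb{D}$ is continuous (the weak-$*$ topology $\sigma(\mathbb{D}',\mathbb{D})$ suffices). This continuity is precisely the content of the preceding lemma (following Taniguchi), which itself rests on the integration-by-parts formula \eqref{eq:IBPformulaZ}. So I would first spell out what the hypotheses mean: ``$I\ni s\mapsto F_s$ continuous'' means continuity as a map into $\mathcal{E}(\So)$; ``continuously differentiable'' means that for each $s\in I$ the difference quotient $h^{-1}(F_{s+h}-F_s)$ converges in $\mathcal{E}(\So)$, as $h\to 0$, to a limit $dF_s/ds$, with $s\mapsto dF_s/ds$ again continuous into $\mathcal{E}(\So)$; and the weak integral $\int_I T_s\,ds$ is the distribution acting on test functions $\phi\in C_c^\infty(\So)$ by $\phi\mapsto\int_I\langle T_s,\phi\rangle\,ds$.

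For part (1), continuity of $s\mapsto\Psi_t(F_s)$ into $\mathbb{D}'$ is then immediate as the composition of two continuous maps, and $s\mapsto\langle\Psi_t(F_s),G\rangle$ is continuous because $\langle\,\cdot\,,G\rangle$ is continuous on $\mathbb{D}'$. In the differentiable case I would use linearity: $h^{-1}(\Psi_t(F_{s+h})-\Psi_t(F_s))=\Psi_t\big(h^{-1}(F_{s+h}-F_s)\big)$, and since $h^{-1}(F_{s+h}-F_s)\to dF_s/ds$ in $\mathcal{E}(\So)$, continuity of $\Psi_t$ forces the right-hand side to converge to $\Psi_t(dF_s/ds)$ in $\mathbb{D}'$; hence $s\mapsto\Psi_t(F_s)$ is differentiable with $\frac{d}{ds}\Psi_t(F_s)=\Psi_t(dF_s/ds)$, this derivative is continuous by the first part applied to $s\mapsto dF_s/ds$, and testing against $G\in\mathbb{D}$ gives the displayed identity.

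For part (2) I would approximate by Riemann sums: over a bounded subinterval of $I$, the sums $\sum_j F_{s_j}\,\Delta_j$ converge to $\int_I F_s\,ds$ in $\mathcal{E}(\So)$ (using continuity of $s\mapsto F_s$, completeness of $\mathcal{E}(\So)$ on bounded sets, and that the family $\{F_s\}$ has supports inside a common compact subset of $\So$), so applying the continuous linear maps $\Psi_t$ and then $\langle\,\cdot\,,G\rangle$ and passing to the limit yields $\langle\Psi_t(\int_I F_s\,ds),G\rangle=\int_I\langle\Psi_t(F_s),G\rangle\,ds$. Equivalently, $F\mapsto\langle\Psi_t(F),G\rangle$ is a continuous linear functional on $\mathcal{E}(\So)$, hence (by reflexivity of $C^\infty(\So)$) represented by pairing with some $\psi_G\in C^\infty(\So)$, and the interchange then follows directly from the definition of the weak integral. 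Either way the argument is soft once continuity of $\Psi_t$ is granted; this parallels \cite[Proposition 2]{Watanabe}.

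The genuinely substantive ingredient is therefore not in this proposition but in the preceding lemma, namely that $\Psi_t$ is well defined and continuous on \emph{all} of $\mathcal{E}(\So)$, which uses the integration-by-parts formula \eqref{eq:IBPformulaZ} of Nee together with $L^p$-bounds, uniform in $t$ on compact time intervals, for the correction variables $\Phi_{\alpha_1,\dots,\alpha_M}(t,x)$. Within the scope of the present statement the only points requiring care are bookkeeping: fixing the topologies on $\mathcal{E}(\So)$ and on $\mathbb{D}'$ so that ``continuous / continuously differentiable / integrable family'' are unambiguous, and, in part (2), ensuring the supports of the $F_s$ remain inside one compact subset of $\So$ so that both the weak integral and the pairing with a smooth (not necessarily compactly supported) function are legitimate — a mild restriction automatically satisfied by the families $F_s$ arising in the application to the density of $\cZ_t$ on $\So$.
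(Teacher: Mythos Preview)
Your argument is correct and is the standard one: everything reduces to linearity and continuity of $\Psi_t:\mathcal{E}(\So)\to\mathbb{D}'$, after which continuity, differentiability, and the interchange with integration are purely functional-analytic. The paper does not give its own proof of this proposition at all --- it simply cites \cite[Proposition 2]{Watanabe} --- so your sketch is in fact more detailed than what the paper provides, and it aligns with the approach in the cited reference.
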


%

We can show that the law of $\cZ_t$ admits a density.
\begin{prop}
	Assume Hypothesis \ref{hyp:gennonautoassumptions} \ref{item:UFGgen}, and  assume that $X_t^{(x_0)}\in \St$ almost surely. 
   Then for each $t>0$ the law of $\cZ_t^{(x_0)}$ admits a density with respect to the volume element on $\So$.
\end{prop}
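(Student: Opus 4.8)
The plan is to follow the strategy of \cite{Schiltz}, substituting the integration by parts formula of \cite{Nee} for the (incorrect) one used there, exactly as anticipated in the discussion preceding the statement. By hypothesis the law $\mu_t^{\cZ}$ of $\cZ_t^{(x_0)}$ is a probability measure carried by $\So$, and the goal is to produce a density for it with respect to the volume element $\lambda_{\So}$ on $\So$ (via the diffeomorphism $e^{-t\voperp}$ this is equivalent to the corresponding statement for $X_t^{(x_0)}$ on $\St$, as already noted at the start of the subsection). The candidate density is
$$p_t(y):=\langle \Psi_t(\delta_y),1\rangle,$$
where $\delta_y\in\mathcal{E}(\So)$ is the Dirac distribution at $y\in\So$, $1\in\mathbb{D}$ is the constant functional, and $\Psi_t$ is the map introduced above; the proposition will follow once $\Psi_t$ is shown to be well defined on Dirac distributions, by combining its two structural properties recalled from \cite{Watanabe}.

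First I would record the geometric fact that makes the construction of $\Psi_t$ at Dirac distributions legitimate: on $\So$ the vector fields $\{\cV_{[\alpha],t}\}_{\alpha\in\A_m}$ span the tangent space at every point. Indeed, by Lemma \ref{lemmaW0} one has $\deln=\mathrm{span}\{V_{[\alpha]}:\alpha\in\A_m\}$, so $\mathrm{span}\{V_{[\alpha]}(z):\alpha\in\A_m\}=\deln(z)=T_z\So$ for $z\in\So$ because $\So$ is a maximal integral manifold of $\deln$; applying the linear isomorphism $\jacobian{e^{t\voperp}}{x}$, which maps $\deln$ onto $\deln$ by the observation \eqref{keyobs}, and using the identity $\cV_{[\alpha],t}=\Ad_{t\voperp}V_{[\alpha]}$ from \eqref{eq:commutatorsofcV}, we get $\mathrm{span}\{\cV_{[\alpha],t}(z):\alpha\in\A_m\}=T_z\So$ for every $z\in\So$. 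Hence, working in a local chart of $\So$, any partial derivative, and therefore any smooth differential operator on $\So$ of any order, can be written as a $C^\infty$ combination of finitely many products of the $\cV_{[\alpha],t}$.

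Next I would verify that $\cZ_t^{(x_0)}$ satisfies the integration by parts identity \eqref{eq:IBPformulaZ}: by Lemma \ref{lem:existenceofmallmatrix} (applicable since the $V_i$ are smooth and globally Lipschitz and $\voperp$ is Lipschitz, so the coefficients of \eqref{eq:SDEnonauto} are locally Lipschitz and $\cZ_t\in\mathbb{D}^{1,p}$ for all $p$) together with Proposition \ref{lem:VufgimpliesAdVUFG}, which guarantees that $\{\partial_t+\cV_{[0],t},\cV_{[1],t},\dots,\cV_{[d],t}\}$ satisfy the UFG condition on $\R^N\times\R$, the hypotheses of \cite[Section~3]{Nee} hold; transporting the resulting formula through $f=g\circ e^{-t\voperp}$ yields \eqref{eq:IBPformulaZ}, with Malliavin weights $\Phi_{\alpha_1,\dots,\alpha_M}(t,x_0)\in\mathbb{D}$. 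With \eqref{eq:IBPformulaZ} in hand, the map $\Psi_t:\mathcal{E}(\So)\to\mathbb{D}'$ exists with the stated properties; to define $\Psi_t(\delta_y)$ one expresses $\delta_y$, locally around $y$, as $\mathcal{D}u$ for a continuous compactly supported $u$ and a differential operator $\mathcal{D}$ of sufficiently high (finite) order, rewrites $\mathcal{D}$ through the spanning frame $\{\cV_{[\alpha],t}\}$ as above, and uses \eqref{eq:IBPformulaZ} to move all derivatives onto the Malliavin side, obtaining a bona fide element of $\mathbb{D}'$ depending continuously on $y$.

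Finally I would conclude: for $\phi\in C_c^\infty(\So)$, decomposing $\phi=\int_{\So}\phi(y)\,\delta_y\,\lambda_{\So}(dy)$ as an $\mathcal{E}(\So)$-valued integral (chart by chart, via a partition of unity) and using property (2) of $\Psi_t$ together with $\Psi_t(\phi)=\phi(\cZ_t)$,
\begin{equation*}
\int_{\So}\phi(z)\,\mu_t^{\cZ}(dz)=\E_{x_0}[\phi(\cZ_t)]=\langle\Psi_t(\phi),1\rangle=\int_{\So}\phi(y)\,\langle\Psi_t(\delta_y),1\rangle\,\lambda_{\So}(dy)=\int_{\So}\phi(y)\,p_t(y)\,\lambda_{\So}(dy).
\end{equation*}
Since $\phi\in C_c^\infty(\So)$ was arbitrary, $\mu_t^{\cZ}$ is absolutely continuous with respect to $\lambda_{\So}$ with density $p_t$ (automatically nonnegative $\lambda_{\So}$-a.e.\ and in $L^1(\lambda_{\So})$), which is the claim; pushing forward through $e^{t\voperp}$ then gives the density of the law of $X_t^{(x_0)}$ on $\St$. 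The main obstacle is the third step: making the Schiltz-type construction of $\Psi_t$ rigorous on the submanifold $\So$ rather than on all of $\R^N$, i.e.\ checking that the formula of \cite{Nee} genuinely applies to the time-inhomogeneous $\So$-valued process $\cZ_t$ and that the merely tangential spanning property of $\{\cV_{[\alpha],t}\}$, with no global H\"ormander/ellipticity structure on $\R^N$, still permits writing $\delta_y$ with finitely many such derivatives and $L^p$-controlled weights; this is precisely the point where the gap in \cite{Schiltz} flagged by \cite{Cattiaux} must be circumvented.
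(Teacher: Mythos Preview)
Your proposal is correct and follows essentially the same route as the paper: define the candidate density $p_t(y)=\langle\Psi_t(\delta_y),1\rangle$ and verify it is the density by writing $\phi=\int_{\So}\phi(y)\,\delta_y\,\lambda_{\So}(dy)$ and using $\Psi_t(F_\phi)=\phi(\cZ_t)$. You supply more detail than the paper (the tangential spanning of $\{\cV_{[\alpha],t}\}$ on $\So$, the justification that \eqref{eq:IBPformulaZ} applies, and the local representation of $\delta_y$), and your closing caveat about the Schiltz--Cattiaux gap is well placed, but the core argument is the same.
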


\begin{proof}
	Note that the map $x\mapsto \delta_x$ is smooth, moreover its (weak) derivative $\frac{d}{x^i}\delta_x$ is given by $D_i\delta_x$, where $D_i\delta_x$ is defined by $\langle D_i\delta_x,\phi\rangle =-\partial_{x^i}\phi(x)$ for all $\phi$. Therefore $\Psi_t(\delta_x)$ is smooth and in particular $p(x):=\langle \Psi_t(\delta_x),1\rangle$ is smooth. It remains to show that $p(x)$ is the density of the law of $X_t$. Take $\phi\in C_c^\infty(S)$ then
	\begin{align*}
	\int_S \phi(x) p(x) \lambda_{\So}(dx) &= \int_S \phi(x) \langle \Psi(\delta_x),1\rangle \lambda_{\So}(dx)\\
	&= \langle \Psi_t\left(\int_{\So} \phi(x)  \delta_x \lambda_{\So}(dx)\right),1\rangle.
	\end{align*}
	Now for $f\in C_c^\infty(\So)$ we have 
	\begin{equation*}
	\langle \int_{\So} \phi(x)  \delta_x \lambda_{\So}(dx) ,f\rangle =  \int_{\So} \phi(x)  \langle\delta_x,f\rangle \lambda_{\So}(dx) = \int_S \phi(x) f(x) \lambda_{\So}(dx).
	\end{equation*}
	Therefore $\int_{\So} \phi(x)  \delta_x \lambda_{\So}(dx) = F_\phi$, and in particular $\Psi_t\left( \int_{\So} \phi(x)  \delta_x \lambda_{\So}(dx)\right) = \phi(\cZ_t)$. Now we have
	\begin{equation*}
	\int_S \phi(x) p(x) \lambda_S(dx) = \langle \phi(\cZ_t),1\rangle = \mathbb{E}[\phi(\cZ_t)].
	\end{equation*}
\end{proof}

\begin{theorem}\label{thmsec8}
	Assume the vector fields $V_0,V_1,\ldots,V_d$ are uniformly Lipschitz, satisfy the UFG condition and assume that $X_t^{(x_0)}\in\St$ almost surely. \footnote{As we have already mentioned, the latter fact follows for example from assuming Hypothesis \ref{hyp:gennonautoassumptions} \ref{item:gentightnessofq}. } Then for each $t>0$ the law of $X_t^{(x_0)}$ admits a density with respect to the volume element on $\St$.
\end{theorem}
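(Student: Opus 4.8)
The plan is to reduce Theorem \ref{thmsec8} to the density statement for the auxiliary process $\cZ_t$ proved immediately above, transported back to $\St$ by the flow of $\voperp$. Recall from \eqref{eq:Zdef} that $\cZ_t=e^{-t\voperp}(X_t^{(x_0)})$, so $X_t^{(x_0)}=e^{t\voperp}(\cZ_t)$; by hypothesis $X_t^{(x_0)}\in\St$ almost surely, equivalently $\cZ_t\in\So$ almost surely (cf.\ Note \ref{noteonhyplocal}). The preceding proposition gives, for each fixed $t>0$, that the law of $\cZ_t$ is absolutely continuous with respect to the volume element $\lambda_{\So}$ on $\So$, with smooth density $p(z)=\langle\Psi_t(\delta_z),1\rangle$. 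Thus Theorem \ref{thmsec8} will follow once one knows that pushing this law forward by $e^{t\voperp}$ produces a measure with a density on $\St$.

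First I would check that $e^{t\voperp}$ restricts to a diffeomorphism of $\So$ onto $\St$. The distribution $\deln$ is invariant under $\voperp$: by Note \ref{Tom} it suffices to verify $[\voperp,\tau]\in\deln$ for every $\tau\in\deln$, and this holds because $\voperp=V_0-\vodel$, $\deln$ is invariant under $V_0$, $\deln$ is involutive, and $\vodel\in\deln$ by Lemma \ref{lemmaW0}. Lemma \ref{thm:intcurvepreservemanifolds} then shows that the flow $e^{t\voperp}$ carries all points of the maximal integral manifold $\So=S_{x_0}$ of $\deln$ into the single maximal integral manifold $S_{e^{t\voperp}(x_0)}=\St$; applying the same reasoning to $e^{-t\voperp}$ yields the opposite inclusion, so $e^{t\voperp}(\So)=\St$, and the map is bijective since $e^{t\voperp}$ is injective on all of $\R^N$. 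Under the regularity in force (the $V_i$ are smooth, $\voperp$ is uniformly Lipschitz, and on $\So$ the rank of $\deln$ is constant, so the orthogonal projection defining $\vodel$ is smooth there; see Note \ref{technicalpoint}), $e^{t\voperp}$ is a diffeomorphism of $\R^N$ carrying the embedded submanifold $\So$ onto the embedded submanifold $\St$, hence restricts to a diffeomorphism $\varphi:=e^{t\voperp}|_{\So}\colon\So\to\St$.

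Next I would transport the density through $\varphi$. The law of $X_t^{(x_0)}$ is the image under $\varphi$ of the law of $\cZ_t$. For a diffeomorphism $\varphi$ between Riemannian submanifolds of $\R^N$ one has $\varphi^{*}\lambda_{\St}=(J\varphi)\,\lambda_{\So}$, where $J\varphi>0$ is the smooth Jacobian of $\varphi$ (absolute value of the determinant of $d\varphi$ in orthonormal frames for the metrics induced on $\So$ and $\St$ by the ambient Euclidean structure). Hence, for every $\phi\in C_b(\St)$,
\begin{equation*}
\E\!\left[\phi(X_t^{(x_0)})\right]=\E\!\left[(\phi\circ\varphi)(\cZ_t)\right]=\int_{\So}\phi(\varphi(z))\,p(z)\,\lambda_{\So}(dz)=\int_{\St}\phi(y)\,\frac{p(\varphi^{-1}(y))}{J\varphi(\varphi^{-1}(y))}\,\lambda_{\St}(dy),
\end{equation*}
so that the law of $X_t^{(x_0)}$ is absolutely continuous with respect to $\lambda_{\St}$ with density $y\mapsto p(\varphi^{-1}(y))/J\varphi(\varphi^{-1}(y))$, which is exactly the assertion.

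The only genuinely delicate points are bookkeeping ones, and I expect the regularity of the flow to be the main (mild) obstacle: one must be sure that $e^{t\voperp}$, restricted to $\So$, is smooth enough for the Riemannian change of variables to apply, which is why one invokes the constancy of $\mathrm{rank}\,\deln$ on $\So$ and Note \ref{technicalpoint}; and one must keep track of the two distinct induced metrics on $\So$ and $\St$, which is accounted for by the positive smooth factor $J\varphi$. All the analytic substance — the Malliavin calculus, the integration-by-parts formula of \cite{Nee}, and the distributional map $\Psi_t$ — has already been absorbed into the density statement for $\cZ_t$, so no further hard work is required here.
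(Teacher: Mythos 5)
Your proposal is correct and takes essentially the same route as the paper: the paper observes at the start of Section \ref{sec:densitygen} that, since $e^{-t\voperp}$ is a diffeomorphism, the law of $X_t$ admits a density with respect to the volume element on $\St$ if and only if the law of $\cZ_t$ admits one on $\So$, proves the latter via the map $\Psi_t$ and the integration-by-parts formula, and then states Theorem \ref{thmsec8} as an immediate consequence. Your extra steps (invariance of $\deln$ under $\voperp$, Lemma \ref{thm:intcurvepreservemanifolds} to identify $e^{t\voperp}(\So)=\St$, and the Riemannian change of variables with the Jacobian factor) simply make explicit what the paper leaves implicit.
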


{\bf Acknowledgments}. P. Dobson was supported by the Maxwell Institute Graduate School in Analysis and its
Applications (MIGSAA), a Centre for Doctoral Training funded by the UK Engineering and Physical
Sciences Research Council (grant EP/L016508/01), the Scottish Funding Council, Heriot--Watt
University and the University of Edinburgh. This work was partially supported by the grant 346300 for IMPAN from the Simons Foundation and the matching 2015--2019 Polish MNiSW fund.
\appendix
\numberwithin{equation}{section}

\section{Some technical results}\label{AppendixA}
We gather in this appendix some auxiliary results. In particular, Appendix \ref{app:topology} contains background material about the topology of the orbits of finitely generated smooth distributions. Appendix \ref{app:UFG} reports some known smoothing results on UFG semigroups, which are often used in the proofs of Appendix \ref{sec:mainproofs}. Appendix \ref{app:misc} contains precise statements and proofs of further technical facts which would have been cumbersome (and detracting from the main line of thought) if presented in the main body of the work. 

\subsection{Topology of orbits} \label{app:topology}

Here we give a brief justification of the reason why we make the standing assumption \ref{SAtop}. In short, assuming that the manifold topology of the manifolds $\curlys$ is the Euclidean topology is equivalent to assuming that such manifolds are embedded manifolds. In full generality, as explained in \cite[page 78]{Isidori}, elements of a global partition induced by distributions which enjoy the integral manifold property are immersed manifolds. We briefly explain the difference between an embedded and an immersed manifold. A detailed treatment of the matter can be found in \cite[Appendix A.2 and Appendix A.4]{Isidori}. 
Let $F: \mathcal{M}_1 \rightarrow \R^N$ be a continuous mapping of topological spaces and let $\mathcal{M}_2=F(\mathcal{M}_1)$. $\mathcal{M}_2$ can be endowed with two topologies: i) the topology of $\mathcal{M}_2$ as a subset of  the Euclidean space  $\R^N$,   so that the open sets in this topology are the sets $O$ of the form  $O=O'\cap \mathcal{M}_2$ for some $O'$ which is open in the Euclidean topology of  $\mathbb{R}^N$; ii) the topology induced by $\mathcal{M}_1$, where the open sets are the sets $U$ of the form $U =F(U')$, for some $U'$ which is open in the topology of $\mathcal{M}_1$. In general, the latter topology is stronger than the former.  With this premise, one can give the following definition. 
\begin{definition}
Let $F:\mathcal{M}_1 \rightarrow \R^N$ be a smooth mapping of manifolds. $F$ is an {\em immersion} if it is injective and $rank(\jacobian{F}{p})=dim(\mathcal{M}_1)$ for every $p \in \mathcal{M}_1$. F is an {\em embedding} if it is an immersion and the topology induced on $\mathcal{M}_2=F(\mathcal{M}_1)$ by the one on $\mathcal{M}_1$ coincides with the Euclidean topology of $\mathcal{M}_2$ as a subset of $\R^N$. 
\end{definition}

The reason why we consider only the case in which the manifolds of the partition are embeddings comes mostly from the need to use the Stroock and Varadhan support theorem: the closure appearing in the statement of such a theorem is intended in Euclidean sense. If the manifold topology was not the Euclidean topology we would have to consider two closures, the closure in the Euclidean topology and the closure in the manifold topology. This would make the exposition much more cloudy. Moreover we point out that in all our examples the manifolds at hand are embedded manifolds. It is possible that, under the assumption of this paper that the vector $\voperp$ is smooth and Lipshitz and that the integral curves of $\voperp$ are convergent, one may prove that the orbits $\curlys$ are indeed embedded manifolds. But this is beyond the scope of this paper.

\subsection{Known facts about UFG semigroups}\label{app:UFG}

In this appendix we gather some known facts that we use frequently. 

\begin{enumerate}[label=\textbf{\textup{[F.\arabic*]}}]
\item A semigroup $\cP_t$  of bounded operators  is Markov if 
$$
\cP_t 1= 1 \qquad \mbox{and} \qquad \cP_t f \geq 0 \mbox{ when } f\geq 0\,,
$$ 
where, in the above, $1$ denotes the function identically equal to one. Denoting by $\| \cdot \|_{\infty}$ the supremum norm, the above implies  that if $\|f\|_{\infty}< \infty$ then $\|\cP_t f\|_{\infty} \leq \|f\|_{\infty}$, i.e. the semigroup is a contraction in the supremum norm. Similarly the two parameter semigroups $\{\cQ_{s,t}\}_{0\leq s\leq t}$ and $\{\zQ_{s,t}\}_{0\leq s\leq t}$, considered in Section \ref{sec:non-autonomous} and Section \ref{sec:longtimebehaviour},  are both contractive in the supremum  norm.   \label{item:contraction}

\item Note that if the vector fields $V_0,V_1\ldots,V_d$ satisfy the parabolic H\"ormander condition then for any $f\in C_b(\R^N)$, the semigroup $\cP_tf(x)$  is smooth in all directions in $\R^N$ and moreover it is smooth in $t$. This is not generally the case if we assume the UFG condition.  However we have that for any $f\in C_b(\R^N)$ and $t>0$ the function $x\mapsto \cP_tf(x)$ is differentiable in the directions $V_{[\alpha]}$ for any $\alpha\in \A$. Moreover for any compact set $K$, $t>0$ there exists $C(K)>0, \omega>0$ such that
\begin{equation*}
\sup_{x\in K}\lv V_{[\alpha]} \cP_tf(x) \rv \leq C(K) e^{\omega t} t^{-\|\alpha\|/2} \lv f \rv_\infty. 
\end{equation*}
If the vector fields $V_{[\alpha]}$ are bounded then the above estimate holds uniformly on $\R^N$, for details see \cite[Chapter 3]{Nee}. In contrast to the case in which  the parabolic H\"ormander condition is enforced, when the UFG condition holds $\cP_tf$ need not be differentiable in the direction $V_0$;  however it is differentiable in the direction $\partial_t-V_0$. For more details see \cite[Appendix A]{CrisanOttobre}. \label{item:shorttime}

\item For $f\in C_V^\infty(\R^N)$ (the set $C_V^\infty(\R^N)$ has been defined in Section \ref{sec:notation}) we have that $(x,t)\mapsto \cP_tf$ is smooth in both $x$ and $t$, i.e. it is differentiable arbitrarily many times in every direction, see \cite{CrisanDelarue}. When $f\in C_b(\R^N)$ we may take a sequence $f_n\in C_V^\infty(\R^N)$ such that $\cP_tf_n\in C_V^\infty(\R^N)$ and for each compact set $K\subseteq \R^N$ we have that $\cP_tf_n$ and $V_{[\alpha_1]}\ldots V_{[\alpha_k]}\cP_tf_n$ converge uniformly over $K$ as $n$ tends to $\infty$ to $\cP_tf$ and $V_{[\alpha]}\ldots V_{[\alpha_k]}\cP_tf$ respectively for each $k\in \mathbb{N}, \alpha_1,\ldots,\alpha_k\in \A$. We shall denote by $\DVinfty$ the space of all  functions that can be approximated with the procedure just described. From what we have just said, the semigroup $\cP_tf$ belongs to $\DVinfty$ for any $f\in C_b$.  See \cite[Appendix A]{CrisanOttobre} for more details. 
\label{item:smoothmapstosmooth}

\end{enumerate}

\subsection{Miscellaneous technical facts}\label{app:misc}

\begin{lemma}\label{lem:nonufgexamplecont}
	Let $X$ and $Y$ be as in Example \ref{ex:nonufgexample}. Then the vector fields $\{X,Y\}$ do not satisfy the UFG condition, in the sense that whether we take $X=V_0$ and $Y=V_1$ or viceversa, the UFG condition is not satisfied. 
\end{lemma}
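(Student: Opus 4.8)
\textbf{Proof plan for Lemma \ref{lem:nonufgexamplecont}.}

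The plan is to show directly that for either assignment of $\{X,Y\}$ to the pair $(V_0,V_1)$, no finite level $m$ can work in Definition \ref{defufg}; equivalently, the module over $C^\infty_V(\R^2)$ generated by $\mathcal{R}_m$ fails to absorb the next commutator on the critical line $x=0$. Recall $X=\pa_x$, $Y=\psi(x)\pa_y$ with $\psi$ smooth, vanishing on $\{x\le 0\}$ and (we may take) strictly positive on $\{x>0\}$; all derivatives of $\psi$ vanish at $x=0$. The key computational fact is that all the iterated brackets that survive are of the form $\psi^{(k)}(x)\pa_y$: indeed $[\pa_x,\psi^{(j)}\pa_y]=\psi^{(j+1)}\pa_y$, $[\psi^{(i)}\pa_y,\psi^{(j)}\pa_y]=0$, and brackets with $\pa_x$ in the second slot produce nothing new up to sign. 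So whichever of $X,Y$ is called $V_0$ and whichever is called $V_1$, every $V_{[\alpha]}$ with $\|\alpha\|\ge 1$ is a scalar multiple of $\pa_y$ with coefficient a derivative $\psi^{(k)}(x)$ for some $k\ge 1$ (when the string contains at least one $Y$) or is $\pm\pa_x$ (the string $X$ alone, excluded when $V_0=X$ since singletons $(0)$ are not in $\A$, but present as $V_{[1]}=\pa_x$ when $V_1=X$).

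First I would treat the case $V_0=X=\pa_x$, $V_1=Y=\psi\pa_y$. Then $\mathcal{R}_m$ consists (up to $C^\infty_V$-combinations) of the vector fields $\psi^{(1)}\pa_y,\dots,\psi^{(m)}\pa_y$ (the singleton $V_{[1]}=\psi\pa_y=\psi^{(0)}\pa_y$ included; note $\pa_x\notin\mathcal{R}_m$ as it only arises from the forbidden string $(0)$). The UFG condition at level $m$ would require $V_{[(1,\underbrace{0,\dots,0}_{m})]}=\psi^{(m+1)}\pa_y$ to be written as $\sum_{\beta}\varphi_{\cdot,\beta}(x)\,\psi^{(j_\beta)}(x)\pa_y$ with $j_\beta\le m$ and bounded smooth $\varphi$. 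Comparing $\pa_y$-components, this means $\psi^{(m+1)}(x)=\sum_{j=0}^m c_j(x)\psi^{(j)}(x)$ on all of $\R$ for bounded smooth $c_j$. Now I would derive a contradiction: on $\{x>0\}$ this is an ODE forcing $\psi$ to grow at most exponentially away from $0$, which is fine, but the real obstruction is at $x=0^+$ — one chooses $\psi$ so that $\psi$ is flat to all orders at $0$ yet $\psi^{(m+1)}/\psi^{(j)}$ is unbounded near $0$ for every $j\le m$; a standard choice is $\psi(x)=e^{-1/x}$ for $x>0$, for which $\psi^{(k)}(x)=P_k(1/x)e^{-1/x}$ with $P_k$ a polynomial of degree $2k$, so $\psi^{(m+1)}/\psi^{(j)}=P_{m+1}(1/x)/P_j(1/x)\to\infty$ as $x\to 0^+$, while any finite combination $\sum_{j\le m} c_j(x)\psi^{(j)}(x)/\psi^{(m+1)}(x)$ with bounded $c_j$ tends to $0$. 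This rules out level $m$ for every $m$, hence the UFG condition fails. (If one prefers to keep $\psi$ generic, the cleaner route is: the quotient field argument shows $\psi$ would have to satisfy a linear ODE of order $m+1$ with smooth coefficients, whose solution space near $x=0$ is $(m+1)$-dimensional and contains no nonzero function flat to all orders at $0$ unless all coefficients are themselves singular there — contradicting boundedness.)

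For the symmetric case $V_0=Y=\psi\pa_y$, $V_1=X=\pa_x$, I would argue that $\mathcal{R}_m=\mathrm{span}\{V_{[1]}\}=\mathrm{span}\{\pa_x\}$ as a $C^\infty_V$-module (brackets $[\pa_x,\psi\pa_y]=\psi'\pa_y$ involve $V_0$ and so are not in $\mathcal{R}_m$; but the relevant object to absorb is $V_{[(1,0)]}=[\pa_x,\psi\pa_y]=\psi'\pa_y$), and the UFG requirement would be $\psi'(x)\pa_y=\varphi(x)\pa_x$ for a smooth bounded $\varphi$, which is impossible unless $\psi'\equiv 0$, since $\pa_x$ and $\pa_y$ are pointwise independent and $\psi'\not\equiv 0$. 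This case is immediate. Finally I would note the $\R^2$-specific bookkeeping: since $\psi$ vanishes identically on $\{x\le 0\}$, the conclusion of Theorem \ref{thm:Hermann} would (if UFG held) force the orbit through a point with $x\le 0$ to be $1$-dimensional, contradicting the stated fact that the orbit of $\Delta_{X,Y}$ is all of $\R^2$ — so this gives an alternative, non-computational proof, which I would include as a remark. I expect the main obstacle to be making the $x\to 0^+$ unboundedness argument rigorous for a general admissible $\psi$ rather than for the explicit $e^{-1/x}$; the cleanest fix is simply to fix the explicit $\psi$, which is legitimate since Example \ref{ex:nonufgexample} only asserts existence of such a $\psi$.
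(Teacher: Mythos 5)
Your handling of the assignment $V_0=X=\partial_x$, $V_1=Y=\psi\,\partial_y$ is sound: there $\mathcal{R}_m$ indeed consists (up to sign) of the fields $\psi^{(k)}\partial_y$, the UFG identity forces a scalar relation $\psi^{(K)}=\sum_{j<K}c_j\,\psi^{(j)}$ with bounded smooth $c_j$, and your contradiction at $x\to 0^+$ for the explicit choice $\psi(x)=e^{-1/x}$ is legitimate (the example only asserts existence of such a $\psi$). Note, though, that the paper's detailed case is the \emph{other} assignment, and it concludes from the same scalar relation not by a growth estimate but by ODE uniqueness: $\psi$ is flat at $0$ and solves a linear ODE with smooth coefficients, hence vanishes near $0$ — an argument that works for every admissible $\psi$, not just a fixed one. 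Your closing geometric remark via Theorem \ref{thm:Hermann} is fine and is already the alternative route mentioned in Example \ref{ex:nonufgexample}.

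The genuine gap is your "immediate" dispatch of the case $V_0=Y=\psi\,\partial_y$, $V_1=X=\partial_x$. You assert that $\mathcal{R}_m=\mathrm{span}\{\partial_x\}$ because "brackets $[\partial_x,\psi\partial_y]$ involve $V_0$ and so are not in $\mathcal{R}_m$". This misreads Definition \ref{defufg} and \eqref{(R)}: only the singleton multi-index $(0)$, i.e.\ $V_0$ itself, is excluded from $\mathcal{A}$; iterated brackets containing $V_0$, such as $V_{[(1,0)]}=[V_1,V_0]=\psi'\partial_y$ (of length $\|(1,0)\|=3$), do belong to $\mathcal{R}_m$ once $m\geq 3$, and likewise all higher $\psi^{(k)}\partial_y$ up to the length bound. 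Under your reading, the linear SDE example of Section \ref{subsec:UFG} would fail UFG whenever $AC_i\notin\mathrm{span}\{C_1,\dots,C_d\}$, contradicting the paper's Cayley--Hamilton argument that such systems always satisfy UFG. Consequently the UFG requirement in this case is not "$\psi'\partial_y=\varphi\,\partial_x$", and your one-line contradiction is spurious. The correct requirement is $\psi^{(K)}\partial_y=\varphi_X\,\partial_x+\sum_k\varphi_k\,\psi^{(k)}\partial_y$ for a bracket of length exceeding $m$; comparing $\partial_x$-components gives $\varphi_X=0$, after which one is back to the same scalar identity as in your first case and must run the same flatness/ODE (or growth) argument — which is exactly how the paper treats this assignment (it is the paper's main case, the one you treated in detail being the one the paper calls simple). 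So the conclusion survives, but the argument you actually give for this assignment is invalid as written and needs to be replaced by the reduction just described.
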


\begin{proof}[Proof of Lemma \ref{lem:nonufgexamplecont}] In the definition of UFG condition take $Y=V_0$ and $X=V_1$ (the other case is simple to show) and 
	assume that the UFG condition holds for some $m\in\N$. Denote by $\ad_X$ the map which takes a vector field $Z$ to $[X,Z]$, then note that
	\begin{equation}\label{eq:adXY}
	(\ad_X)^kY = \psi^{(k)}(x)\partial_y.
	\end{equation}
	Here $\psi^{(k)}$ denotes the $k^{th}$ derivative of $\psi$. Now $(\ad_X)^kY$ commutes with the vector field $Y$ and hence the only non-trivial vector fields in $\mathcal{R}_m$ are $X,Y$ and $(\ad_X)^kY$ for any $k\in \N$. 
	By the UFG condition there exist smooth functions $\varphi_{X},\varphi_{Y,k}$ such that
	\begin{equation*}
	(\ad_X)^{m+1}Y = \sum_{k=0}^m\varphi_{Y,k}(\ad_X)^kY + \varphi_{X}X.
	\end{equation*}
	We may write this as follows using \eqref{eq:adXY}
	\begin{equation*}
	\psi^{(m+1)}\partial_y = \sum_{k=0}^m\varphi_{Y,k}\psi^{(k)}\partial_y + \varphi_{X}\partial_x.
	\end{equation*}
	By considering the direction $\partial_x$ we have that $\varphi_X=0$, therefore we have
	\begin{equation*}\label{eq:ODEforpsi}
		\psi^{(m+1)} = \sum_{k=0}^m\varphi_{Y,k}\psi^{(k)}.
	\end{equation*}	
	Also note that since $\psi(x)=0$ for all $x<0$ we have that $\psi^{(k)}(x)=0$ for all $x<0$ and $k\in \N$;  as $\psi$ is smooth this gives that $\psi^{(k)}(0)=0$ for all $k\in \N$. In particular, $\psi$ solves the following initial value problem
	\begin{align*}
	\psi^{(m+1)}(x) &= \sum_{k=0}^m\varphi_{Y,k}(x,y)\psi^{(k)}(x), && \mbox{for all } x\geq 0\\
	\psi^{(k)}(0)&=0, && \mbox{for all } k\in\{0,1,2,\ldots,m\}.
	\end{align*}
	However since $\psi$ is smooth and the functions $\{\varphi_{Y,k}\}_{k\geq 0}$ are smooth,  there is a (at east locally) unique solution to this initial value problem;  the function which is constantly zero clearly satisfies the initial value problem. Therefore we have that $\psi \equiv 0$ (in a neighbourhood of zero),  which gives a contradiction and hence the UFG condition is not satisfied.
\end{proof}

\begin{lemma}\label{lem:submanifoldlastcoordinate}
	Assume that the vector fields $V_0,\ldots, V_d$ satisfy the UFG condition. Let $\curlys$ be a maximal integral submanifold of $\delnn$ and let $x,y\in \curlys$. Assume that $x,y$ lie in the same coordinate neighbourhood $\mathscr{U}_{x_0}$ of a coordinate transformation $\Phi_{x_0}$ constructed in Section \ref{sec:Coordinatechange} . Then $x$ and $y$ lie in the same maximal integral submanifold of $\deln$ if and only if $\Phi_{x_0}^{n+1}(x)=\Phi_{x_0}^{n+1}(y)$.
\end{lemma}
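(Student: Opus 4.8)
The statement concerns the coordinate transformation $\Phi_{x_0}$ constructed in Section \ref{sec:Coordinatechange}, under which the distribution $\deln$ becomes "flattened": recall that, by construction, the vectors $\partial \Psi/\partial t_j$ for $j = 1 \dd n$ span $\deln$ locally, and the coordinate functions $\Phi^{n+1} \dd \Phi^N$ have gradients orthogonal to $\deln$. Thus in the $\mathbf{z}$-coordinates the distribution $\deln$ is spanned by $\partial_{z^1} \dd \partial_{z^n}$, and its integral manifolds are precisely the level sets $\{z^{n+1} = c_1 \dd z^N = c_{N-n}\}$. The plan is to turn this geometric picture into the stated iff statement.

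First I would recall that, by the UFG condition, $\deln$ is locally of finite type (Note \ref{ufginvariant}), hence integrable by Theorem \ref{thm:Hermann}; consequently through every point there passes a unique maximal integral manifold of $\deln$, and two points lie in the same MIM if and only if they can be joined by a piecewise integral curve of vector fields in $\fRm$ (equivalently, of vector fields lying in $\deln$). Restricting attention to the coordinate neighbourhood $\mathscr{U}_{x_0}$ and passing to the $\mathbf{z}$-coordinates via $\Phi_{x_0}$, a vector field $V$ belongs to $\deln$ if and only if its last $N-n$ components vanish in these coordinates (this is item i) of Proposition \ref{changecoordgen}, applied with $\Delta = \deln$). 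Therefore any integral curve of such a vector field keeps the coordinates $z^{n+1} \dd z^N$ constant, and in particular keeps $z^{n+1} = \Phi_{x_0}^{n+1}$ constant. This gives the easy direction: if $x,y$ lie in the same MIM of $\deln$ then, joining them by a piecewise integral curve that stays inside $\mathscr{U}_{x_0}$ (which is possible by shrinking the neighbourhood if necessary, since the coordinate chart is built from flows of the generating fields), we conclude $\Phi_{x_0}^{n+1}(x) = \Phi_{x_0}^{n+1}(y)$. Actually, since the integral manifold of $\deln$ through a point is, locally in the chart, the slice obtained by fixing \emph{all} of $z^{n+1}\dd z^N$, one even gets equality of all the last $N-n$ coordinates; the statement singles out $z^{n+1}$ because that is the coordinate $\zeta$ that will matter in the sequel.

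For the converse, suppose $\Phi_{x_0}^{n+1}(x) = \Phi_{x_0}^{n+1}(y)$. Here I would use the more precise structure of the chart $\Phi_{x_0}$: it was constructed (Step one and Step two of Section \ref{sec:Coordinatechange}) so that, in the $\mathbf z$-coordinates, the manifold $\curlys$ containing $x,y$ is a slice $\{(z,\zeta,a): a = a_0\}$ of fixed dimension $n+1$, and the orbits of $\deln$ inside it are the further slices $\{\zeta = \text{const}\}$. Since $x,y \in \curlys$ already have the same $a$-coordinate, and we are assuming they have the same $\zeta = z^{n+1}$ coordinate, they lie in the same slice $\{\zeta = \zeta_0, a = a_0\}$ within the chart. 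It remains to check this slice is (contained in) a single maximal integral manifold of $\deln$: the slice is connected and is an integral manifold of $\deln$ (its tangent space at each point is $\mathrm{span}\{\partial_{z^1}\dd\partial_{z^n}\} = \deln$), hence it lies in a unique MIM; since $x$ and $y$ lie on this same connected integral manifold, they lie in the same MIM. The main obstacle, and the place needing care, is the bookkeeping around the coordinate neighbourhood: one must make sure that "same slice in the chart" genuinely implies "same maximal integral manifold" globally, not just locally. This is handled by the hypothesis that $x,y$ lie in the common chart $\mathscr{U}_{x_0}$ together with the fact (Theorem \ref{thm:Hermann}, Theorem \ref{thm:sussorbit}) that MIMs of an integrable distribution are unique and are exactly the orbits, so a connected piece of integral manifold extends unambiguously to a unique MIM. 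I would also invoke Lemma \ref{lem:boundary} only implicitly if needed; the core argument is purely the flattening of $\deln$ in the chart and the uniqueness of maximal integral manifolds.
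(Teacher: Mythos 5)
Your argument is correct and takes essentially the same route as the paper's proof: the forward implication transports $x$ to $y$ along a piecewise integral curve of fields in $\deln$, whose representation in the $\Phi_{x_0}$-coordinates has vanishing last $N-n$ components so that $z^{n+1}$ is conserved, while the converse identifies the common level slice as a connected integral manifold of $\deln$ (tangent space spanned by $\partial_{z^1},\dots,\partial_{z^n}$), hence contained in the unique maximal integral manifold through $x$. The one point you gloss over --- that the connecting curve can be taken to stay inside the chart where $\Phi_{x_0}$ is defined --- is treated with the same level of informality in the paper's own proof, so there is no essential discrepancy.
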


\begin{proof}[Proof of Lemma \ref{lem:submanifoldlastcoordinate}]
	Assume that $x,y$ both lie in the same maximal integral submanifold $S$ of $\deln$. Then there is a time $T>0$ and a path $p:[0,T]\to S$ satisfying the following ODE 
	\begin{equation*}
	\dot{p}(t)=\sum_{\alpha\in\mathcal{A}_m} {V}_{[\alpha]}(p(t))\psi_{\alpha}(t), \quad p(0)=x, \quad p(T)=y
	\end{equation*}
	for some piecewise linear input functions $\psi_\alpha:[0,T]\to\mathbb{R}$.

	Now let $\tilde{p}(t)=\Phi_{x_0}(p(t))$ and let $\tilde{V}$ denote the representation of $V$ in the coordinates defined by $\Phi_{x_0}$, then we have
	\begin{equation*}
	\dot{\tilde{p}}(t) = \sum_{\alpha\in\mathcal{A}_m} \tilde{V}_{[\alpha]}(\tilde{p}(t))\psi_\alpha(t).
	\end{equation*}
	
	Now by the properties in Proposition \ref{changecoordgen} we have that $\tilde{V}_{[\alpha]}^{n+1}=0$ for all $\alpha\in\mathcal{A}$, and hence
	\begin{equation*}
	\Phi_{x_0}^{n+1}(y)=\tilde{p}^{n+1}(T)=\tilde{p}^{n+1}(0)=\Phi_{x_0}^{n+1}(x).
	\end{equation*}

	Now assume that $\Phi_{x_0}^{n+1}(x)=\Phi_{x_0}^{n+1}(y)$. 
	
	Let $\tilde{\gamma}$ be any smooth curve that is contained in $(\Phi_{x_0}(\mathscr{U}_{x_0})) \cap (\mathbb{R}^n\times\{\Phi_{x_0}^{n+1}(x)\})$, and let $\dot{\tilde{\gamma}}(0)=\tilde{v}$. Define $\gamma = \Phi_{x_0}^{-1}(\tilde{\gamma})$ and $v=\dot{\gamma}(0)$.
 Now we have
	\begin{equation*}
	\dot{\gamma}(0) = \jacobian{\Phi_{x_0}^{-1}}{z}(\gamma(0)) \dot{\tilde{\gamma}}(0) = \jacobian{\Phi_{x_0}^{-1}}{z}(\gamma(0))\tilde{v}.
	\end{equation*}
	Since $\tilde{\gamma}$ is contained within $\mathbb{R}^{n}\times\{\Phi_{x_0}^{n+1}(x)\}$ we have that $\tilde{v} \in \mathbb{R}^n\times \{0\}$ and hence $v\in \deln(\gamma(0))$.
	Therefore the tangent space to $\Phi_{x_0}^{-1}(\operatorname{Im}(\Phi_{x_0}) \cap (\mathbb{R}^n\times\{\Phi_{x_0}^{n+1}(x)\}))$ at each point $x'$ in this set is $\deln(x')$. Therefore  $\Phi_{x_0}^{-1}(\operatorname{Im}(\Phi_{x_0}) \cap (\mathbb{R}^n\times\{\Phi_{x_0}^{n+1}(x)\}))\subseteq S_x$, where $S_x$ is the maximal integral submanifold of $\deln$ which passes through $x$.
	In particular, we have that $y \in S_x$ as required.
\end{proof}

\begin{lemma}\label{lem:fundamentalthmofcalc}
Assume the vector fields $V_0 \dd V_d$ satisfy the UFG condition. 	Let $x,y\in\mathbb{R}^N$ be connected by an integral curve of one of the vector fields $V_{[\alpha]}, \alpha \in \A_m$; that is,  $y=e^{TV_{[\alpha]}}(x)$ for some $T>0$ and $\alpha\in \A_m$.
	Then, for all $h\in \DVinfty$,\footnote{We recall that the set $\DVinfty$ has been  introduced in Appendix \ref{app:UFG} \ref{item:smoothmapstosmooth}.} we have 
	\begin{equation*}
	h(y)-h(x)=\int_{0}^T (V_{[\alpha]} h)(\gamma(s))ds.
	\end{equation*}
\end{lemma}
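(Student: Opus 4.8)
\textbf{Proof proposal for Lemma \ref{lem:fundamentalthmofcalc}.}

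The plan is to reduce the statement to the ordinary fundamental theorem of calculus applied to the $C^1$ function $s \mapsto h(\gamma(s))$, where $\gamma(s) := e^{sV_{[\alpha]}}(x)$ for $s \in [0,T]$. The only genuine content is to justify that $h \circ \gamma$ is differentiable with the expected derivative, which is a smoothness statement about $h$ along the directions in $\mathcal{R}_m$. Since $h \in \DVinfty$, by the construction recalled in Appendix \ref{app:UFG} \ref{item:smoothmapstosmooth} there is a sequence $h_n \in C_V^\infty(\R^N)$ (of the form $\cP_{t_n} g_n$) such that $h_n \to h$ and $V_{[\beta_1]} \cdots V_{[\beta_k]} h_n \to V_{[\beta_1]} \cdots V_{[\beta_k]} h$ uniformly on compact sets, for all multi-indices $\beta_i \in \A$. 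First I would establish the identity for $h_n \in C_V^\infty(\R^N)$, and then pass to the limit.

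For $h_n \in C_V^\infty(\R^N)$ the function $h_n$ is genuinely $C^\infty$ on $\R^N$, so $s \mapsto h_n(\gamma(s))$ is $C^1$ on $[0,T]$ and the chain rule gives
\begin{equation*}
\frac{d}{ds} h_n(\gamma(s)) = \nabla h_n(\gamma(s)) \cdot \dot\gamma(s) = \nabla h_n(\gamma(s)) \cdot V_{[\alpha]}(\gamma(s)) = (V_{[\alpha]} h_n)(\gamma(s)),
\end{equation*}
where the last equality is just the interpretation of $V_{[\alpha]}$ as a first-order differential operator, cf. \eqref{notvectoper}. Here I use that $\gamma$ is well-defined on all of $[0,T]$, which holds because $V_{[\alpha]}$ is a smooth vector field whose coefficients have bounded derivatives (the UFG hypothesis, Definition \ref{defufg}), hence is globally Lipschitz, so integral curves exist globally (cf. Hypothesis \ref{SA} \ref{SA1} and the discussion in Section \ref{subsec:geometry}). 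Integrating in $s$ over $[0,T]$ yields
\begin{equation*}
h_n(y) - h_n(x) = h_n(\gamma(T)) - h_n(\gamma(0)) = \int_0^T (V_{[\alpha]} h_n)(\gamma(s))\, ds.
\end{equation*}

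It remains to pass to the limit $n \to \infty$. The curve $\gamma([0,T])$ is a compact subset $K$ of $\R^N$. By the defining property of $\DVinfty$, $h_n \to h$ uniformly on $K$, so $h_n(y) \to h(y)$ and $h_n(x) \to h(x)$; likewise $V_{[\alpha]} h_n \to V_{[\alpha]} h$ uniformly on $K$, so $(V_{[\alpha]} h_n)(\gamma(\cdot)) \to (V_{[\alpha]} h)(\gamma(\cdot))$ uniformly on $[0,T]$, and therefore $\int_0^T (V_{[\alpha]} h_n)(\gamma(s))\, ds \to \int_0^T (V_{[\alpha]} h)(\gamma(s))\, ds$. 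Taking limits on both sides of the displayed identity gives the claim. The main (modest) obstacle is purely bookkeeping: being careful that the approximation in Appendix \ref{app:UFG} \ref{item:smoothmapstosmooth} is exactly strong enough to give uniform convergence of both $h_n$ and $V_{[\alpha]} h_n$ on the compact curve $K$, and noting that $\alpha \in \A_m$ (so $V_{[\alpha]}$ is one of the directions in which $\DVinfty$-functions are controlled) is what makes this work — no differentiability in the $V_0$ direction is ever invoked.
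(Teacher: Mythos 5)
Your proof is correct, and it reaches the same conclusion as the paper by a slightly different route. The paper's own proof is a two-liner: it invokes directly that $h\in\DVinfty$ is differentiable in the directions $V_{[\alpha]}$, $\alpha\in\A_m$, writes $\frac{d}{ds}h(\gamma(s))=(V_{[\alpha]}h)(\gamma(s))$ ``by definition of directional derivative'', and integrates. You instead prove the identity for the smooth approximants $h_n\in C_V^\infty(\R^N)$ via the ordinary chain rule and fundamental theorem of calculus, and then pass to the limit using the uniform-on-compacts convergence of $h_n$ and $V_{[\alpha]}h_n$ built into the definition of $\DVinfty$. What your version buys is an explicit justification of the step the paper compresses: for $h\in\DVinfty$ the derivative $V_{[\alpha]}h$ is only given as a locally uniform limit, and your approximation argument shows cleanly both that $s\mapsto h(\gamma(s))$ has the claimed derivative in the integrated sense and that the integrand is continuous, so the FTC applies. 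One small inaccuracy, which does not affect the proof: bounded derivatives of the coefficients of $V_0,\dots,V_d$ make those fields globally Lipschitz, but this does not automatically transfer to the commutators $V_{[\alpha]}$ (their first derivatives can involve products of an unbounded coefficient with a bounded second derivative); global existence of the curve is not needed anyway, since the hypothesis $y=e^{TV_{[\alpha]}}(x)$ already guarantees that $\gamma$ is defined on $[0,T]$, which is all you use.
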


\begin{proof}[Proof of Lemma \ref{lem:fundamentalthmofcalc}]
	 By definition of directional derivative, and because $h$ is differentiable in the directions $V_{[\alpha]}$, $\alpha \in \A_m$,  one has
	\begin{align*}
	\frac{d}{ds}h(\gamma(s)) = V_{[\alpha]}h(\gamma(s)).
	\end{align*}
	Integrating from $0$ to $T$, and using that $\gamma(0)=x$ and $\gamma(T)=y$, the statement follows.  
\end{proof}

\begin{lemma}\label{lem:denistyargforvs}
	With notation of Section \ref{sec:non-autonomous}, suppose Hypothesis \ref{hyp:nonautoassumptions} \ref{item:smoothnessassumption} holds. For any $g\in C_b(\R^{n})$ define the functions $f(z,\zeta)=g(z)$ and $v_s(z,t):=\cP_t f(z,\zeta_{s-t})$. Then $v_s$ is smooth as a map from $\R^n\times (0,\infty)$ to $\R$; moreover it satisfies 
	\begin{equation}\label{eq:vPDE}
	\partial_t v_s(z,t) = U_0v_s(z,t) + \sum_{i=1}^d U_i^2v_s(z,t).
	\end{equation}
\end{lemma}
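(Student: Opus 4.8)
The plan is to deduce everything from the known smoothing properties of $\cP_t$ together with the fact, recalled in \eqref{linearPDE}, that for $f\in C_b$ the function $u(t,x):=(\cP_tf)(x)$ is a classical solution of $\mathcal{V}u=\sum_{i=1}^d V_i^2 u$, $u(0,\cdot)=f$, where $\mathcal{V}=\partial_t-V_0$. In the present setting $X_t=(Z_t,\zeta_t)$ solves \eqref{SDE} with $V_0=(U_0,W_0)$ and $V_j=(U_j,0)$, $j=1\dd d$, and under Hypothesis \ref{hyp:nonautoassumptions} \ref{item:smoothnessassumption} the fields $\{V_{[\alpha]}:\alpha\in\A_m\}$ span $\R^n\times\{0\}$ at every point. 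Hence, by Appendix \ref{app:UFG} (\ref{item:shorttime} and \ref{item:smoothmapstosmooth}), $u(t,z,\zeta)=(\cP_tf)(z,\zeta)$ lies in $\DVinfty$: it is $C^\infty$ in the $z$-variable with all $z$-derivatives jointly continuous in $(t,z,\zeta)\in(0,\infty)\times\R^{n+1}$, and (being a classical solution of \eqref{linearPDE}) it is continuously differentiable along $\mathcal{V}$, with
\[
\partial_t u - U_0(z,\zeta)\cdot\nabla_z u - W_0(\zeta)\,\partial_\zeta u \;=\; \sum_{i=1}^d U_i^2 u \qquad\text{on } (0,\infty)\times\R^{n+1},
\]
where $U_i^2:=(U_i\cdot\nabla_z)(U_i\cdot\nabla_z)$; in particular $\mathcal{V}u=\sum_i U_i^2 u$ is continuous.

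Next I would prove \eqref{eq:vPDE}. Put $\eta(t):=\zeta_{s-t}$ (the solution of \eqref{ODEglobal} at time $s-t$), so $\eta\in C^\infty$, $\dot\eta(t)=-W_0(\eta(t))$, and $v_s(z,t)=u(t,z,\eta(t))$. Fix $(z,t)$ and let $\gamma$ be the integral curve of $\mathcal{V}$ with $\gamma(0)=(t,z_*,\eta(t))$. Since $\mathcal{V}=\partial_t-U_0\cdot\nabla_z-W_0\partial_\zeta$, the $\zeta$-component of $\gamma$ solves $\dot{\tilde\zeta}=-W_0(\tilde\zeta)$, hence equals $\eta(t+\cdot)$, while the $z$-component satisfies $\tilde z(0)=z_*$, $\dot{\tilde z}=-U_0(\tilde z,\eta(t+\cdot))$. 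Choosing $z_*=z_*(h)$ so that $\tilde z(h)=z$ (thus $z_*=z+hU_0(z,\eta(t))+o(h)$) and applying the fundamental theorem of calculus along $\gamma$ (legitimate because $u$ is continuously differentiable along $\mathcal{V}$) gives
\[
u(t+h,z,\eta(t+h)) \;=\; u(t,z_*,\eta(t)) + \int_0^h (\mathcal{V}u)(\gamma(r))\,dr .
\]
Expanding the first term by the $C^\infty$-smoothness of $u$ in $z$ and the integral by continuity of $\mathcal{V}u$ (noting $\gamma(r)$ stays within $O(h)$ of $(t,z,\eta(t))$ for $r\in[0,h]$), one obtains $\partial_t v_s(z,t)=U_0(z,\eta(t))\cdot\nabla_z u(t,z,\eta(t))+(\mathcal{V}u)(t,z,\eta(t))$. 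Substituting $\mathcal{V}u=\sum_i U_i^2 u$ and using that $\eta(t)$ does not depend on $z$, so $(\nabla_z u)(t,z,\eta(t))=\nabla_z v_s(z,t)$ and likewise for second $z$-derivatives, this is exactly \eqref{eq:vPDE} with $U_0,U_i$ denoting the differential operators $U_j(z,\zeta_{s-t})\cdot\nabla_z$.

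Finally, smoothness. At each fixed $t$, $v_s(\cdot,t)$ is $C^\infty$ in $z$ with derivatives jointly continuous in $(z,t)$, inherited from $u\in\DVinfty$ and continuity of $t\mapsto\zeta_{s-t}$. Equation \eqref{eq:vPDE} then expresses $\partial_t v_s$ as a vector-field expression in $v_s$ whose coefficients $U_j(z,\zeta_{s-t})$ are $C^\infty$ in $(z,t)$; differentiating \eqref{eq:vPDE} repeatedly in $z$ and in $t$ and using $\dot\eta=-W_0(\eta)$, every mixed derivative $\partial_t^k\partial_z^\ell v_s$ becomes a polynomial expression in the (continuous) $z$-derivatives of $v_s$, in the derivatives of $\zeta_{s-t}$ and of the $U_j$'s, hence continuous; induction on $k$ yields $v_s\in C^\infty(\R^n\times(0,\infty))$.

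I expect the main obstacle to be precisely the chain-rule step: $u$ is in general not differentiable along $V_0$, nor along $\partial_\zeta$, nor jointly $C^1$ in $(t,\zeta)$, so $t\mapsto u(t,z,\eta(t))$ cannot be differentiated naively. The device above — using the integral curve of $\mathcal{V}$ to absorb the ``non-smooth'' part of the motion in $(t,\zeta)$ and leaving only a genuine $z$-translation, along which $u$ is smooth — is what resolves this, and it uses essentially Hypothesis \ref{hyp:nonautoassumptions} \ref{item:smoothnessassumption} (for $z$-smoothness of $\cP_tf$) and the fact that $\cP_tf$ solves \eqref{linearPDE} classically (for continuous differentiability along $\mathcal{V}$).
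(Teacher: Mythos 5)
Your proof is correct, but it follows a genuinely different route from the paper. The paper's own proof first takes $f\in C_V^\infty(\R^{n+1})$, where $\cP_tf$ is smooth in \emph{all} directions (including $t$ and $\zeta$), performs the chain rule directly to get $\partial_t v_s=(V_0-\voperp)\cP_tf+\sum_i V_i^2\cP_tf=U_0v_s+\sum_iU_i^2v_s$, and then extends to $f\in C_b$ by the density argument of Appendix \ref{app:UFG} \ref{item:smoothmapstosmooth}: integrate the identity over $[t,t+h]$, pass to the limit along approximating $f_n\in C_V^\infty$, divide by $h$ and let $h\to0$. You instead work with $f\in C_b$ directly, taking as input the fact (recalled around \eqref{linearPDE}) that $u=\cP_tf$ is a classical solution, i.e.\ differentiable along $\mathcal{V}=\partial_t-V_0$ with $\mathcal{V}u=\sum_iV_i^2u$, and you resolve the chain-rule obstruction (no differentiability in $\partial_t$, $\partial_\zeta$ or $V_0$ separately) by following the integral curve of $\mathcal{V}$ from a shifted base point and correcting by a pure $z$-translation; this exploits that the tangent direction of $t\mapsto(t,z,\zeta_{s-t})$, namely $\partial_t-W_0\partial_\zeta$, decomposes as $\mathcal{V}+U_0\cdot\nabla_z$, so the nonsmooth ($\voperp$) part of the motion is exactly absorbed by the $\mathcal{V}$-direction and only smooth $z$-directions remain. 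Your fundamental-theorem-of-calculus step along $\gamma$ is legitimate because differentiability in the direction $\mathcal{V}$ is meant along its flow and $\mathcal{V}u=\sum_iV_i^2u$ is continuous, exactly as in Lemma \ref{lem:fundamentalthmofcalc} for the $V_{[\alpha]}$-directions. What each approach buys: the paper's regularize-and-pass-to-the-limit scheme is self-contained modulo the approximation facts and is reused verbatim for Proposition \ref{lemTie}, whereas your argument avoids rerunning the approximation but uses the classical solvability of \eqref{linearPDE} as a black box (itself proved by that very density argument in \cite{CrisanOttobre}), and it gives a cleaner geometric explanation of why this particular composition is differentiable. One minor remark: your closing induction for joint $C^\infty$ smoothness is stated loosely (to exchange mixed partials you need a Schwarz-type theorem, using that the relevant mixed derivative obtained from differentiating \eqref{eq:vPDE} in $z$ is continuous), but the paper's proof is no more detailed on this point, and only $z$-smoothness plus one $t$-derivative via \eqref{eq:vPDE} is actually used later (in Lemma \ref{lem:semigpconvergetosemigp}).
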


\begin{proof}[Proof of Lemma \ref{lem:denistyargforvs}]
		Note that $z\in\R^n\mapsto v_s(z,t)$ is smooth (in any direction in $\R^n$) for each fixed $t>0$ since $\cP_{t}f$ is differentiable in all the directions spanned by $V_{[\alpha]}$ for all $\alpha$ (which span $\R^n$). 
 To see that $t\mapsto v_s(z,t)$ is differentiable we first consider the case when $f$ belongs to $C_V^\infty(\R^{n+1})$ then $t\mapsto \cP_tf$ is differentiable (see Appendix \ref{app:UFG} \ref{item:smoothmapstosmooth}) and hence $t\mapsto v_s(z,t)$ is differentiable. Moreover using \eqref{eq:vintermsofP} we may differentiate $v_s$ to find
	\begin{align*}
	\partial_tv_s(z,t)
	&=V_0\cP_tf(z,\zeta_{s-t})+ \sum_{i=1}^dV_i^2\cP_tf(z,\zeta_{s-t}) - {W}_0(\zeta_{s-t}) \partial_{\zeta}\cP_tf(z,\zeta_{s-t})\\
	&=V_0\cP_tf(z,\zeta_{s-t})+ \sum_{i=1}^dV_i^2\cP_tf(z,\zeta_{s-t}) -\voperp\cP_tf(z,\zeta_{s-t})\\
	&=(V_0-\voperp)\cP_tf(z,\zeta_{s-t})+ \sum_{i=1}^dV_i^2\cP_tf(z,\zeta_{s-t}).
	\end{align*}
	Now using the equality $\vodel=V_0-\voperp$ (see \eqref{defvoperp}), we have
	\begin{equation*}
	\partial_tv_s(z,t) = \vodel v_s(z,t)+\sum_{i=1}^dV_i^2v_s(z,t).
	\end{equation*}
Note that, as differential operators, $\vodel=U_0$ and $V_i=U_i$ therefore we have that $v_s$ satisfies \eqref{eq:vPDE}.

To extend the proof to the case when $f$ belongs to $C_b(\R^{n+1})$ we apply the argument of \cite[Appendix A]{CrisanOttobre}, so we only sketch this part of the proof. By Appendix \ref{app:UFG} \ref{item:smoothmapstosmooth} if $f\in C_b(\R^{n+1})$ may take a sequence $f_n \in C_V^\infty(\R^{n+1})$ such that $f_n$ converges to $f$ and  $V_{[\alpha_1]}\dots V_{[\alpha_k]}\cP_tf_n$ converges uniformly on compacts of $\R^{n+1}\times (0,\infty)$ to $V_{[\alpha_1]}\dots V_{[\alpha_k]}\cP_tf$ for any $k\geq 1$, and $\alpha_1,\dots,\alpha_k\in \mathcal{A}_m$. By the above argument we have
\begin{equation*}
\partial_t(\cP_tf_n(z,\zeta_{s-t}))=U_0\cP_tf_n(z,\zeta_{s-t})+ \sum_{i=1}^dU_i^2\cP_tf_n(z,\zeta_{s-t}).
\end{equation*}
For any $h>0$  we have
\begin{equation*}
\frac{\cP_{t+h}f_n(z,\zeta_{s-(t+h)})-\cP_tf_n(z,\zeta_{s-t})}{h}=\frac{1}{h}\int_t^{t+h}U_0\cP_rf_n(z,\zeta_{s-r})+ \sum_{i=1}^dU_i^2\cP_rf_n(z,\zeta_{s-r}) dr \, ; 
\end{equation*}
therefore,  letting $n$ tend to $\infty$, we obtain 
\begin{equation*}
\frac{\cP_{t+h}f(z,\zeta_{s-(t+h)})-\cP_tf(z,\zeta_{s-t})}{h}=\frac{1}{h}\int_t^{t+h}U_0\cP_rf(z,\zeta_{s-r})+ \sum_{i=1}^dU_i^2\cP_rf(z,\zeta_{s-r}) dr.
\end{equation*}
Letting now  $h$ tend to $0$ we have that $(\cP_tf)(z,\zeta_{s-t})$ is differentiable with respect to $t$ and moreover
\begin{equation*}
\partial_t(\cP_tf(z,\zeta_{s-t}))=U_0\cP_tf(z,\zeta_{s-t})+ \sum_{i=1}^dU_i^2\cP_tf(z,\zeta_{s-t}).
\end{equation*}
That is, $v_s$ is differentiable in both $z$ and $t$ as a map from $\R^n\times (0,\infty)$ to $\R$ and satisfies \eqref{eq:vPDE}.

\end{proof}

\begin{lemma}\label{lem:continuityofWlim}
	With the notation of Section \ref{sec:longtimebehaviour},  if the map $\Wlim$ is well defined on $\So$ (in the sense that $\So \subseteq \mathrm{Dom}(\Wlim)$)  and  it is continuous when restricted to $\So$,  then $\Wlim$ is also well defined and continuous on $\curlys_{x_0}$.
\end{lemma}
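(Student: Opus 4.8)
The plan rests on the following structural picture, which I would establish first. Writing $\voperp=V_0-\vodel$ with $V_0\in\delnn$ and $\vodel\in\deln\subseteq\delnn$, we have $\voperp\in\delnn$, so (as $\voperp$ is Lipschitz) uniqueness of integral curves keeps $t\mapsto e^{t\voperp}(x)$ inside the maximal integral manifold of $\delnn$ through $x$; in particular $e^{t\voperp}(\curlys_{x_0})\subseteq\curlys_{x_0}$ for all $t$. Also $\deln$ is involutive (Proposition \ref{intmanpro}) and invariant under $\voperp$: it is invariant under $V_0$ by the very definition of $\deln$, and, using the criterion of Note \ref{Tom} together with involutivity, also under $\vodel\in\deln$. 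Hence by Lemma \ref{thm:intcurvepreservemanifolds} each $e^{t\voperp}$ carries maximal integral manifolds of $\deln$ onto maximal integral manifolds of $\deln$ (maximality of the image is inherited via the diffeomorphism $e^{-t\voperp}$). If $\voperp\equiv0$ on $\curlys_{x_0}$ then $\curlys_{x_0}=\So$ by Lemma \ref{lemmaW0} and there is nothing to do; otherwise Lemma \ref{lemmavoonmanifold} gives $\voperp\neq0$ throughout $\curlys_{x_0}$, $\dim\curlys_{x_0}=\dim\So+1$, and the integral manifolds of $\deln$ foliate $\curlys_{x_0}$ with $\voperp$ everywhere transverse to the leaves.

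Next I would prove that every $y\in\curlys_{x_0}$ has the form $y=e^{s\voperp}(x)$ for some $s\in\R$ and some $x\in\So$. Consider $\mathcal{O}:=\bigcup_{s\in\R}e^{s\voperp}(\So)\subseteq\curlys_{x_0}$. Since $e^{s\voperp}$ sends the leaf $\So$ onto the leaf through $e^{s\voperp}(x_0)$, the set $\mathcal{O}$ is a union of whole leaves of $\deln$, and so is its complement in $\curlys_{x_0}$ because $\mathcal{O}$ is $\voperp$-saturated. I would then use the change of coordinates of Section \ref{sec:Coordinatechange} around an arbitrary point $y\in\curlys_{x_0}$: in those coordinates the leaves of $\deln$ are the slices $\{\zeta=\mathrm{const}\}$ and $\voperp=(0,\dots,0,W_0,0,\dots,0)$ with $W_0\neq0$ at $y$, so a neighbourhood of $y$ in $\curlys_{x_0}$ is a flow box $\{e^{u\voperp}(b):|u|<\varepsilon,\ b\in B\}$, with $B$ a neighbourhood of $y$ in its leaf. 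Consequently $\mathcal{O}$ and $\curlys_{x_0}\setminus\mathcal{O}$, being $\voperp$-saturated unions of leaves, are both open in $\curlys_{x_0}$; by connectedness of $\curlys_{x_0}$ and $\mathcal{O}\ni x_0$ we conclude $\mathcal{O}=\curlys_{x_0}$. For $y=e^{s\voperp}(x)$ with $x\in\So$ this yields $e^{t\voperp}(y)=e^{(t+s)\voperp}(x)\to\Wlim(x)$ as $t\to\infty$, so $\Wlim$ is well defined on $\curlys_{x_0}$ and $\Wlim(y)=\Wlim(x)$.

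For continuity I would argue locally. Given $y_0\in\curlys_{x_0}$, write $y_0=e^{s_0\voperp}(x_{00})$ with $x_{00}\in\So$, and use the flow box around $y_0$ to represent every nearby $y\in\curlys_{x_0}$ uniquely as $y=e^{u(y)\voperp}(b(y))$ with $u,b$ continuous, $u(y_0)=0$, $b(y_0)=y_0$, and $b(y)$ lying on the leaf through $y_0$, which by the first paragraph equals $e^{s_0\voperp}(\So)$. Thus $x(y):=e^{-s_0\voperp}(b(y))\in\So$ depends continuously on $y$ with $x(y_0)=x_{00}$, and $y=e^{(u(y)+s_0)\voperp}(x(y))$, so $\Wlim(y)=\Wlim(x(y))$ by the previous step; since $x(y)\to x_{00}$ in $\So$ as $y\to y_0$ and $\Wlim$ is continuous on $\So$ by hypothesis, $\Wlim(y)\to\Wlim(x_{00})=\Wlim(y_0)$. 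This proves $\Wlim$ is continuous on $\curlys_{x_0}$.

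The hard part is Step 1's appeal to a chart that simultaneously straightens the foliation by integral manifolds of $\deln$ and the flow of $\voperp$ — i.e. verifying that the coordinates of Section \ref{sec:Coordinatechange} do exactly this near every point of $\curlys_{x_0}$ — together with the bookkeeping that makes the open–closed argument on $\curlys_{x_0}$ legitimate, for which I would lean on the standing assumption \ref{SAtop} identifying the manifold topology of $\curlys_{x_0}$ with its Euclidean topology, so that "open in $\curlys_{x_0}$" is unambiguous. A minor additional point is the invariance of $\deln$ under $\voperp$, which formally requires $\voperp\in C^1$; this is automatic when $\voperp$ is smooth and is otherwise part of the structural hypotheses under which the results of Section \ref{sec:longtimebehaviour} are stated.
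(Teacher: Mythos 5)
Your proof is correct and relies on the same underlying mechanism as the paper's: the coordinates of Section \ref{sec:Coordinatechange}, in which the leaves of $\deln$ inside $\curlys_{x_0}$ are level sets of the $(n+1)$-th chart coordinate (Lemma \ref{lem:submanifoldlastcoordinate}) and $\voperp$ moves only that coordinate, together with the identity $\Wlim(e^{s\voperp}(z))=\Wlim(z)$. Two differences are worth recording. First, you actually prove the representation $\curlys_{x_0}=\bigcup_{s\in\R}e^{s\voperp}(\So)$, via invariance of $\deln$ under $\voperp$ (Note \ref{Tom}), the fact that $e^{s\voperp}$ maps leaves onto whole leaves (Lemma \ref{thm:intcurvepreservemanifolds} plus the inverse flow), and the flow-box/connectedness argument; the paper's proof simply asserts that every $x\in\curlys_{x_0}$ can be written as $e^{s\voperp}(z)$ with $z\in\So$, so your Step 1 fills in a point left implicit there. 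Second, for continuity the paper argues sequentially: writing $x_k=e^{s_k\voperp}(z_k)$ it applies the chart at $x$ and Lemma \ref{lem:submanifoldlastcoordinate} to the pulled-back points $z_k$, which implicitly requires the $z_k$ to lie in that chart (equivalently, that the representation can be chosen with $s_k$ small); your continuous flow-box factorization $y=e^{u(y)\voperp}(b(y))$, with $b(y)$ on the leaf $e^{s_0\voperp}(\So)$ and $x(y)=e^{-s_0\voperp}(b(y))\in\So$, is exactly the statement that such a good choice exists and depends continuously on $y$, so your packaging is slightly cleaner at this point. The caveats you flag are the same ones the paper itself incurs: availability of the Section \ref{sec:Coordinatechange} chart at points of $\curlys_{x_0}$, enough regularity of $\voperp$ for Lemma \ref{lemmavoonmanifold} and uniqueness of its flow, and the standing assumption \ref{SAtop} identifying the manifold topology of $\curlys_{x_0}$ with the Euclidean one; so nothing in your argument requires more than the paper assumes.
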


\begin{proof}[Proof of Lemma \ref{lem:continuityofWlim}]
	First note that given any point $x\in \curlys_{x_0}$ we can find some $s\in \R$ and $\lz\in \So$ such that $x=e^{s\voperp}(\lz)$, in which case we have
	\begin{equation}\label{eq:extensionofWlim}
	\Wlim(x) = \lim_{t\to\infty} e^{t\voperp}(x) = \lim_{t\to\infty} e^{(t+s)\voperp}(\lz)  = \Wlim(z).
	\end{equation}
	Now $\Wlim(z)$ is well defined by assumption and hence $\Wlim(x)$ is well-defined. 
	
	To show that $\Wlim$ is continuous on $\curlys_{x_0}$ take $\{x_k\}_k\subseteq \curlys_{x_0}$ and $x\in \curlys_{x_0}$ such that $x_k\to x$ as $k$ tends to $\infty$, then we must show that $\Wlim(x_k)$ converges to $\Wlim(x)$ as $k$ tends to $\infty$. Let $x_k=e^{s_k\voperp}(\lz_k)$ and $x=e^{s\voperp}(\lz)$ for some $s_k,s\in\R$ and $\lz_k,\lz\in\So$. Without loss of generality we may assume that $s=0$, otherwise consider the sequence $y_k:=e^{-s\voperp}(x_k)$.
	
	Recall from Section \ref{sec:Coordinatechange} that we may take a local neighbourhood $U_{x}$ of $x$ and a coordinate transformation $\Phi$. Then for $k$ sufficiently large we have that $x_k\in U_{x}$, and hence $\Phi(x_k)$ converges to $\Phi(x)$. By the uniqueness of integral curves we have that
	\begin{equation*}
	\Phi(e^{s\voperp}(y)) = e^{s\tilde{V_0}^{(\perp)}}(\Phi(y)). 
	\end{equation*}
	Recall that $\tilde{\voperp}$ denotes the representation of $\voperp$ in the coordinates defined by $\Phi$. Therefore 
		\begin{equation*}
	\Phi(x_k)=\Phi(e^{s_k\voperp}(z_k)) = e^{s_k\tilde{V_0}^{(\perp)}}(\Phi(z_k)). 
	\end{equation*}
	Since $\tilde{\voperp}$ only acts on the last coordinate we have that the first $n$ components of $\Phi(e^{s_k\voperp}(z_k))$ are equal to the first $n$ components of $\Phi(z_k)$. In particular, the first $n$ components of $\Phi(z_k)$ converge to the first $n$ components of $\Phi(z)$. Now $z_k$ and $z$ lie on the same integral submanifold of $\deln$ and hence by Lemma \ref{lem:submanifoldlastcoordinate} the last component of $\Phi(z_k)$ is equal to the last coordinate of $\Phi(z)$. Therefore $\Phi(z_k)$ converges to $\Phi(z)$, and since $\Phi$ is a diffeomorphism we have that $z_k$ converges to $z$.
	
	Now since $\Wlim$ is continuous on $\So$ and using \eqref{eq:extensionofWlim} we have
	\begin{equation*}
	\Wlim(x_k) = \Wlim(z_k) \to \Wlim(z)=\Wlim(x).
	\end{equation*}
Therefore $\Wlim$ is continuous on $\curlys_{x_0}$.
	\end{proof}


\section{Proofs}\label{sec:mainproofs}
This appendix contains all the proofs that we omitted in the main text.

\subsection{Proofs of Section \ref{sec:geomofUFGprocesses} and Section \ref{sec:5}}\label{app:sec45}

\begin{proof}[Proof of Lemma \ref{lemmaW0}]
	First we show that $\mathrm{span}(\fRm)$ is contained in $\deln$. By definition $\deln$ contains $V_1,\dots, V_d$ and is invariant under $V_0,\dots,V_d$, hence by Note \ref{Tom} we have that $V_{[\alpha]}\in \deln$ for all $\alpha \in \A_m$. By linearity we have that $\mathrm{span}(\fRm)\subseteq \deln$.
	We show that $\deln$ is contained in $\mathrm{span}(\fRm)$. It is sufficient to show that $\mathrm{span}(\fRm)$ contains $V_1\dd V_d$ and is invariant under $V_0,V_1\dd V_d$. Since $V_1\dd V_d\in \fRm$ it suffices to show that every vector field in $\mathrm{span}(\fRm)$ is invariant under $V_0,V_1\dd V_d$. Every vector field ${V}$ in $\mathrm{span}(\fRm)$ can be locally expressed in the form
	\begin{equation}\label{eq:vectorfielddecomp}
	V=\sum_{\alpha\in\mathcal{A}_m} \varphi_\alpha V_{[\alpha]}
	\end{equation}
	for some smooth functions $\varphi_\alpha$. Therefore, again by Note \ref{Tom},  it is sufficient to show that $[V,V_j]\in\Delta_{\mathcal{R}_m}$ for $V$ given by \eqref{eq:vectorfielddecomp} and $j\in\{0,1\dd d\}$. Note that
	\begin{equation*}
	[V,V_j] = \sum_{\alpha\in\mathcal{A}_m} [\varphi_\alpha V_{[\alpha]},V_j] = \sum_{\alpha\in\mathcal{A}_m}  \varphi_\alpha [V_{[\alpha]},V_j]-V_j(\varphi_\alpha)V_{[\alpha]}.
	\end{equation*}
	Now $[V_{[\alpha]},V_j]$ and $V_{[\alpha]}$ are in $\mathrm{span}(\fRm)$ and hence $\mathrm{span}(\fRm)$ is invariant under $V_0, V_1\dd V_d$. Therefore $\deln = \mathrm{span}(\fRm)$;   similarly one can show that $\delnn = \mathrm{span}(\fRmo)$.
\end{proof}

\begin{proof}[Proof of Proposition \ref{changecoordgen}]
	Let us start by proving i). Construct $\Phi$ as described before the statement of Proposition \ref{changecoordgen}. After the change of coordinates $\Phi$ the vector $V$ is expressed as
	\begin{equation}\label{eq:representationofvectorfields}
	\tilde{V}(\bfz)= \left[ (\jacobian{\Phi}{x} ) \cdot V(x) \right] \vert_{x = \Phi^{-1}(\bfz)} \,. 
	\end{equation}
	As we have already observed,  the last $N-n$ rows of the Jacobian matrix $\jacobian{\Phi}{}$ are orthogonal to vectors in $\Delta$, see \eqref{ell}. Since $V \in \Delta$, the statement follows. 
	
	To prove ii), we first observe that by i), the vector fields $\{\pa_{z_j}\}_{j=1}^n$ belong to $\Delta$.  Moreover, by  Note \ref{Tom}, we have that $[\tilde{W}, \pa_{z_j}] \in \Delta$, for all $j=1 \dd n$. The field $[\tilde{W}, \pa_{z_j}]$ can be calculated explicitly:
	$$
	[\tilde{W}, \pa_{z_j}] = \left[ \sum_{i=1}^N \tilde{W}^i \pa_{z_i}, \pa_{z_j} \right] =   - \sum_{i=1}^N \frac{\pa \tilde{W}^i}{\pa z_j} \pa_{z_i} \,.
	$$
	Because $[\tilde{W}, \pa_{z_j}] \in \Delta$, one must have 
	$$
	\frac{\pa \tilde{W}^i}{\pa z_j}  = 0 \quad \mbox{for all } j=1 \dd n, i=n+1 \dd N \,.
	$$
	This concludes the proof. 
\end{proof}

\begin{proof}[Proof of Lemma \ref{lem:boundary}]
	Since $S\subseteq \curlys$ we have that $\overline{S}\subseteq \overline{\curlys}$, therefore it is sufficient to show that if $x\in \pa S$ then $x \notin \curlys$. Assume for a contradiction there exists some $x\in \pa S \cap \curlys$. Since $x\in \curlys$ there exists a neighbourhood $U \subseteq \curlys$ which contains $x$ and on which the coordinate transformation $\Phi$ constructed at the beginning of  Section \ref{sec:Coordinatechange} is well defined. Now $x\in \pa S$ implies there exists a sequence $\{x_k\} \subseteq S$ such that $x_k$ converges to $x$. For $k$ sufficiently large $x_k$ belongs to $U$ and,  since the coordinate transformation is smooth,  we have
	\begin{equation}\label{blububble}
	\Phi^{n+1}(x) = \lim_{k\to\infty} \Phi^{n+1}(x_k) \, ,
	\end{equation}
having used the notation \ref{ellell}. 
	However $x_k$ all belong to the same maximal integral submanifold of $\deln$ and hence $\Phi^{n+1}(x_k)$ is constant for $k$ large enough, by Lemma \ref{lem:submanifoldlastcoordinate}. However this implies, for $k$ large enough, that $\Phi^{n+1}(x) = \Phi^{n+1}(x_k)$; so by \eqref{blububble} and  Lemma \ref{lem:submanifoldlastcoordinate} we have that $x$ and $x_k$ lie in the same  maximal integral submanifold of $\deln$. However this gives a contradiction, since $x_k\in S$ and $x\notin S$.
\end{proof}

\begin{proof}[Proof of Lemma \ref{lemmavoonmanifold}]
We will prove that the set of points $K:=\{x \in \curlys_{x_0}: \voperp(x_0)=0\}\subseteq \curlys_{x_0}$ is both open and closed 
in (the topology of) $\curlys_{x_0}$, hence it has to be the whole manifold $\curlys_{x_0}$ -- see \ref{SAtop} and Appendix \ref{app:topology} for clarifications on the manifold topology.  Such a set is clearly closed (in $\R^N$ and hence in the manifold topology) as it is the intersection between $\curlys_{x_0}$ and the preimage of $0$ through a continuous function. To prove that it is also open, we will show that for any $x \in K$  there exists an open neighbourhood of  $x$, $O_x$, which is contained in $K$. 
Let $x \in \R^N$ such that $\voperp(x)=0$ and let $n=n(x)$ be the rank of $\delnn$ at $x$; then  there exist $n$ vectors in $\delnn(x)$ which span $\delnn$ at $x$. Notice that, by construction, such vectors must belong to $\deln(x)$, as by Lemma \ref{lemmaW0} $\deln(x)=\delnn(x)$ if $\voperp(x)=0$. By the smoothness of the vector fields and because $x$ is a regular point for both distributions, there exists a neighbourhood $O_x$ of $x$ such that the same $n$ vectors span $\delnn(y)$ for every $y \in O_x$. Because $\voperp(y)$ is orthogonal to all the vectors in $\delnn(y)(=\deln(y))$, it must be the case that $\voperp(y)=0$ on $O_x$ (otherwise the rank of $\delnn$ would increase, which is impossible as the rank stays constant on the orbits).  Therefore $O_x \subseteq K$ and the proof is concluded. 
\end{proof}

\begin{proof}[Proof of Proposition \ref{prop:dimcanonlydecrease}.] 
	We emphasize that this proof heavily relies on the fact that the integral manifolds of $\delnn$ coincide with the orbits of  $\delnn$, see Proposition \ref{intmanpro}.
	
	\noindent
	$\bullet$ {\em Proof of i)}. Let $\curlys$ be one of the integral manifolds of $\delnn$ and suppose    $x\in \pa \curlys$. To begin with, we show that $\curlys_x \subseteq \bar{\curlys}$. To this end,   let $y$ be any point in  $\curlys_x$. We want to show that $y \in \bar{\curlys}$.  By Proposition \ref{intmanpro} the integral manifold $\curlys_x$ is given by the orbit through $x$ of the vector fields in $\delnn$, and hence $y$ can be written as the end point of a curve which starts from $x$ and is a piecewise integral curve for vector fields in $\delnn$. By considering each piece  of the integral curve separately, if needed, we may assume that $y=e^{TV}(x)$ for some $T>0$ and $V\in \delnn$. Since $x\in\overline{\curlys}$, there is a sequence $\{x_k\}_k$ converging to $x$ and such that $\{x_k\}_k\subseteq \curlys$. Set $y_k:=e^{TV}(x_k)$ and note that $\{y_k\}_k$ belongs to $\curlys$ since $\curlys$ is an orbit of $\delnn$. We have that $y_k$ converges to $y$ since the map $z\mapsto e^{TV}(z)$ is continuous. Therefore $y \in\overline{\curlys}$ which implies that $\curlys_x\subseteq \overline{\curlys}$ as $y$ is an arbitrary point in $\curlys_x$. However $\curlys$ and $\curlys_x$ are both maximal integral submanifolds so they are either disjoint or they coincide;  since $x\in \curlys_x$ and $x\notin\curlys$ they must be disjoint,  hence $\curlys_x \subseteq \pa\curlys$.
	
	\noindent
	$\bullet$ {\em Proof of ii)}. Note that by the Stroock and Varadhan Theorem, Theorem \ref{thm:stroocksuppthm}, we have  $\mathbb{P}_x(X_t\in \overline{\curlys_x})=1$ for any $x\in\mathbb{R}^N$. From the reasoning in the proof of point i), we know that if $x \in \pa \curlys$  then $\curlys_x \subseteq \pa\curlys$, so that $\overline{\curlys_x} \subseteq \pa\curlys$. Therefore for any $x\in\pa\curlys$ we have
	\begin{equation*}
	\mathbb{P}_x(X_t\in \pa\curlys) \geq \mathbb{P}_x(X_t\in \overline{\curlys_x})=1. 
	\end{equation*}
\end{proof}

\begin{proof}[Proof of Proposition \ref{prop:SDElivesonsubmanifold}]
Here we consider the case $\voperp(x_0) \neq 0$ (which, by Lemma \ref{lemmavoonmanifold}, implies $\voperp(x)\neq 0$ for every $x \in \curlys_{x_0}$).	Consider the control problem \eqref{stoccontrprob} associated with the SDE \eqref{SDE}.  
	If we can show that  any solution $p(t)$ of \eqref{stoccontrprob} has the property that $p(t)\in S_{{e^{t\voperp}(x_0)}}$ for all $t$,  the result then follows by the Stroock and Varadhan Support Theorem.\footnote{Note that by Theorem \ref{thm:stroocksuppthm} the path $\{X_t^{(x_0)}(\omega)\}_{t\in[0,T]}$ is a limit, in $C([0,T], \| \cdot \|_{\infty})$,  of solutions to the control problem \eqref{stoccontrprob}. Because uniform convergence implies pointwise convergence,  for each fixed $t\geq 0$ the point  $X_t$ is a limit  of $\{p(t):p \text{ is a solution to \eqref{stoccontrprob}} \}$.}  
	Let us  now define the set
	$$
	C:=\left\{t\in\R: p(t)\in \capitals_{e^{t\voperp}(x_0)}\right\}.
	$$
	Note that $C$ is non-empty since $0\in C$; if we can show that $C$ is open and closed as a subset of $\R$ then we must have that $C=\R$ which implies the desired result.
	Let us start by showing that $C$ is open in $\R$. To this end,  fix an arbitrary point $t_0\in C$;  without loss of generality we may assume that $t_0=0$ (otherwise we consider the path $q(t):=p(t+t_0)$). We will show that there exists an open neigbourhood of $0$ which is contained in $C$. To show this fact we will make use of the  (local) change of coordinates defined in Section \ref{sec:Coordinatechange}. Let $\Phi_{x_0}:\mathscr{U}_{x_0}\to \tilde{\mathcal{U}}_{x_0}$ be the coordinate transformation defined on a local neighbourhood of $x_0$. Take $\varepsilon>0$ sufficiently small that $p(t)\in \mathscr{U}_{x_0}$ for all $t\leq \varepsilon$. It is sufficient to show that $p(t)\in \St$ for all $t \in (-\varepsilon,\varepsilon)$.  Let $\tilde{p}(t)=\Phi_{x_0}(p(t))$;  consistently with the notation set in Section \ref{sec:notation}, we shall denote the first $n$ components of $\tilde{p}(t)$ by $\pn(t)$,  the $(n+1)^{th}$ component by $\pnn(t)$ and the last $N-(n+1)$ components by $a(t)$.  That is,  $\tilde{p}(t)=(\pn(t),\pnn(t),a(t)) = \Phi_{x_0}(p(t))$;  hence, in particular, 
    \begin{equation}\label{eq:pnnequalsPhin1}
    \pnn(t) = \Phi_{x_0}^{n+1}(p(t)).
    \end{equation}
   By Lemma \ref{lem:submanifoldlastcoordinate} and \eqref{eq:pnnequalsPhin1},  to prove that $p(t)\in S_{e^{t\voperp}(x_0)}$ it is sufficient to show that  the following holds
\be\label{HH}
\pnn(t)=\Phi_{x_0}^{n+1}(e^{t\voperp}(x_0)) \, .  
\ee 
	Differentiating the equation $\tilde{p}(t)= \Phi_{x_0}(p(t))$ with respect to $t$ and using \eqref{stoccontrprob} and \eqref{eq:representationofvectorfields},  we see that $\tilde{p}$ satisfies the equation
	\begin{equation*}
	\frac{d\tilde{p}(t)}{dt} = \tilde{{V}}_0(\tilde{p}(t)) + \sqrt{2}\sum_{i=1}^d \tilde{{V}}_i(\tilde{p}(t)) \psi_i.
	\end{equation*}
	Since ${V}_i\in \deln$ for $i=1,\dots,d$ and $\deln$ is invariant under ${V}_0$ we have, using Proposition \ref{changecoordgen} (ii) and the notation \eqref{newvectors1}-\eqref{newvectors0},
	\begin{align}
	\frac{d\pn(t)}{dt} &= U_0(\pn(t),\pnn(t),a(t)) + \sqrt{2}\sum_{i=1}^d U_i(\pn(t),\pnn(t),a(t)) \psi_i,\nonumber\\
	\frac{d\pnn(t)}{dt} &= W_0(\pnn(t),a(t)),\label{eq:ptildeWintcurve}\\
	\frac{da(t)}{dt} &=0. \nonumber
	\end{align}
(The above is completely analogous to what we have done to obtain \eqref{ashtree1}-\eqref{ashtree3}).
	 From equation \eqref{eq:ptildeWintcurve} we then have
	\begin{equation*}
	\pnn(t) =e^{tW_0}(\pnn(0)) \stackrel{\eqref{eq:pnnequalsPhin1}}{=} e^{tW_0}(\Phi_{x_0}^{n+1}(x_0)).
	\end{equation*}
	In order to prove \eqref{HH} it remains to show that $e^{tW_0}(\Phi_{x_0}^{n+1}(x_0)) = \Phi_{x_0}^{n+1}(e^{t\voperp}(x_0))$. By uniqueness of the integral curves, to prove this equality  we must show
	$$
	\frac{d}{dt}\Phi_{x_0}^{n+1}(e^{t\voperp}(x_0)) = W_0(\Phi_{x_0}^{n+1}(e^{t\voperp}(x_0))).
	$$
	This follows from
	\begin{align*}
	\frac{d}{dt}(\Phi_{x_0}^{n+1}(e^{t\voperp}(x_0))) &= \nabla_x {\Phi_{x_0}^{n+1}}{x}(e^{t\voperp}(x_0))\voperp(e^{t\voperp}(x_0)) \\
	&= W_0(\Phi_{x_0}^{n+1}(e^{t\voperp}(x_0))), 
	\end{align*}
where the last equality is a consequence of \eqref{eq:representationofvectorfields} and of the fact that $\tilde{V_0}^{(\perp)}=(0 \dd 0, W_0, 0 \dd 0)$ (see comments after \eqref{ashtree1}-\eqref{ashtree3}).  This proves \eqref{HH}, so $C$ is open. Now we show that $C$ is closed by showing that $\R\setminus C$ is open.
	
	Assume there exists $t_0$ such that $p(t_0) \notin S_{e^{t_0\voperp}(x_0)}$. Now we may take $\varepsilon$ sufficiently small that $p(t) \in \mathscr{U}_{p(t_0)}$ whenever $\lv t-t_0\rv<\varepsilon$. It is sufficient to show that $p(t)\notin C$ whenever $\lv t-t_0\rv <\varepsilon$. For a contradiction assume that there exists some $\overline{t} \in (t_0-\varepsilon,t_0+\varepsilon)$ with $\overline{t}\in C$ i.e. $p(\overline{t}) \in S_{e^{\overline{t}\voperp}(x_0)}$; then by the same argument as above, we have that $p(t_0) \in S_{e^{(t_0-\overline{t})\voperp}(p(\overline{t_0}))}$. Now by Lemma \ref{thm:intcurvepreservemanifolds} applied to the points $p(\overline{t})$ and $e^{\overline{t}\voperp}(x_0)$  and to the vector field $\voperp$ we have that 
	$$
	S_{e^{(t_0-\overline{t})\voperp}(p(\overline{t}))} = S_{e^{(t_0-\overline{t})\voperp}(e^{\overline{t}\voperp}(x_0))} =S_{e^{t_0\voperp}(x_0)}.
	$$
	Therefore $p(t_0) \in S_{e^{t_0\voperp}}$ and we have a contradiction since $t_0\notin C$ therefore there is no $\overline{t}\in (t_0-\varepsilon,t_0+\varepsilon)\cap C$, hence $(t_0-\varepsilon,t_0+\varepsilon)\subseteq \R\setminus C$. That is, $C$ is closed in $\R$ and we have $C=\R$ as required.
\end{proof}

\begin{proof}[Proof of Proposition \ref{thm:meszerosetsunderinvmeas}.] 
	For every $x \in\R^N$ and $t>0$, let $g_t(x):=\mathbb{P}_x(X_t^{(x)}\notin \curlys_x)$ and notice that 
	$E_t=\{x\in\mathbb{R}^N:\mathbb{P}_x(X_t^x\notin \curlys_x )>0\} = \{x \in \R^N: g_t(x)>0\}.$  Suppose the SDE \eqref{SDE} admit an   invariant measure,  $\mu$.  Because $E= \cup_{t>0}E_t$, if we prove that $\mu(E_t)=0$ for every $t\geq 0$, then  it follows that $\mu(E)=0$ (as $\{E_t\}_{t\geq 0}$ is an increasing sequence of sets).  So we concentrate on proving the first statement. To this end, define $\curlys^{\ell}$ to be the union of all the maximal integral submanifolds of $\delnn$ of dimension $\ell$ and notice that $\cup_{\ell=0}^N \curlys^{\ell}=\R^N$; moreover, for every (arbitrary but fixed) $t>0$, set  $E^{\ell}_t:=\{x\in\curlys^{\ell}:\mathbb{P}_x(X_t^x\notin \curlys^{\ell})>0\}.$  We now proceed in two steps.  
	
	{$\bullet $ {\em Step 1}}: show that
	\begin{equation*}
	E_t=\bigcup_{\ell=0}^N E^\ell_t.
	\end{equation*}
	Note that $E^\ell_t \subseteq E_t$;  indeed, if $x\in E^\ell_t$ then $\curlys_x\subseteq \curlys^\ell$ and 
	$
	\mathbb{P}_x(X_t^x \notin \curlys_x) \geq \mathbb{P}_x(X_t^x \notin \curlys^\ell) >0.
	$
	Therefore $x\in E_t$. It remains to show that if $x\in E_t$ then there exists some $\ell$ such that $x\in E^\ell_t$. Fix $x\in E_t$ and let $\ell$ denote the dimension of $\curlys_x$. By the Stroock and Varadhan Support Theorem (Theorem \ref{thm:stroocksuppthm})  we have that $\mathbb{P}_x(X_t^x\in \overline{\curlys_x})=1$ for every $t \geq 0$, hence 
	\begin{equation*}
	\mathbb{P}_x(X_t^x\in \partial\curlys_x) = \mathbb{P}_x(X_t^x\notin \curlys_x)= g_t(x).
	\end{equation*}
	By Proposition \ref{prop:dimcanonlydecrease} we have that $\partial\curlys_x$ is contained in the set  $\cup_{k< \ell}\curlys^k$. In particular, we have that $\partial\curlys_x$ is disjoint from $\curlys^\ell$ and hence
	\begin{equation*}
	g_t(x) = \mathbb{P}_x(X_t^x\in\partial \curlys_x) \leq  \mathbb{P}_x(X_t^x \notin \mathscr{S}^\ell).
	\end{equation*}
	Since $x\in E_t$ we have that $g_t(x)>0$ and therefore $\mathbb{P}_x(X_t^x \notin \curlys^\ell)>0$, which, by definition, gives that $x\in E^\ell_t$.

	{$\bullet $ {\em Step 2}}: 
show that $\mu(E^\ell_t)=0$ for all $\ell \in \{0,\ldots, N\}$. 
 To this end, set $g^{\ell}_t(x):= \mathbb{P}_x(X_t^x\notin \curlys^{\ell})$; then the set of $x\in \curlys^\ell$ such that $g^\ell_t(x)>0$ is the set $E^\ell_t$. Therefore it is sufficient to show that $\int_{\curlys^{\ell}} g_t^{\ell}(x) \mu(dx)=0 \,\, \mbox{ for all } \ell\in \{0 \dd N\}$. Assume this is not the case; that is,  assume there exists some $\bar{\ell}$ such that
	\begin{equation}\label{eq:probofhittingboundaryatdimn}
	\int_{\curlys^{\bar{\ell}}} \mathbb{P}_x(X_t^x\notin \curlys^{\bar{\ell}}) \mu(dx)>0.
	\end{equation}
	We will let $\bar{\ell}$ be the maximum index such that \eqref{eq:probofhittingboundaryatdimn} holds.  Since $\mu$ is an invariant measure we have that
	\begin{equation}\label{eq:muofSn}
	\mu(\curlys^{\bar{\ell}}) = \int_{\mathbb{R}^N} \mathbb{P}_x(X_t^x\in \curlys^{\bar{\ell}}) \mu(dx) = \sum_{k=0}^N \int_{\curlys^k} \mathbb{P}_x(X_t^x\in \curlys^{\bar{\ell}}) \mu(dx).
	\end{equation}
	Fix $k\in\{0,\ldots,N\}$ and first consider the case when $k> \bar{\ell}$. Since $\bar{\ell}$ was chosen to be maximal such that \eqref{eq:probofhittingboundaryatdimn} holds we must have
	\begin{equation*}
	\int_{\curlys^k} \mathbb{P}_x(X_t^x\notin \curlys^k) \mu(dx)=0 \quad \mbox{if } k> \bar{\ell}.
	\end{equation*}
	This is equivalent to saying that the $\mu$- measure of the set $E^k_t= \{x \in \curlys^k: \mathbb{P}_x(X_t^x\notin \curlys^{k}) >0\}$ is zero. 
	Since $k \neq \bar{\ell}$,  we have  $\{x \in \curlys^k: \mathbb{P}_x(X_t^x\in \curlys^{\bar{\ell}})>0 \} \subseteq E^k_t$, so  the $\mu$- measure of the set  $\{x \in \curlys^k: \mathbb{P}_x(X_t^x\in \curlys^{\bar{\ell}})>0 \}$  is zero as well. Therefore 
	$$
	\int_{\curlys^k} \mathbb{P}_x(X_t\in \curlys^{\bar{\ell}}) \mu(dx)=0 \quad \mbox{for all } k> \bar{\ell}, 
	$$
	so that
	\be\label{sum1}
	\sum_{k=\bar{\ell} + 1}^{N}\int_{\curlys^k} \mathbb{P}_x(X_t^x\in \curlys^{\ell}) \mu(dx)=0 \,. 
	\ee
	Now consider the case $k<\bar{\ell}$. In this case we have 
	\be\label{sum2}
	\sum_{k=0}^{\bar{\ell}-1}\int_{\curlys^k} \mathbb{P}_x(X_t^x\in \curlys^{\bar{\ell}}) \mu(dx)=0 \,, 
	\ee
	as by Proposition \ref{prop:dimcanonlydecrease} the dimension of the manifold in which  $X_t$ evolves can only either decrease or stay the same along the paths of the SDE. Putting together \eqref{eq:muofSn}, \eqref{sum1} and \eqref{sum2}, one has
	\begin{equation*}
	\mu(\curlys^{\bar{\ell}}) = \int_{\curlys^{\bar{\ell}}} \mathbb{P}_x(X_t^x\in \curlys^{\bar{\ell}}) \mu(dx).
	\end{equation*}
	Writing $\mathbb{P}_x(X_t^x\in \curlys^{\bar{\ell}})$ as $1-\mathbb{P}_x(X_t^x\notin \curlys^{\bar{\ell}})$  we obtain
	\begin{equation*}
	\mu(\curlys^{\bar{\ell}}) = \int_{\curlys^{\bar{\ell}}} \mathbb{P}_x(X_t\in \curlys^{\bar{\ell}}) \mu(dx) = \mu(\curlys^{\bar{\ell}})- \int_{\curlys^{\bar{\ell}}} \mathbb{P}_x(X_t\notin \curlys^{\bar{\ell}}) \mu(dx), 
	\end{equation*}
	which gives
	\begin{equation*}
	\int_{\curlys^{\bar{\ell}}} \mathbb{P}_x(X_t\notin \curlys^{\bar{\ell}}) \mu(dx)=0.
	\end{equation*}
	This contradicts \eqref{eq:probofhittingboundaryatdimn} and hence we must have that the statement holds.
\end{proof}


\begin{proof}[Proof of Lemma \ref{prop:pointwiselimit}]
	Fix $x,y\in S$ and $f\in C_b(\mathbb{R}^N)$ and  assume first that $x,y$ are such that there exists a path $\gamma:[0,T]\to\mathbb{R}^N$  with $\gamma(0)=x, \gamma(T)=y$  and 
	$
	\dot{\gamma}(t)=V_{[\alpha]}(\gamma(t)),$ for some $\alpha\in\mathcal{A}_m$. Clearly the final time $T$ will depend on $x$ and $y$, $T=T_{x,y}$.
	By Lemma \ref{lem:fundamentalthmofcalc} and by Appendix \ref{app:UFG} \ref{item:smoothmapstosmooth} we have
	\begin{equation*}
	\cP_t f(y)-\cP_t f(x) = \int_0^{T_{x,y}} (V_{[\alpha]}\cP_t f)(\gamma(s)) ds.
	\end{equation*}
	Take a compact set $K$ such that $K\supseteq \gamma([0,T_{x,y}])$, then by \eqref{eq:graddecayest} we have
	\begin{equation*}
	\lvert \cP_t f(y)-\cP_t f(x)\rvert\leq \sup_{x\in K}(V_{[\alpha]}\cP_t f)(x) T_{x,y} \leq c_{t_0,K}^{\frac{1}{2}} e^{-\lambda(t-t_0)/2} T_{x,y}\lVert f \rVert_\infty.
	\end{equation*}
	Letting $t$ tend to $\infty$ we obtain the result. 	
	For any $x,y \in S$ we can take a piecewise integral curve connecting $x$ and $y$, hence applying the above argument to each piece of the curve we obtain \eqref{eq:pointwiselimit}.
\end{proof}

\begin{proof}[Proof of Proposition \ref{invmesonstathyperpl}]
	Assume there exists an invariant measure $\mu$ with $\mu(S)=1$; we must show that $\mu$ is the unique invariant measure such that $\mu(S)=1$. Integrating \eqref{eq:pointwiselimit} with respect to $\mu$ we obtain 
	\be \label{eq:integralofpointwiselim}
	\lim_{t\to\infty} \lv\cP_t f(x)-\int_S\cP_t f(y) \mu(dy)\rv = 0, \text{ for every } x\in \capitals.
	\ee
	(Here exchanging the integral and limit is justified by the dominated convergence theorem and using Appendix \ref{app:misc} \ref{item:contraction}). The invariance of $\mu$ and \eqref{eq:integralofpointwiselim} imply \eqref{eq:convtoequil}. Because \eqref{eq:convtoequil} holds for every $x\in S$, by the uniqueness of the limit we have that $\mu$ must be the only invariant measure supported on $S$.
	
	It remains to show that $\mu$ is ergodic. Suppose there exists $t>0$ and a Borel set $E\subseteq \mathbb{R}^N$ such that $\cP_t\mathbbm{1}_E = \mathbbm{1}_E$ $\mu$-almost everywhere. Then by the semigroup property we have for every $n\in\mathbb{N}$ $\cP_{nt}\mathbbm{1}_E = \mathbbm{1}_E$ $\mu$-almost everywhere. 
	Now squaring and integrating \eqref{eq:convtoequil} with respect to $\mu$ we have that $\cP_{t}f$ converges to $\int_Sfd\mu$ in $L_\mu^2$ for each $f\in C_b(\mathbb{R}^N)$. Then since $C_b(\mathbb{R}^N)$ is dense in $L_\mu^2$ (see \cite[Theorem 3.14]{rudin}), for all $f\in L_\mu^2$ we have
	\begin{equation*}
	\lim_{n\to\infty}\left\lVert\cP_{nt}f - \int_{S} f d\mu\right\rVert_{L_\mu^2} =0.
	\end{equation*}
	By taking a subsequence if necessary we get convergence $\mu$-almost everywhere.
	In particular, taking $f=\mathbbm{1}_E$ we get $\mu(E)= \mathbbm{1}_E(x)$ $\mu$-almost everywhere. Hence $\mu(E)=0$ or $1$, and $\mu$ is ergodic.
\end{proof}

\subsection{Proofs of Section \ref{sec:non-autonomous}} \label{app:proofsofnonautosec}

We include here the proofs of Lemma \ref{lem:convtoevolutionsystem} and Lemma \ref{lem:convofevolsystemtoequilibria}, on which the proof of Theorem \ref{thm:mainglobalthm} hinges. 

\begin{lemma}\label{lem:convtoevolutionsystem}
	Let Hypothesis \ref{hyp:nonautoassumptions} \ref{item:smoothnessassumption} and \ref{item:OAC} hold and assume the semigroup $\{\cQ_{s,t}\}_{0\leq s\leq t}$ admits an evolution system of measures $\{\nu_t\}_{t\geq 0}$; then, for each $g\in C_b(\R^n)$, $z\in \R^n$, and $s\geq 0$ we have
	\begin{equation}\label{eq:semigpconvergestoevolution}
	\lim_{t\to\infty}\left\lvert \cQ_{s,t}^\zeta g(z)-\int_{\R^n} g(y) d\nu_t\right\rvert \to 0.
	\end{equation}
\end{lemma}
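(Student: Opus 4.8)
\textbf{Proof plan for Lemma \ref{lem:convtoevolutionsystem}.}

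The plan is to exploit the decay estimate \eqref{eq:graddecayest}, which is available under Hypothesis \ref{item:smoothnessassumption} and \ref{item:OAC} (via Hypothesis \ref{SA} \ref{SAOAC}), together with the defining property \eqref{eq:evolutionsystem} of the evolution system of measures. First I would fix $g \in C_b(\R^n)$, $z \in \R^n$ and $s \geq 0$, and for $t > s$ use the evolution-system identity to write, for any auxiliary time $r$ with $s \le r \le t$,
\begin{equation*}
\int_{\R^n} g(y)\,\nu_t(dy) = \int_{\R^n} (\cQ_{r,t}^\zeta g)(y)\,\nu_r(dy).
\end{equation*}
Hence the quantity to be estimated can be rewritten as
\begin{equation*}
\cQ_{s,t}^\zeta g(z) - \int_{\R^n} g\,d\nu_t = \int_{\R^n}\Big( \cQ_{s,t}^\zeta g(z) - \cQ_{r,t}^\zeta g(y)\Big)\,\nu_r(dy),
\end{equation*}
and by the semigroup property $\cQ_{s,t}^\zeta = \cQ_{s,r}^\zeta \cQ_{r,t}^\zeta$ one has $\cQ_{s,t}^\zeta g(z) = (\cQ_{s,r}^\zeta h)(z)$ where $h := \cQ_{r,t}^\zeta g$. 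So the integrand compares the value of a function $h$ transported from time $s$ to time $r$ at the point $z$, against the value of $h$ at a generic point $y$; the natural split is $\cQ_{s,t}^\zeta g(z) - \cQ_{r,t}^\zeta g(y) = [\cQ_{s,r}^\zeta h(z) - h(z)] + [h(z) - h(y)]$, but it is cleaner to pick $r$ as the midpoint $r = (s+t)/2$ and control directly the spatial oscillation of $h = \cQ_{r,t}^\zeta g$ on the support of $\nu_r$.

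The key step is the following: since $\{\nu_t\}$ is (we may assume, as in the application) tight, for given $\varepsilon > 0$ choose a compact $K_\varepsilon$ with $\nu_r(K_\varepsilon^c) \le \varepsilon$ uniformly in $r$. On $K_\varepsilon$, any two points can be joined by a piecewise integral curve of the vector fields $V_{[\alpha]}$, $\alpha \in \A_m$ (these span $\R^n$ at every point by Hypothesis \ref{item:smoothnessassumption}), with the number of pieces and the total length bounded uniformly over $K_\varepsilon$; this is exactly the mechanism used in the proof of Lemma \ref{prop:pointwiselimit}. Applying Lemma \ref{lem:fundamentalthmofcalc} piecewise to the function $h = \cQ_{r,t}^\zeta g$ — which, by the translation of the UFG smoothing results to the two-parameter semigroup $\cQ$ (cf. \eqref{linksmanysemigr} and Appendix \ref{app:UFG} \ref{item:smoothmapstosmooth}), is differentiable in the directions $V_{[\alpha]}$ — and then invoking \eqref{eq:graddecayest} on a fixed compact neighbourhood of $K_\varepsilon$, one obtains
\begin{equation*}
\sup_{z,y \in K_\varepsilon} \lv \cQ_{r,t}^\zeta g(z) - \cQ_{r,t}^\zeta g(y)\rv \leq C_{K_\varepsilon}\, \|g\|_\infty\, e^{-\lambda(t - r)/2},
\end{equation*}
where $r = (s+t)/2$, so the right side is $C_{K_\varepsilon}\|g\|_\infty e^{-\lambda(t-s)/4} \to 0$. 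Combining with the trivial bound $\lv \cQ_{r,t}^\zeta g(z) - \cQ_{r,t}^\zeta g(y)\rv \le 2\|g\|_\infty$ on the $\nu_r$-small set $K_\varepsilon^c$, and also handling the point $z$ itself (if $z \notin K_\varepsilon$, enlarge $K_\varepsilon$ to contain it, which is harmless since $z$ is fixed), gives
\begin{equation*}
\lv \cQ_{s,t}^\zeta g(z) - \nu_t(g)\rv \leq C_{K_\varepsilon}\|g\|_\infty e^{-\lambda(t-s)/4} + 2\varepsilon\|g\|_\infty.
\end{equation*}
Letting $t \to \infty$ and then $\varepsilon \to 0$ yields \eqref{eq:semigpconvergestoevolution}.

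The main obstacle I anticipate is the regularity/estimate transfer: one needs the gradient-decay estimate \eqref{eq:graddecayest}, originally stated for the one-parameter semigroup $\cP_t$, to hold for $\cQ_{s,t}^\zeta$ applied to merely bounded continuous $g$, uniformly on compacts, with a rate independent of $s$. This should follow from the identity \eqref{linksmanysemigr} relating $\cQ$ to $\cP$ (with $\cP$ acting on the larger space $\R^{n+1}$, with vector fields $V_0 = (U_0, W_0)$, $V_j = (U_j, 0)$) together with the fact that, in the directions $V_{[\alpha]}$ with $\alpha \in \A_m$ — which under Hypothesis \ref{item:smoothnessassumption} are all ``spatial'' directions not involving $\zeta$ — differentiating $\cP_t f(\cdot, \zeta_{t+s}^\zeta)$ only hits the first $n$ coordinates, so the estimate passes through verbatim. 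Care is also needed that the uniform-length bound on piecewise integral curves within a compact set is genuinely uniform; this is a standard consequence of the orbit being an embedded submanifold of full dimension $n$ (here $S_x$ is a hyperplane, so it is elementary), but it must be invoked correctly. Finally, a minor point: the statement is phrased for an arbitrary evolution system $\{\nu_t\}$, so tightness must either be assumed or derived; in the application (Step 1 of the proof of Theorem \ref{thm:mainglobalthm}) the constructed $\{\nu_t\}$ is tight, and the lemma as used only needs that case, so I would either add tightness to the hypotheses or note that it is the only case invoked.
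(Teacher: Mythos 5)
Your core mechanism (join two points of the hyperplane $\{\zeta=\zeta_s\}$ by piecewise integral curves of the $V_{[\alpha]}$, apply Lemma \ref{lem:fundamentalthmofcalc} and the decay estimate \eqref{eq:graddecayest}) is exactly the paper's, but the way you deploy it through the midpoint $r=(s+t)/2$ creates a genuine gap. After writing $\cQ_{s,t}g(z)-\nu_t(g)=\int\bigl(\cQ_{s,t}g(z)-\cQ_{r,t}g(y)\bigr)\nu_r(dy)$, the integrand is $\cQ_{s,r}\bigl(\cQ_{r,t}g\bigr)(z)-\cQ_{r,t}g(y)$, i.e.\ it contains the transported value $\cQ_{s,r}h(z)$ with $h=\cQ_{r,t}g$, not $h(z)$ itself; your displayed bound $\sup_{z,y\in K_\varepsilon}\lv \cQ_{r,t}g(z)-\cQ_{r,t}g(y)\rv\leq C_{K_\varepsilon}\|g\|_\infty e^{-\lambda(t-r)/2}$ therefore does not bound the integrand. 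To control $\cQ_{s,r}h(z)=\E[h(Z_r^{s,z,\zeta})]$ against $h(y)$ you would additionally need the transition laws $q_r^{s,z}$ to stay in a compact set with high probability uniformly in $r$ — that is Hypothesis \ref{hyp:nonautoassumptions} \ref{item:tightnessassumption}, which is not among the lemma's hypotheses — and you also need tightness of $\{\nu_r\}_r$ uniformly in $r$, likewise not assumed. Moreover, since $r\to\infty$, the relevant hyperplane $\zeta_r$ moves along the ODE trajectory, and \eqref{eq:graddecayest} is only locally uniform, so the constant $C_{K_\varepsilon}$ is not controlled unless the trajectory $\{\zeta_r\}$ is bounded (essentially Hypothesis \ref{item:convODE}, again not assumed). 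So as written the proof proves a weaker statement under stronger hypotheses, and the final display does not follow from the steps preceding it.

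All of these complications disappear if you use the evolution-system identity at the \emph{fixed} time $s$ rather than at a moving $r$: by \eqref{eq:evolutionsystem}, $\nu_t(g)=\int_{\R^n}\cQ_{s,t}g(y)\,\nu_s(dy)$, so $\lv\cQ_{s,t}g(z)-\nu_t(g)\rv\leq\int\lv\cQ_{s,t}g(z)-\cQ_{s,t}g(y)\rv\nu_s(dy)$, and the integrand is the pure spatial oscillation of the single function $\cQ_{s,t}g=\cP_{t-s}f(\cdot,\zeta_s^\zeta)$ on the fixed hyperplane. For each fixed $y$ this tends to $0$ by your own argument (the curve, its length $T_{z,y}$ and the compact set $K$ containing it are fixed, so the constant in \eqref{eq:graddecayest} is fixed and the bound is $c\,T_{z,y}\|f\|_\infty e^{-\lambda(t-s-s_0)}$), and dominated convergence with respect to the fixed probability measure $\nu_s$ (dominating function $2\|g\|_\infty$, using \ref{item:contraction}) yields \eqref{eq:semigpconvergestoevolution}. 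This is the paper's proof: no tightness of $\{\nu_t\}$, no uniform-on-compacts oscillation bound, and no control of the ODE trajectory is needed, which is precisely why the lemma can be stated under Hypothesis \ref{item:smoothnessassumption} and \ref{item:OAC} alone.
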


\begin{proof}[Proof of Lemma \ref{lem:convtoevolutionsystem}]
	Fix $g\in C_b(\R^n)$ and let $f\in C_b(\R^{n+1})$ be a function that doesn't depend on the last variable and such that $f(z,\eta)=g(z)$ for every $\eta \in \R, z \in \R^n$; note that by \eqref{linksmanysemigr} we have
	\be\label{A?}
	\cQ_{s,t}^\zeta g(z) = \cP_{t-s}f(z, \zeta_s^\zeta).
	\ee
	Now for every fixed $z,y \in \R^n$ we can write
	\begin{equation*}
	\lvert \cQ_{s,t}g(z)-\cQ_{s,t}g(y)\rvert = \left\lvert\cP_{t-s}f(z,\zeta_s)-\cP_{t-s}f(y,\zeta_s)\right\rvert
	\end{equation*}
	By Note \ref{noteonhypglobal} we have that the hyperplane $S:=\{{x}=(z,\zeta)\in\R^{n+1}: \zeta= \zeta_s\}$ is the orbit of the vector fields $V_{[\alpha]}$,  $\alpha\in\mathcal{A}_m$. Since  $(z,\zeta_s)$ and $(y,\zeta_s)$ belong to $S$ we may take a piecewise integral curve connecting them. Without loss of generality we may take an integral curve $\gamma:[0,T]\to\mathbb{R}^{n+1}$ connecting $(z,\zeta_s)$ and $(y,\zeta_s)$, with $\dot{\gamma}_t= V_{[\alpha]}(\gamma_t)$. Clearly the time $T$ will depend on $z$ and $y$, i.e. $T=T_{z,y}$. Let  $K$ be a  compact set such that $\gamma([0,T_{z,y}]) \subseteq K$;  by Lemma \ref{lem:fundamentalthmofcalc} applied to the function $h=\cP_{t-s}f$ , which is in $\DVinfty$ by Appendix \ref{app:UFG} \ref{item:smoothmapstosmooth}, we have
	\begin{equation*}
	\left\lvert\cP_{t-s}f(z,\zeta_s)-\cP_{t-s}f(y,\zeta_s)\right\rvert \leq \int_0^{T_{z,y}} V_{[\alpha]}(\cP_{t-s}f(\gamma_u)) du.
	\end{equation*}
Because we let $t \rar \infty$, we can restrict to the case $t>s$. 
So fix $s_0>0$  such that $t-s>s_0$;   by \eqref{eq:graddecayest} we then have
	\begin{equation*}
	\left\lvert\cP_{t-s}f(z,\zeta_s)-\cP_{t-s}f(y,\zeta_s)\right\rvert \leq c_{s_0,r}e^{-\lambda (t-s-s_0)} \lVert f\rVert_\infty T_{z,y}.
	\end{equation*}
	Letting $t$ tend to $\infty$ and using \eqref{A?} we obtain
	\begin{equation}\label{eq:indepofinitcond}
	\lim_{t\to\infty}\lvert \cQ_{s,t}g(z)-\cQ_{s,t}g(y)\rvert =0.	  		\end{equation}
	The proof can now be concluded as follows: because  $\{\nu_t\}_t$ is an evolution system of measures (see \ref{eq:evolutionsystem}),  we can write
	\begin{align*}
	\left\lvert \cQ_{s,t}g(z)-\int_{\mathbb{R}^n} g(y) \nu_t(dy)\right\rvert &= \left\lvert \cQ_{s,t}g(z)-\int_{\mathbb{R}^n} \cQ_{s,t}g(y) \nu_s(dy)\right\rvert\\
	&\leq \int_{\mathbb{R}^n}\left\lvert \cQ_{s,t}g(z)- \cQ_{s,t}g(y) \right\rvert	\nu_s(dy).
	\end{align*}
	Using \eqref{eq:indepofinitcond} and  the dominated convergence theorem (which is applicable by Appendix \ref{app:misc} \ref{item:contraction}) we may take the limit as $t$ tends to $\infty$ and obtain \eqref{eq:semigpconvergestoevolution}.
\end{proof}

In order to prove Lemma \ref{lem:convofevolsystemtoequilibria}, which is the core of the last step of the proof of Theorem \ref{thm:mainglobalthm}, we must first prove the following  two results, Lemma \ref{lem:semigpconvergetosemigp} and Lemma \ref{lem:invarianceofmuk}.

\begin{lemma}\label{lem:semigpconvergetosemigp}
	Assume Hypothesis \ref{hyp:nonautoassumptions} holds. Then, for each $g\in C_b(\mathbb{R}^{n})$ and $z\in\R^n$ we have
	\begin{equation*}
	\cQ_{s-t,s}g(z) \to \barQ_{t}g(z)
	\end{equation*}
	as $s$ tends to $\infty$ uniformly on compacts of $\R^n\times(0,\infty)$. That is, for every fixed $T>0$ and $r>0$ we have
	\begin{equation*}\label{eq:uniformoncompacts}
		\lim_{s\to\infty}\sup_{z\in B_r}\sup_{[1/T,T]}\lv\cQ_{s-t,s}g(z) - \barQ_{t}g(z)\rv =0.
	\end{equation*}	
\end{lemma}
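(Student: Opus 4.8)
\textbf{Proof plan for Lemma \ref{lem:semigpconvergetosemigp}.}

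The plan is to reduce the statement to a convergence of the time-inhomogeneous semigroup $\cQ$ towards the autonomous semigroup $\barQ$ driven entirely by the convergence of the ODE $\zeta_t^{\zeta_0} \rar \barz$, while controlling the regularity uniformly in time using the Obtuse Angle Condition. First I would fix $g \in C_b(\R^n)$, the compact set $B_r \subset \R^n$, the interval $[1/T, T]$, and recall that by \eqref{linksmanysemigr} (or rather its analogue at the level of the non-autonomous equation) one has, for $t \in [1/T, T]$,
\begin{equation*}
\cQ_{s-t,s}^{\zeta_0} g(z) = \cP_t f(z, \zeta_{s-t}^{\zeta_0}) \quad \text{and} \quad \barQ_t g(z) = \cP_t^{\barz} f(z, \barz),
\end{equation*}
where $f(z,\eta) := g(z)$ and $\cP_t^{\barz}$ is the one-parameter semigroup associated with the ``frozen'' SDE \eqref{limitingproc} (viewed in $\R^{n+1}$ on the hyperplane $\mathcal{H}_{\barz}$). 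Thus it suffices to estimate $\lv \cP_t f(z, \zeta_{s-t}^{\zeta_0}) - \cP_t^{\barz} f(z, \barz) \rv$ and show it vanishes as $s \rar \infty$, uniformly over $z \in B_r$ and $t \in [1/T, T]$. Since $\zeta_{s-t}^{\zeta_0} \rar \barz$ as $s \rar \infty$ uniformly for $t \in [1/T, T]$ (because $t$ ranges in a bounded interval and $\zeta_\cdot^{\zeta_0}$ is continuous and convergent), the problem becomes a continuity-in-the-coefficients statement for the diffusion semigroup.

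The key step is to show the map $(\eta, t, z) \mapsto \cP_t^{\eta} f(z, \eta)$ is continuous, jointly, with the continuity in $\eta$ being uniform on compacts in $(t,z)$. I would split the difference as
\begin{align*}
\lv \cP_t f(z, \zeta_{s-t}^{\zeta_0}) - \cP_t^{\barz} f(z, \barz) \rv
&\leq \lv \cP_t f(z, \zeta_{s-t}^{\zeta_0}) - \cP_t^{\zeta_{s-t}^{\zeta_0}} f(z, \zeta_{s-t}^{\zeta_0}) \rv \\
&\quad + \lv \cP_t^{\zeta_{s-t}^{\zeta_0}} f(z, \zeta_{s-t}^{\zeta_0}) - \cP_t^{\barz} f(z, \barz) \rv,
\end{align*}
but in fact the cleaner route is to estimate directly $\lv \cP_t^{\eta_1} g(z) - \cP_t^{\eta_2} g(z) \rv$ for $\eta_1, \eta_2$ close, using a standard coupling/Gronwall argument on the SDEs \eqref{limitingproc} with frozen parameters $\eta_1$ and $\eta_2$: since the coefficients $U_j(\cdot, \eta)$ are smooth with derivatives controlled locally uniformly in $\eta$, the flows $\barZ_t^{\eta_1}$ and $\barZ_t^{\eta_2}$ started from the same point and driven by the same Brownian motion satisfy $\E \lv \barZ_t^{\eta_1} - \barZ_t^{\eta_2}\rv^2 \leq C(T, r)\, \lv \eta_1 - \eta_2\rv^2$ for $t \in [0,T]$, uniformly for the starting point in $B_r$. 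Then, for $g$ merely bounded continuous one uses a density argument (approximate $g$ by Lipschitz functions, or appeal to the short-time smoothing estimates of Appendix \ref{app:UFG} \ref{item:shorttime} to upgrade regularity away from $t=0$, which is harmless since $t \geq 1/T$); this yields $\sup_{z \in B_r}\sup_{t \in [1/T,T]} \lv \cP_t^{\eta_1} g(z) - \cP_t^{\eta_2} g(z)\rv \rar 0$ as $\eta_1 \rar \eta_2$. Finally, one must also handle the genuinely non-autonomous term: $\cP_t f(z, \zeta_{s-t}^{\zeta_0})$ involves the SDE \eqref{SDEglobal} with a genuinely $t$-dependent coefficient $U_j(\cdot, \zeta_{s-t+u}^{\zeta_0})$ for $u \in [0,t]$, whereas $\cP_t^{\barz}$ has the constant coefficient $U_j(\cdot, \barz)$; but since $\zeta_{s-t+u}^{\zeta_0} \rar \barz$ uniformly for $u \in [0,T]$ as $s \rar \infty$, the same Gronwall estimate (now with a time-dependent perturbation of the coefficient, bounded by $\sup_{u \in [0,T]}\lv \zeta_{s-t+u}^{\zeta_0} - \barz\rv$) gives the convergence.

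The main obstacle I anticipate is the regularity bookkeeping: making the coupling/Gronwall estimate rigorous for bounded-but-not-Lipschitz $g$ under only the UFG condition (rather than uniform ellipticity), and in particular making the constants uniform over $z \in B_r$ and $t \in [1/T, T]$. This is precisely the point flagged in Note \ref{note:comparelunardi} as ``the main place where we take care of the relaxed regularity assumptions.'' The resolution is to invoke the short-time smoothing estimates of Appendix \ref{app:UFG} \ref{item:shorttime}: because $t \geq 1/T > 0$, for any $f \in C_b$ the function $x \mapsto \cP_{t/2} f(x)$ already lies in $\DVinfty$ with derivatives bounded on compacts, so one applies the coupling argument to $\cP_{t/2}f$ over the time interval $[0, t/2]$ and the semigroup property $\cP_t = \cP_{t/2}\cP_{t/2}$ closes the estimate. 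One then lets $s \rar \infty$, using the uniform convergence of $\zeta_{s-t+u}^{\zeta_0}$ to $\barz$, to conclude $\sup_{z \in B_r}\sup_{t \in [1/T,T]} \lv \cQ_{s-t,s}g(z) - \barQ_t g(z)\rv \rar 0$, which is the claim.
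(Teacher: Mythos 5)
Your proposal is correct in outline but follows a genuinely different route from the paper. You do find the key identity $\cQ_{s-t,s}g(z)=\cP_t f(z,\zeta_{s-t})$ with $f(z,\eta):=g(z)$, but you then introduce a separate ``frozen'' semigroup $\cP_t^{\barz}$ and compare two different diffusions (non-autonomous versus frozen coefficients) by a coupling/Gronwall argument. The paper instead observes that, since $\barz$ is a stationary point of the ODE, the frozen dynamics is just the autonomous $(n+1)$-dimensional dynamics started on the hyperplane $\zeta=\barz$, so that $\barQ_t g(z)=\cP_t f(z,\barz)$ with the \emph{same} semigroup $\cP_t$; the quantity to estimate is then the increment of the single continuous function $\cP_t f$ between the spatial points $(z,\zeta_{s-t})$ and $(z,\barz)$. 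Pointwise convergence is literally continuity of $\cP_t f$, and uniformity on $B_r\times[1/T,T]$ follows from Arzel\`a--Ascoli: equicontinuity in $z$ comes from the short-time UFG estimates of Appendix \ref{app:UFG} \ref{item:shorttime} together with Hypothesis \ref{hyp:nonautoassumptions} \ref{item:smoothnessassumption} (so that $\partial_{z^i}$ can be expressed through the $V_{[\alpha]}$), equicontinuity in $t$ from the PDE of Lemma \ref{lem:denistyargforvs}, and the relevant spatial set $B_r\times\{\zeta_u:u\geq -T\}$ is compact precisely because the ODE converges. What the paper's identification buys is that no comparison of stochastic flows is needed at all, hence no regularity of $g$ beyond bounded continuity; what your coupling route buys is a quantitative error in terms of $\sup_{u\in[0,T]}\lv\zeta_{s-t+u}-\barz\rv$, at the price of the bookkeeping you anticipate.

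One caveat on the resolution you propose for that bookkeeping: applying the coupling to $\cP_{t/2}f$ and invoking $\cP_t=\cP_{t/2}\cP_{t/2}$ is slightly circular, since after the first half-interval the two dynamics carry two \emph{different} smoothed test functions ($\cQ_{s-t/2,s}g$ versus $\barQ_{t/2}g$), so one of the resulting terms is again of the same form as the quantity being estimated; moreover the UFG smoothing only controls derivatives in the $z$-directions and only on compacts. A cleaner way to close your argument for $g\in C_b(\R^n)$ is to forgo smoothing altogether: the Gronwall estimate $\E\lv Z_t-\barZ_t\rv^2\leq C(T)\sup_{u\in[0,T]}\lv\zeta_{s-t+u}-\barz\rv^2$ (same Brownian motion, same start, globally Lipschitz coefficients by Hypothesis \ref{SA} \ref{SA1} and Definition \ref{defufg}), combined with uniform moment bounds, lets you restrict to a large compact where $g$ is uniformly continuous and bound the exceptional events by Chebyshev, uniformly over $z\in B_r$ and $t\in[1/T,T]$.
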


\begin{proof}[Proof of Lemma \ref{lem:semigpconvergetosemigp}]
	Fix $g\in C_b(\R^n)$ and consider
	\begin{equation*}
	v_s(z,t)=(\cQ_{s-t,s}^{\zeta}g)(z), \quad  z\in\mathbb{R}^{n},t>0.
	\end{equation*}	
	Like in the Proof of Lemma \ref{lem:convtoevolutionsystem}, define $f\in C_b(\R^{n+1})$ by $f(z,\eta)=g(z)$ for all $z\in \R^n$ and $\eta\in \R$; then by \eqref{A?} we have
	\begin{equation}\label{eq:vintermsofP}
	v_s(z,t)= \cP_{t}f(z,\zeta_{s-t}).
	\end{equation}
	Since $(\cP_{t}f)(x)$ is a continuous function,  we may take the limit as $s$ tends to infinity to obtain
	\begin{equation*}
	\lim_{s\to\infty}v_s=\cP_{t}f(z,\bar{\zeta}) = \barQ_t g(z) =:v(z,t).
	\end{equation*}
	We now wish to show that the above limit is uniform on compact subsets of $\R^n\times (0,\infty)$;  that is, we wish to show that  
	\begin{equation*}
	\lim_{s\to\infty} \, \sup_{t\in[\frac{1}{T},T]} \sup_{z\in B_R} \lv v_s(z,t)-v(z,t)\rv =0, \quad \mbox{for every fixed } R>0, T>0. 
	\end{equation*}
	To show this fact we shall use the Ascoli-Arzela Theorem. Indeed, assuming for the moment that we can apply such a theorem,  then we can  find a subsequence $s_k$ such that $v_{s_k}$ converges uniformly on $B_R\times[1/T,T]$. Since $v_{s_k}$ converges pointwise to $v$ we have that the limit is independent of the choice of sequence hence $v_s$ converges uniformly in $B_R\times [1/T,T]$ to $v$,  as $s$ tends to $\infty$. So,  if we show that the derivatives of $v_s$ are bounded on $B_R\times[\frac{1}{T},T]$, uniformly in $s$, then we may apply Arzela-Ascoli Theorem and the proof is concluded by the above line of reasoning.  By Lemma \ref{lem:denistyargforvs} the function $v_s$ is smooth in $(z,t)\in\R^n\times\R$ and satisfies \eqref{eq:vPDE}.

For any  point $(z,t)\in \mathbb{R}^n\times(0,\infty)$  there exist an open neighbourhood of $(z,t)$ and smooth functions $\varphi_{i,\alpha}$ such that, for any $i \in \{1 \dd n\}$,  the derivative  $\partial_i\equiv \pa_{z^i}$ can be expressed as
	\begin{equation*}
	\partial_i=\sum_{\alpha\in\mathcal{A}_m} \varphi_{i,\alpha} V_{[\alpha]}.
	\end{equation*}
	Therefore, to show that the  derivatives $\pa_iv_s$ are bounded on $B_R\times [0,T]$ it is sufficient to show that $V_{[\alpha]}v_s$ is bounded in $B_R\times [1/T,T]$. This follows from the estimates recalled in  Appendix \ref{app:UFG} \ref{item:shorttime}.  In particular, there exists constants $C(R),\omega(R)>0$ such that
	\begin{align*}
	\sup_{z\in B_R, t\in [1/T,T]}\lv V_{[\alpha]}v_s(z,t)\rv &= \sup_{z\in B_R, t\in [1/T,T]}\lvert V_{[\alpha]}\cP_{t} f(z,\zeta_{s-t})\rvert \\
	&\leq \sup_{t\in[1/T,T]}\, \sup_{x\in B_R\times \{\zeta_t: t\geq -T\}}\lvert V_{[\alpha]}\cP_{t} f(x)\rvert\\
	&\leq C(R) \lvert T\rvert^{\frac{\lVert\alpha\rVert}{2}} e^{\omega(R) T} \lVert f\rVert_{\infty}.
	\end{align*}
	Here we have used that $\zeta_t$ is convergent and hence $B_R \times \{\zeta_t: t\geq -T\}$ is a compact subset of $\R^{n+1}$. Similarly we may bound the second order derivatives $V_i^2v_s$, and using \eqref{eq:vPDE} we obtain a bound for the derivative with respect to $t$,  which is independent of $s$. 
\end{proof}

Using the tightness of the family $\{\nu_t\}_{t\geq 0}$,  there exists a divergent sequence $t_\ell$ such that $\nu_{t_\ell-k}$ converges weakly to some measure $\mu_k$ as $\ell$ tends to $\infty$, for each $k\in \N$. (We emphasise that, by a diagonal argument, the sequence $t_\ell$ can be chosen to be  independent of $k$). Moreover, $\{\mu_k\}_{k\in\N}$ is tight since $\{\nu_t\}_{t\geq 0}$ is tight (see \cite[Step 2 in the proof of Theorem 6.2]{AngiuliLorenziLunardi}).

\begin{lemma}\label{lem:invarianceofmuk}
	Assume Hypothesis \ref{hyp:nonautoassumptions} holds and construct $\{\mu_k\}_{k \in \N}$ as above. Then,  
	\begin{equation*}
	\int_{\mathbb{R}^{n}} \barQ_{k}g(z) \mu_k(dz) = \int g(z) \mu_0(dz), 
	\end{equation*}
for any $g\in C_b(\R^n)$ and every $k \in \N$. 
\end{lemma}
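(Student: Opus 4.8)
\textbf{Proof proposal for Lemma \ref{lem:invarianceofmuk}.}

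The plan is to exploit the evolution–system identity $\cQ_{s,t}^*\nu_s = \nu_t$ together with the convergence $\cQ_{s-t,s}g \to \barQ_t g$ uniformly on compacts (Lemma \ref{lem:semigpconvergetosemigp}), and then pass to the limit along the sequence $t_\ell$. First I would record the key algebraic identity coming from the evolution system of measures: for every $\ell$ large enough that $t_\ell - k \geq 0$, and every $g\in C_b(\R^n)$, the relation \eqref{eq:evolutionsystem} applied with $s = t_\ell - k$ and $t = t_\ell$ gives
\begin{equation*}
\int_{\R^n} \cQ_{t_\ell - k, t_\ell} g(z) \,\nu_{t_\ell-k}(dz) = \int_{\R^n} g(z)\, \nu_{t_\ell}(dz).
\end{equation*}
By the very definition of the two–parameter semigroup (which depends only on the elapsed time through the ODE flow, but here we can simply use $\cQ_{t_\ell-k,t_\ell} = \cQ_{s-k,s}$ with $s = t_\ell$, consistent with the notation $\cQ_{s-t,s}$ used in Lemma \ref{lem:semigpconvergetosemigp}), we rewrite the left-hand side as $\int_{\R^n}\cQ_{t_\ell-k,t_\ell}g(z)\,\nu_{t_\ell-k}(dz)$.

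The heart of the argument is then to take $\ell\to\infty$ on both sides. On the right-hand side, $\nu_{t_\ell} \rightharpoonup \mu_0$ weakly by construction, so $\int g\,d\nu_{t_\ell} \to \int g\,d\mu_0$ for $g\in C_b(\R^n)$. On the left-hand side, I would split
\begin{align*}
\left| \int_{\R^n}\!\! \cQ_{t_\ell-k,t_\ell}g\,d\nu_{t_\ell-k} - \int_{\R^n}\!\! \barQ_k g\,d\mu_k\right|
&\leq \left| \int_{\R^n}\!\!\big(\cQ_{t_\ell-k,t_\ell}g - \barQ_k g\big)\,d\nu_{t_\ell-k}\right|
+ \left| \int_{\R^n}\!\! \barQ_k g\,d\nu_{t_\ell-k} - \int_{\R^n}\!\! \barQ_k g\,d\mu_k\right|.
\end{align*}
The second term tends to $0$ because $\nu_{t_\ell-k}\rightharpoonup \mu_k$ and $\barQ_k g \in C_b(\R^n)$ (the semigroup $\barQ_k$ maps $C_b$ to $C_b$ since \eqref{limitingproc} is hypoelliptic under Hypothesis \ref{hyp:nonautoassumptions} \ref{item:smoothnessassumption}, hence has the strong Feller / at least Feller property; in any case boundedness and continuity of $\barQ_k g$ is standard). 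For the first term I would use the uniform–on–compacts convergence from Lemma \ref{lem:semigpconvergetosemigp}: given $\varepsilon>0$, use tightness of $\{\nu_t\}_{t\geq 0}$ to pick a compact $K_\varepsilon$ with $\nu_t(K_\varepsilon)\geq 1-\varepsilon$ for all $t$, then on $K_\varepsilon$ we have $\sup_{z\in K_\varepsilon}|\cQ_{t_\ell-k,t_\ell}g(z) - \barQ_k g(z)| \to 0$ as $\ell\to\infty$ (with $k$ and the time–window bounded), while off $K_\varepsilon$ we bound the integrand by $2\|g\|_\infty$ using contractivity (Appendix \ref{app:UFG} \ref{item:contraction}); hence the first term is $\leq \varepsilon\cdot(\text{small}) + 2\|g\|_\infty\,\varepsilon$, and since $\varepsilon$ is arbitrary it vanishes in the limit.

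Combining the two displays yields $\int_{\R^n}\barQ_k g\,d\mu_k = \int_{\R^n} g\,d\mu_0$, which is the claim. The main obstacle I anticipate is the bookkeeping in the first term of the split: one must be careful that Lemma \ref{lem:semigpconvergetosemigp}'s uniform convergence is stated with the time variable ranging in a fixed compact $[1/T,T]$ away from $0$, whereas here the elapsed time is the fixed integer $k$; this is fine as long as $k\geq 1$, and the case $k=0$ is trivial since $\barQ_0 = \mathrm{Id}$ and $\mu_0 = \mu_0$. A secondary point requiring care is making sure the compact set $K_\varepsilon$ can be chosen uniformly in $\ell$ (and $k$), which is exactly what tightness of the whole family $\{\nu_t\}_{t\ge0}$ — established in Step 1 of the proof of Theorem \ref{thm:mainglobalthm} — provides. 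Everything else is a routine weak–convergence plus dominated–convergence argument.
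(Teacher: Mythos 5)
Your proposal is correct and follows essentially the same route as the paper: the evolution-system identity applied with $s=t_\ell-k$, $t=t_\ell$, the weak convergence $\nu_{t_\ell}\rightharpoonup\mu_0$ for one side, and for the other the same decomposition into the two terms $\int(\cQ_{t_\ell-k,t_\ell}g-\barQ_kg)\,d\nu_{t_\ell-k}$ and $\int\barQ_kg\,d\nu_{t_\ell-k}-\int\barQ_kg\,d\mu_k$, handled respectively by tightness plus the uniform-on-compacts convergence of Lemma \ref{lem:semigpconvergetosemigp} (with the $2\lVert g\rVert_\infty$ bound off the compact) and by the weak convergence $\nu_{t_\ell-k}\rightharpoonup\mu_k$. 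Your side remarks on the case $k=0$ and on fitting the fixed $k$ inside the time window $[1/T,T]$ are sensible but do not change the argument.
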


\begin{proof}[Proof of Lemma \ref{lem:invarianceofmuk}] We will consider the integral $\int_{\mathbb{R}^{n}}\cQ_{t_\ell-k,t_\ell}g(z) \nu_{t_\ell-k}(dz) $ and show the following:
	\be\label{fact1}
	\int_{\R^n} g(z) \mu_{0}(dz)=\lim_{\ell\to\infty}\int_{\mathbb{R}^{n}}\cQ_{t_\ell-k,t_\ell}g(z) \nu_{t_\ell-k}(dz) = \int_{\mathbb{R}^{n}} \barQ_{k}g(z) \mu_k(dz)\,, \quad \mbox{for every $k \in \N$}.
	\end{equation}	
	Let us start with showing the first equality in \eqref{fact1}. 
	Because  $\{\nu_t\}_{t\geq 0}$ is an evolution system of measures (and taking $\ell$ sufficiently large that $t_\ell>k$), we have
	$$
	\int_{\mathbb{R}^{n}}\cQ_{t_\ell-k,t_\ell}g(z) \nu_{t_\ell-k}(dz) = \int g(z) \nu_{t_\ell}(dz). 
	$$
	The above, combined with the fact that $\nu_{t_{\ell}}$ converges weakly to $\mu_0$, gives the first identity in \eqref{fact1}. 	To prove the second equality in \eqref{fact1}, observe the following:
	\begin{align*}
	\int_{\mathbb{R}^{n}}\cQ_{t_\ell-k,t_\ell}g(z) \nu_{t_\ell-k}(dz) - \int_{\mathbb{R}^{n}} \barQ_{k}g(z)\mu_k(dz) =&\int_{\mathbb{R}^{n}}\left(\cQ_{t_\ell-k,t_\ell}g(z)-\barQ_{k}g(z) \right) \nu_{t_\ell-k}(dz) \\
	+&\int_{\mathbb{R}^{n}} \barQ_{k}g(z)\nu_{t_\ell-k}(dz)- \int_{\mathbb{R}^{n}} \barQ_{k}g(z)\mu_k(dz)\\
	=&I_{1,\ell}+I_{2,\ell},
	\end{align*}
	having set
	\begin{align*}
	I_{1,\ell}&:=\int_{\mathbb{R}^{n}}\left(\cQ_{t_\ell-k,t_\ell}g(z)-\barQ_{k}g(z) \right) \nu_{t_\ell-k}(dz),\\
	I_{2,\ell}&:=\int_{\mathbb{R}^{n}} \barQ_{k}g(z)\nu_{t_\ell-k}(dz)- \int_{\mathbb{R}^{n}} \barQ_{k}g(z)\mu_k(dz).
	\end{align*}	
	Now $I_{2,\ell}$ converges to $0$ as $\ell\to \infty$ since $\nu_{t_\ell-k}$ converges weakly to $\mu_k$, by definition of $\mu_k$. 
	To see that $I_{1,\ell}$ vanishes when $\ell$ tends to $\infty$ fix $\varepsilon>0$ and take a ball $B_r$ such that $\nu_{t_\ell-k}(B_r)\geq 1-\varepsilon$ for all $\ell$ with $t_\ell>k$.  This is possible since the family $\{\nu_{t}: t\geq 0\}$ is tight. By Lemma \ref{lem:semigpconvergetosemigp} we know that $Q_{t_\ell-k,t_\ell}g(z)$ converges uniformly on compacts to $\barQ_{k}g(z)$;  hence, if  $\ell$ is sufficiently large,  we have
	\begin{equation*}
	\sup_{z\in B_r}\lvert \cQ_{t_\ell-k,t_\ell}g(z)-\barQ_{k}g(z)\rvert \leq \varepsilon.
	\end{equation*}
We can therefore derive the following estimate in $I_{1,\ell}$: 
	\begin{align*}
	I_{1,\ell}=\int_{\mathbb{R}^{n}}\left(\cQ_{t_\ell-k,t_\ell}g(z)-\barQ_{k}g(z)\right) \nu_{t_\ell-k}(dz) =& \int_{B_r}\left(\cQ_{t_\ell-k,t_\ell}g(z)-\barQ_{k}g(z)\right) \nu_{t_\ell-k}(dz)\\
	+& \int_{\mathbb{R}^{n} \setminus B_r}\left(\cQ_{t_\ell-k,t_\ell}g(z)-\barQ_{k}g(z)\right) \nu_{t_\ell-k}(dz) \\
	\leq& \varepsilon + 2\lVert g\rVert_{\infty} \varepsilon.
	\end{align*}
	As $\varepsilon$ is arbitrary we have that $I_{1,\ell}$ converges to $0$ as $\ell$ tends to $\infty$, and the claim follows.
\end{proof}

\begin{lemma}\label{lem:convofevolsystemtoequilibria}
	Assume Hypothesis \ref{hyp:nonautoassumptions} holds and,  as described before the statement of Lemma \ref{lem:invarianceofmuk}, let $\mu_0$ be the weak limit of the sequence $\nu_{t_{\ell}}$. Then  $\mu_0=\bar{\mu}$.
\end{lemma}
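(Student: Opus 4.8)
\textbf{Proof proposal for Lemma \ref{lem:convofevolsystemtoequilibria}.}

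The plan is to show that the weak limit $\mu_0$ of the sequence $\nu_{t_\ell}$ coincides with $\bar\mu$, the unique invariant measure of the limiting SDE \eqref{limitingproc}, by exploiting the invariance-type identity established in Lemma \ref{lem:invarianceofmuk} together with the ergodicity provided by Lemma \ref{lem:ergbarZ}. First I would recall that, by Lemma \ref{lem:invarianceofmuk}, for every $k\in\N$ and every $g\in C_b(\R^n)$ we have
\begin{equation*}
\int_{\R^n} \barQ_k g(z)\, \mu_k(dz) = \int_{\R^n} g(z)\, \mu_0(dz),
\end{equation*}
where $\{\mu_k\}_{k\in\N}$ is the tight family of sub-sequential weak limits of $\nu_{t_\ell-k}$. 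The idea is to let $k\to\infty$ in this identity. By Lemma \ref{lem:ergbarZ}, the semigroup $\barQ_t$ is ergodic with unique invariant measure $\bar\mu$ and $\barQ_k g(z)\to \bar\mu(g)$ pointwise as $k\to\infty$, for every $z\in\R^n$ and $g\in C_b(\R^n)$. Since $\|\barQ_k g\|_\infty\le\|g\|_\infty$ (contractivity, Appendix \ref{app:UFG} \ref{item:contraction}), and the family $\{\mu_k\}$ is tight, I would argue that $\int \barQ_k g\, d\mu_k \to \bar\mu(g)$ as $k\to\infty$. Combined with the right-hand side being the constant $\mu_0(g)$, this would give $\mu_0(g)=\bar\mu(g)$ for all $g\in C_b(\R^n)$, hence $\mu_0=\bar\mu$.

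The key step — and the one I expect to be the main obstacle — is justifying the passage to the limit $\int_{\R^n}\barQ_k g(z)\,\mu_k(dz)\to\bar\mu(g)$. This is not immediate because the measure $\mu_k$ itself varies with $k$, so one cannot simply invoke dominated convergence against a fixed reference measure. The strategy I would use is the following: fix $\varepsilon>0$; by the tightness of $\{\mu_k\}_{k\in\N}$ choose a compact set $K_\varepsilon\subset\R^n$ with $\mu_k(K_\varepsilon)\ge 1-\varepsilon$ for all $k$. On $K_\varepsilon$ the pointwise convergence $\barQ_k g\to\bar\mu(g)$ can be upgraded to uniform convergence — this uses that $\barQ_k g$ is, for large $k$, an equicontinuous family on compacts (via the smoothing/gradient estimates of Appendix \ref{app:UFG} \ref{item:shorttime} and the exponential decay \eqref{eq:graddecayest} under the Obtuse Angle Condition \ref{item:OAC}, applied through an Arzelà–Ascoli argument exactly as in the proof of Lemma \ref{lem:semigpconvergetosemigp}), so that a uniform limit on $K_\varepsilon$ is forced to equal the pointwise limit $\bar\mu(g)$. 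Then split
\begin{equation*}
\left|\int_{\R^n}\barQ_k g\, d\mu_k - \bar\mu(g)\right| \le \int_{K_\varepsilon}\left|\barQ_k g - \bar\mu(g)\right| d\mu_k + 2\|g\|_\infty\,\mu_k(K_\varepsilon^c) \le \sup_{z\in K_\varepsilon}\left|\barQ_k g(z)-\bar\mu(g)\right| + 2\|g\|_\infty\varepsilon,
\end{equation*}
and let $k\to\infty$ and then $\varepsilon\to 0$.

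Finally I would tie the two sides together: since the right-hand side of the Lemma \ref{lem:invarianceofmuk} identity is $\mu_0(g)$, independent of $k$, and the left-hand side converges to $\bar\mu(g)$, we conclude $\mu_0(g)=\bar\mu(g)$ for every $g\in C_b(\R^n)$, which by the standard separation property of $C_b$ on $\R^n$ yields $\mu_0=\bar\mu$. A small technical point worth checking is that the sequence $t_\ell$ from Step 2 of the proof of Theorem \ref{thm:mainglobalthm} and the one used to construct the $\mu_k$ in Lemma \ref{lem:invarianceofmuk} can be taken to be the same (via a diagonal extraction, as already noted before Lemma \ref{lem:invarianceofmuk}), so that $\mu_0$ here is genuinely the weak limit of $\nu_{t_\ell}=\nu_{t_\ell-0}$, i.e. the $k=0$ member of the family $\{\mu_k\}$; this is precisely what makes the identity for $k=0$ read $\mu_0(g)=\mu_0(g)$ trivially and for general $k$ give the content we need. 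Alternatively, one can restate the conclusion directly from the general-$k$ identity as above without needing $\mu_0$ to be the $k=0$ term.
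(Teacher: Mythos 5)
Your proposal is correct and follows essentially the same route as the paper's proof: pass to the limit $k\to\infty$ in the identity of Lemma \ref{lem:invarianceofmuk}, upgrading the pointwise convergence $\barQ_k g(z)\to\bar\mu(g)$ from Lemma \ref{lem:ergbarZ} to locally uniform convergence by an Arzel\`a--Ascoli argument as in Lemma \ref{lem:semigpconvergetosemigp}, and then exploit tightness of $\{\mu_k\}$ via the compact-set/complement splitting. Your closing remark on the diagonal choice of the sequence $t_\ell$ matches the construction the paper makes just before Lemma \ref{lem:invarianceofmuk}, so no gap remains.
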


\begin{proof}[Proof of Lemma \ref{lem:convofevolsystemtoequilibria}]
	Take $g\in C_b(\mathbb{R}^{n})$. By Lemma \ref{lem:ergbarZ} we know that $\barQ_{k}g(z) \to \mu(g)$ as $k$ tends to $\infty$ for each $z\in\mathbb{R}^{n}$ and $g\in C_b(\mathbb{R}^{n})$. 
	By an argument analogous to the one used in  the proof of Lemma \ref{lem:semigpconvergetosemigp} we have that $\barQ_{k}g(z)$ converges to $\mu(g)$ locally uniformly for $z\in \R^n$.
	
	Now fix $\varepsilon>0$;  since $\{\mu_k\}_{k}$ is a tight sequence,  we may take $B_r\subseteq \R^n$ such that $\mu_k(B_r)\geq 1-\varepsilon$ for all $t\geq 0$. Moreover,  for $k$ sufficiently large we have 
	\begin{equation*}
	\sup_{x\in B_r}\lvert\barQ_{k}g(z)-\bar{\mu}(g)\rvert \leq \varepsilon.
	\end{equation*}
	Then
	\begin{align*}
	\int_{\mathbb{R}^n} \barQ_{k}g(z) \mu_k(dz) - \bar{\mu}(g) &= \int_{B_r} \left[ \barQ_{k}g(z)-\bar{\mu}(g)\right] \mu_k(dz) + \int_{\mathbb{R}^n\setminus B_r} \left[\barQ_{k}g(z)-\bar{\mu}(g)\right] {\mu}_k(dz) \\
	&\leq \varepsilon +2\lVert g\rVert_\infty \varepsilon.
	\end{align*}
	Since $\varepsilon$ is arbitrary we deduce
	\begin{equation*}
	\int_{\mathbb{R}^n} \barQ_{k}g(z) \mu_k(dz) \to \bar{\mu}(g), \quad \mbox{as } k \rar \infty. 
	\end{equation*}
 However by Lemma \ref{lem:invarianceofmuk} we also have
	\begin{equation*}
	\int_{\mathbb{R}^n} g(z) \mu_0(dz) =\int_{\mathbb{R}^n} \barQ_{k}g(z) \mu_k(dz), \quad \text{ for every } k\in\N.
	\end{equation*}	
	Therefore $\bar{\mu}=\mu_0$.
\end{proof}

\subsection{Proofs of Section \ref{sec:longtimebehaviour}}\label{app:proofsoflongtime}

\begin{proof}[Proof of Proposition \ref{lem:VufgimpliesAdVUFG}]
First note to prove \eqref{eq:commutatorsofcV} it is sufficient to show that for any $\alpha \in \A_m$,  
\begin{align}
[\Ad_{t\voperp}V_{[\alpha]}, \partial_t+\cV_{0,t}] &= \Ad_{t\voperp}[V_{[\alpha]},V_0], \label{eq:commutatorofValpahandV0}\\
[\Ad_{t\voperp}V_{[\alpha]}, \cV_{i,t}] &= \Ad_{t\voperp}[V_{[\alpha]},V_i], &&\text{ for any } i\in\{1,\ldots,d\}.\label{eq:commutatorofValpahandVi}
\end{align}
By \cite[Proposition 8.30]{Lee}, for any two vector fields $U,V,W$ we have
\begin{equation}\label{eq:adcommuteswithAd}
[\Ad_{W}U,\Ad_{W}V] = \Ad_{W}[U,V].
\end{equation}
Therefore setting $W=t\voperp$, $U=V_{[\alpha]}$ and $V=V_i$ for any $i\in \{1,\ldots,d\}$ we have that \eqref{eq:commutatorofValpahandVi} holds.
To prove \eqref{eq:commutatorofValpahandV0} note that 
\begin{equation}\label{eq:commutatorwithpartials}
[\Ad_{t\voperp}V_{[\alpha]}, \partial_t] = -\partial_t\Ad_{t\voperp}V_{[\alpha]} = -\Ad_{t\voperp}[\voperp,V_{[\alpha]}]= \Ad_{t\voperp}[V_{[\alpha]},\voperp].
\end{equation}
In the above we have used \cite[Lemma 4.4.2]{Sontag} which states that $\partial_t\Ad_{t\voperp}V_{[\alpha]} = \Ad_{t\voperp}[\voperp,V_{[\alpha]}]$. Using \eqref{eq:adcommuteswithAd} and \eqref{eq:commutatorwithpartials} we have 
\begin{equation*}
[\Ad_{t\voperp}V_{[\alpha]}, \partial_t+\Ad_{t\voperp}\vodel] = \Ad_{t\voperp}[V_{[\alpha]},\voperp] + \Ad_{t\voperp}[V_{[\alpha]},\vodel] = \Ad_{t\voperp}[V_{[\alpha]},V_0].
\end{equation*}
Therefore \eqref{eq:commutatorofValpahandV0} holds. Hence we have that \eqref{eq:commutatorsofcV} holds.

It remains to show that for any $\alpha\in\mathcal{A}$ there exists smooth and bounded functions $\tilde{\varphi}_{\alpha,\beta}\in C_V^\infty(\R^N\times\R)$ such that
\begin{equation*}
\cV_{[\alpha],t}(x) = \sum_{\beta \in \A_m} \tilde{\varphi}_{\alpha,\beta}(x,t) \cV_{[\beta],t} (x).
\end{equation*}
Since the vector fields $\{V_0,V_1,\ldots,V_d\}$ satisfy the UFG condition \eqref{eq:UFG}, applying the operator $\Ad_{t\voperp}$ to \eqref{eq:UFG} we have
\begin{equation*}
\Ad_{t\voperp}V_{[\alpha]}(x) = \sum_{\beta \in \A_m} \Ad_{t\voperp}(\varphi_{\alpha,\beta} V_{[\beta]}) (x) = \sum_{\beta \in \A_m} \varphi_{\alpha,\beta}(e^{t\voperp}(x)) \Ad_{t\voperp}V_{[\beta]}(x).
\end{equation*}
Therefore if we show that the functions $\tilde{\varphi}(x,t):=\varphi_{\alpha,\beta}(e^{t\voperp}(x))$ are smooth, bounded and belong to the sets $C_V^\infty(\R^N\times\R)$ then we have that the vector fields $\{\partial_t+\cV_{[0],t},\cV_{[1],t},\ldots,\cV_{[d],t}\}$ satisfy the UFG condition when viewed as vector fields in both the time variable $t$ and spatial variables $\lz$. Since $\varphi_{\alpha,\beta}$ is smooth and bounded, and $\voperp$ is smooth we have that $\varphi_{\alpha,\beta}\circ e^{t\voperp}$ is smooth and bounded, it remains to show that for any $k\in\mathbb{N}$ and $\gamma_1,\ldots,\gamma_k\in \mathcal{A}$ we have
\begin{equation*}
\sup_{x\in \R^N, t\in \R}\lv(\cV_{[\gamma_1],t})\dots(\cV_{[\gamma_k],t})(\varphi_{\alpha,\beta}\circ e^{t\voperp})\rv < \infty
\end{equation*}
	By \cite[Proposition 8.30]{Lee} for every smooth function $f$ we have for all $\alpha \in \A$, $z\in \So$, $s\in\R$ that
	\begin{equation}\label{eq:relatedvectorfields}
	\cV_{[\alpha],s}(f\circ e^{s\voperp})(\lz) = (V_{[\alpha]}f)(e^{s\voperp}(\lz)).
	\end{equation}
Therefore 
\begin{align*}
\sup_{x\in \R^N, s\in \R}\lv(\cV_{[\gamma_1],t})\dots(\cV_{[\gamma_k,t]})(\varphi_{\alpha,\beta}\circ e^{t\voperp})\rv &= \sup_{x\in \R^N, t\in \R}\lv(V_{[\gamma_1]}\dots V_{[\gamma_k]}\varphi_{\alpha,\beta})(e^{t\voperp}(x))\rv  \\
&= \sup_{y\in \R^N}\lv(V_{[\gamma(1)]}\dots V_{[\gamma(k)]}\varphi_{\alpha,\beta})(y)\rv  < \infty.
\end{align*}
Therefore the UFG condition is satisfied.
\end{proof}


\begin{proof}[Proof of Proposition \ref{lemTie}]
	For $f\in C_V^\infty(\R^N)$ we have that $\cP_tf$ is smooth (in every direction, see \ref{item:smoothmapstosmooth}) so by \eqref{eq:autonomoustonon} we obtain
	\begin{equation}\label{eq:autodifftononautodiff}
	\cV_{[\alpha],s}\zQ_{s,t}f(\lz) = \cV_{[\alpha],s}\left(\cP_{t-s}(f\circ e^{-t\wo}))(e^{s\wo})\right)(\lz) \stackrel{\eqref{eq:relatedvectorfields}}{=} V_{[\alpha]}\left(\cP_{t-s}(f\circ e^{-t\wo})\right)(e^{s\voperp}(\lz)).
	\end{equation}
	By differentiating \eqref{eq:autonomoustonon} with respect to $s$ we have, with a calculation analogous to the one in Lemma \ref{lem:denistyargforvs}, 
	\begin{align*}
	\partial_s\zQ_{s,t}f(\lz) &= -\mathcal{L}\cP_{t-s}(f\circ e^{-t\voperp})(e^{s\wo}(\lz)) + \voperp\cP_{t-s}(f\circ e^{-t\voperp})(e^{s\wo}(\lz))\\
	& \stackrel{\eqref{defvoperp}}{=} -\vodel\cP_{t-s}(f\circ e^{-t\voperp})(e^{s\wo}(\lz)) - \sum_{i=1}^d V_{i}^2\cP_{t-s}(f\circ e^{-t\voperp})(e^{s\wo}(\lz))\\
	&=-\cV_{0,s}\zQ_{s,t}f(\lz) - \sum_{i=1}^d\cV_{i,s}\zQ_{s,t}f(\lz) =-\mathcal{L}_s\zQ_{s,t}f(\lz).
	\end{align*}
	
	Now by a density argument analogous to the one in the proof of Lemma \ref{lem:denistyargforvs} we obtain the result for $f\in C_b(\R^N)$. To prove the result for  $g\in C_b(\Sbar)$ we may apply the Tietze Extension Theorem, see \cite[Chapter 2 Theorem 5.4]{Gamelin}, to extend $g$ to a function $f\in C_b(\R^N)$ such that $f=g$ on $\Sbar$ (this is where we need $g$ to be continuous up to and including the closure of $S_{x_0}$, as functions that are continuous on open sets don't necessarily admit a continuous extension to the whole $\R^N$, i.e. Tietze Extension Theorem would not apply). Since $\cZ_t$ takes values in $\So$ for every $t\geq 0$ we have that $\zQ_{s,t}g(\lz)=\zQ_{s,t}f(\lz)$ for any $\lz\in\Sbar$, hence the claim follows. 
\end{proof}

\begin{proof}[Proof of Proposition \ref{prop:limitmeasureonmanifold}]
	By Hypothesis \ref{hyp:gennonautoassumptions} \ref{item:gentightnessofp}, the family of measures $\{p_t^x\}_{t\geq 0}$ is tight and hence, by Prokhorov's Theorem,  there exist a measure $\overline{\mu}^{S}$ and a diverging sequence $\{t_k\}_k$ such that  $p_{t_k}^x$ converges weakly to $\overline{\mu}^{S}$ as $t_k\nearrow\infty$. Note that in general the sequence $t_k$ and the measure $\overline{\mu}^{S}$ may depend on the choice of $x\in \So$; however, by Lemma \ref{prop:pointwiselimit}, $p_{t_k}^x(\cdot)= (\cP_t{\mathbbm{1}_{\{\cdot\}}})(x)$ converges weakly to $\overline{\mu}^{S}$ for any choice of $x\in \capitals$. 
	We now show that such a convergence is also independent of the choice of divergent sequence. Let $s_k$ be a sequence such that $s_k \nearrow \infty$ and fix  $f\in C_b(\R^N)$ and $x\in S$; then
	\begin{align*}
	\cP_{t_k}f(x)-\cP_{s_k}f(x) = \int_{s_k}^{t_k} \partial_t\cP_tf(x) dt =  \int_{s_k}^{t_k} \mathcal{L}\cP_tf(x) dt.
	\end{align*}
	By \eqref{eq:graddecayest} (and \eqref{seconorderdecayeq}) there exists a constant $C=C(t_0,x)>0$ such that for all $t> t_0$ we have
	\begin{align*}
	\lv\vodel\cP_tf(x)\rv\leq C(t_0,x)\lVert f\rVert_\infty e^{-\lambda t},\quad 
	\lv V_i^2\cP_tf(x)\rv\leq C(t_0,x)\lVert f\rVert_\infty e^{-\lambda t}.
	\end{align*}
	Using that $\voperp=0$ we have that there exists a constant $C=C(t_0,x)>0$ such that
	\begin{equation*}
	\lv\mathcal{L}\cP_tf(x)\rv\leq C(t_0,x)\lVert f\rVert_\infty e^{-\lambda t}, \quad \text{ for all } t>t_0. 
	\end{equation*}
	
	Therefore 
	\begin{align*}
	\lv\cP_{t_k}f(x)-\cP_{s_k}f(x)\rv \leq C(x,t_0)\lVert f\rVert_\infty \lv\int_{s_k}^{t_k} e^{-\lambda t} dt \rv= \frac{C(x,t_0)\lVert f\rVert_\infty}{\lambda} \lv e^{-\lambda t_k}-e^{-\lambda s_k}  \rv.
	\end{align*}
	Letting $k$ tend to $\infty$ we have that $\lv\cP_{t_k}f(x)-\cP_{s_k}f(x)\rv$ vanishes in the limit and hence $\cP_{s_k}f(x)$ converges to $\overline{\mu}^{S}(f)$. Therefore $\cP_tf(x)$ converges to $\overline{\mu}^{S}(f)$ as $t$ tends to $\infty$.
	
	To show that $\overline{\mu}^{S}$ is an invariant measure take an arbitrary $s>0$ and $f\in C_b(\R^N)$;  then
	\begin{equation*}
	\overline{\mu}^{S}(\cP_s(f)) = \lim_{t\to\infty}\cP_{t}\cP_sf(x) = \lim_{t\to\infty}\cP_{t+s}f(x)= \lim_{t\to\infty}\cP_{t}f(x) = \overline{\mu}^{S}(f), \quad \mbox{for every } s \geq 0 \,.
	\end{equation*}
Hence $\overline{\mu}^{S}$ is an invariant measure. To show that the convergence is uniform on compact subsets of $S$ we apply Arzela-Ascoli. Indeed fix a compact set $K\subseteq \capitals$ then it is sufficient to show that $\cP_tf(x)$ has bounded derivatives uniformly in $t$ on $K$. However $x\mapsto \cP_tf(x)$ is differentiable in the directions $V_{[\alpha]}$  for all $\alpha\in \mathcal{A}$ which span the tangent space of $\capitals$ and, by the Obtuse Angle Condition, Assumption \ref{item:OACgen}, we have for all $t> t_0$ that \eqref{eq:graddecayest} holds. Hence we have that $\cP_{t}f(x)$ converges to $\overline{\mu}^{S}(f)$ uniformly on compact subsets of $\capitals$. Note that since \eqref{eq:muSbarlimit} holds for all $f\in C_b(\R^N)$, there is at most one measure satisfying \eqref{eq:muSbarlimit}.
\end{proof}

We now move on  to prove Lemma \ref{lem:limofnut} and Lemma \ref{lem:limofpt}, which are the backbone of the proof of  Theorem \ref{thm:mainlocalthm}).  Throughout this section, for any $f\in C_b(\Sinftybar)$, we let 
\begin{equation}\label{eq:fhat}
\hat{f}=f\circ \Wlim.
\end{equation}
In order to prove  Lemma \ref{lem:limofnut} we first state and prove the following two results.
\begin{lemma}\label{lem:semigpconvergetosemigpgen}
	Let Hypothesis \ref{hyp:gennonautoassumptions} \ref{item:UFGgen}, \ref{item:LipschitzVoperp}, \ref{item:genODEconv} and \ref{item:conv} hold. For any fixed $f\in C_b(\Sinftybar)$, define $\hat{f}$ as in \eqref{eq:fhat}. Then for any compact $K\subseteq \So$ and $T>0$ we have
	\begin{equation}\label{eq:semigpconvergetosemigpgen}
	\zQ_{t-s,t}\hat{f}(\lz) \to \cP_{s}\hat{f}(\lim_{\tau\to\infty}e^{\tau\voperp}(\lz))
	\end{equation}
	uniformly for $s\in [1/T,T]$ and $\lz\in K$, whenever $\lim_{\tau\to\infty}e^{\tau\voperp}(\lz)$ exists for all $\lz\in K$.
\end{lemma}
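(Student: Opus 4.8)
The statement to prove is Lemma \ref{lem:semigpconvergetosemigpgen}: under the stated hypotheses, for $\hat f = f\circ \Wlim$ with $f\in C_b(\Sinftybar)$, one has $\zQ_{t-s,t}\hat f(\lz)\to \cP_s\hat f(\lim_{\tau\to\infty}e^{\tau\voperp}(\lz))$ uniformly for $s$ in a compact subinterval of $(0,\infty)$ and $\lz$ in a compact subset $K\subseteq \So$. The plan is to mirror the structure of the proof of Lemma \ref{lem:semigpconvergetosemigp} in Section \ref{sec:non-autonomous}, replacing the explicit ODE flow $\zeta_t$ there by the general flow $e^{t\voperp}$ and the hyperplane structure by the integral manifold structure provided by Section \ref{sec:geomofUFGprocesses}.

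\emph{Step 1: rewrite the two-parameter semigroup in terms of $\cP$.} Using the identity \eqref{eq:autonomoustonon}, namely $\zQ_{s,t}g(\lz) = \cP_{t-s}(g\circ e^{-t\wo})(e^{s\wo}(\lz))$, applied with $s\rightsquigarrow t-s$, $t\rightsquigarrow t$ and $g=\hat f$, we get
\begin{equation*}
\zQ_{t-s,t}\hat f(\lz) = \cP_{s}\bigl(\hat f\circ e^{-t\voperp}\bigr)\bigl(e^{(t-s)\voperp}(\lz)\bigr).
\end{equation*}
Now the key observation is that $\hat f = f\circ \Wlim$ is, by construction, \emph{invariant under the flow of} $\voperp$: indeed, by \eqref{eq:extensionofWlim} in Lemma \ref{lem:continuityofWlim}, $\Wlim(e^{r\voperp}(x)) = \Wlim(x)$ for every $r\in\R$ and every $x$ in the domain, so $\hat f\circ e^{-t\voperp} = \hat f$ on $\Sbar\cup\curlys_{x_0}$ (which is where we need it, by Hypothesis \ref{hyp:gennonautoassumptions} \ref{item:genODEconv} the domain of $\Wlim$ contains $\Sbar$, and $e^{(t-s)\voperp}(\lz)\in\curlys_{x_0}$ since $\lz\in\So$). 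Hence
\begin{equation*}
\zQ_{t-s,t}\hat f(\lz) = \cP_s\hat f\bigl(e^{(t-s)\voperp}(\lz)\bigr).
\end{equation*}

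\emph{Step 2: pass to the limit.} Fix $\lz\in K$. By Hypothesis \ref{hyp:gennonautoassumptions} \ref{item:genODEconv}, $\Wlim(\lz) = \lim_{\tau\to\infty}e^{\tau\voperp}(\lz)$ exists, and since $e^{(t-s)\voperp}(\lz) = e^{(t-2s)\voperp}(e^{s\voperp}(\lz))\to \Wlim(e^{s\voperp}(\lz)) = \Wlim(\lz)$ as $t\to\infty$ (uniformly in $s\in[1/T,T]$, because the tail of a convergent integral curve past a large time is controlled, and $s$ ranges over a compact set), while $\cP_s\hat f$ is a \emph{continuous} function of its spatial argument (it is $C_b$, since $\hat f\in C_b$ by continuity of $\Wlim$ on $\Sbar\cup\curlys_{x_0}\cup\Sinfty$ in Hypothesis \ref{item:genODEconv} and Lemma \ref{lem:continuityofWlim}), we obtain the pointwise limit $\zQ_{t-s,t}\hat f(\lz)\to \cP_s\hat f(\Wlim(\lz))$. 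To upgrade pointwise to uniform convergence on $K\times[1/T,T]$, I would apply the Arzelà--Ascoli argument exactly as in the proof of Lemma \ref{lem:semigpconvergetosemigp}: set $v_t(\lz,s):=\zQ_{t-s,t}\hat f(\lz) = \cP_s\hat f(e^{(t-s)\voperp}(\lz))$, note by Proposition \ref{lemTie} (or directly from the chain rule and the short-time smoothing estimates in Appendix \ref{app:UFG} \ref{item:shorttime}) that $v_t$ is differentiable in $\lz$ along the directions $\cV_{[\alpha],s}$ and in $s$, with derivatives bounded on $K\times[1/T,T]$ uniformly in $t$ — here one uses that $e^{(t-s)\voperp}(K)$ stays in a fixed compact set (the integral curves from $K$ converge, so their union over $t\geq 0$ together with the limit points is precompact), and that the spatial derivatives $V_{[\alpha]}\cP_s\hat f$ obey the uniform short-time bound $\sup_{x\in K'}|V_{[\alpha]}\cP_s\hat f(x)|\leq C(K')e^{\omega s}s^{-\|\alpha\|/2}\|\hat f\|_\infty$ for $s\in[1/T,T]$. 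Since the $\cV_{[\alpha],s}$ locally span the tangent directions to $\So$, the full gradient of $v_t$ in $\lz$ is controlled. Then every subsequence of $(v_t)$ has a further subsequence converging uniformly on $K\times[1/T,T]$, and the pointwise limit forces all these to equal $(\lz,s)\mapsto\cP_s\hat f(\Wlim(\lz))$, giving uniform convergence.

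\emph{Main obstacle.} The delicate point is establishing the uniform-in-$t$ bounds on the derivatives of $v_t$ needed for Arzelà--Ascoli: one must check that $\{e^{(t-s)\voperp}(\lz): t\geq 0,\ s\in[1/T,T],\ \lz\in K\}$ has compact closure (so that the short-time smoothing constants $C(K'),\omega(K')$ can be chosen once and for all), and that composing the spatial derivative estimates for $\cP_s\hat f$ with the Jacobian of $e^{(t-s)\voperp}$ — whose norm is controlled by Hypothesis \ref{item:LipschitzVoperp} on compact time intervals but could a priori blow up as $t\to\infty$ — does not destroy the bound. The resolution is that we differentiate $\cP_s\hat f\circ e^{(t-s)\voperp}$ only in directions \emph{tangent to} $\So$, and by the invariance key observation \eqref{keyobs} these directions are mapped by the Jacobian of $e^{(t-s)\voperp}$ back into $\deln$-directions on a fixed compact set, so one never sees the potentially unbounded transverse growth; combined with the compactness of the trajectory set this yields the uniform bound. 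Everything else is a routine adaptation of the arguments already carried out in Appendix \ref{app:proofsofnonautosec}.
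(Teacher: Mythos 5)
Your proposal is correct and follows essentially the same route as the paper: rewrite $\zQ_{t-s,t}\hat f$ via \eqref{eq:autonomoustonon}, use the flow-invariance $\Wlim\circ e^{-t\voperp}=\Wlim$ (implicit in the paper's terse pointwise step) together with continuity of $\cP_s\hat f$ for the pointwise limit, and then upgrade to uniform convergence by Arzel\`a--Ascoli using the identity \eqref{eq:autodifftononautodiff} (equivalently \eqref{eq:relatedvectorfields}, which is exactly your observation that differentiating along $\cV_{[\alpha],\cdot}$ bypasses the Jacobian of the flow) and the short-time estimates on the compact set $K'=\bigcup_{\tau\geq -T}e^{\tau\voperp}(K)$, whose precompactness both you and the paper obtain from convergence of the integral curves and continuity of $\Wlim$. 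No substantive gap beyond the domain/boundary technicalities that the paper itself glosses over.
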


\begin{proof}[Proof of Lemma \ref{lem:semigpconvergetosemigpgen}]
	The proof is analogous to the proof of Lemma \ref{lem:semigpconvergetosemigp}, so we only sketch it and point out the main differences. Note that $\cP_t\hat{f}$ is continuous, using \eqref{eq:autonomoustonon} we have that \eqref{eq:semigpconvergetosemigpgen} holds pointwise. 
	To obtain convergence uniform on compact subsets of $\So\times (0,\infty)$, we use Arzela-Ascoli and the following estimate.

	Fix a compact set $K\subseteq \So$ and $T>0$ then using \eqref{eq:autodifftononautodiff} and the short time estimates from \cite[Corollary 3.13]{Nee} (which have been recalled in \ref{item:shorttime}), there exists some constant $C(K,T)$ such that the following holds:
	\begin{align*}
	\sup_{\lz \in K,s\in[1/T,T]}\lvert \cV_{[\alpha],t-s}(\zQ_{t-s,t}\hat{f})(\lz)\rvert & \stackrel{=}\sup_{\lz \in K,s\in[1/T,T]} \lvert V_{[\alpha]}(\cP_{s}(\hat{f}\circ e^{-t\voperp}))(e^{(t-s)\voperp}(\lz))\rvert\\
	&=\sup_{s\in[1/T,T],x \in e^{(t-s)\voperp}(K)} \lvert V_{[\alpha]}(\cP_{s}(\hat{f}\circ e^{-t\voperp}))(x)\rvert\\
	&=\sup_{s\in[1/T,T],x \in K'} \lvert V_{[\alpha]}(\cP_{s}(\hat{f}\circ e^{-t\voperp}))(x)\rvert\\
	&\leq C(K',T) \lVert \hat{f}\rVert_{\infty};
	\end{align*}
	here $K'$ is defined as $K'=\bigcup_{\tau\geq-T}e^{\tau\voperp}(K)$.
The set $K'$ is compact under out assumptions, as for each $\tau$ the diffeomorphism $x \rar e^{\tau\voperp}(x)$ is a continuous function, the curve $e^{\tau\voperp}x$ is convergent and the map $\Wlim$ is assumed continuous. 
	%
	%
\end{proof}

Define $\mu_k$ to be the probability measure such that $\nu_{t_\ell-k}$ converges weakly to $\mu_k$, this measure is constructed analogously to the comment above Lemma \ref{lem:invarianceofmuk}.

\begin{lemma}\label{lem:invarianceofmukgen}
	Let Hypothesis \ref{hyp:gennonautoassumptions} \ref{item:UFGgen} and \ref{item:genODEconv} hold, and assume the semigroup $\{\zQ_{s,t}\}_{s\leq t}$ admits a tight evolution system of measures $\{\nu_t\}_{0\leq t}$ supported on $\So$. Then
	\begin{equation*}
	\int_{\Sinftybar} \cP_{k}\hat{f}(x) \, (\mu_k\circ (\Wlim)^{-1})(dx) = \int_{\Sinftybar} f(x) \, (\mu_0\circ (\Wlim)^{-1})(dx),
	\end{equation*}
	for any $f\in C_b(\Sinftybar)$ and $\hat{f}$ defined as in \eqref{eq:fhat}.
\end{lemma}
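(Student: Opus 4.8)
\textbf{Proof plan for Lemma \ref{lem:invarianceofmukgen}.}

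The plan is to mimic the proof of Lemma \ref{lem:invarianceofmuk}, but transport everything through the map $\Wlim$. First I would record the two facts that make the argument work: by Lemma \ref{lem:continuityofWlim} the map $\Wlim$ is continuous on $\curlys_{x_0}$, hence on $\So$, and by Hypothesis \ref{hyp:gennonautoassumptions} \ref{item:genODEconv} we have $\Wlim(\So)\subseteq\Sinfty$, so that for $f\in C_b(\Sinftybar)$ the function $\hat f=f\circ\Wlim$ (extended suitably) is a genuine element of $C_b$ to which the semigroup identities of Proposition \ref{lemTie} and the short-time/decay estimates apply. The key identity I want is the ``intertwining'' relation
\begin{equation*}
\zQ_{t-k,t}\hat f(\lz) \;\longrightarrow\; \cP_{k}\hat f\big(\Wlim(\lz)\big) \qquad (t\to\infty),
\end{equation*}
uniformly for $\lz$ in compacts of $\So$, which is precisely the content of Lemma \ref{lem:semigpconvergetosemigpgen}. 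This is the analogue, in the present setting, of Lemma \ref{lem:semigpconvergetosemigp}, and it replaces the ``freeze the ODE at its limit'' step used in Section \ref{sec:non-autonomous}.

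With these tools in hand I would argue as follows. Fix $f\in C_b(\Sinftybar)$ and set $\hat f=f\circ\Wlim$. Using that $\{\nu_t\}$ is an evolution system of measures supported on $\So$, for $\ell$ large enough (so $t_\ell>k$) one has
\begin{equation*}
\int_{\So}\zQ_{t_\ell-k,t_\ell}\hat f(\lz)\,\nu_{t_\ell-k}(d\lz)=\int_{\So}\hat f(\lz)\,\nu_{t_\ell}(d\lz).
\end{equation*}
On the right-hand side, since $\nu_{t_\ell}\to\mu_0$ weakly and $\hat f$ is bounded continuous, the limit is $\int_{\So}\hat f\,d\mu_0=\int_{\Sinftybar}f\,d(\mu_0\circ(\Wlim)^{-1})$ by the change-of-variables formula for pushforward measures. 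For the left-hand side I would split, exactly as in the proof of Lemma \ref{lem:invarianceofmuk}, into
\begin{equation*}
\int_{\So}\Big(\zQ_{t_\ell-k,t_\ell}\hat f(\lz)-\cP_k\hat f(\Wlim(\lz))\Big)\nu_{t_\ell-k}(d\lz)
+\int_{\So}\cP_k\hat f(\Wlim(\lz))\,\nu_{t_\ell-k}(d\lz)=:I_{1,\ell}+I_{2,\ell}.
\end{equation*}
The term $I_{2,\ell}$ converges to $\int_{\So}\cP_k\hat f(\Wlim(\lz))\,\mu_k(d\lz)=\int_{\Sinftybar}\cP_k\hat f(x)\,d(\mu_k\circ(\Wlim)^{-1})(x)$, again by weak convergence $\nu_{t_\ell-k}\to\mu_k$ and continuity of $\lz\mapsto\cP_k\hat f(\Wlim(\lz))$ (here I use that $\Wlim$ is continuous and $\cP_k\hat f$ is continuous). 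The term $I_{1,\ell}$ is handled by tightness of $\{\nu_t\}$ together with the uniform-on-compacts convergence from Lemma \ref{lem:semigpconvergetosemigpgen}: given $\varepsilon>0$ pick a ball $B_r\subseteq\So$ with $\nu_{t_\ell-k}(B_r)\ge 1-\varepsilon$ for all $\ell$, then for $\ell$ large the integrand is $\le\varepsilon$ on $B_r$ and $\le 2\|\hat f\|_\infty$ off $B_r$, so $|I_{1,\ell}|\le\varepsilon+2\|\hat f\|_\infty\varepsilon$; letting $\ell\to\infty$ and then $\varepsilon\to 0$ gives $I_{1,\ell}\to 0$. Equating the two limits yields the claimed identity.

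The main obstacle, and the place requiring genuine care rather than bookkeeping, is justifying that $\lz\mapsto\cP_k\hat f(\Wlim(\lz))$ is a bounded continuous function on $\So$ (so that $I_{2,\ell}$ really converges against the weak limit) and, more delicately, that the convergence in Lemma \ref{lem:semigpconvergetosemigpgen} is genuinely uniform on compacts — this uses the continuity of $\Wlim$ on the \emph{closure} $\Sbar$ (the last clause of Hypothesis \ref{hyp:gennonautoassumptions} \ref{item:genODEconv}) to guarantee that sets of the form $\bigcup_{\tau\ge -T}e^{\tau\voperp}(K)$ are compact, and the Arzel\`a--Ascoli argument then relies on the short-time gradient bounds of Appendix \ref{app:UFG} \ref{item:shorttime} combined with the obtuse-angle decay estimates \eqref{eq:graddecayest}. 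Everything else is a faithful transcription of the argument already given for Lemma \ref{lem:invarianceofmuk}.
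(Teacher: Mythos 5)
Your proposal is correct and follows essentially the same route as the paper's proof: the evolution-system identity, weak convergence of $\nu_{t_\ell}$ transported through the continuous map $\Wlim$ (the paper phrases this via the continuous mapping theorem for the pushforward measures, you via change of variables — the same thing), the splitting into $I_{1,\ell}+I_{2,\ell}$, with $I_{1,\ell}$ controlled by tightness together with the uniform-on-compacts convergence of Lemma \ref{lem:semigpconvergetosemigpgen} and $I_{2,\ell}$ by weak convergence of $\nu_{t_\ell-k}$ to $\mu_k$. The only cosmetic difference is that the paper additionally notes that $\voperp=0$ on the image of $\Wlim$, so that $\cP_k\hat f=\cP_k f$ there, in order to rewrite $I_{2,\ell}$ entirely in terms of $f$ on $\Sinftybar$; your version keeps $\hat f$ throughout, which is equally fine.
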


\begin{proof}[Proof of Lemma \ref{lem:invarianceofmukgen}]
	This proof is completely analogous to the proof of Lemma \ref{lem:invarianceofmuk}, so we only point out the main differences. It suffices to prove the following two expressions
	\begin{align}
	\int_{\Sbar} \hat{f}(\lz) \mu_0(d\lz) =\lim_{\ell\to\infty}\int_{\Sbar}\zQ_{t_\ell-k,t_\ell}\hat{f}(\lz) \nu_{t_\ell-k}(d\lz) &= \int_{\Sbar} \cP_k\hat{f}(\Wlim(\lz)) \mu_k(d\lz),\label{eq:weaklimandevosys}
	\end{align}
	compare to \eqref{fact1} for comparison. Let us start with the first equality in \eqref{eq:weaklimandevosys}. Since $\{\nu_{t}\}_{t\geq 0}$ is an evolution system of measures we have
	\begin{equation*}
	\int_{\Sbar}\zQ_{t_\ell-k,t_\ell}\hat{f}(\lz) \nu_{t_\ell-k}(d\lz) = \int_{\Sbar}\hat{f}(\lz) \nu_{t_\ell}(d\lz).
	\end{equation*}
	Since $\nu_{t_\ell}$ converges weakly to $\mu_0$ and $\Wlim$ is a continuous map from $\overline{S}_{x_0}$ to $\R^N$, by the continuous mapping theorem we have that $\nu_{t_\ell}\circ(\Wlim)^{-1}$ converges weakly to $\mu_0\circ(\Wlim)^{-1}$ and hence we obtain \eqref{eq:weaklimandevosys}. To prove the second equality in \eqref{eq:weaklimandevosys} like in the proof of Lemma \ref{lem:invarianceofmuk} we write
	\begin{align*}
	\int_{\Sbar}\zQ_{t_\ell-k,t_\ell}\hat{f}(\lz) \nu_{t_\ell-k}(d\lz) - &\int_{\Sbar} \cP_{k}\hat{f}(\Wlim (\lz))\mu_k(d\lz) \\=&\int_{\Sbar}\left(\zQ_{t_\ell-k,t_\ell}\hat{f}(x)-\cP_k\hat{f}(\Wlim (x))\right) \nu_{t_\ell-k}(d\lz) \\
	&+\int_{\Sbar} \cP_t\hat{f}(\Wlim (\lz))\nu_{t_\ell-k}(d\lz)- \int_{\Sbar} \cP_t\hat{f}(\Wlim (\lz))\mu_k(d\lz)\\
	=&I_{1,\ell}+I_{2,\ell},
	\end{align*}	
	having set
	\begin{align*}
	I_{1,\ell}&:=\int_{\Sbar}\left(\zQ_{t_\ell-k,t_\ell}\hat{f}(\lz)-\cP_k\hat{f}(\Wlim (\lz))\right) \nu_{t_\ell-k}(d\lz)\\
	I_{2,\ell}&:=\int_{\Sbar} \cP_k\hat{f}(\Wlim (\lz))\nu_{t_\ell-k}(d\lz)- \int_{\Sbar} \cP_k\hat{f}(\Wlim (\lz))\mu_k(d\lz).
	\end{align*}
	Observe that on the image of $\Wlim$ we have $\voperp=0$, by Hypothesis \ref{hyp:gennonautoassumptions} \ref{item:genODEconv}, and hence $\cP_k\hat{f}(\Wlim(\lz))=\cP_kf(\Wlim(\lz))$ therefore we can rewrite $I_{2,\ell}$ as
	\begin{equation*}
	I_{2,\ell}=\int_{\Sinftybar} \cP_kf(\lz)\left(\nu_{t_\ell-k}\circ (\Wlim)^{-1}\right)(d\lz)- \int_{\Sinftybar} \cP_k\hat{f}(\Wlim (\lz))\left(\mu_k\circ (\Wlim)^{-1}\right)(d\lz).
	\end{equation*}
	Now $I_{2,\ell}$ converges to $0$ as $\ell\to \infty$ since $\nu_{t_\ell-k} \circ (\Wlim)^{-1}$ converges weakly to $\mu_k\circ(\Wlim)^{-1}$. The term $I_{1,\ell}$ can be studied analogously to what we have done in the proof of Lemma \ref{lem:invarianceofmuk}, up to modifications in the same spirit of those made so far, so we omit the details.
	
\end{proof}

\begin{lemma}\label{lem:limofnut}
Suppose Hypothesis \ref{hyp:gennonautoassumptions} holds. Let $\mulim$ be defined as in the comment above Theorem \ref{thm:mainlocalthm}. Then $\mu_0\circ (\Wlim)^{-1} = \mulim$.
\end{lemma}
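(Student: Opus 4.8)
The plan is to follow the same scheme as the proof of Lemma \ref{lem:convofevolsystemtoequilibria}, with the one-parameter semigroup $\cP_k$ restricted to the ``equilibrium manifold'' $\Sinfty$ playing the role that $\barQ_k$ played there, and Proposition \ref{prop:limitmeasureonmanifold} playing the role of Lemma \ref{lem:ergbarZ}. Recall that, by Lemma \ref{lem:invarianceofmukgen}, for every $f\in C_b(\Sinftybar)$ and with $\hat f := f\circ\Wlim$ we have
\begin{equation*}
\int_{\Sinftybar}\cP_k\hat f(x)\,(\mu_k\circ(\Wlim)^{-1})(dx) \;=\; \int_{\Sinftybar} f(x)\,(\mu_0\circ(\Wlim)^{-1})(dx), \qquad k\in\N .
\end{equation*}
Hence it suffices to prove that the left-hand side converges, as $k\to\infty$, to $\mulim(f)$: since this would hold for every $f\in C_b(\Sinftybar)$, the two finite Borel measures $\mu_0\circ(\Wlim)^{-1}$ and $\mulim$ would agree on $C_b(\Sinftybar)$ and therefore coincide.

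First I would exploit that $\voperp=0$ on $\Sinfty$ (Hypothesis \ref{hyp:gennonautoassumptions} \ref{item:genODEconv}). By Lemma \ref{lemmavoonmanifold}, $\voperp$ then vanishes on the whole integral manifold $S_x=\Sinfty$ for any $x\in\Sinfty$, so $e^{t\voperp}(x)=x$ and, by Proposition \ref{prop:SDElivesonsubmanifold}, $X_t^{(x)}\in\overline{S_{e^{t\voperp}(x)}}=\Sinftybar$ for all $t\geq0$. Since $\Wlim$ restricts to the identity on $\Sinfty$, the functions $\hat f$ and $f$ agree on $\Sinftybar$, whence $\cP_k\hat f(x)=\cP_k f(x)$ for every $x\in\Sinfty$. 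Applying Proposition \ref{prop:limitmeasureonmanifold} to the manifold $\Sinfty$ (on which $\voperp=0$), we get that $\cP_k f(x)\to\overline\mu^{\Sinfty}(f)=\mulim(f)$ as $k\to\infty$, uniformly for $x$ in compact subsets of $\Sinfty$.

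Next I would pass the limit through the integral. The family $\{\mu_k\}_{k\in\N}$ is tight, being built from weak limits of the tight family $\{\nu_t\}_{t\geq0}$ (cf.\ the construction before Lemma \ref{lem:invarianceofmuk}); consequently $\{\mu_k\circ(\Wlim)^{-1}\}_{k\in\N}$ is tight as well, because if $K_\varepsilon$ is compact with $\mu_k(K_\varepsilon)\geq1-\varepsilon$ for all $k$, then $\Wlim(K_\varepsilon)$ is compact (continuity of $\Wlim$ on $\Sbar\cup\curlys_{x_0}\cup\Sinfty$) and $(\mu_k\circ(\Wlim)^{-1})(\Wlim(K_\varepsilon))\geq1-\varepsilon$ for all $k$. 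Moreover these pushforward measures are supported in $\Sinftybar$. Fixing $\varepsilon>0$, choosing a compact $K\subseteq\Sinfty$ with $(\mu_k\circ(\Wlim)^{-1})(K)\geq1-\varepsilon$ for all $k$, and then $k$ large enough that $\sup_{x\in K}|\cP_k f(x)-\mulim(f)|\leq\varepsilon$, splitting the integral over $K$ and its complement yields
\begin{equation*}
\left|\int_{\Sinftybar}\cP_k f(x)\,(\mu_k\circ(\Wlim)^{-1})(dx)-\mulim(f)\right|\;\leq\;\varepsilon+2\|f\|_\infty\,\varepsilon ,
\end{equation*}
exactly as in the proof of Lemma \ref{lem:convofevolsystemtoequilibria}. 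Since $\cP_k\hat f=\cP_k f$ on the support of $\mu_k\circ(\Wlim)^{-1}$, the left-hand side of the displayed identity therefore converges to $\mulim(f)$, finishing the argument.

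The step I expect to be the main obstacle is not the analytic limit-passing, which is routine once uniform-on-compacts convergence and tightness are in hand, but the geometric bookkeeping that legitimises replacing $\hat f$ by $f$: one must verify carefully that a solution of \eqref{SDE} started on $\Sinfty$ cannot leave $\Sinftybar$ in finite time, that the pushforward family $\{\mu_k\circ(\Wlim)^{-1}\}$ is genuinely tight and $\Sinftybar$-supported, and that $\mulim$ integrates $\hat f$ and $f$ to the same value. Each of these rests on the assumed continuity of $\Wlim$ on $\Sbar\cup\curlys_{x_0}\cup\Sinfty$, on Lemma \ref{lemmavoonmanifold}, and on Proposition \ref{prop:SDElivesonsubmanifold}, so the work is in checking the hypotheses of those results apply here rather than in any genuinely new estimate.
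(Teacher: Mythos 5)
Your argument is exactly the one the paper intends: it omits the details of this lemma precisely because the proof is "completely analogous" to Lemma \ref{lem:convofevolsystemtoequilibria}, and your write-up carries out that analogy with the correct substitutions (Lemma \ref{lem:invarianceofmukgen} in place of Lemma \ref{lem:invarianceofmuk}, Proposition \ref{prop:limitmeasureonmanifold} on $\Sinfty$ in place of Lemma \ref{lem:ergbarZ}, and tightness of the pushforwards $\mu_k\circ(\Wlim)^{-1}$), including the key observation that $\voperp=0$ on $\Sinfty$ lets you replace $\hat f$ by $f$ there. So the proposal is correct and takes essentially the same route as the paper.
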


\begin{proof}[Proof of Lemma \ref{lem:limofnut}] This proof is completely analogous to the proof of Lemma \ref{lem:convofevolsystemtoequilibria} so we omit the details.
\end{proof}

\begin{lemma}\label{lem:limofpt}
	Assume Hypothesis \ref{hyp:gennonautoassumptions} holds, let $x_0$ be an arbitrary point in $\mathcal{I}_0(\overline{x})$ and let $\{\nu_t\},\mu_0$ be constructed as in the proof of Theorem \ref{thm:mainlocalthm}. Let $\{t_\ell\}$ be a divergent sequence such that $p_{t_\ell}^{x_0}$ converges weakly to some probability measure $\nu^{x_0}$. Then $\nu^{x_0}\vert_{\Sinftybar}=\mu_0\circ (\Wlim)^{-1}$. 
\end{lemma}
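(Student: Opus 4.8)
The plan is to identify the law $p_t^{x_0}$ of $X_t^{(x_0)}$ with the push-forward, under the flow $e^{t\voperp}$, of the law $\nu_t$ of $\cZ_t$, and then to let $t\to\infty$ followed by a second limit. Throughout fix $f\in C_b(\R^N)$; recall that $\cZ_0=x_0$, that $X_t^{(x_0)}=e^{t\voperp}(\cZ_t)$, that $\nu_t=\zQ_{0,t}^\ast\delta_{x_0}=\fq_t^{0,x_0}$ is the law of $\cZ_t$, and that by the second bullet of Note \ref{noteonhyplocal} $\nu_t$ is supported on $\So$ for every $t\ge0$. The first step is algebraic: for fixed $s>0$ and $t>s$, writing $\cP_tf(x_0)=\zQ_{0,t-s}\bigl[\zQ_{t-s,t}(f\circ e^{t\voperp})\bigr](x_0)$ and using \eqref{eq:autonomoustonon} together with $(f\circ e^{t\voperp})\circ e^{-t\voperp}=f$, one gets $\zQ_{t-s,t}(f\circ e^{t\voperp})(z)=(\cP_sf)(e^{(t-s)\voperp}(z))$, hence
\begin{equation*}
\int_{\R^N}f\,dp_t^{x_0}=\cP_tf(x_0)=\int_{\So}(\cP_sf)\bigl(e^{(t-s)\voperp}(z)\bigr)\,\nu_{t-s}(dz),\qquad t>s.
\end{equation*}

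Next I would establish that, for each fixed $s>0$ and each compact $K\subseteq\So$, the maps $z\mapsto(\cP_sf)(e^{(t-s)\voperp}(z))$ converge, uniformly on $K$ as $t\to\infty$, to $z\mapsto(\cP_sf)(\Wlim(z))$ (the latter is well defined since $\So\subseteq\mathrm{Dom}(\Wlim)$ by Hypothesis \ref{item:genODEconv}, and continuous on $\Sbar$ by the same hypothesis and Lemma \ref{lem:continuityofWlim}). Pointwise convergence is clear from the continuity of $\cP_sf$; for the uniformity I would reuse the Arzela--Ascoli argument of Lemma \ref{lem:semigpconvergetosemigpgen}: by \eqref{eq:relatedvectorfields} one has $\cV_{[\alpha],t-s}\bigl((\cP_sf)\circ e^{(t-s)\voperp}\bigr)(z)=(V_{[\alpha]}\cP_sf)(e^{(t-s)\voperp}(z))$ for $\alpha\in\mathcal{A}_m$, the short-time smoothing estimates recalled in \ref{item:shorttime} bound $V_{[\alpha]}\cP_sf$ uniformly on the compact set $K'=\bigcup_{\tau\ge0}e^{\tau\voperp}(K)$ (compact by the argument in Lemma \ref{lem:semigpconvergetosemigpgen}), and the vectors $\cV_{[\alpha],t-s}(z)$, $\alpha\in\mathcal{A}_m$, span $\deln(z)$; this yields the required equicontinuity on $K$.

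I would then pass to the limit. By Hypothesis \ref{item:gentightnessofq} the family $\{\nu_t\}_{t\ge0}$ is tight on $\So$, so after passing to a diagonal subsequence of $\{t_\ell\}$ (still denoted $\{t_\ell\}$) we may assume $\nu_{t_\ell-s}\Rightarrow\mu_s$ for every $s\in\mathbb{N}$, with $\{\mu_s\}_{s\in\mathbb{N}}$ tight. Splitting the integral above over a compact $K_\varepsilon$ with $\nu_t(K_\varepsilon)\ge1-\varepsilon$ for all $t$ and over its complement, and combining the uniform convergence of the previous step with the weak convergence $\nu_{t_\ell-s}\Rightarrow\mu_s$ and the continuity of $z\mapsto(\cP_sf)(\Wlim(z))$ on $\Sbar$, one obtains
\begin{equation*}
\int_{\R^N}f\,d\nu^{x_0}=\int_{\Sbar}(\cP_sf)(\Wlim(z))\,\mu_s(dz)\qquad\text{for every }s\in\mathbb{N}.
\end{equation*}

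Finally I would let $s\to\infty$. Since $x_0\in\mathcal{I}_0(\overline x)$ we have $\Wlim(\So)\subseteq\Sinfty$, so $\Wlim(K)$ is a compact subset of $\Sinfty$ for every compact $K\subseteq\So$, and Proposition \ref{prop:limitmeasureonmanifold} applied to $\Sinfty$ (on which $\voperp=0$) gives $(\cP_sf)(\Wlim(z))\to\mulim(f)$, uniformly on compact subsets of $\So$, as $s\to\infty$; the mass that $\mu_s$ may place on $\partial\So=\Sbar\setminus\So$ is handled via Lemma \ref{lem:boundary} and Proposition \ref{thm:meszerosetsunderinvmeas}, which prevent the relevant limiting measures from charging the escaping part of the boundary. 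Using tightness of $\{\mu_s\}$ once more to pass to the limit inside the integral, we conclude $\int_{\R^N}f\,d\nu^{x_0}=\mulim(f)$ for every $f\in C_b(\R^N)$; since $\mulim$ is a probability measure supported on $\Sinftybar$, this shows that $\nu^{x_0}$ is supported on $\Sinftybar$ and $\nu^{x_0}=\mulim$, whence in particular $\nu^{x_0}\vert_{\Sinftybar}=\mulim=\mu_0\circ(\Wlim)^{-1}$, the last equality being Lemma \ref{lem:limofnut}. I expect the main obstacle to be reconciling the two ``moving'' objects, the flow $e^{(t-s)\voperp}$ and the measures $\nu_{t-s}$: it is precisely in order to obtain uniform-on-compacts convergence of the former in a form robust against the latter that one introduces the auxiliary time $s$ and invokes the short-time smoothing estimates, rather than attempting to prove uniform convergence of $e^{t\voperp}$ itself; controlling the boundary mass of the limits $\mu_s$ is the other delicate point.
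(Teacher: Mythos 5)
Your argument is correct in substance, but it follows a genuinely different (and much heavier) route than the paper. The paper's proof is essentially two observations: first, $\nu^{x_0}$ is supported on $\Sinftybar$ because, testing against any $f\in C_b(\R^N)$ that is constant on closures of the $\deln$-leaves, Proposition \ref{prop:SDElivesonsubmanifold} gives $\cP_{t_\ell}f(x_0)=f(e^{t_\ell\voperp}(x_0))\to f(\Wlim(x_0))$; second, for $\hat f=f\circ\Wlim$ one has the invariance $\hat f\circ e^{t\voperp}=\hat f$, so that by \eqref{eq:autonomoustonon} $\cP_{t_\ell}\hat f(x_0)=\zQ_{0,t_\ell}(\hat f\circ e^{t_\ell\voperp})(x_0)=\zQ_{0,t_\ell}\hat f(x_0)=\nu_{t_\ell}(\hat f)\to\mu_0(\hat f)$, which identifies $\nu^{x_0}$ with $\mu_0\circ(\Wlim)^{-1}$ on $\Sinftybar$ with no smoothing estimates, no Proposition \ref{prop:limitmeasureonmanifold}, no second limit, and no use of Lemma \ref{lem:limofnut}. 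You instead disintegrate $p_t^{x_0}$ over $\nu_{t-s}$ via the correct identity $\cP_tf(x_0)=\int_{\So}\cP_sf(e^{(t-s)\voperp}(z))\,\nu_{t-s}(dz)$, re-run the Arzela--Ascoli/uniform-on-compacts machinery of Lemma \ref{lem:semigpconvergetosemigpgen}, and take a double limit ($t\to\infty$, then $s\to\infty$, using Proposition \ref{prop:limitmeasureonmanifold} on $\Sinfty$). This proves the stronger statement $\nu^{x_0}=\mu^{\Sinfty}$ directly -- in effect you fold Steps 2 and 3 of Theorem \ref{thm:mainlocalthm} into a single argument -- and then you recover the stated equality through Lemma \ref{lem:limofnut}. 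That is legitimate (no circularity, since Lemma \ref{lem:limofnut} is proved independently), but it buys the conclusion at the cost of reproducing the work of Lemmas \ref{lem:semigpconvergetosemigpgen}--\ref{lem:limofnut}, whereas the paper's trick $\Wlim\circ e^{t\voperp}=\Wlim$ short-circuits all of it.

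One step as written is misattributed, though easily repaired: you invoke Lemma \ref{lem:boundary} and Proposition \ref{thm:meszerosetsunderinvmeas} to rule out mass of the limits $\mu_s$ on $\partial\So$, but Proposition \ref{thm:meszerosetsunderinvmeas} concerns invariant measures of the SDE \eqref{SDE}, and $\mu_s$ is merely a weak limit of the laws of $\cZ_{t_\ell-s}$, so neither result applies. Fortunately no such argument is needed: Hypothesis \ref{hyp:gennonautoassumptions} \ref{item:gentightnessofq} supplies compact sets $K_\varepsilon\subseteq\So$ with $\nu_t(K_\varepsilon)\geq 1-\varepsilon$ for all $t$, so by the Portmanteau theorem $\mu_s(K_\varepsilon)\geq 1-\varepsilon$ for every $s$, whence $\mu_s(\So)=1$; and in any case the splitting over $K_\varepsilon$ that you already perform bounds the off-compact contribution by $2\lVert f\rVert_\infty\varepsilon$, which is all the final limit requires. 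With that correction your proof goes through at the same level of rigour as the paper's own Appendix \ref{app:proofsoflongtime}.
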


\begin{proof}[Proof of Lemma \ref{lem:limofpt}]
	First we note that $\nu^{x_0}$ and $\mu_0\circ (\Wlim)^{-1}$ are both supported on $\Sinftybar$. Indeed for the measure $\mu_0\circ (\Wlim)^{-1}$ this follows from Lemma \ref{lem:limofnut}. It is sufficient to show that given a function $f\in C_b(\R^N)$ such that $f(x)=f(y)$ whenever $x\in \overline{\capitals}_y$ then $\nu^{x_0}(f)=f(W^\infty(x_0))$. Let $f$ be such a function then by Proposition \ref{prop:SDElivesonsubmanifold} we have
	\begin{equation*}
	\cP_{t_\ell}f(x_0) = \mathbb{E}_{x_0}[f(X_{t_\ell})] = \mathbb{E}_{x_0}[f(e^{t_\ell\voperp}(x_0))] = f(e^{t_\ell\voperp}(x_0)).
	\end{equation*}
	Now letting $\ell$ tend to $\infty$ we have $\nu^{x_0}(f)=f(\Wlim(x_0))$ and hence $\nu^{x_0}$ must be supported on $\Sinftybar$. 
We now show that $\nu^{x_0}$ and $\mu_0\circ(\Wlim)^{-1}$ coincide on $\Sinftybar$. Take a function $f\in C_b(\R^N)$ and let $\hat{f}=f\circ W^\infty$ then it is sufficient to show that $\nu^{x_0}(\hat{f})=\mu_0(\hat{f})$. This follows from
	\begin{align*}
	\nu^{x_0}(\hat{f})&=\lim_{\ell\to\infty} \cP_{t_\ell}\hat{f}(x_0)=\lim_{\ell\to\infty} \zQ_{0,t_\ell}(\hat{f}\circ e^{t_\ell\voperp})(x_0)\\
	&=\lim_{\ell\to\infty} \zQ_{0,t_\ell}\hat{f}(x_0)=\mu_0(\hat{f}).
	\end{align*}
\end{proof}

\subsection{Proofs of Section \ref{sec:8}}\label{sec:proofsofMalliavinsec}

\begin{proof}[Proof of Lemma \ref{lem:formofmalliavinmatrix}]
	Recall that $V_i=(U_i,0)$ for $i=1,\dots,d$ and $V_0=(U_0,W_0)$ where $W_0$ is independent of the variable $z$. By \eqref{eq:MalliavinderivativeSDE} we have
	\begin{equation*}
	D_r^jX_t^{n+1} = D_r^j\zeta_t = \int_r^t \partial_{x^{n+1}} W_0(\zeta_s) D_r^j(\zeta_s) ds. 
	\end{equation*}
	The only solution to this differential equation is $D_r^j\zeta_t=0$. Therefore we have $\mathscr{M}_t^{i,n+1}=0$ and $\mathscr{M}_t^{n+1,j}=0$ for any $i,j=1,\ldots,n+1$. Hence $\mathscr{M}_t$ has the form \eqref{eq:formofmalliavinmatrix}. The $(i,j)^{th}$ entry of the matrix $M_t$ is
	\begin{equation*}
	M_t^{i,j}=\mathscr{M}_t^{ij} = \sum_{k=1}^d\int_0^t D_s^k(X_t^i) D_s^k(X_t^j) ds = \sum_{k=1}^d\int_0^t D_s^k(X_t^i) D_s^k(Z_t^j) ds.
	\end{equation*}
	Therefore $M_t$ is the Malliavin matrix corresponding to $Z_t$.
\end{proof}

\begin{proof}[Proof of Proposition \ref{prop:malliavinmatrixinv}]
	Note that $J_t^{n+1,i}= \frac{\partial}{\partial x^i}\zeta_t=0$ for any $i\in\{1,\ldots,n\}$ therefore $J_t$ has the form
	\begin{equation*}
	J_t=\left(\begin{array}{cc}
	\tilde{J}_t & a\\
	0 &b
	\end{array}\right)
	\end{equation*}
	for some random real numbers $a,b$ and a random $n\times n$ invertible matrix $\tilde{J}_t$. This implies that
	\begin{equation*}
	J_t^{-1}=\left(\begin{array}{cc}
	\tilde{J}_t^{-1} & -\tilde{J}_t^{-1}ab^{-1}\\
	0 &b^{-1}
	\end{array}\right)
	\end{equation*}	
	Now by Lemma \ref{lem:formofmalliavinmatrix} we have that
	\begin{align*}
	\mathscr{C}_t &= J_t^{-1}\mathscr{M}_t(J_t^{-1})^T\\
	&=\left(\begin{array}{cc}
	\tilde{J}_t^{-1} & -\tilde{J}_t^{-1}ab^{-1}\\
	0 &b^{-1}
	\end{array}\right)\left(\begin{array}{cc}
	M_t & 0\\
	0 & 0\\
	\end{array}\right)\left(\begin{array}{cc}
	(\tilde{J}_t^{-1})^T & 0\\
	-\tilde{J}_t^{-1}ab^{-1} &b^{-1}
	\end{array}\right)\\
	&=\left(\begin{array}{cc}
	\tilde{J}_t^{-1}M_t(\tilde{J}_t^{-1})^T & 0\\
	0 & 0\\
	\end{array}\right).
	\end{align*}
	Let $C_t=\tilde{J}_t^{-1}M_t(\tilde{J}_t^{-1})^T$ then it remains to show that $C_t$ is invertible.
	
	It is sufficient to show that $\ker(C_t)=\{0\}$ almost surely, that is if there exists $v(\omega)\in \mathbb{R}^n$ such that $v^TC_tv=0$ implies $v=0$ almost surely. Note that
	\begin{equation*}
	0=v^T(\omega)C_t(\omega)v(\omega) = \sum_{k=1}^d \int_0^t \lvert v^T J_t^{-1}V_k(X_s,Y_s) \rvert^2 ds.
	\end{equation*}
	Therefore if $v(\omega)\in \ker(C_t(\omega))$ then $v$ is orthogonal to the space $K_s:=\operatorname{span}\{J_r^{-1}V_k(X_r):0\leq r\leq s,k=1,\dots,d\}$. Hence it is sufficient to show that $K_s = \mathbb{R}^n$.
	
	Note that the family of vector spaces $\{K_s:s\geq 0\}$ is increasing and set $K_{0+}:=\cap_{s>0} K_s$. By the Blumenthal zero-one law, see Theorem 7.17 in \cite{{KaratzasShreve}}, $K_{0+}$ is a deterministic space with probability one. Define the stopping time
	\begin{equation*}
	\tau=\inf\{s>0:\dim K_s>\dim K_{0+}\}.
	\end{equation*}
	Note that $\tau>0$ with probability one. Let $v$ be orthogonal to $K_{0+}$ and non-zero, then we have $v\perp K_s$ if $s<\tau$, that is,
	\begin{equation*}\label{eq:orthogonalitycond02}
	v^TJ_t^{-1}V_k(X_s,Y_s) = 0, \quad k\in\{1,\dots,d\}, s<\tau.
	\end{equation*}
	This follows since $K_{0+}\subseteq K_s$ for all $s>0$ and for $s<\tau$ we have that $\dim(K_{0+})=\dim(K_s)$.
	
	Recall the set $\mathcal{R}_m$ was defined in \eqref{(R)}, we shall denote by $\distlevel{k}(x)$ to be the vector space spanned by the vectors of $\mathcal{R}_k$ evaluated at the point $x$. 
	
	By following the proof of \cite[Theorem~2.3.2]{Nualart} we obtain that $v$ is orthogonal to $\distlevel{k}(x_0)$ and hence obtain that $\distlevel{k}(x_0)\subseteq K_{0+}$ for all $k\in\mathbb{N}$. By setting $k=m$ we have that $\mathbb{R}^n \subseteq K_{0+} \subseteq K_s \subseteq \mathbb{R}^n$. Therefore $\ker(C_t) =\{0\}$ and we have that $C_t$ is invertible. 
\end{proof}

\thebibliography{10}

\bibitem{AngiuliLorenziLunardi}L. Angiuli, L. Lorenzi, A. Lunardi. {\em Hypercontractivity and asymptotic behaviour in nonautonomous Kolmogorov equations.}

\bibitem{BE} D. Bakry and M.Emery. Diffusions hypercontractives. In: S\'em. de Probab. XIX. Lecture Notes in Math., vol. 1123. Springer, Berlin 1985.

\bibitem{Bakry} D. Bakry, I. Gentil and M. Ledoux. {\em Analysis and geometry of Markov Diffusion operators}. Springer, 2014.

\bibitem{Bellaiche}
	A. Bellaiche and J.J. Risler, \emph{Sub-Riemannian Geometry}, Progress in Mathematics, Birkh{\"a}user Basel, 2012.

\bibitem{Bellet} Bellet, Luc Rey. {\em Ergodic properties of Markov processes.}  Open quantum systems II. Springer, Berlin, Heidelberg, 2006. 1-39.

\bibitem{Bismut} J.-M. Bismut. {\em Martingales, the Malliavin calculus and hypoellipticity under general Hoermander's conditions. } Wahrscheinlichkeitstheorie verw Gebiete, 1981. 


\bibitem{BLU}
A.~Bonfiglioli, E.~Lanconelli, and F.~Uguzzoni, \emph{Stratified {L}ie groups
  and potential theory for their sub-{L}aplacians}, Springer Monographs in
  Mathematics, Springer, Berlin, 2007.

\bibitem{Cattiaux}P. Cattiaux and L. Mesnager. {\em Hypoelliptic non-homogeneous diffusions}. Probab. Theory Relat. Fields 123, 453--483 (2002)

\bibitem{CrisanDelarue} D. Crisan, F. Delarue, 
Sharp derivative bounds for solutions of degenerate semi-linear partial differential equations, J. Funct. Anal.  263,  no. 10, 3024-3101, 2012.

\bibitem{CrisanGhazali} D. Crisan and S. Ghazali. {\em On the convergence rates of a general class of weak approximations of SDEs}. Stochastic differential equations: theory and applications, 221–248, 2007.

\bibitem{Crisan}  D.~Crisan, K.~Manolarakis, C.Nee. {\em Cubature methods and applications}.  Paris-Princeton Lectures on Mathematical Finance, 2013.

\bibitem{CrisanMcMurray} D.~Crisan and E.~McMurray. {\em Cubature on Wiener Space for McKean-Vlasov SDEs with Smooth Scalar  Interaction}. http://arxiv.org/abs/1703.04177v1

\bibitem{CrisanOttobre} D.~Crisan, M.~Ottobre. {\em Pointwise gradient bounds for degenerate semigroups (of UFG type).} Proc. R. Soc. A 472.2195 (2016): 20160442.

\bibitem{CrisanLitterer}D.~Crisan, C. Litterer, T. Lyons. {\em Kusuoka--Stroock gradient bounds for the solution of the filtering equation}. JFA, 7, 2015. 

\bibitem{daPratoRoeckner}G. da Prato and M. Roeckner. {\em A note on evolution systems of measures for time-dependent stochastic differential equations.} Seminar on Stochastic Analysis, Random Fields and Applications V. Birkhäuser Basel, 2007.


\bibitem{DragKonZeg}
F. Dragoni, V. Kontis, B.~Zegarli\'nski, {\em Ergodicity of Markov Semigroups with H\"ormander Type Generators in Infinite Dimensions}. J. Pot. Anal. 37  (2011), 199--227.

\bibitem{DLP}
AB~Duncan, T~Lelievre, and GA~Pavliotis.
\newblock Variance reduction using nonreversible langevin samplers.
\newblock {\em Journal of Statistical Physics}, 163(3):457--491, 2016.

\bibitem{EckmannHairer} J.P. Eckmann and  M. Hairer. {\em Spectral properties of hypoelliptic operators}. Comm. Math. Phys., 235, 233--253 (2003). 

\bibitem{Florchinger}
P. Florchinger, {\em Malliavin calculus with time dependent coefficients and application to nonlinear filtering.} Probability theory and related fields 86.2 (1990): 203-223.

\bibitem{Gamelin}
Gamelin, Theodore W., and Robert Everist Greene. {\em Introduction to topology}. Courier Corporation, 1983.

\bibitem{Geissert}
Matthias Geissert, Alessandra Lunardi; {\em Asymptotic behavior and hypercontractivity in non-autonomous Ornstein–Uhlenbeck equations},  Journal of the London Mathematical Society, Volume 79, Issue 1, 1 February 2009, Pages 85–106

\bibitem{Hairerbath} M. Hairer. {\em How hot can a heat bath get?} Communications in Mathematical Physics 292.1 (2009): 131-177.

\bibitem{Hairer} M. Hairer. {\em On Malliavin's proof of H\"ormander's theorem}.  Bulletin des sciences mathematiques 135.6-7 (2011): 650-666.

\bibitem{Herau2007}
F.~H{\'e}rau.
\newblock Short and long time behavior of the {F}okker-{P}lanck equation in a  confining potential and applications,  J. Funct. Anal. 244(1) (2007) 95-118.

\bibitem{Hermann}  R. Hermann. {\em On the accessibility problem in control theory}, Internat. Sympos.
Nonlinear Differential Equations and Nonlinear Mechanics, Academic Press, New York,
1963, pp. 325-332. 

\bibitem{H1} L. H\"ormander.  {\em Hypoelliptic second order differential equations}. Acta Math. 119 (1967) 147-171.

\bibitem{Isidori}A.~Isidori. Nonlinear control systems. Springer Science \& Business Media, 2013.

\bibitem{KaratzasShreve} I. Karatzas, and S. Shreve. {\em Brownian motion and stochastic calculus}. Vol. 113. Springer Science \& Business Media, 2012.

\bibitem{Kunita} H. Kunita. {\em Stochastic differential equations and stochastic flows of diffeomorphisms}. École d'Été de Probabilités de Saint-Flour XII-1982. Springer, Berlin, Heidelberg, 1984. 143-303.

\bibitem{RWM} J.Kuntz, M. Ottobre and A. M. Stuart. {\em Diffusion limit for the Random Walk Metropolis algorithm out of stationarity}. arXiv preprint arXiv:1405.4896 (2014).

\bibitem{Kunze}
M. Kunze, L. Lorenzi, and A. Lunardi. {\em Nonautonomous Kolmogorov parabolic equations with unbounded coefficients}. Transactions of the American Mathematical Society 362.1 (2010): 169-198.

\bibitem{KusStr82}S. Kusuoka and D.W. Stroock. {\em Applications of the Malliavin Calculus -- I}. Stochastic analysis (Katata/Kyoto, 1982)
(1982), 271--306.

\bibitem{KusStr85} S. Kusuoka and D.W. Stroock. {\em Applications of the Malliavin Calculus -- II}. Journal of the Faculty of Science, Univ. of
Tokyo 1 (1985) 1--76.

\bibitem{KusStr87} S. Kusuoka, D.W. Stroock. {\em Applications of the Malliavin Calculus -- III}. Journal of the Faculty of Science, Univ. of
Tokyo 2 (1987), 391--442.

\bibitem{Kus03} S. Kusuoka. {\em Malliavin calculus revisited}. J. Math. Sci. Univ. Tokyo, 10 (2003), 261–277.

\bibitem{cub1} S. Kusuoka. {\em Approximation of expectations of diffusion processes based on Lie algebra
and Malliavin calculus}. UTMS, 34, 2003.

\bibitem{Lee}
Lee, John M. {\em Smooth manifolds.} Introduction to Smooth Manifolds. Springer, New York, NY, 2003. 1-29.

\bibitem{Lobry} C. Lobry, {\em Controlabilite des systemes non lineaires}, SIAM J. Control 8 (1970),  573-605. 

\bibitem{Lorenzi} L. Lorenzi, M. Bertoldi. Analytical methods for Markov Semigroups. Chapmann and Hall, 2006. 

\bibitem{cub2} T. Lyons and N. Victoir. {\em Cubature on Wiener space}. Proc. Royal Soc. London, 468:
169–198, 2004.

\bibitem{Ma}
Y.--A Ma, T. Chen and E. Fox. {\em A complete recipe for stochastic-gradient MCMC. } Advances in Neural Information Processing systems 28, pages 2899--2907, 2015.

\bibitem{Malsom} Malsom, Patrick, and Frank Pinski. {\em Pinned Brownian Bridges in the Continuous-Time Limit.} arXiv preprint arXiv:1704.01991(2017).

\bibitem{Nee} C. Nee. {\em Sharp gradient bounds for the diffusion semigroup}. PhD Thesis, Imperial College London, 2011. 

\bibitem{cub3} S. Ninomyia and N. Victoir. {\em Weak approximation scheme of stochastic differential
equations and applications to derivatives pricing}. Applied Mathematical Finance, 15(2):107–
121, 2008.

\bibitem{Nualart} D. Nualart. {\em The Malliavin calculus and related topics}. Vol. 1995. Berlin: Springer, 2006.

\bibitem{mythesis}
M.~Ottobre. \emph{Asymptotic Analysis for Markovian models in non-equilibrium Statistical Mechanics}, Ph.D Thesis, Imperial College London, 2012. 

\bibitem{Ottirr}
M. Ottobre. {\em Markov Chain Monte Carlo and Irreversibility}. Reports on Math. Phys. (2016)

\bibitem{MV_I} 
V.~Kontis, M.~Ottobre,  B.~Zegarli\'nski. \emph{Markov semigroups with hypocoercive-type generator in infinite dimensions: ergodicity and smoothing}, Journal of Functional Analysis, 2016.

\bibitem{OttobrePavliotis}M. Ottobre and G. Pavliotis. {\em Asymptotic analysis for the generalized Langevin equation}. Nonlinearity 24.5 (2011): 1629.

\bibitem{ReyBellet}L. Rey-Bellet. {\em Ergodic properties of Markov processes}. Open quantum systems II. Springer, Berlin, Heidelberg, 2006. 1-39.

\bibitem{Bellet2}
Luc Rey-Bellet and Konstantinos Spiliopoulos.
\newblock Irreversible langevin samplers and variance reduction: a large
  deviations approach.
\newblock {\em Nonlinearity}, 28(7):2081--2103, 2015.

\bibitem{Bellet1}
Luc Rey-Bellet and Konstantinos Spiliopoulos.
\newblock Variance reduction for irreversible langevin samplers and diffusion
  on graphs.
\newblock {\em Electronic Communications in Probability}, 20, 2015.

\bibitem{rudin}	
W. Rudin. {\em Real and complex analysis}. Tata McGraw-Hill Education, 1987.

\bibitem{Schiltz}
J. Schiltz. {\em Time depending Malliavin calculus on manifolds and application to nonlinear filtering}. Probability and Mathematical Statistics 18.2 (1998): 319-334.

\bibitem{Sontag} E. ~Sontag Mathematical control theory: deterministic finite dimensional systems. Vol. 6. Springer Science \& Business Media, 2013

\bibitem{Sussman} H.J. Sussmann. {\em Orbits of families of vector fields and integrability of distributions}. Transactions
of the American Mathematical Society, Vol 180, 1973.

\bibitem{Taniguchi}
S. Taniguchi. {\em Malliavin's stochastic calculus of variations for manifold-valued Wiener functionals and its applications}. Probability Theory and Related Fields 65.2 (1983): 269-290.

\bibitem{V}
C.~Villani,
\newblock Hypocoercivity.
\newblock { Mem. Amer. Math. Soc.}, 202 (950) 2009.

\bibitem{Watanabe}
S.~Watanabe. {\em Malliavin's calculus in terms of generalized Wiener functionals}. Theory and Application of Random Fields (1983): 284-290.

\end{document}